 \newtheorem{definition}{Definition}[section]
 \newtheorem{theorem}[definition]{Theorem}
 \newtheorem{lemma}[definition]{Lemma}
 \newtheorem{proposition}[definition]{Proposition}
 \newtheorem*{theorem*}{Theorem}
\newtheorem*{proposition*}{Proposition}
\newtheorem*{lemma*}{Lemma}
 \theoremstyle{remark}
 \newtheorem{example}[definition]{Example}
 \newtheorem{remark}[definition]{Remark}
   \newtheorem{claim}{Claim}
\newcommand{\op}[1]{\operatorname{#1}}
\newcommand{\acou}[2]{\ensuremath{\langle #1 , #2 \rangle}}
\newcommand{\Tr}{\ensuremath{\op{Tr}}}
\newcommand{\tr}{\op{tr}}
\newcommand{\Res}{\ensuremath{\op{Res}}}
\def\Xint#1{\mathchoice
{\XXint\displaystyle\textstyle{#1}}%
{\XXint\textstyle\scriptstyle{#1}}%
{\XXint\scriptstyle\scriptscriptstyle{#1}}%
{\XXint\scriptscriptstyle\scriptscriptstyle{#1}}%
\!\int}
\def\XXint#1#2#3{{\setbox0=\hbox{$#1{#2#3}{\int}$}
\vcenter{\hbox{$#2#3$}}\kern-.5\wd0}}
\def\dashint{\Xint-}
\newcommand{\bint}{\ensuremath{\dashint}}
\newcommand{\Str}{\op{Str}}
\newcommand{\str}{\op{str}}
\newcommand{\ind}{\op{ind}}
\newcommand{\Ch}{\op{Ch}}
\newcommand{\bCh}{\op{\mathbf{Ch}}}
\newcommand{\Pf}{\op{Pf}}
\newcommand{\Cl}{\op{Cl}}
\newcommand{\Hol}{\op{Hol}}
\newcommand{\C}{\ensuremath{\mathbb{C}}} 
\newcommand{\N}{\ensuremath{\mathbb{N}}} 
\newcommand{\R}{\ensuremath{\mathbb{R}}} 
\newcommand{\Z}{\ensuremath{\mathbb{Z}}}
\newcommand{\Rno}{\R^{n}\!\setminus\! 0}
\newcommand{\UR}{U\times\R}
\newcommand{\fa}{\ensuremath{\mathfrak{a}}}
\newcommand{\Ca}[1]{\ensuremath{\mathcal{#1}}}
\newcommand{\cA}{\Ca{A}}
\newcommand{\cB}{\Ca{B}}
\newcommand{\cD}{\ensuremath{\mathcal{D}}}
\newcommand{\cE}{\Ca{E}}
\newcommand{\cH}{\ensuremath{\mathcal{H}}}
\newcommand{\cK}{\ensuremath{\mathcal{K}}}
\newcommand{\cL}{\ensuremath{\mathcal{L}}}
\newcommand{\cN}{\Ca{N}}
\newcommand{\cP}{\Ca{P}}
\newcommand{\cS}{\ensuremath{\mathcal{S}}}
\newcommand{\scA}{\mathscr{A}}
\newcommand{\sD}{\ensuremath{{/\!\!\!\!D}}}%{/{\!\!\!\!D}}
\newcommand{\sS}{\ensuremath{{/\!\!\!\!\!\;S}}}
\newcommand{\pdo}{\ensuremath{\Psi}}
\newcommand{\psido}{$\Psi$DO} 
\newcommand{\psidos}{$\Psi$DOs}
\newcommand{\pvdo}{\ensuremath{\Psi_{\textup{v}}}}
\newcommand{\OP}{\ensuremath{OP}}
\newcommand{\Sv}{\ensuremath{S_{\textup{v}}}}
\newcommand{\ord}{{\op{ord}}}
\newcommand{\End}{\ensuremath{\op{End}}}
\newcommand{\dom}{\op{dom}}
\newcommand{\bt}{\bullet}
\newcommand{\hotimes}{\hat\otimes}
\newcommand{\CM}{\textup{CM}}
\newcommand{\JLO}{\textup{JLO}}
\newcommand{\HP}{\op{HP}}
\newcommand{\bHP}{\op{\mathbf{HP}}}
\newcommand{\FP}{\underset{t=0^{+}}{\Pf}}
\newcommand{\ii}{\sqrt{-1}}
\numberwithin{equation}{section}
\begin{document}

\title{NONCOMMUTATIVE GEOMETRY AND CONFORMAL GEOMETRY, II.  CONNES-CHERN CHARACTER AND THE LOCAL EQUIVARIANT INDEX THEOREM.}
 \author{Rapha\"el Ponge}
 \address{Department of Mathematical Sciences, Seoul National University, Seoul, South Korea}
 \email{ponge.snu@gmail.com}
 \author{Hang Wang}
 \address{School of Mathematical Sciences, University of Adelaide, Adelaide, Australia}
 \email{hang.wang01@adelaide.edu.au}

\keywords{Noncommutative geometry, equivariant index theory, heat kernel techniques}
%\keywords{Conformally covariant differential operator, nullspace, nodal set, 
%nodal domain, conformal invariant, curvature prescription, scalar curvature, $Q$-curvature, Heisenberg manifold, projective space}

\subjclass[2010]{Primary: 58B34. Secondary: 58J20, 58J35}

\thanks{R.P.\ was partially supported by Research Resettlement Fund and Foreign Faculty Research Fund of Seoul National University and  Basic Research Grant 2013R1A1A2008802 
 of National Research Foundation of Korea (South Korea)}
 
\begin{abstract}
    This paper is the second part of a series of papers on noncommutative geometry and conformal geometry. In this paper, 
    we compute explicitly the Connes-Chern character of an equivariant Dirac spectral triple. The formula that we obtain
     for which was used in the first paper of the series. The computation has two main steps. The first step is the 
    justification that the CM cocycle represents the Connes-Chern character. The second step is the computation of the CM 
    cocycle as a byproduct of a new proof of the local equivariant index theorem of Donnelly-Patodi, Gilkey and 
    Kawasaki. The proof combines the rescaling method of Getzler with an equivariant version of the Greiner-Hadamard approach 
    to the heat kernel asymptotics. Finally, as a further application of this approach, we compute the short-time limit of the JLO cocycle of an equivariant 
    Dirac spectral triple. 
\end{abstract}

\maketitle 

\section{Introduction}
The paper is the second part of a series of papers whose aim is to use tools of noncommutative geometry to study conformal 
geometry and noncommutative versions of conformal geometry. In the prequel~\cite{PW:NCGCGI.PartI} (referred throughout this 
paper as Part~I) we derived a local index formula in conformal-diffeomorphism invariant geometry and exhibited a new class of conformal 
invariants taking into account the action by a group of conformal-diffeomorphisms (i.e., the conformal gauge group). 
These results make use of the conformal Dirac spectral triple of Connes-Moscovici~\cite{CM:TGNTQF} and two key features of its 
Connes-Chern character, namely, its conformal invariance and its explicit computation in terms of equivariant 
characteristic forms. The former feature is established in Part I. The latter is one of the main goal of this paper. 

It should be stressed out that the conformal Dirac spectral triple is not an ordinary spectral 
triple, but a \emph{twisted} spectral triple in the sense of~\cite{CM:TGNTQF}. As shown by 
Connes-Moscovici~\cite{CM:LIFNCG}, under suitable conditions, the Connes-Chern is represented by a cocycle, called CM 
cocycle, which is given by formulas that are ``local'' in the sense of noncommutative geometry. Except for a small class of 
twisted spectral triples, which don't include the conformal Dirac spectral triple, there is no analogue of the CM 
cocycle for twisted spectral triples (see~\cite{Mo:LIFTST} and the related discussion in Part~I). However, in the case of the conformal Dirac spectral triple, the conformal invariance of the Connes-Chern character 
allows us to choose \emph{any} metric in the 
given conformal class. In particular, when the conformal structure is non-flat, thanks to the Ferrand-Obata theorem we may choose a metric invariant under the whole 
conformal-diffeomorphism group. Moreover, in this case the conformal Dirac spectral triple becomes an ordinary 
spectral triple, namely, an equivariant Dirac spectral triple $( C^{\infty}(M)\rtimes G, L^{2}_{g}(M,\sS),\ \sD_{g})$. 
We are thus reduced to computing the Connes-Chern character of such a Dirac spectral triple.  

A main goal of this paper is the explicit computation of the Connes-Chern character of an equivariant Dirac 
spectral triple $( C^{\infty}(M)\rtimes G, L^{2}_{g}(M,\sS),\ \sD_{g})$, where $(M^{n},g)$ is a compact Riemannian oriented spin manifold 
of even dimension, $\sD_{g}$ is the Dirac operator acting on the spinor bundle $\sS$, and 
$C^{\infty}(M)\rtimes G$ is the (discrete) crossed product of the algebra $C^{\infty}(M)$ with a group $G$ 
of orientation-preserving isometries preserving the spin structure.  As it turns out, the construction in Part~I of the conformal invariants 
actually used the representation of the Connes-Chern character in the periodic cyclic cohomology of \emph{continuous} cochains on the locally convex algebra $C^{\infty}(M)\rtimes G$, 
rather than the usual representation in the periodic cyclic cohomology of arbitrary cochains. Therefore, we actually 
aim at computing the Connes-Chern character of $( C^{\infty}(M)\rtimes G, L^{2}_{g}(M,\sS),\ \sD_{g})$ in the cyclic cohomology 
$\bHP^{0}\left(C^{\infty}(M)\rtimes G\right)$ of \emph{continuous} cochains. We refer to Theorem~\ref{thm:Connes-Chern-conformal-DiracST} for the explicit formula for this 
Connes-Chern character.  The computation has two main steps. The first step is the justification that the CM cocycle makes sense and represents 
the Connes-Chern character in $\bHP^{0}\left(C^{\infty}(M)\rtimes G\right)$ (and not just in the ordinary periodic cyclic cohomology $\HP^{0}\left(C^{\infty}(M)\rtimes G\right)$. 
The second step is the actual computation of the CM cocycle. 

In the original paper of Connes-Moscovi~\cite{CM:LIFNCG}, the existence of the CM cocycle and the representation of the 
Connes-Chern character by this cocycle was proved under several assumptions, which were subsequently relaxed 
(see, e.g., \cite{Hi:RITCM, CPRS:CCSFST}).  In Part~I, we 
introduced a natural class of spectral triples over locally convex algebras, called smooth 
spectral triples. Given a smooth spectral triple $(\cA,\cH,D)$,  it was shown that the Connes-Chern character descends to a class $\bCh(D)$ in 
$\bHP^{0}(\cA)$. It is natural to ask for further conditions ensuring us that the CM cocycle represents 
the Connes-Chern character in $\bHP^{0}(\cA)$. Such conditions can be figured out by a careful look at the construction of the CM cocycle 
in~\cite{CM:LIFNCG} and~\cite{Hi:RITCM} (see~\cite{Po:SmoothCM} and Section~\ref{sec:spectral-triples}). For our 
purpose, it is useful to re-express these conditions in terms of heat-trace asymptotics (see Proposition~\ref{prop:CM.heat.trace.estimate}). Using the differentiable 
equivariant heat kernel asymptotics that we establish 
in Section~\ref{sec:Equivariant-heat-kernel-asymptotics},  it is fairly straightforward to check these conditions. Therefore, 
the Connes-Chern character of  $(C^{\infty}(M)\rtimes G, L^{2}_{g}(M,\sS),\ \sD_{g})$ is represented in 
$\bHP^{0}\left(C^{\infty}(M)\rtimes G\right)$ by the CM cocycle (Proposition~\ref{prop:CCC.CMC-represents-CCC}). 
 
We then are left with computing the CM cocycle of the equivariant Dirac spectral triple $( C^{\infty}(M)\rtimes G, 
L^{2}_{g}(M,\sS),\ \sD_{g})$. As it turns out, Azmi~\cite{Az:RMJM} computed the CM cocycle when $G$ is a 
finite group. However, she did not check 
the assumptions of~\cite{CM:LIFNCG} hold in her setting, and so she did not show that the CM cocycle represents the 
Connes-Chern character. Likewise, Chern-Hu~\cite{CH:ECCIDO} computed the CM 
cocycle defined as an equivariant periodic cyclic cochain, but they didn't verify the conditions of~\cite{CM:LIFNCG}. 
Note also that in these papers the computations are carried out by elaborating on the approach of 
Lafferty-Yu-Zhang~\cite{LYZ:TAMS} to the local equivariant index theorem, but some additional work  is
required to derive the heat-kernel asymptotics that are needed in the computation of the CM cocycle. 

Another goal of this paper is to give a new proof of the local equivariant index theorem of Donelly-Patodi~\cite{DP:T}, Gilkey~\cite{Gi:LNPAM}, and 
Kawasaki~\cite{Ka:PhD} (see also~\cite{Bi:ASITPA2, BV:BSMF, LYZ:TAMS, LM:DMJ}). More precisely, we produce a proof which, as an immediate byproduct,  yields a \emph{differentiable} version of the local equivariant index theorem. Once it is recasted in terms of heat kernel asymptotics (see 
Proposition~\ref{prop:CM.heat.trace.estimate}), the computation of the CM cocycle then becomes a simple corollary 
of this differentiable version of the local equivariant index theorem. Thus, hardly any additional work is required to compute the CM cocycle. 

Recall that the local equivariant index theorem provides us with a heat 
kernel proof of the equivariant index theorem of Atiyah-Segal-Singer~\cite{AS:IEO2, AS:IEO3}, 
which is a fundamental extension of both the index theorem of Atiyah-Singer~\cite{AS:IEO1, AS:IEO3} and the 
fixed-point formula of Atiyah-Bott~\cite{AB:LFT1, AB:LFT2}. Given a $G$-equivariant Hermitian vector bundle $E$, any $\phi\in G$ gives 
rise to a unitary operator $U_{\phi}$ of $L^{2}(M,\sS\otimes E)$. If we let $\sD_{\nabla^{E}}$ be the Dirac operator associated 
to a $G$-equivariant connection $\nabla^E$ on $E$, then the local equivariant index theorem for Dirac operators states that, for all $\phi \in G$, we have
\begin{equation}
   \lim_{t\rightarrow 0^{+}}   \Str \left[e^{-t\sD_{\nabla^{E}}^{2}}U_{\phi}\right] = (-i)^{\frac{n}{2}} \sum_{\substack{0\leq a \leq n\\ \textup{$a$ even}}} (2\pi)^{-\frac{a}{2}}\int_{M^{\phi}_a} 
  \hat{A}(R^{TM^{\phi}})\wedge \nu_{\phi}\left(R^{\cN^{\phi}}\right)\wedge 
      \Ch_{\phi}\left(F^{E}\right),
%    + \op{O}(t) \qquad \text{as $t \rightarrow 0^{+}$},
    \label{eq:Intro.LEIT}
\end{equation}where $M^{\phi}_a$ is the $a$-th dimensional submanifold component of the fixed-point set $M^\phi$ of $\phi$, and 
$ \hat{A}(R^{TM^{\phi}})$, $\nu_{\phi}\left(R^{\cN^{\phi}}\right)$, $\Ch_{\phi}\left(F^{E}\right)$ are polynomials in $\phi'$ and the respective curvatures $R^{TM^{\phi}}$, $R^{\cN^{\phi}}$, $F^{E}$ of the tangent space $TM^{\phi}$, its normal bundle $\cN^{\phi}$ and the bundle $E$ 
(see Section~\ref{sec:proof-key-thm} for their precise definitions). The differentiable version of the local index theorem gives a similar asymptotic for $\Str 
\left[Pe^{-t\sD_{\nabla^{E}}^{2}}U_{\phi}\right]$ as $t\rightarrow 0^{+}$, where $P$ is
a differential operator acting on the sections of $\sS\otimes E$ (see Theorem~\ref{thm:Diff.local.equiv.index.thm}  for the precise statement). 
%In the form~(\ref{eq:LEIT.local-equiv.-index-thm}) the local equivariant index theorem is due to 
%Gilkey~\cite{Gi:LNPAM} (see also~\cite{DP:T, Ka:T, Bi:ASITPA2, BV:BSMF, LYZ:TAMS, LM:DMJ}). 

Our approach to the proof of the local equivariant index theorem is an equivariant version of the approach 
of~\cite{Po:CMP} to the proof of the local index theorem. Namely, it combines the rescaling method of Getzler~\cite{Ge:SPLASIT} with an 
equivariant version of the Greiner-Hadamard approach to the heat kernel asymptotics~(\cite{Gr:AEHE}; see also~\cite{BGS:HECRM}). Given a positive elliptic 
differential operator $L$, the Greiner-Hadamard approach derives the short-time asymptotics for the heat kernel of $L$ by exploiting the well known fact that 
the heat semi-group enables us to invert the heat operator $L+\partial_{t}$ (see Section~\ref{sec.volterra}). As it turns out, the inverse 
$(L+\partial_{t})^{-1}$ lies in a natural class of \psidos, called Volterra \psidos\ (see~\cite{Gr:AEHE, Pi:COPDTV}). 
These operators have the Volterra property, in the sense that they are causal and time-translation invariant. The short-time asymptotics for the 
heat kernel then follows from the short-time asymptotics for kernels of Volterra \psidos. Observing that, given any 
differential operator $P$, the operator $P(L+\partial_{t})^{-1}$ is a Volterra \psido, the Greiner-Hadamard approach yields \emph{for 
free} a short-time asymptotic for the kernel of $Pe^{-tL}$. That is, it provides us for free with \emph{differentiable} heat kernel 
asymptotics. It should also be mentioned that the Volterra pseudodifferential calculus reduces the derivation of the the short-time asymptotics for kernels of 
Volterra \psidos\ to a mere application of Taylor's formula. 

There is no difficulty to extend the Greiner-Hadamard approach to the equivariant setting by working in tubular coordinates (see Proposition~\ref{TraceOfHeatKernelVB}). 
Like in the non-equivariant setting, the equivariant asymptotics for kernels of Volterra \psidos\ follow from an 
application of Taylor's formula (\emph{cf.}~Lemma~\ref{lem:Heat.asymptotic-Isymbol}). Once these equivariant asymptotics are established, we can extend the approach 
of~\cite{Po:CMP} to prove the local equivariant index theorem. More precisely, as observed in~\cite{Po:CMP}, the rescaling of 
Getzler~\cite{Ge:SPLASIT} defines a natural filtration on Volterra \psidos. Incidentally, this gives rise to a finer 
notion of order for Volterra \psidos\ (called Getzler order) and a notion of model operator (see Section~\ref{sec:proof-key-thm}). The existence of a limit 
in~(\ref{eq:Intro.LEIT}) and its computation by means of the inverse kernel of the model heat operator then 
follow from elementary considerations on Getzler orders and model operators of Volterra \psidos\ (see Lemmas~\ref{lem:GetzlerOrderParametrix} 
and~\ref{lem:AS.approximation-asymptotic-kernel}).  
The proof of the local equivariant index theorem is then completed by the computation of this inverse kernel. This is 
done by using Melher's formula for the heat kernel of an harmonic oscillator (see~\cite{LM:DMJ} and 
Section~\ref{sec:proof-key-thm}).   

% We note that in proofs of the local equivariant index 
% theorem using Getzler's rescaling there is often a  tension between the tubular coordinates, in which the equivariant heat kernel asymptotics are derived, and the normal 
%  coordinates, in which the Getzler's rescaling is performed (see, e.g., the discussion in the introduction of \cite{LYZ:TAMS}). With our approach, this 
%  tension is taken dealt with by applying the change of variable formula for symbols of pseudodifferential 
%  operators. 
 
 As it is based on considerations on Volterra \psidos, this approach to the proof of the local equivariant index 
 theorem actually yields a version of the local equivariant index 
 theorem for Volterra \psidos\ (Theorem~\ref{thm:LEIT.Volterra-LEIT}). In particular, specializing this theorem to operators of the form 
 $P(\sD_{E}^{2}+\partial_{t})^{-1}$ then provides us with the differentiable version of the local equivariant index 
 theorem (Theorem~\ref{thm:Diff.local.equiv.index.thm} ) which we need in the computation of the CM cocycle. This enables us to complete the computation of the 
 Connes-Chern character of the 
 equivariant Dirac spectral triple $(C^{\infty}(M)\rtimes G, L^{2}_{g}(M,\sS),\ \sD_{g})$. 

As mentioned the existence of the CM cocycle, as defined in~\cite{CM:LIFNCG}, requires various assumptions. One of these 
assumptions is the regularity condition which can be regarded as some kind of operator theoretic version of the 
scalarness of the principal symbol of the square of the Dirac operator (\emph{cf.}\ Definition~\ref{eq:CM.regularity}). However, there are natural geometric examples 
of spectral triples associated to hypoelliptic operators on contact or more generally Carnot manifolds which are not regular. Nevertheless, for these 
spectral triples the Connes-Chern character is represented by the short-time \emph{partie finie} (finite part) of the JLO cocycle. In the 
case of the spectral triple is regular the short-time \emph{partie finie} of the JLO cocycle agrees with that of the CM cocycle. 

In the case of a Dirac spectral triple the JLO cocycle actually has a short-time limit, and all previous 
computations of this limit implicitly use the regularity of the Dirac spectral triple as they involve commuting the 
heat semi-group with Clifford elements. As a further application of our version of the local equivariant index theorem 
alluded to above we give a computation of the short-time limit of the JLO cocycle of an equivariant Dirac spectral 
triple that does not assume the regularity of this spectral triple (Theorem~\ref{thm:JLO.PfJLO}). To our knowledge this seems to be the 
first computation of the short-time limit of the JLO cocycle of a Dirac spectral triple that does not use the 
regularity of this spectral triple. In some sense, this paves the way for the computation of the Connes-Chern character of spectral 
triples associated to hypoelliptic operators on contact or Carnot manifolds. 

It is believed that the approach of this paper to the proof of the local equivariant index theorem and computation of 
the Connes-Chern character is fairly general and can be applied to numerous geometric situations. In particular, it can 
be extended to spin manifolds with boundary equipped with $b$-metric (\emph{cf.}~Remark~\ref{rmk:JLO.boundary}; see also~\cite{Me:APSIT}) and to various 
(non-equivariant or equivariant) family settings or even bivariant settings (\emph{cf.}~Remarks~\ref{rmk:LEIT-family-settings} 
and~\ref{rmk:JLO.family}; see also~\cite{YWang:AJM, YWang:JKT14}). 

In this paper, the computation of the Connes-Chern character and short-time limit of the JLO cocycle is carried out in 
even dimension only. We refer to~\cite{Po:Odd} for an extension of these results to odd dimension (see also 
Remark~\ref{rmk:LEIT.odd} and~\ref{rmk:CM.odd}). Note also that~\cite{Po:Odd} contains applications to the construction 
of the eta cochain for a Dirac spectral triple (see also Remark~\ref{rmk:JLO.eta-cochain} on this point).

The paper is organized as follows. In Section~\ref{sec.volterra}, we review the Volterra pseudodifferential calculus and its relationship with the heat kernel. 
In Section~\ref{sec:Equivariant-heat-kernel-asymptotics}, we use the Volterra pseudodifferential calculus to derive equivariant heat kernel asymptotics. 
In Section~\ref{sec:proof-key-thm}, we present our proof of the local equivariant index theorem, which leads us to a 
version of the local equivariant index theorem for Volterra \psidos. In Section~\ref{sec:spectral-triples}, we review the construction of the 
Connes-Chern character and CM cocycle in the framework of smooth spectral triples. In Section~\ref{sec:CM.cocycle.HT.Asym}, we re-interpret 
the CM cocycle  in terms of heat kernel asymptotics. In Section~\ref{sec:Connes-Chern-conformal}, we determine the 
Connes-Chern character of an equivariant Dirac spectral triple by computing its CM cocycle. In Section~\ref{sec:JLO}, we use 
the version of the  local equivariant index theorem for Volterra \psidos\ to compute the short-time limit of the JLO 
cocycle of an equivariant Dirac spectral triple.

\section*{Acknowledgements}
The authors wish to thank Xiaonan Ma and Bai-Ling Wang for useful discussions related to the subject matter of 
this paper. They also would like to thank the following institutions for their hospitality during the
preparation of this manuscript: Mathematical Sciences Center of Tsinghua University, Research Institute of Mathematical Sciences of Kyoto 
University, University of California at Berkeley, and University Paris 6 (RP), Seoul National University (HW), Australian National University, Chern 
Institute of Mathematics of Nankai University, and Fudan University (RP+HW).

\section{Volterra Pseudodifferential Calculus and Heat Kernels}
\label{sec.volterra}
In this section, we recall the main definitions and properties of the Volterra pseudodifferential calculus and its 
relationship with the heat kernel of an elliptic operator. The pseudodifferential representation of the heat kernel appeared in~\cite{Gr:AEHE}, 
but some of the ideas can be traced back to Hadamard~\cite{Ha:LCPLPDE}. The presentation here follows closely that 
of~\cite{BGS:HECRM}.  
  
Let $(M^{n},g)$ be a compact Riemannian manifold and $E$ a Hermitian vector bundle over $M$. The metrics of $M$ and $E$ 
naturally define a continuous inner product on the space $L^{2}(M,E)$ of the $L^{2}$-sections of $E$. In addition, we let 
$L:C^{\infty}(M,E)\rightarrow C^{\infty}(M,E)$ be a selfadjoint 2nd 
order differential operator whose principal symbol is positive-definite. In particular, $L$ is elliptic.  The operator 
$L$ then generates a smooth semigroup $(0,\infty)\ni t \rightarrow e^{-tL}\in \cL(L^{2}(M,E))$ (called heat 
semigroup) such that, for all $u\in L^2(M,E)$, we have 
\begin{equation}
 \lim_{ t\to0^+}e^{-tL}u=u   \qquad \text{and} \qquad \frac{d}{dt} e^{-tL}u= -Le^{-tL}u  \quad \forall t>0.  
    \label{eq:Heat-inverse.semi-group-functions}
\end{equation}

In what follows, we shall make some abuse of notation by denoting by $E$  the vector bundle over $M\times \R$  obtained as the pullback of $E$ by the projection 
 the projection $(x,t)\rightarrow x$ (so that the fiber at $(x,t)$ is $E_x$).  The heat operator 
$L +\partial_{t}$ then acts on the sections of this vector bundle.  As is well known, the heat semigroup enables us to 
invert this operator. More precisely, let us denote by $C^{0}_{+}\left(\R,L^{2}(M,E)\right)$ the space of continuous 
functions $u:\R \rightarrow L^{2}(M,E)$ that are supported on some interval $I_{c}=[c,\infty)$, $c\in \R$. We also 
denote by $C^{\infty}_{+}(M\times \R, E)$ the subspace of $C^{\infty}(M\times \R,E)$ consisting of sections supported 
on $M\times I_{c}$ for some $c\in \R$. We shall regard $C^{\infty}_{+}(M\times \R, E)$ as a subspace of $C^{0}_{+}\left(\R,L^{2}(M,E)\right)$.  
We then define a linear operator $(L+\partial_{t})^{-1}:C^{0}_{+}\left(\R,L^{2}(M,E)\right)\rightarrow C^{0}_{+}\left(\R,L^{2}(M,E)\right)$  by
\begin{equation}
    (L+\partial_{t})^{-1}u(s):=\int_{0}^\infty e^{-tL} u(s-t)dt \qquad \forall u \in C^{0}_{+}(M\times \R,E). 
     \label{eq:volterra.inverse-heat-operator}
\end{equation}
The following result shows that $(L+\partial_{t})^{-1}$ is really an inverse for the heat operator. 

\begin{proposition}[\cite{Gr:AEHE, BGS:HECRM}]
For all $u\in C^{\infty}_{+}(M\times \R,E)$, we have
\begin{equation}
            (L+\partial_{t})^{-1}(L+\partial_{t})u=(L+\partial_{t})(L+\partial_{t})^{-1}u=u. 
            \label{eq:Heat.inverse-heat-equation}
            \end{equation}
\end{proposition}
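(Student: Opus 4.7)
The plan is to verify both identities in~\eqref{eq:Heat.inverse-heat-equation} by direct manipulation of the defining integral~\eqref{eq:volterra.inverse-heat-operator}, using the semigroup properties~\eqref{eq:Heat-inverse.semi-group-functions} together with the support condition on $u$. First I would make the substitution $\tau = s-t$ in the integral defining $v(s):=(L+\partial_t)^{-1}u(s)$. For $u \in C^{\infty}_{+}(M\times \R,E)$ supported in $M\times [c,\infty)$, this yields
\begin{equation*}
v(s) = \int_{-\infty}^{s} e^{-(s-\tau)L} u(\tau)\, d\tau = \int_{c}^{s} e^{-(s-\tau)L} u(\tau)\, d\tau,
\end{equation*}
with $v(s)=0$ for $s\leq c$, so the integral is always over a bounded interval. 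Moreover, smoothness of $u$ combined with $u\equiv 0$ on $M\times(-\infty,c)$ forces $u$ and all its time-derivatives to vanish at $\tau=c$, which takes care of potential boundary contributions.

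Next I would establish $(L+\partial_t)v=u$. Differentiating $v(s)$ in $s$ under the integral sign, the upper limit contributes $e^{0\cdot L}u(s)=u(s)$, while differentiating the integrand using $\partial_{s}\,e^{-(s-\tau)L}=-L\,e^{-(s-\tau)L}$ from~\eqref{eq:Heat-inverse.semi-group-functions} produces $-Lv(s)$. Hence $\partial_{s}v+Lv=u$, which is the first identity.

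For the second identity, set $w:=(L+\partial_t)u$, so that $w(r)=Lu(r)+u'(r)$, where $u'$ denotes the time-derivative. Since $L$ does not affect the time support, $w$ lies in $C^{\infty}_{+}(M\times\R,E)$, and
\begin{equation*}
(L+\partial_t)^{-1}w(s) = \int_{0}^{\infty} e^{-tL}\, Lu(s-t)\, dt + \int_{0}^{\infty} e^{-tL}\, u'(s-t)\, dt.
\end{equation*}
In the first integral, I would rewrite $e^{-tL}L=-\frac{d}{dt}e^{-tL}$ (using the semigroup ODE) and integrate by parts in $t$. The boundary term at $t=0$ produces $u(s)$; the boundary term at the upper endpoint vanishes thanks to the support condition $u(s-t)=0$ for $t>s-c$; and the remaining interior contribution is $-\int_{0}^{\infty}e^{-tL}u'(s-t)\,dt$, which cancels the second integral. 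This yields $(L+\partial_t)^{-1}w(s)=u(s)$, as required.

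The main technical obstacle is justifying the interchange of $\partial_s$ with the integral and the integration by parts in~$t$. These hinge on the fact that for a positive elliptic operator with smooth coefficients on a compact manifold the heat kernel is jointly smooth on $M\times M\times (0,\infty)$ and the semigroup preserves $C^{\infty}(M,E)$, so that the integrands and all their needed derivatives are continuous in $(s,t)$ on the (bounded) effective domain of integration. Once this analytic bookkeeping is in place, the calculations above go through verbatim.
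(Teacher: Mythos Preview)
Your argument is correct and is the standard direct verification via the semigroup properties~\eqref{eq:Heat-inverse.semi-group-functions}; the paper does not supply its own proof of this proposition but simply cites \cite{Gr:AEHE, BGS:HECRM}, so there is nothing to compare against. The one point worth tightening is the differentiation under the integral sign in the first identity: you use $\partial_s e^{-(s-\tau)L}u(\tau)=-Le^{-(s-\tau)L}u(\tau)$ all the way up to $\tau=s$, which requires observing that for smooth $u(\tau)$ this equals $-e^{-(s-\tau)L}Lu(\tau)$ and hence extends continuously to $s-\tau=0$; once noted, Leibniz's rule applies cleanly.
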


\begin{remark}
\label{rem:toplogy.C+}
 The space $C^{0}_{+}\left(\R,L^{2}(M,E)\right)$ carries a natural locally convex topology defined as follows. For $I_{c}=[c,\infty)$, $c\in \R$, denote by 
 $C^{0}_{I_{c}}\left(\R,L^{2}(M,E)\right)$ the subspace of 
 $C^{0}\left(\R,L^{2}(M,E)\right)$ consisting of functions 
 supported on $I_{c}$. We equip $C^{0}_{I_{c}}\left(\R,L^{2}(M,E)\right)$ with the induced topology. The topology of 
 $C^{0}_{+}\left(\R,L^{2}(M,E)\right)$ then is the coarsest locally convex topology that makes continuous the inclusion of 
 $C^{0}_{I_{c}}\left(\R,L^{2}(M,E)\right)$ into $C^{0}_{+}\left(\R,L^{2}(M,E)\right)$ for all $c\in \R$. 
 We observe that with respect to this topology the inclusion of $C^{\infty}_{+}(M\times \R, E)$ into $C^{0}_{+}\left(\R,L^{2}(M,E)\right)$ is continuous. 
% We also note that 
%$ C^{\infty}_{+}(M\times  \R, E)$ carries a natural locally convex topology defined in a similar fashion. 
\end{remark}

\begin{remark}
Let $c\in \R$ and $u\in C^{0}_{I_{c}}\left(\R,L^{2}(M,E)\right)$. Then, for all $s \in \R$, 
\begin{equation*}
    (L+\partial_{t})^{-1}u(s)=\int_{\{0\leq t \leq s-c\}}e^{-tL} u(s-t)dt. 
\end{equation*}
Therefore, we see that $ (L+\partial_{t})^{-1}u\in C^{0}_{I_{c}}\left(\R,L^{2}(M,E)\right)$ and, for $s\geq c$, we have
\begin{equation*}
    \|(L+\partial_{t})^{-1}u(s)\|\leq \int_{0}^{s-c}\|e^{-tL} u(s-t)\|dt \leq (s-c) \sup\{\|u(s')\|; \ c\leq s'\leq s\}. 
\end{equation*}
We then deduce that $(L+\partial_{t})^{-1}$ induces a continuous endomorphism of $C^{0}_{I_{c}}\left(\R,L^{2}(M,E)\right)$ for all $c 
\in \R$, and hence is a continuous operator from $C^{0}_{+}\left(\R,L^{2}(M,E)\right)$ to itself. 
\end{remark}

\begin{remark}\label{rem:Volterra.smoothness-inverse-heat-operator0}
 The space $ C^{\infty}_{+}(M\times  \R, E)$ carries a natural locally convex topology defined in a similar fashion as the topology of  $C^{0}_{+}\left(\R,L^{2}(M,E)\right)$ described 
 in Remark~\ref{rem:toplogy.C+}.  Furthermore, by using the  ellipticity of $L$ it can be further shown that with respect to this topology the operator $(L+\partial_{t})^{-1}$ induces a 
 continuous endomorphism of $C^{\infty}_{+}(M\times \R, E)$ (see also 
 Remark~\ref{rem:Volterra.smoothness-inverse-heat-operator} below on this point). 
\end{remark}

Let us denote by $E\boxtimes E^{*}$ the vector bundle over $M\times M\times \R$ whose fiber at $(x,y,t)\in M\times 
M\times \R$ is $\op{Hom}(E_{y},E_{x})$. We define the \emph{heat kernel} $k_{t}(x,y)$, $t>0$, as the smooth section of 
$E\boxtimes E^{*}$ over $M\times M\times (0,\infty)$ such that 
\begin{equation}
    e^{-tL}u(x)=\int_{M}k_{t}(x,y)u(y)|dy| \qquad \forall u \in L^{2}(M,E),
    \label{eq:Heat.heat-kernel-smooth-function}
\end{equation}where $|dy|$ is the Riemannian density defined by $g$ on $M$. That is, $k_{t}(x,y)|dy|$ is the Schwartz 
kernel of $e^{-tL}$. In addition, as $(L+\partial_{t})^{-1}$ is a continuous linear operator from  
$C^{0}_{+}\left(\R,L^{2}(M,E)\right)$ to itself, we may regard it as a continuous linear operator from $C^{\infty}_{c}(M\times 
\R, E)$ to $C^{0}(M\times \R, E)$. As such it has a kernel $k_{(L+\partial_{t})^{-1}}(x,s,y,t)\in C^{0}(M_{x}\times 
\R_{s},E)\hotimes \cD'(M_{y}\times \R_{t},E)$ such that
\begin{equation*}
    (L+\partial_{t})^{-1}u(x,s)=\acou{k_{(L+\partial_{t})^{-1}}(x,s,y,t)}{u(y,t)} \qquad \forall u \in C^{\infty}_{c}(M\times \R,E). 
\end{equation*}
We then observe that, at the level of Schwartz kernels, the definition~(\ref{eq:Heat.inverse-heat-equation}) means that
\begin{equation}
  k_{(L+\partial_{t})^{-1}}(x,s,y,t)= \left\{ 
  \begin{array}{ll}
    k_{s-t}(x,y)   & \text{for $s-t>0$},  \\
      0 &  \text{for $s-t<0$}. 
  \end{array}  \right.
     \label{eq:Heat.KQ0-heat-kernel}
\end{equation}It then follows that $(L+\partial_{t})^{-1}$ has the Volterra property in the sense of the following definition. 

\begin{definition}[\cite{Pi:COPDTV}]\label{def:Volterra-property}
A linear operator $Q:C^{\infty}_{c}(M\times \R,E)\rightarrow C^{0}(M\times \R,E)$ has the Volterra property when it satisfies the following properties:
\begin{enumerate}
\item[(i)] \emph{Time-Translation Invariance}. For all $u\in C^{\infty}_{c}(M\times \R,E)$ and $c\in \R$, 
\begin{equation*}
 Qu_c(x,t)=(Qu)(x,t+c) \qquad \text{for all $(x,t)\in M\times \R$},
\end{equation*}where $u_c(x,t)=u(x,t+c)$. 

\item[(ii)]  \emph{Causality Principle}. For all $u\in C^{\infty}_{c}(M\times \R,E)$ and $t_0\in \R$, 
\begin{equation*}
 \text{$u=0$ on $M\times (-\infty,t_0]$} \ \Longrightarrow \ \text{$Qu(x,t_0)=0$ for all $x\in M$}.
\end{equation*}
\end{enumerate}
\end{definition}

\begin{remark}
\label{rem:Volterra.kernel}
If we further assume that $Q$ is a continuous linear operator from $C^\infty_c(M\times \R, E)$ to $C^\infty(M\times \R, E)$, then the Volterra property implies that 
the Schwartz kernel of $Q$ takes the form,
\begin{equation*}
 k_Q(x,s,y,t)=K_Q(x,y,s-t),
\end{equation*}for some $K_Q(x,y,t)$ in $C^{\infty}(M,E)\hotimes\cD'(M\times \R,E)$ such that $K_Q(x,y,t)=0$ for $t<0$. 
The distribution $K_Q(x,y,t)$ is then called the \emph{Volterra kernel} of $Q$. We also observe that in this case $Q$ uniquely extends to a continuous linear operator from 
$C^\infty_+(M\times \R,E)$ to itself. 
\end{remark}

%\begin{remark}
% For an operator $Q$ with the Volterra property we shall call the distribution $K_Q(x,y,t)$ the \emph{Volterra kernel} 
% of $Q$. Note that $K_{Q}(x,y,t)$ actually lies in $C^{0}(M,E)\hotimes\cD'(M\times \R,E)$ when $Q$ maps continuously 
% $C^{\infty}_{+}(M\times \R,E)$ to itself. 
%\end{remark}

The Volterra \psido\ calculus aims at constructing a class of \psidos\ which is a natural receptacle for the inverse of the heat 
operator. The idea is to modify the classical \psido\ calculus in order to take into account the following properties: 
\begin{itemize}
    \item[(i)]  The aforementioned Volterra property.  

    \item[(ii)] The parabolic homogeneity of the heat operator $L+ \partial_{t}$, i.e., the homogeneity with respect 
             to the dilations,
\begin{equation*}
    \lambda.(\xi,\tau):=(\lambda\xi,\lambda^{2}\tau) \qquad \forall (\xi,\tau)\in \R^{n+1} \ \forall \lambda \in 
    \R^{*}.
\end{equation*}             
\end{itemize}
In what follows, for $G\in \cS'(\R^{n+1})$  and $\lambda\neq 0$, we denote by $G_{\lambda}$ the distribution in  
    $\cS'(\R^{n+1})$ defined by   
    \begin{equation}
        \acou{G_{\lambda}( \xi,\tau)}{u(\xi,\tau)} :=    |\lambda|^{-(n+2)} 
            \acou{G(\xi,\tau)} {u(\lambda^{-1}\xi, \lambda^{-2}\tau)} \quad \forall u \in \cS(\R^{n+1}). 
    \end{equation}
In addition, we denote by $\C_{-}$ the complex halfplane $\{\Im \tau <0\}$ with closure $\overline{\C}_{-}$. 

\begin{definition}%
    A distribution $ G\in \cS'(\R^{n+1})$ is (parabolic) 
homogeneous of degree $m$, $m\in \Z$, when
\begin{equation}
\label{eq:homogeneous.Voterra.symbol}
    G_{\lambda}=\lambda^m G \qquad \forall \lambda \in \R\setminus 0.
\end{equation}
\end{definition}
 
We mention the following version of Paley-Wiener-Schwartz Theorem. 
 
\begin{lemma}[{\cite[Prop.~1.9]{BGS:HECRM}}] \label{lem:volterra.volterra-extension}
Let $q(\xi,\tau)\in C^\infty((\R^{n}\times\R)\setminus0)$ be a parabolic homogeneous 
function of degree $m$, $m\in \Z$, such that 
\begin{itemize}
    \item[(i)] $q(\xi,\tau)$ extends to a continuous function on $(\R^{n}\times\overline{\C}_{-})\setminus0$ in 
    such way to be holomorphic with respect to the variable $\tau$ on $\R^{n}\times\C_{-}$.
\end{itemize} 
\noindent Then  there is a unique $G\in \cS'(\R^{n+1})$ agreeing with $q$ on $\R^{n+1}\setminus 0$ and such that
\begin{itemize}
    \item[(ii)] $G$ is homogeneous of degree $m$.

    \item[(iii)] The inverse Fourier transform $\check G(x,t)$ vanishes for $t<0$. 
\end{itemize}
\end{lemma}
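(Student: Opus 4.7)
The strategy is to construct $G$ as the boundary value of $q$ viewed as a holomorphic function of $\tau$ on $\R^{n}\times\C_{-}$, and then to deduce uniqueness from the classification of tempered distributions supported at the origin combined with real-analyticity of polynomials. Concretely, for $\epsilon>0$ I set $q_{\epsilon}(\xi,\sigma):=q(\xi,\sigma-i\epsilon)$ for $(\xi,\sigma)\in\R^{n+1}$, and I aim to show that $q_{\epsilon}\to G$ in $\cS'(\R^{n+1})$ as $\epsilon\to 0^{+}$ for some $G\in\cS'(\R^{n+1})$ which automatically agrees with $q$ on $\R^{n+1}\setminus 0$ by the continuity of $q$ up to the real axis.

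The first main step is a uniform polynomial bound of the form $|q(\xi,\sigma-i\epsilon)|\leq C(|\xi|^{2}+|\sigma|+\epsilon)^{m/2}$, valid for all $(\xi,\sigma)\in\R^{n+1}$ and $\epsilon\in(0,1]$. This estimate is obtained from parabolic homogeneity: applying the scaling with $\lambda=(|\xi|^{2}+|\sigma|+\epsilon)^{1/2}$ reduces matters to the compact set $\{|\xi|^{2}+|\sigma|+\epsilon=1\}\subset(\R^{n}\times\overline{\C}_{-})\setminus 0$, where $q$ is continuous. This provides equicontinuity of the family $(q_{\epsilon})$ as a family of tempered distributions; combined with pointwise convergence on the dense open set $\R^{n+1}\setminus 0$, a standard argument yields the required limit $G\in\cS'(\R^{n+1})$.

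Condition (ii) then passes to the limit from the homogeneity of $q_{\epsilon}$: for $\lambda>0$ a change of variables exchanges the parabolic dilation with the reparametrization $\epsilon\mapsto\epsilon/\lambda^{2}$, and the case $\lambda<0$ is handled via $q(-\xi,-\sigma)$ and continuity. For (iii), the Paley-Wiener step, I pair $\check{G}$ against a Schwartz function supported in $\{t<0\}$, rewrite the pairing in terms of $q_{\epsilon}$, and shift the $\sigma$-contour from $\R$ to $\R-iR$ using the holomorphy of $q$ on $\C_{-}$. The uniform polynomial bounds above control the contributions at infinity, and the resulting factor $e^{tR}$ with $t<0$ forces the pairing to vanish as $R\to+\infty$.

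For \emph{uniqueness}, if $G_{1}$ and $G_{2}$ both satisfy (i)--(iii), then $H:=G_{1}-G_{2}$ is a tempered distribution supported at the origin, hence a finite sum $H=\sum c_{\alpha,k}\partial_{\xi}^{\alpha}\partial_{\tau}^{k}\delta_{0}$. Parabolic homogeneity of degree $m$ restricts the multi-indices to those with $|\alpha|+2k=-m-(n+2)$, so $\check{H}(x,t)$ is (up to a constant) a polynomial in $(x,t)$. Since this polynomial must vanish on the open half-space $\{t<0\}$, real-analyticity forces $\check{H}\equiv 0$ and hence $G_{1}=G_{2}$. The main technical difficulty in this plan is the contour-shift step proving (iii): one must interchange the limit $\epsilon\to 0^{+}$ with the contour deformation while controlling all tails uniformly, and it is precisely the uniform polynomial bounds coming from parabolic homogeneity that make this rigorous.
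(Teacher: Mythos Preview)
The paper does not actually prove this lemma; it is stated with a citation to \cite[Prop.~1.9]{BGS:HECRM} and used as a black box, so there is no ``paper's own proof'' to compare against. Your boundary-value/Paley--Wiener strategy is the standard route to results of this type and is essentially the argument one finds in the cited reference.

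Your outline is sound, but one step is stated too loosely to stand as written: the passage from the uniform bound $|q(\xi,\sigma-i\epsilon)|\leq C(|\xi|^{2}+|\sigma|+\epsilon)^{m/2}$ to convergence of $q_{\epsilon}$ in $\cS'(\R^{n+1})$. When $m\leq -(n+2)$ the right-hand side is not uniformly locally integrable near the origin as $\epsilon\to 0^{+}$, so ``equicontinuity plus pointwise convergence on $\R^{n+1}\setminus 0$'' does not immediately produce a limit. The standard fix is to observe that your bound also controls derivatives (since $\partial_{\xi}^{\alpha}\partial_{\tau}^{k}q$ is parabolic homogeneous of degree $m-|\alpha|-2k$ and satisfies the same hypotheses), and then to integrate by parts enough times in the pairing $\langle q_{\epsilon},u\rangle$ to raise the effective degree above $-(n+2)$; alternatively, invoke the general boundary-value theorem for holomorphic functions with polynomial growth in $1/\epsilon$. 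Once this point is tightened, the rest of your argument (homogeneity via $\epsilon\mapsto\epsilon/\lambda^{2}$, the contour shift for (iii), and the uniqueness via distributions supported at the origin whose inverse Fourier transforms are polynomials vanishing on a half-space) is correct.
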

\begin{remark}[See~\cite{BGS:HECRM}]\label{eq:Heat.homogeneity-inverse-Fourier}
    The homogeneity of $G$ implies that %$\check{G}$ has the following homogeneity property:
    \begin{equation*}
        \check{G}_{\lambda}=|\lambda|^{-(n+2)}\lambda^{-m}\check{G} \qquad \forall \lambda \in \R^{*}.
    \end{equation*}
    In particular, we see that $\check{G}$ is \emph{positively} homogeneous of degree $-(m+n+2)$. 
\end{remark}

Let $U$ be an open subset of $\R^{n}$. We define Volterra symbols and Volterra \psidos\ on 
$U\times\R^{n+1}\setminus 0$ as follows. 

\begin{definition}
\label{def.volterra.asymptotic.expansion}
    $S_{\op v}^m(U\times\R^{n+1})$, $m\in\Z$,  consists of smooth functions $q(x,\xi,\tau)$ on 
    $U\times\R^n\times\R$ with an asymptotic expansion  $q(x,\xi,\tau) \sim \sum_{j\geq 0} q_{m-j}(x,\xi,\tau)$, where 
    \begin{itemize}
        \item[-] $q_{l}(x,\xi,\tau)\in C^{\infty}(U\times[(\R^n\times\R)\setminus0])$ is a homogeneous Volterra symbol of degree~$l$, 
    i.e., $q_{l}$ is  parabolic homogeneous of degree $l$ and  satisfies the 
    property (i) in Lemma~\ref{lem:volterra.volterra-extension} with respect to the variables $\xi$ and $\tau$. \smallskip  
    
        \item[-] The asymptotic expansion is meant in the sense that, for all  compact sets $K\subset U$, integers $N$ and $k$ and multi-orders $\alpha$ 
	and $\beta$, there is a constant $C_{NK\alpha\beta 
    k}>0$ such that,  for  all $(x,\xi,\tau)\in K\times \R^n\times\R$ with $|\xi|+|\tau|^{\frac12}\geq 1$, we have
            \begin{equation}
                \biggl|\partial^{\alpha}_{x}\partial^{\beta}_{\xi} \partial^k_{\tau}\biggl(q-\sum_{j< N} 
            q_{m-j}\biggr)(x,\xi,\tau) \biggr| 
                \leq C_{NK\alpha\beta k} (|\xi|+|\tau|^{1/2})^{m-N-|\beta|-2k}.
                          \label{eq:volterra.asymptotic-symbols}
            \end{equation}
%    for  all $(x,\xi,\tau)\in K\times \R^n\times\R$ with $|\xi|+|\tau|^{\frac12}\geq 1$. 
    \end{itemize}
\end{definition}

In what follows, for a symbol $q(x,\xi,\tau)\in S^m_{\op v}(U\times\R)$ we shall denote by $q(x,D_{x},D_{t})$ the operator from 
$C^{\infty}_{c}(U\times \R)$ to $C^{\infty}(U\times \R)$ defined by
\begin{equation*}
    q(x,D_{x},D_{t})u(x,t)= (2\pi)^{-(n+1)}\iint e^{i(x\cdot \xi +t\tau)}q(x,\xi,\tau)\hat{u}(\xi,\tau)d\xi d\tau \quad \forall u 
    \in C^{\infty}_{c}(U\times \R).
\end{equation*}

\begin{definition}\label{def:volterra.PsiDO}
    $\pvdo^m(U\times\R)$, $m\in\Z$,  consists of continuous linear operators 
    $Q$ from $C_{c}^\infty(U_{x}\times\R_{t})$ to $C^\infty(U_{x}\times\R_{t})$ such that
    \begin{itemize}
        \item[(i)] $Q$ has the Volterra property in the sense of Definition~\ref{def:Volterra-property}.  
    
        \item[(ii)] $Q$ can be put in the form,
	\begin{equation}
	    Q=q(x,D_{x},D_{t})+R,
	    \label{eq:Heat.Volterra-PsiDO-symbol-R}
	\end{equation}
	for some symbol $q(x,\xi,\tau)\in S^m_{\op v}(U\times\R)$ and some smoothing operator  $R$.
    \end{itemize}
\end{definition}

\begin{remark}
 For any operator $Q\in \pvdo^m(U\times\R)$ there is a unique Volterra kernel $K_{Q}(x,y,t)$ in 
 $C^{\infty}(U,\cD'(\UR))$ such that $K_{Q}(x,y,t)=0$ for $t<0$ and 
\begin{equation*}
    Qu(x,s)=\acou{K_{Q}(x,y,s-t)}{u(y,t)}\qquad \forall u \in C^{\infty}_{+}(\UR).
\end{equation*}In fact, if we put $Q$ in the form~(\ref{eq:Heat.Volterra-PsiDO-symbol-R}) and we denote by $k_{R}(x,s,y,t)$ the Schwartz kernel of the 
smoothing operator $R$ as defined in~(\ref{eq:Heat.heat-kernel-smooth-function}), then
\begin{equation*}
    K_{Q}(x,y,t)=\check{q}(x,x-y,t)+k_{R}(x,0,y,-t).
\end{equation*}
\end{remark}
%By abuse of language, we shall call $K_{Q}(x,y,t)$ the \emph{kernel} of $Q$ (although the actual 
%Schwartz kernel is $k_{Q}(x,s,y,t):=K_{Q}(x,y,s-t)$).

\begin{example}\label{ex:heat-inverse.diff-op}
    Let $P$ be a differential operator of order $2$ on $U$ with principal symbol $p_{2}(x,\xi)$. Then 
    the operator $P+\partial_{t}$ is a Volterra \psido\ of order $2$ with principal symbol 
    $p_{2}(x,\xi)+i\tau$. In particular, if $p_{2}(x,\xi)>0$ for all $(x,\xi)\in U\times (\Rno)$, then 
    $p_{2}(x,\xi)+i\tau\neq 0$ for all $(x,\xi,\tau)\in U\times[(\R^n\times \overline{\C_{-}}\setminus 0)]$. 
\end{example}

The following definition provides us with further examples of Volterra \psidos. 

\begin{definition}\label{def:volterra.homogeneous-PsiDO} 
Let $q_{m}(x,\xi,\tau) \in C^\infty(U\times(\R^{n+1}\setminus 0))$ be a homogeneous Volterra 
symbol of order $m$ and let $G_{m}(x,\xi,\tau)\in C^\infty\left(U,\cS'(\R^{n+1})\right)$ denote its unique homogeneous extension given by 
    Lemma~\ref{lem:volterra.volterra-extension}. Then 
    \begin{itemize}
        \item[-]  $\check q_{m}(x,y,t)$ is the inverse Fourier transform of $G_{m}(x,\xi,\tau)$ 
	w.r.t.~the last $n+1$ variables.  
    
        \item[-]  The operator $q_{m}(x,D_{x},D_{t}):C^{\infty}_{c}(\UR)\rightarrow C^{\infty}(\UR)$ is defined by
        \begin{equation}
            q_{m}(x,D_{x},D_{t})u(x,s):=\acou{\check{q}_{m}(x,x-y,s-t)}{u(y,t)} \qquad \forall u\in C^{\infty}_{+}(U\times \R).
            \label{eq:Heat.homogeneous-Volterra-PsiDOs}
        \end{equation}
    \end{itemize}
\end{definition}

\begin{remark}
   It follows from the proof of~\cite[Prop.~1.9]{BGS:HECRM} that the 
   homogeneous extension $G_{m}(x,\xi,\tau)$ depends smoothly on $x$, i.e., it belongs to
   $C^\infty\left(U,\cS'(\R^{n+1})\right)$. 
\end{remark}

\begin{lemma}
The operator $q_{m}(x,D_{x},D_{t})$ is a Volterra \psido\ of order $m$ with symbol $q\sim q_{m}$.    
\end{lemma}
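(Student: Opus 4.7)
The plan is to write $q_{m}(x,D_{x},D_{t})$ in the form $q(x,D_{x},D_{t})+R$ required by Definition~\ref{def:volterra.PsiDO}, with $q$ a genuine Volterra symbol obtained by cutting off $G_{m}$ away from the frequency origin, and $R$ a smoothing operator arising from the removed piece. The Volterra property will then be read off directly from the kernel formula~\eqref{eq:Heat.homogeneous-Volterra-PsiDOs} using property~(iii) of Lemma~\ref{lem:volterra.volterra-extension}.

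Concretely, I would pick a cutoff $\chi\in C^{\infty}(\R^{n+1})$ vanishing in a neighborhood of the origin and equal to $1$ outside a larger compact set. Since $q_{m}(x,\cdot)$ is smooth on $\R^{n+1}\setminus 0$, the product $q(x,\xi,\tau):=\chi(\xi,\tau)q_{m}(x,\xi,\tau)$ extends smoothly to $U\times\R^{n+1}$, and the parabolic homogeneity of $q_{m}$ together with the compact support of $1-\chi$ force the symbol estimates of Definition~\ref{def.volterra.asymptotic.expansion}, with asymptotic expansion reducing to the single term $q_{m}$; thus $q\in S^{m}_{\op v}(U\times\R)$. The difference $R:=q_{m}(x,D_{x},D_{t})-q(x,D_{x},D_{t})$ corresponds, at the symbol level, to the distribution $(1-\chi)G_{m}$, where $G_{m}$ is the homogeneous extension of $q_{m}$ supplied by Lemma~\ref{lem:volterra.volterra-extension}. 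Since $1-\chi$ is compactly supported in $(\xi,\tau)$ and $G_{m}\in C^{\infty}(U,\cS'(\R^{n+1}))$, the Paley--Wiener--Schwartz theorem gives that the inverse Fourier transform of $(1-\chi)G_{m}$ is smooth on $U\times\R^{n+1}$, so $R$ has a smooth Schwartz kernel and is smoothing.

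To verify the Volterra property, I would first note that the Schwartz kernel of $q_{m}(x,D_{x},D_{t})$ is $\check q_{m}(x,x-y,s-t)$ by~\eqref{eq:Heat.homogeneous-Volterra-PsiDOs}, which depends on the time variables only through $s-t$; time-translation invariance is therefore immediate. For causality, Lemma~\ref{lem:volterra.volterra-extension}(iii) yields $\check q_{m}(x,y,t)=0$ for $t<0$. If $u\in C^{\infty}_{c}(U\times\R)$ vanishes on $U\times(-\infty,t_{0}]$, then in the distributional pairing $\acou{\check q_{m}(x,x-y,t_{0}-t)}{u(y,t)}$ the integrand vanishes pointwise: for $t\le t_{0}$ because $u(y,t)=0$, and for $t>t_{0}$ because $t_{0}-t<0$ forces $\check q_{m}$ to vanish. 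Combined with the previous paragraph, this will show $q_{m}(x,D_{x},D_{t})\in\pvdo^{m}(U\times\R)$ with $q\sim q_{m}$.

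The main delicate point will be the smooth $x$-dependence used when asserting that $(1-\chi)G_{m}$ has a smooth inverse Fourier transform on all of $U\times\R^{n+1}$: one needs $G_{m}$ to genuinely lie in $C^{\infty}(U,\cS'(\R^{n+1}))$ rather than merely to be a family of tempered distributions indexed by $x$. This smoothness is precisely the content of the remark immediately preceding the statement, which records that the homogeneous extension produced by Lemma~\ref{lem:volterra.volterra-extension} does belong to $C^{\infty}(U,\cS'(\R^{n+1}))$; everything else reduces to routine bookkeeping with parabolic homogeneity and the classical Paley--Wiener--Schwartz theorem.
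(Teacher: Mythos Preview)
Your proposal is correct and follows essentially the same approach as the paper's own sketch: cut off the homogeneous symbol away from the frequency origin to obtain a genuine Volterra symbol $\chi q_{m}\in S^{m}_{\op v}$, and observe that the remainder corresponds to a compactly supported distribution in $(\xi,\tau)$ whose inverse Fourier transform is therefore smooth. Your cutoff $\chi$ is exactly the paper's $1-\varphi$, and your verification of the Volterra property via the kernel formula and property~(iii) of Lemma~\ref{lem:volterra.volterra-extension} spells out what the paper dispatches in one line.
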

\begin{proof}[Sketch of Proof]
Set $Q=q_{m}(x,D_{x},D_{t})$. Since $\check{q}_{m}(x,y,t)$ belongs to $C^\infty\left(U,\cS'(\R^{n+1})\right)$, it follows from~(\ref{eq:Heat.homogeneous-Volterra-PsiDOs}) that 
 the operator $q_{m}(x,D_{x},D_{t})$ is continuous and satisfies the Volterra property. Denote by  $G_{m}(x,\xi,\tau)$  the unique homogeneous extension of 
 $q_{m}(x,\xi,\tau)$ given by Lemma~\ref{lem:volterra.volterra-extension}. In addition, let $\varphi \in C^{\infty}_{c}(\R^{n+1})$ be such that 
 $\varphi(\xi,\tau)=1$ near $(\xi,\tau)=(0,0)$. Then the symbol $\tilde{q}_{m}(x,\xi,\tau):=\left(1-\varphi(\xi,\tau)\right)q_{m}(x,\xi,\tau)$ lies 
    in $S_{\op 
    v}^m(U\times\R^{n+1})$ and we have
 \begin{equation*}
     K_{Q}(x,y,t)=(\tilde{q}_{m})^{\vee}(x,y,t)+(\varphi G_{m})^{\vee}(x,y,t),
 \end{equation*}Observe that $(\varphi G_{m})^{\vee}(x,y,t)$ is smooth since this is the inverse Fourier transform of a 
 compactly supported function. Thus $Q$ agrees with $\tilde{q}_{m}(x,D_{x},D_{t})$ up to a smoothing operator, and 
 hence is a Volterra \psido\ of order $m$. Furthermore, it has symbol $\tilde{q}_{m}\sim q_{m}$. The proof is complete.
\end{proof}

We gather the main properties of Volterra \psidos\ in the following statement. 

\begin{proposition}[\cite{Gr:AEHE, Pi:COPDTV, BGS:HECRM}]
\label{prop:Volterra-properties} 
The following properties hold.
  \begin{enumerate}
      
      \item \emph{Pseudolocality}. For any $Q\in \pvdo^{m}(\UR)$, the Volterra kernel $K_{Q}(x,y,t)$ is smooth on the open subset $\{(x,y,t)\in 
      M\times M\times \R; \ x\neq y \ \text{or} \ t \neq 0\}$.\smallskip 
       
        \item \emph{Proper Support}. For any $Q\in \pvdo^{m}(\UR)$ there exists $Q'\in \pvdo^{m}(\UR)$ such that 
        $Q'$ is properly 
        supported and $Q-Q'$ is a smoothing operator.\smallskip 
       
      \item  \emph{Composition}. Let $Q_{j}\in \pvdo^{m_{j}}(\UR)$, $j=1,2$, have symbol $q_{j}$ and assume that $Q_1$ or $Q_2$ is properly supported.
Then $Q_{1}Q_{2}$ lies in $\pvdo^{m_{1}+m_{2}}(\UR)$
    and has symbol  $q_{1}\#q_{2} \sim \sum \frac{1}{\alpha!} 
\partial_{\xi}^{\alpha}q_{1} D_{\xi}^\alpha q_{2}$.\smallskip 
  
      \item   \emph{Parametrices}.  Any $Q\in \pvdo^{m}(U\times\R)$ admits a parametrix 
    in $\pvdo^{-m}(U\times\R)$ if and only if its principal symbol is nowhere vanishing on 
    $U\times[(\R^n\times \overline{\C_{-}})\setminus 0]$.\smallskip 
  
      \item  \emph{Diffeomorphism Invariance}. Let $\phi$ be 
    a diffeomorphism from $U$ onto an open 
subset $V$ of $\R^n$. Then for any $Q \in \pvdo^{m}(U\times\R)$ the operator 
$(\phi\oplus \op{id}_{\R})_{*}Q$  is contained in $\pvdo^{m}(V\times\R)$. 
  \end{enumerate}
\end{proposition}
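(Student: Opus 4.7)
The plan is to prove the five properties in turn by adapting the corresponding classical \psido\ arguments to the Volterra setting, with the key recurring step being the verification that the Volterra property (time-translation invariance plus causality) and the holomorphic-extension condition of Lemma~\ref{lem:volterra.volterra-extension} are preserved under each operation. The main technical tool throughout is the representation of each $Q\in\pvdo^m(\UR)$ by its Volterra kernel
\begin{equation*}
    K_{Q}(x,y,t)\sim\sum_{j\geq 0}\check{q}_{m-j}(x,x-y,t)\mod C^{\infty},
\end{equation*}where each $\check q_{m-j}(x,y,t)$ is positively homogeneous of degree $-(m-j+n+2)$ in $(y,t)$ (\emph{cf.}\ Remark~\ref{eq:Heat.homogeneity-inverse-Fourier}) and vanishes for $t<0$.

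For (1), the homogeneity of $\check{q}_{m-j}(x,y,t)$ and the estimates~(\ref{eq:volterra.asymptotic-symbols}) imply that the asymptotic sum converges to a smooth function on $\{(y,t)\neq(0,0)\}$, so $K_Q(x,y,t)$ is smooth off $\{x=y,\ t=0\}$. For (2), choose $\chi(x,y)\in C^\infty(U\times U)$ equal to $1$ near the diagonal and properly supported on both factors, and replace $K_Q$ by $\chi(x,y)K_Q(x,y,t)$; this only removes a smooth part of the kernel, does not touch the $t$-dependence, and so yields a properly supported Volterra \psido\ $Q'$ with $Q-Q'$ smoothing. For (5), the change-of-variables argument of the classical calculus applies verbatim: push-forward by $\phi\oplus\op{id}_\R$ acts only on the $(x,\xi)$ variables and commutes with $\tau$, so parabolic homogeneity and holomorphy in $\tau$ are preserved.

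For (3), the standard oscillatory-integral manipulation gives the asymptotic $q_1\#q_2\sim\sum \frac{1}{\alpha!}\partial_\xi^\alpha q_1\, D_x^\alpha q_2$; each term is a Volterra symbol because parabolic homogeneity adds, $\partial_\xi^\alpha$ and $D_x^\alpha$ commute with the holomorphic extension in $\tau$, and products of functions holomorphic on $\R^n\times \C_-$ remain so. The Volterra property of $Q_1Q_2$ follows directly from Definition~\ref{def:Volterra-property}: time-translation invariance is stable under composition, and if $u$ vanishes on $M\times (-\infty,t_0]$ then $Q_2 u$ vanishes there by causality of $Q_2$, and then $Q_1Q_2u(\cdot,t_0)=0$ by causality of $Q_1$.

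The main obstacle is (4). Given the nonvanishing hypothesis on the principal symbol $q_m$, the candidate leading term $q_{-m}:=q_m^{-1}$ is smooth and parabolic homogeneous of degree $-m$ on $U\times(\R^{n+1}\setminus 0)$, and the key nontrivial point is that $q_{-m}$ extends continuously to $U\times[(\R^n\times\overline{\C_-})\setminus 0]$ and holomorphically to $U\times(\R^n\times\C_-)$; this follows because $q_m$ has such an extension and is nowhere zero on this set, so $1/q_m$ inherits the same regularity by elementary complex analysis. Applying Lemma~\ref{lem:volterra.volterra-extension} then upgrades $q_{-m}$ to a homogeneous Volterra symbol, hence to a Volterra \psido\ by Definition~\ref{def:volterra.homogeneous-PsiDO}. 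The lower-order correction terms $q_{-m-j}$ are obtained by the usual recursive formula from the composition asymptotic of (3); each is a finite sum of products of derivatives of $q_{m-k}$ and powers of $q_{-m}$, so inherits both the parabolic homogeneity and the holomorphy in $\tau$, and Lemma~\ref{lem:volterra.volterra-extension} applies to each. Standard Borel-type asymptotic summation (carried out in the Volterra class using the proper-support step (2)) yields a symbol $q\in S^{-m}_{\op v}(U\times\R^{n+1})$, and (3) together with (1) shows that $q(x,D_x,D_t)$ is a two-sided parametrix for $Q$ modulo smoothing Volterra operators.
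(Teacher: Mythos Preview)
Your overall strategy matches the paper's viewpoint: the remark following the proposition states explicitly that ``most properties of Volterra \psidos\ can be proved in the same way as with classical \psidos\ or by observing that Volterra \psidos\ are \psidos\ of type $(\frac{1}{2},0)$,'' and your treatments of (1), (2), (3), (5) and of the recursive symbol construction in (4) are in line with this.

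There is, however, a genuine gap in your argument for (4), and it is precisely the one point the paper singles out as \emph{not} routine. You write that ``standard Borel-type asymptotic summation (carried out in the Volterra class using the proper-support step (2)) yields a symbol $q\in S^{-m}_{\op v}(U\times\R^{n+1})$.'' But the standard Borel construction proceeds by multiplying each homogeneous term $q_{-m-j}$ by a cutoff $\chi(\epsilon_j(\xi,\tau))$ in the cotangent variables, and such a cutoff destroys holomorphy in $\tau$ on $\R^n\times\C_-$. So while each individual $q_{-m-j}$ is a legitimate homogeneous Volterra symbol (you verify this correctly), it is not at all automatic that one can sum them asymptotically \emph{within} the Volterra class $S^{-m}_{\op v}$. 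The paper's remark flags this explicitly: ``One important exception is the asymptotic completeness \ldots\ This property is a crucial ingredient in the parametrix construction,'' and refers to~\cite{Po:JAM} for the actual argument. You need either to supply a Borel-type construction compatible with the holomorphic-extension condition (e.g.\ cutting off only in $\xi$ and handling $\tau$ separately, or working on the kernel side), or to cite the relevant result; as written, the phrase ``standard Borel-type asymptotic summation'' hides the only genuinely nontrivial step in the whole proposition.
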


\begin{remark}
   Most properties of Volterra \psidos\ can be proved in the same way as with classical \psidos\ or by observing that Volterra 
   \psidos\ are \psidos\ of type $(\frac{1}{2},0)$ in the sense of~\cite{Ho:ALPDO3}. One important exception is the asymptotic completeness, i.e., given 
   homogeneous Volterra symbols $q_{m-j}$ of degree $m-j$, $j=0,1,\ldots$, there is a Volterra \psido\ with 
   symbol $q\sim \sum q_{m-j}$. This property is a crucial ingredient in the parametrix construction in 
   Proposition~\ref{prop:Volterra-properties} (see~\cite{Po:JAM} for a discussion on this point). 
\end{remark}

As usual with \psidos, the asymptotic expansion~(\ref{eq:volterra.asymptotic-symbols}) for the symbol of a given \psido\ can be translated in terms of an asymptotic expansion for the Schwartz kernel in terms of distributions that are smoother and smoother. For Volterra \psidos\ we obtain the following result.

\begin{proposition}[\cite{Gr:AEHE, Pi:COPDTV, BGS:HECRM}]\label{prop:Heat.asymptotic-kernels}
 Let $Q\in \pvdo^{m}(\UR)$ have symbol $q\sim \sum_{j\geq 0}q_{m-j}$. Then, for all $N\in 
 \N_{0}$, there is $J\in \N$ such that
 \begin{equation}
     K_{Q}(x,y,t)= \sum_{j\leq J}\check{q}_{m-j}(x,x-y,t) \ \bmod C^{N}(U\times\UR). 
\label{eq:Heat.asymptotic-kernels}
 \end{equation}
\end{proposition}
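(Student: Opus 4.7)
The plan is to separate off leading homogeneous terms of the symbol and to show that the remainder, once its parabolic order is sufficiently negative, produces a Volterra kernel of arbitrarily high smoothness. First, using Definition~\ref{def:volterra.PsiDO}, I write $Q = q(x,D_x,D_t) + R$ with $q \in S_{\op v}^m(U\times\R^{n+1})$ and $R$ smoothing; since the Volterra kernel of $R$ is smooth, the reduction is to $Q = q(x,D_x,D_t)$. Fix a cut-off $\chi \in C^\infty_c(\R^{n+1})$ equal to $1$ near the origin and set $\tilde q_{m-j} := (1-\chi) q_{m-j} \in S_{\op v}^{m-j}(U\times\R^{n+1})$. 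By the very meaning of the asymptotic expansion in Definition~\ref{def.volterra.asymptotic.expansion} (combined with a standard Borel-type summation to ensure existence of such a $q$), for every $J$ one has
$$r_J := q - \sum_{j\leq J} \tilde q_{m-j} \in S_{\op v}^{m-J-1}(U\times\R^{n+1}).$$

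Next, the Volterra kernel of $\tilde q_{m-j}(x,D_x,D_t)$ is $(\tilde q_{m-j})^\vee(x,x-y,t)$, where the check denotes inverse Fourier transform in the last $n+1$ variables. As already observed in the proof of the lemma preceding this proposition, this kernel differs from $\check{q}_{m-j}(x,x-y,t)$ by $(\chi G_{m-j})^\vee(x,x-y,t)$, which is smooth in $(x,y,t)$ because $\chi G_{m-j}$ is compactly supported in $(\xi,\tau)$. Hence modulo a smooth error we get
$$K_Q(x,y,t) \equiv \sum_{j\leq J}\check{q}_{m-j}(x,x-y,t) + K_{r_J}(x,y,t),$$
so the proposition reduces to showing that $K_{r_J}$ belongs to $C^N(U\times U\times \R)$ once $J$ is taken large enough.

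The core technical step is a parabolic smoothness lemma: if $r \in S_{\op v}^\mu(U\times\R^{n+1})$ with $\mu < -(n+2) - 2N$, then the kernel
$$K_r(x,y,t) = (2\pi)^{-(n+1)} \iint e^{i((x-y)\cdot\xi + t\tau)} r(x,\xi,\tau)\, d\xi\, d\tau$$
is of class $C^N$. Differentiating under the integral, each $\partial_x$ or $\partial_y$ extracts a factor of $\xi$ (raising the parabolic order by $1$), while each $\partial_t$ extracts a factor of $\tau$ (raising it by $2$); derivatives of the symbol in $x$ preserve the order by Definition~\ref{def.volterra.asymptotic.expansion}. Thus any derivative $\partial_x^\alpha \partial_y^\beta \partial_t^k$ of total order $|\alpha|+|\beta|+k \leq N$ produces an integrand bounded by $C\bigl(1+|\xi|+|\tau|^{1/2}\bigr)^{\mu + 2N}$. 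Since the measure $d\xi\,d\tau$ has parabolic homogeneous dimension $n+2$, the hypothesis $\mu + 2N < -(n+2)$ gives absolute integrability and hence continuity of the derivative. Choosing $J > m + n + 1 + 2N$ secures the inclusion $r_J \in S_{\op v}^{m-J-1}$ with $m-J-1 < -(n+2)-2N$, and yields the claimed asymptotic. The main obstacle is the parabolic bookkeeping---the time variable carries double weight throughout---but once this is tracked consistently the argument is a direct parabolic adaptation of the classical asymptotic expansion for \psido\ kernels.
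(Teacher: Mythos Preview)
Your proof is correct and follows essentially the same approach as the paper's sketch: peel off the leading homogeneous symbols, observe that the remainder has Volterra order $m-J-1$, and show that a sufficiently negative order forces the kernel to be $C^N$. The only difference is in how the last step is justified: the paper observes that Volterra \psidos\ are \psidos\ of type $(\frac{1}{2},0)$ in $\R^{n+1}$ and invokes the standard result (H\"ormander) that such an operator of order $\leq -(n+2+2N)$ has $C^N$ kernel, whereas you give the direct parabolic integrability argument. Your threshold $\mu<-(n+2)-2N$ and the paper's $m-J-1\leq-(n+2+2N)$ coincide, and your bookkeeping (each $\partial_t$ costs two parabolic units, the measure $d\xi\,d\tau$ has parabolic dimension $n+2$) is exactly what underlies the cited type-$(\frac{1}{2},0)$ result when unwound.
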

\begin{proof}[Sketch of Proof] As Volterra \psidos\ are \psidos\ of type $(\frac{1}{2},0)$, the kernel of a Volterra 
    \psido\ of order $\leq -(n+2+2N)$ is $C^{N}$ (see~\cite{Ho:ALPDO3}). Let us choose $J$ so that $m-J\leq -(n+1+2N)$, then  
    $Q-\sum_{j\leq J} q_{m-j}(x,D_{x},D_{t})$ is a Volterra \psidos\ with symbol $q^{J}\sim \sum_{j\geq 
    J+1}q_{m-j}$, and hence it has order $m-J-1\leq -(n+2+2N)$. Therefore, its kernel is $C^{N}$. This proves the result.  
\end{proof}

The invariance property in Proposition~\ref{prop:Volterra-properties} enables us to define Volterra \psidos\ on products of manifolds with $\R$ and acting on sections of vector bundles. 
In particular, we can define Volterra \psidos\ on the manifold $M\times \R$ and acting on the sections of our vector bundle $E$ (seen as a vector bundle over $M\times \R$). 
All the aforementioned  properties of Volterra \psidos\  hold \emph{verbatim} in this 
context. We shall denote by $\pvdo^{m}(M\times \R,E)$ the space of Volterra \psidos\ of order $m$ on $M\times \R$ acting on sections of $E$.  

\begin{remark}
 By Remark~\ref{rem:Volterra.kernel} Volterra \psidos\ on $M\times \R$ uniquely extend to continuous operators $C^\infty_+(M\times \R, E)$ to itself. Therefore, the composition of such 
 Volterra \psidos\ always make sense as a continuous operator from $C^\infty_+(M\times \R, E)$ to itself. This enables us to drop the proper support assumption in part (3) of 
 Proposition~\ref{prop:Volterra-properties}.  
 \end{remark}

If $Q\in \pvdo^{m}(M\times \R,E)$, then there is a unique $K_{Q}(x,y,t)\in C^{\infty}(M\times \R)\hotimes \cD'(M,E)$ 
such that $K_{Q}(x,y,t)=0$ for $t<0$ and 
\begin{equation*}
    Qu(x,s)=\acou{K_{Q}(x,y,s-t)}{u(y,t)} \qquad \forall u \in C^{\infty}_{+}(M\times \R,E).
\end{equation*}We shall refer to $K_{Q}(x,y,t)$ as the \emph{Volterra kernel} of $Q$. Proposition~\ref{prop:Volterra-properties} ensures us that $K_{Q}(x,y,t)$ is smooth for $t\neq 0$. 
Therefore, on $M\times M\times \R^{*}$ we 
may regard  $K_{Q}(x,y,t)$ as a smooth section of $E\boxtimes E^{*}$ over $M\times M\times \R^{*}$ such that
\begin{equation*}
    \acou{K_{Q}(x,y,t)}{u(y,t)} =\int_{M\times \R}K_{Q}(x,y,t)u(y,t)|dy|dt \qquad \forall u \in 
    C^{\infty}_{+}(M\times \R^{*},E),
\end{equation*}where in the l.h.s.~$K_{Q}(x,y,t)$ is an element of 
$C^{\infty}(M\times \R)\hotimes \cD'(M,E)$ and in the r.h.s.~it is a smooth section of $E\boxtimes E^{*}$. 

It follows from Example~\ref{ex:heat-inverse.diff-op} and Proposition~\ref{prop:Volterra-properties} that the heat operator $L+\partial_{t}$ admits a  parametrix in 
$\pvdo^{-2}(M\times \R,E)$. Comparing such a parametrix with the inverse $(L+\partial_{t})^{-1}$ defined 
by~(\ref{eq:volterra.inverse-heat-operator}) and using~(\ref{eq:Heat.KQ0-heat-kernel}) we arrive at the following result.

\begin{proposition}[\cite{Gr:AEHE, Pi:COPDTV}, {\cite[pp.~363-362]{BGS:HECRM}}] \label{thm:Heat.inverse-heat-operator-PsiDO} The operator 
    $(L+\partial_{t})^{-1}$ defined by~(\ref{eq:volterra.inverse-heat-operator})  is a Volterra \psido\ of 
    order~$-2$. Moreover, we have
     \begin{equation}
         k_{t}(x,y)=K_{(L+\partial_{t})^{-1}}(x,y,t) \qquad \forall t>0.
    \label{eq:Heat.K-heat-inverse-heat-kernel}
\end{equation}    
 \end{proposition}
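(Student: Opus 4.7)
The plan is to combine the parametrix construction for Volterra pseudodifferential operators with the exact inverse relation~(\ref{eq:Heat.inverse-heat-equation}) to show that the integral operator $(L+\partial_{t})^{-1}$ agrees with any parametrix of $L+\partial_{t}$ modulo a smoothing Volterra operator. The kernel identity will then follow from~(\ref{eq:Heat.KQ0-heat-kernel}) together with the Schwartz-kernel-to-Volterra-kernel correspondence of Remark~\ref{rem:Volterra.kernel}.

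First, by Example~\ref{ex:heat-inverse.diff-op} the heat operator lies in $\pvdo^{2}(M\times\R,E)$ with principal symbol $p_{2}(x,\xi)+i\tau$, where $p_{2}$ is the positive-definite principal symbol of $L$. Consequently $p_{2}(x,\xi)+i\tau$ is nowhere vanishing on $T^{*}M\times\overline{\C}_{-}\setminus 0$, and Proposition~\ref{prop:Volterra-properties}(4) produces a parametrix $Q\in\pvdo^{-2}(M\times\R,E)$ satisfying $(L+\partial_{t})Q = I+R$, where $R$ is a smoothing Volterra operator. Applying $(L+\partial_{t})^{-1}$ on the left of this identity, using~(\ref{eq:Heat.inverse-heat-equation}) on $C^{\infty}_{+}(M\times\R,E)$, yields
\begin{equation*}
Q \;=\; (L+\partial_{t})^{-1}(L+\partial_{t})Q \;=\; (L+\partial_{t})^{-1} + (L+\partial_{t})^{-1}R,
\end{equation*}
whence $(L+\partial_{t})^{-1} = Q - S$ with $S:=(L+\partial_{t})^{-1}R$.

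It remains to verify that $S$ is a smoothing Volterra operator. Time-translation invariance and causality pass to compositions, so $S$ has the Volterra property. Writing $\tilde R(x,y,t)$ for the smooth Volterra kernel of $R$ (vanishing for $t<0$), a direct computation from~(\ref{eq:volterra.inverse-heat-operator}) gives the Volterra kernel of $S$ as
\begin{equation*}
K_{S}(x,y,\sigma) \;=\; \int_{0}^{\sigma}\bigl(e^{-rL}\tilde R(\cdot,y,\sigma-r)\bigr)(x)\,dr,\qquad \sigma>0.
\end{equation*}
Since $\tilde R$ is smooth and the heat semigroup acts smoothly on $C^{\infty}$ in the parameter $r\geq 0$, this integral defines a smooth section on $M\times M\times\R$ vanishing for $\sigma<0$, so $S$ is smoothing Volterra. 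Consequently $(L+\partial_{t})^{-1} = Q - S \in \pvdo^{-2}(M\times\R,E)$, proving the first assertion.

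For the kernel identity, Remark~\ref{rem:Volterra.kernel} identifies the Schwartz kernel of a Volterra \psido\ $P$ with its Volterra kernel via $k_{P}(x,s,y,t) = K_{P}(x,y,s-t)$. Substituting into~(\ref{eq:Heat.KQ0-heat-kernel}) yields $K_{(L+\partial_{t})^{-1}}(x,y,t) = k_{t}(x,y)$ for all $t>0$, as claimed. The only non-formal point is the smoothness of $K_{S}$: the integrand appears singular at $r=0$ because of the initial-time behavior of $k_{r}$, but the factor $e^{-rL}\tilde R(\cdot,y,\sigma-r)$ is already smooth at $r=0$ thanks to the smoothness of $\tilde R$ and the strong continuity of the heat semigroup on $C^{\infty}$; this is the main (mild) obstacle and, beyond it, the argument is essentially formal bookkeeping.
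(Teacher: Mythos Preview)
Your argument is correct and follows exactly the approach the paper sketches in the paragraph preceding the proposition: construct a Volterra parametrix $Q$ via Example~\ref{ex:heat-inverse.diff-op} and Proposition~\ref{prop:Volterra-properties}(4), compare it with the exact inverse using~(\ref{eq:Heat.inverse-heat-equation}), and read off the kernel identity from~(\ref{eq:Heat.KQ0-heat-kernel}). The paper leaves these steps implicit and defers to the cited references, whereas you have written them out in full, including the verification that $S=(L+\partial_t)^{-1}R$ is smoothing Volterra.
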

 
\begin{remark}\label{rem:Volterra.smoothness-inverse-heat-operator}
 The fact that $(L+\partial_{t})^{-1}$ is Volterra \psido\  implies that it induces a continuous linear operator from $C^\infty_+(M\times \R, E)$ to itself 
 (\emph{cf}.~Remark~\ref{rem:Volterra.smoothness-inverse-heat-operator0}). 
\end{remark}

 Proposition~\ref{thm:Heat.inverse-heat-operator-PsiDO} provides us with a representation of the heat kernel 
 as the (Volterra) kernel of a Volterra \psido. Combining it 
 with~(\ref{eq:Heat.asymptotic-kernels}) enables us to describe the asymptotic behaviour of $k_{t}(x,x)$ as $t \rightarrow 0^{+}$ 
 (see~\cite{Gr:AEHE, BGS:HECRM} and next section).  More generally, we have the following result.

\begin{proposition}\label{thm:Heat.P-inverse-heat-operator-PsiDO} Let $P:C^{\infty}(M,E) \rightarrow C^{\infty}(M,E)$ be a differential operator of order $m$. For $t>0$ 
denote by $h_{t}(x,y)$ the kernel of $Pe^{-tL}$ defined as in~(\ref{eq:Heat.heat-kernel-smooth-function}). Then, $P(L+\partial_{t})^{-1}$ is a Volterra \psido\ of order $m-2$, and we have
     \begin{equation*}
    h_{t}(x,y)=K_{P(L+\partial_{t})^{-1}}(x,y,t) \qquad \forall t>0. 
\end{equation*}    
 \end{proposition}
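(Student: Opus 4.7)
The plan is to obtain the two conclusions by combining Proposition~\ref{thm:Heat.inverse-heat-operator-PsiDO} with the composition property of the Volterra calculus, and then to verify the kernel identity by a direct computation with the defining formula~(\ref{eq:volterra.inverse-heat-operator}).

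First, I would regard $P$ as a differential operator on $M\times \R$ acting on the pullback of $E$ (involving only $x$-derivatives). As such, $P$ trivially has the Volterra property (it is local and constant in $t$), and its full symbol, being polynomial in $\xi$ and independent of $\tau$, is a Volterra symbol of order $m$; hence $P\in\pvdo^{m}(M\times\R, E)$. By Proposition~\ref{thm:Heat.inverse-heat-operator-PsiDO}, $(L+\partial_{t})^{-1}$ lies in $\pvdo^{-2}(M\times\R, E)$. In view of the remark following Proposition~\ref{prop:Volterra-properties} on dropping the proper-support hypothesis for Volterra \psidos\ on $M\times \R$ (they all extend to continuous endomorphisms of $C^{\infty}_{+}(M\times\R, E)$), the composition rule gives $P(L+\partial_{t})^{-1}\in\pvdo^{m-2}(M\times\R, E)$.

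Next, I would identify the Volterra kernel. Since $P$ acts only in $x$, applying $P$ under the integral sign in~(\ref{eq:volterra.inverse-heat-operator}) yields
\begin{equation*}
P(L+\partial_{t})^{-1}u(x,s)=\int_{0}^{\infty}\bigl(Pe^{-tL}u(\cdot,s-t)\bigr)(x)\,dt \qquad \forall u\in C^{\infty}_{+}(M\times\R,E),
\end{equation*}
and inserting~(\ref{eq:Heat.heat-kernel-smooth-function}) (applied to $Pe^{-tL}$ with kernel $h_{t}(x,y)$) shows that the Schwartz kernel $k_{P(L+\partial_{t})^{-1}}(x,s,y,t)$ equals $h_{s-t}(x,y)$ for $s>t$ and vanishes for $s<t$, exactly in the manner of~(\ref{eq:Heat.KQ0-heat-kernel}). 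Invoking Remark~\ref{rem:Volterra.kernel}, which identifies the Volterra kernel $K_{Q}(x,y,t)$ as the unique distribution such that $k_{Q}(x,s,y,t)=K_{Q}(x,y,s-t)$, I conclude that $K_{P(L+\partial_{t})^{-1}}(x,y,t)=h_{t}(x,y)$ for all $t>0$.

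There is no real obstacle here; the only mildly delicate point is justifying that $P$ can be moved inside the integral defining $(L+\partial_{t})^{-1}u$, which follows from the smoothness of the heat semigroup on $C^{\infty}_{+}(M\times\R, E)$ recorded in Remarks~\ref{rem:Volterra.smoothness-inverse-heat-operator0} and~\ref{rem:Volterra.smoothness-inverse-heat-operator}, together with the fact that $P$ is a differential operator and hence continuous in this topology.
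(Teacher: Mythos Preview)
Your proposal is correct and follows essentially the same approach as the paper: both use that $P\in\pvdo^{m}$ together with the composition rule (Proposition~\ref{prop:Volterra-properties}) to obtain the order, and both exploit that $P$ acts only in the $x$-variable to identify the Volterra kernel. The paper's kernel argument is slightly more direct: it simply writes $h_{t}(x,y)=P_{x}k_{t}(x,y)=P_{x}K_{(L+\partial_{t})^{-1}}(x,y,t)=K_{P(L+\partial_{t})^{-1}}(x,y,t)$ using Proposition~\ref{thm:Heat.inverse-heat-operator-PsiDO}, whereas you re-derive the analogue of~(\ref{eq:Heat.KQ0-heat-kernel}) from the integral formula~(\ref{eq:volterra.inverse-heat-operator}); the content is the same.
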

\begin{proof}
   As the order of $P$ as a Volterra \psido\ is $m$, it follows from Proposition~\ref{prop:Volterra-properties} that $P(L+\partial_{t})^{-1}$ is a Volterra \psido\ of order $m-2$. Moreover, we have
   \[h_{t}(x,y)=P_{x}k_{t}(x,y)=P_{x}K_{(L+\partial_{s})^{-1}}(x,y,t)=K_{P(L+\partial_{s})^{-1}}(x,y,t).\]
   The proof is complete.  
\end{proof}

\section{Equivariant Heat Kernel Asymptotics}
\label{sec:Equivariant-heat-kernel-asymptotics}
In this section, we explain how to use the representation of the heat kernel given in the previous section to derive 
equivariant heat kernel asymptotics. We shall keep using the notation of the previous section. In particular, we let 
$(M^{n},g)$ be a Riemannian manifold. In addition, we let $G$ be a group of isometric diffeomorphisms and let $E$ be a 
$G$-equivariant Hermitian vector bundle over $M$. Given $\phi \in G$, we denote by $\phi^{E}$ the unitary vector bundle isomorphism from 
$E$ onto $\phi^{*}E$ induced by the action of $E$. This defines a unitary operator $U_{\phi}:L^{2}(M,E)\rightarrow L^{2}(M,E)$ by
\begin{equation}
\label{eq:unitary.operator.on.L^2}
    U_{\phi}u(x)=\phi^{E}\left(\phi^{-1}(x)\right)\!u\left(\phi^{-1}(x)\right) \qquad \forall u \in L^{2}(M,E). 
\end{equation}
The goal of this section is to derive a short-time asymptotics equivariant traces $\Tr \left [ 
Pe^{-tL}U_{\phi}\right]$, where $\phi$ ranges over $G$ and $P$ ranges over differential operator acting on the sections of $E$.

Let $\phi \in G$. For $t>0$ let $h_{t}(x,y)$ be the kernel of $Pe^{-tL}$ as defined in~(\ref{eq:Heat.heat-kernel-smooth-function}). We observe that the Schwartz kernel of 
$Pe^{-tL}U_{\phi}$ is $h_{t}(x,\phi(y))\phi^{E}(x)$, and so we have
\begin{equation}
    \Tr \left [ Pe^{-tL}U_{\phi}\right] = \int_{M} \tr_{E} \left [h_{t}(x,\phi(x)) \phi^{E}(x)\right]|dx|= 
    \int_{M} \tr_{E}\left [ \phi^{E}(x)h_{t}(x,\phi(x))\right]|dx|. 
    \label{eq:Heat.equivariant-trace-formula}
\end{equation}We are thus led to understand the short-time behavior of $h_{t}(x,\phi(x))$. Since by Proposition~\ref{thm:Heat.P-inverse-heat-operator-PsiDO} 
we can represent $h_{t}(x,y)$ as the kernel of a Volterra \psido, we shall more generally study the short-time behavior 
of $K_{Q}(x,\phi(x),t)$, where $Q \in \pvdo^{m}(M\times \R, E)$, $m \in \Z$.  

In what follows, we denote by $M^{\phi}$ the fixed-point 
set of $\phi$, and  for $a =0,\ldots, n$, we let $M_a^{\phi}$ be the subset of $M^{\phi}$ consisting of fixed-points $x$ at which $\phi'(x)-1$ has rank $n-a$, i.e., 
the eigenvalue $1$ of $\phi'(x)$ has multiplicity $a$. 
% Since $\phi$ is isometry, the eigenvalues of $\phi'(x)$ are $\pm 1$ 
% and complex conjugates with same multiplicity.  As $\phi$ preserves the orientation the multiplicity must be even, and 
% so we see that $\ker (\phi'(x)-1)$ has even dimension, i.e., $k$ and $n$ have the same parity.   
Therefore, we have the disjoint-sum decomposition,
\begin{equation*}
    M^{\phi}=\bigsqcup_{0\leq a \leq n}M_a^{\phi}. 
\end{equation*}
In addition, we pick some $\epsilon_{0}\in (0,\rho_{0})$, where $\rho_{0}$ is the injectivity radius of $(M,g)$.  

Let $x_{0}$ be a point in some component $M_a^{\phi}$.  Denote by $B_{x_{0}}(\epsilon_0)$ the ball of radius $\epsilon_0$ around the origin in 
$T_{x_{0}}M$. Then $\exp_{x_{0}}$ induces a diffeomorphism from $B_{x_{0}}(\epsilon_0)$ onto an open neighborhood 
$U_{\epsilon_0}$ of $x_{0}$ in $M$. Moreover, as $\phi$ is an isometry, for all $X\in B_{x_{0}}(\epsilon_0)$, we have
\begin{equation}
    \phi\left(\exp_{x_{0}}(X)\right)=\exp_{\phi(x_{0})}(\phi'(x_{0})X)=\exp_{x_{0}}(\phi'(x_{0})X).
    \label{eq:Heat.action-phi-Nphi}
\end{equation}Thus under $\exp_{x_{0}|B_{x_{0}}(\epsilon_0)}$ the diffeomorphism $\phi$ corresponds to $\phi'(x_{0})$, and 
hence $M^{\phi}\cap U_{\epsilon_0}$ is identified with $B_{x_{0}}^{\phi}(\epsilon_0):=B_{x_{0}}(\epsilon_0)\cap \ker (\phi'(x_{0})-1)$. 
Incidentally, the tangent bundle $TM^{\phi}_{|M^{\phi}\cap U_{\epsilon_0}}$ is identified with $B_{x_{0}}^{\phi}(\epsilon_0)\times 
\ker (\phi'(x_{0})-1)$ and the normal bundle  $\left(TM^{\phi}\right)^{\bot}_{|M^{\phi}\cap U_{\epsilon_0}}$ is identified with $B_{x_{0}}^{\phi}(\epsilon_0)\times 
\ker (\phi'(x_{0})-1)^{\bot}$. Note also that when $a=0$ this shows that $x_{0}$ is an isolated 
fixed-point. It follows from this that  each component $M_a^{\phi}$ is a (closed) submanifold of dimension $a$ of $M$ and over $M_a^{\phi}$ 
the set $\cN^{\phi}:=\sqcup_{x\in 
M^{\phi}}\ker (\phi'(x)-1)^{\bot}$ can be organized as a smooth vector bundle. We denote by $\pi:\cN^{\phi}\rightarrow 
M^{\phi}$ the corresponding canonical map. We shall refer to $\cN^{\phi}$ as the normal bundle of $M^{\phi}$. 
Note that  $\phi'$ induces (over each component $M_a^{\phi}$) an isometric vector bundle isomorphism of $\cN^{\phi}$ onto 
itself. 

As is well known, using the normal bundle $\cN^{\phi}$ we can construct a tubular neighborhood of $M^{\phi}$ as follows.  Let 
$\cN^{\phi}(\epsilon_0)$ be the ball bundle of $\cN^{\phi}$ of radius $\epsilon_0$ around the zero-section. Then the map
$\cN^{\phi}(\epsilon_0)\ni X \rightarrow \exp_{\pi(x)}(X)$ is a homeomorphism from $\cN^{\phi}(\epsilon_0)$ onto an open tubular neighborhood $V_{\epsilon_0}$ of 
$M^{\phi}$ in $M$. Moreover, over each submanifold $M_a^{\phi}$, $a=0,1,\ldots,n$, this maps induces a 
diffeomorphism from $\cN^{\phi}(\epsilon_0)_{|M_a^{\phi}}$ onto its image. Let us fix some $\epsilon\in (0,\epsilon_{0})$ and let $(x,t)\in M^{\phi}\times (0,\infty)$. Observe that, in view of~(\ref{eq:Heat.action-phi-Nphi}), for 
all $v \in N_{x}^{\phi}(\epsilon)$, we have
\begin{equation*}
 K_{Q}\left(\exp_{x}v,\exp_{x}(\phi'(x)v),t\right)=K_{Q}\left(\exp_{x}v, \phi(\exp_{x}v),t\right). 
\end{equation*}For $x \in M^{\phi}$ and $t>0$ set
\begin{equation}
\label{eq:definition.I_Q.Volterra.kernel}
    I_{Q}(x,t):=\phi^{E}(x)^{-1}\int_{N_{x}^{\phi}(\epsilon)} \phi^{E}\left(\exp_{x}v\right) K_{Q}\left(\exp_{x}v, 
    \exp_{x}(\phi'(x)v),t\right)|dv| . 
\end{equation}
This defines a smooth section of  $\End E$ over $M^{\phi}\times (0,\infty)$, since $\phi^{E}(x)\in \End E_{x}$ for all 
$x \in M^{\phi}$. 
% Note that $I_{Q}(x,t)$ agrees with $K_{Q}(x,x,t)$ when 
% $x$ is in $M^{\phi}_{0}$. 

In what follows, we shall say that a function $f(t)$ is $\op{O}(t^{\infty})$ as $t\rightarrow 0^{+}$ when $f(t)$ is $\op{O}(t^{N})$ 
for all $N\in \N$. 

\begin{lemma}\label{lem:Heat-localization}
     As $t\rightarrow 0^{+}$, we have
\begin{equation*}
    \int_{M}\tr_{E}\left[\phi^{E}(x)K_{Q}(x,\phi(x),t)\right]|dx|=\int_{M^{\phi}}\tr_{E}\left[\phi^{E}(x)I_{Q}(x,t)\right]|dx| +\op{O}(t^{\infty}).
\end{equation*}
\end{lemma}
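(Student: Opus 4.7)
The plan is to prove the lemma by a localization argument. The Volterra kernel $K_Q(x,\phi(x),t)$ should be rapidly decaying as $t \to 0^+$ whenever $x$ stays away from the fixed-point set $M^\phi$, so the integral over $M$ concentrates on a tubular neighborhood $V_\epsilon$ of $M^\phi$, which can then be rewritten in normal-bundle coordinates to recover $\int_{M^\phi}\tr_E[\phi^E(x) I_Q(x,t)]|dx|$. Concretely, I would split $\int_M = \int_{V_\epsilon} + \int_{M\setminus V_\epsilon}$ and treat the two pieces separately. By continuity of the function $d(\cdot,\phi(\cdot))$ and compactness of $M$, there exists $c>0$ such that $d(x,\phi(x))\geq c$ on $M\setminus V_\epsilon$, so showing that the first piece contributes $\op{O}(t^\infty)$ reduces to the off-diagonal decay estimate $K_Q(x,y,t)=\op{O}(t^\infty)$, uniformly for $d(x,y)\geq c$ as $t\to 0^+$.

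To establish this decay I would invoke Proposition~\ref{prop:Heat.asymptotic-kernels} and write $K_Q(x,y,t) = \sum_{j\leq J}\check{q}_{m-j}(x,x-y,t)$ modulo a $C^N$ remainder, with $J$ and $N$ as large as needed. By condition (iii) of Lemma~\ref{lem:volterra.volterra-extension} combined with the smoothness of $\check{q}_{m-j}$ off the origin, each $\check{q}_{m-j}(x,z,t)$ vanishes for $t\leq 0$ whenever $z\neq 0$, so all $t$-derivatives of $\check{q}_{m-j}(x,z,\cdot)$ at $t=0$ vanish there as well. Combining this with the parabolic homogeneity recorded in Remark~\ref{eq:Heat.homogeneity-inverse-Fourier} through the identity $\check{q}_{m-j}(x,z,1) = |z|^{-(m-j+n+2)}\check{q}_{m-j}(x,z/|z|,|z|^{-2})$ and a Taylor expansion of the last argument around $0$ yields rapid decay of $\check{q}_{m-j}(x,z,1)$ in $|z|$. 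Parabolic rescaling $t=\lambda^2$ then gives $\check{q}_{m-j}(x,z,t)=\op{O}(t^N)$ for any $N$, uniformly on $|z|\geq c$. The $C^N$ remainder is itself $\op{O}(t^N)$ because, as a Volterra kernel, it vanishes together with its first $N$ time-derivatives at $t=0$, after which Taylor's theorem applies.

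For the integral over $V_\epsilon$, I would change variables via the tubular diffeomorphism $\cN^\phi(\epsilon)\ni(x,v)\mapsto \exp_x v \in V_\epsilon$. Using the identity $\phi(\exp_x v) = \exp_x(\phi'(x)v)$ from~(\ref{eq:Heat.action-phi-Nphi}), the integral becomes
\[
 \int_{M^\phi}\int_{N_x^\phi(\epsilon)}\tr_E\bigl[\phi^E(\exp_x v)K_Q(\exp_x v,\exp_x(\phi'(x)v),t)\bigr]|dv|\,|dx|,
\]
with $|dv|$ understood as the measure on the normal fibre for which the pullback of the Riemannian density factors as $|dv|\,|dx|$. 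Cyclicity of the trace together with the definition of $I_Q(x,t)$ then identifies this expression with $\int_{M^\phi}\tr_E[\phi^E(x) I_Q(x,t)]|dx|$, completing the proof. The main obstacle is the off-diagonal decay estimate for Volterra $\psi$DO kernels; once this essentially standard consequence of the Volterra calculus is in place, the remainder of the argument is a routine change of variables.
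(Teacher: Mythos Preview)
Your proposal is correct and follows the same overall strategy as the paper: split the integral into $V_\epsilon$ and its complement, show the complement contributes $\op{O}(t^\infty)$, and rewrite the tubular piece via the normal-bundle change of variables. The only substantive difference is in how you establish the off-diagonal decay. The paper does this in one stroke: pseudolocality (Proposition~\ref{prop:Volterra-properties}(1)) says $K_Q(x,y,t)$ is smooth on $\{x\neq y\}\times\R$, and since it vanishes for $t<0$ all its $t$-derivatives at $t=0$ vanish there, so Taylor's formula gives $K_Q(x,\phi(x),t)=\op{O}(t^N)$ uniformly on compact subsets of $M\setminus M^\phi$. Your route through Proposition~\ref{prop:Heat.asymptotic-kernels} and the term-by-term analysis of the $\check q_{m-j}$ is valid but amounts to re-proving this consequence of pseudolocality by hand; even within your argument, the homogeneity-and-rescaling detour is unnecessary, since ``smooth for $z\neq 0$, vanishes for $t<0$, hence $\op{O}(t^N)$ by Taylor'' already suffices for each $\check q_{m-j}$. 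Finally, the identification of the $V_\epsilon$-integral with $\int_{M^\phi}\tr_E[\phi^E(x)I_Q(x,t)]|dx|$ needs no cyclicity: the factor $\phi^E(x)^{-1}$ in the definition of $I_Q$ is cancelled outright by the $\phi^E(x)$ in front.
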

\begin{proof}
If we regard $K_{Q}(x,y,t)$ as a distributional section of $E\boxtimes E^{*}$ over $M\times M\times \R$, then Proposition~\ref{prop:Volterra-properties} tells us that 
$K_{Q}(x,y,t)$ is smooth on $\{(x,y,t)\in M\times M\times \R; \ x\neq y\}$. Incidentally,
$K_{Q}\left(x,\phi(x),t\right)$ is smooth on $(M\setminus M^{\phi})\times \R$. Let $N\in \N$.  Since $K_{Q}(x,y,t)=0$ for $t<0$, 
we see that $\partial^{N}_{t}K_{Q}(x,\phi(x),0)=0$ for all $x\in M\setminus  M^{\phi}$. The Taylor formula at $t=0$ then 
implies that, uniformly on compact subsets of $M\setminus M^{\phi}$, we have
\begin{equation*}
 K_{Q}\left(x,\phi(x),t\right)=\op{O}(t^{N})  \qquad \text{as $t\rightarrow 0^{+}$}.  
\end{equation*}
As $M$ is compact and $V_{\epsilon}$ is an open neighborhood of $M^{\phi}$, the complement $M\setminus V_{\epsilon}$ is a 
compact subset of $M\setminus M^{\phi}$. Thus,
  \begin{multline*}
   \int_{M}\tr_{E}\left[\phi^{E}(x)K_{Q}(x,\phi(x),t)\right]  |dx|     = 
   \int_{V_{\epsilon}}\tr_{E}\left[\phi^{E}(x)K_{Q}(x,\phi(x),t)\right] |dx|  +\op{O}(t^{N})  \\
%       & = \int_{N^{\phi}(\epsilon)}K_{Q}\left(\exp_{\pi(X)}(X),\phi(\exp_{\pi(X)}(X)),t\right)|dX| +\op{O}(t^{N})  \\
       = \int_{M^{\phi}}\left( \int_{\cN^{\phi}_{x}(\epsilon)} 
      \tr_{E}\left[\phi^{E}(\exp_{x}(v))K_{Q}\left(\exp_{x}(v),\phi(\exp_{x}(v)),t\right)\right] |dv|\right)|dx| + \op{O}(t^{N})\\
        = \int_{M^{\phi}}\tr_{E}\left[\phi^{E}(x)I_{Q}(x,t)\right] |dx| +\op{O}(t^{N}).
 \end{multline*}This proves the lemma. 
\end{proof}
 
Thanks to Lemma~\ref{lem:Heat-localization} we are reduced to study the short-time behavior of $I_{Q}(x,t)$. Note this is a purely local 
issue and $I_{Q}(x,t)$ depends on $\epsilon$ only up to $\op{O}(t^{\infty})$ near $t=0$. Therefore, upon choosing 
$\epsilon_{0}$ small enough so that there is a local trivialization of $E$ over the tubular neighborhood 
$V_{\epsilon_{0}}$, we may  assume that $E$ is a trivial vector bundle. 

Given a fixed-point $x_{0}$ in a submanifold 
component $M_a^{\phi}$, consider some local coordinates $x=(x^{1},\ldots, x^{a})$ around 
$x_{0}$. Setting $b=n-a$, we may further assume that 
over the range of the domain of the local coordinates there is an orthonormal frame $e_{1}(x),\ldots, e_{b}(x)$ of 
$\cN^{\phi}$. This defines fiber coordinates $v=(v^{1},\ldots, v^{b})$. Composing with the map $\cN^{\phi}(\epsilon_{0})\ni 
(x,v)\rightarrow \exp_{x}v$ we then get local coordinates $x^{1},\ldots, x^{a},v^{1},\ldots ,v^{b}$ for $M$ near 
the fixed-point $x_{0}$. We shall refer to this type of coordinates as \emph{tubular coordinates}. 
Let $q(x,v;\xi,\nu;\tau)\sim \sum_{j\geq 0}q_{m-j}(x,v;\xi,\nu;\tau)$ be the symbol $Q$ in these tubular coordinates.  
The asymptotic expansion $\sim$ here follows from Definition~\ref{def.volterra.asymptotic.expansion}.
We denote by $K_{Q}(x,v;y,w;t)$ the kernel of $Q$ in these coordinates. In the local coordinates 
$x^{1},\ldots, x^{a}$ we have 
\begin{equation}\label{IQIntKer}
    I_{Q}(x,t)=\int_{|v|<\epsilon}\phi^{E}(x,0)^{-1}\phi^{E}(x,v)K_{Q}(x,v;x,\phi'(x)v;t)dv,
\end{equation}where $\phi^{E}(x,v)$ is $\phi^{E}$ in the tubular coordinates $(x,v)$.  

In what follows, we let $U$ be the open subset of $\R^a$ over which the coordinates $x=(x^{1},\ldots, x^{a})$ range. Moreover, we denote by $B(\epsilon_{0})$ (resp., 
$B(\epsilon)$) the open ball about the origin in $\R^{b}$ with  radius $\epsilon_{0}$ (resp., $\epsilon$). Note that  the range of $v=(v^{1},\ldots 
, v^{b})$ is $B(\epsilon_{0})$.  In addition, for $j=0,1,\ldots$ we set 
\begin{equation}
    q^{E}_{m-j}(x,v;\xi,\nu;\tau):=\phi^{E}(x,0)^{-1}\phi^{E}(x,v)q_{m-j}(x,v;\xi,\nu;\tau). 
    \label{eq:Equivariant.twisted-symbol}
\end{equation}

\begin{lemma}\label{lem:Heat.asymptotic-IQ-symbols}
    As $t\rightarrow 0^{+}$ and uniformly on compact subsets of $U$, we have
    \begin{equation*}
        I_{Q}(x,t)\sim \sum_{j\geq 0} \int_{|v|<\epsilon}(q^{E}_{m-j})^{\vee}\left(x,v;0,(1-\phi'(x))v;t\right)dv.
    \end{equation*}
\end{lemma}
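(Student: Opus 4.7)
The plan is to substitute the kernel asymptotic of Proposition~\ref{prop:Heat.asymptotic-kernels} into the integral representation~(\ref{IQIntKer}) for $I_{Q}(x,t)$ and to control the resulting remainder by exploiting the fact that Volterra kernels vanish for negative time.

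To carry this out I would fix a compact subset $K\subset U$ and an integer $N\in\N$. Applying Proposition~\ref{prop:Heat.asymptotic-kernels} to $Q$ in the tubular coordinates $(x,v)$ produces an integer $J=J(N)$ such that
\begin{equation*}
 R_{J}(x,v;y,w;t):=K_{Q}(x,v;y,w;t)-\sum_{j\leq J}\check{q}_{m-j}(x,v;x-y,v-w;t)
\end{equation*}
is of class $C^{N}$ on $(U\times B(\epsilon_{0}))\times(U\times B(\epsilon_{0}))\times\R$. Since $\phi'(x)$ depends smoothly on $x$, the substitution $(y,w)=(x,\phi'(x)v)$ yields a function $\tilde{R}_{J}(x,v;t):=R_{J}(x,v;x,\phi'(x)v;t)$ that is jointly of class $C^{N}$ on $K\times\overline{B(\epsilon)}\times\R$.

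The crux of the argument---and the one step requiring care---is to observe that both $K_{Q}$ and each homogeneous piece $\check{q}_{m-j}$ are Volterra kernels and hence vanish identically for $t<0$; so does $\tilde{R}_{J}$. By $C^{N}$-continuity this forces $\partial_{t}^{k}\tilde{R}_{J}(x,v;0)=0$ for $0\leq k\leq N$, and Taylor's formula with integral remainder at $t=0$ then yields $\tilde{R}_{J}(x,v;t)=\op{O}(t^{N})$ as $t\to 0^{+}$, uniformly for $(x,v)\in K\times\overline{B(\epsilon)}$. Multiplication by the smooth bounded matrix factor $\phi^{E}(x,0)^{-1}\phi^{E}(x,v)$ and integration over the bounded ball $|v|<\epsilon$ preserve this uniform estimate.

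It then remains to identify the main terms. Since the prefactor $\phi^{E}(x,0)^{-1}\phi^{E}(x,v)$ does not depend on the cotangent variables, it commutes with the inverse Fourier transform and combines with $\check{q}_{m-j}$ to produce exactly $(q^{E}_{m-j})^{\vee}$ in view of~(\ref{eq:Equivariant.twisted-symbol}). Substituting $x-y=0$ and $v-\phi'(x)v=(1-\phi'(x))v$, the above gives
\begin{equation*}
 I_{Q}(x,t)=\sum_{j\leq J}\int_{|v|<\epsilon}(q^{E}_{m-j})^{\vee}\bigl(x,v;0,(1-\phi'(x))v;t\bigr)dv+\op{O}(t^{N})
\end{equation*}
uniformly on $K$. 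Since $N\in\N$ is arbitrary, this is precisely the asserted asymptotic expansion.
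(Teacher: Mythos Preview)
Your proof is correct and follows essentially the same approach as the paper's: both invoke Proposition~\ref{prop:Heat.asymptotic-kernels} to write $K_{Q}$ as a finite sum of $\check{q}_{m-j}$ plus a $C^{N}$ remainder, use the Volterra property (vanishing for $t<0$) together with Taylor's formula at $t=0$ to show the remainder is $\op{O}(t^{N})$ uniformly on compacts, and then integrate over $|v|<\epsilon$. Your version is slightly more explicit about the handling of the twisting factor $\phi^{E}(x,0)^{-1}\phi^{E}(x,v)$, which the paper absorbs directly into the notation $(q^{E}_{m-j})^{\vee}$.
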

\begin{proof}
Let $N \in \N_{0}$. By Proposition~\ref{prop:Heat.asymptotic-kernels} there is $J\in \N$ such that $K_{Q}-\sum_{j\leq 
J}\check{q}_{m-j}$ is $C^{N}$. Set 
\begin{equation*}
    R_{N}(x,v,t):=K_{Q}\left(x,v;x,\phi'(x)v;t\right)-\sum_{j\leq J}\check{q}_{m-j}\left(x,v;0,(1-\phi'(x))v;t\right).
\end{equation*}Then $R_{N}(x,v,t)$ is $C^{N}$  on $U\times B(\epsilon_{0})\times \R$. 
Moreover $R_{N}(x,v,t)=0$ for $t<0$, since $K_{Q}(x,v;y,w;t)$ and all the $\check{q}_{m-j}(x,v;y,w;t)$ vanish for 
$t<0$. This implies that $\partial^{j}R_{N}(x,v,0)=0$ for all $j\leq N$. Applying Taylor's formula at $t=0$ to 
$R_{N}(x,v,t)$ then shows that, as $t\rightarrow 0^{+}$ and uniformly on compact subsets of $U\times B(\epsilon_{0})$, 
the function 
$R_{N}(x,v,t)$ is $\op{O}(t^{N})$, that is, 
\begin{equation*}
 K_{Q}(x,v;x,\phi'(x)v;t)=\sum_{j\leq J}\check{q}_{m-j}\left(x,v;0,(1-\phi'(x))v;t\right)  + \op{O}(t^{N}).
\end{equation*}Therefore, uniformly on compact subsets of $U$, 
\begin{equation*}
    I_{Q}(x,t)= \sum_{j\leq J}  
  \int_{|v|<\epsilon}(q^{E}_{m-j})^{\vee}\left(x,v;0,(1-\phi'(x))v;t\right)dv +\op{O}(t^{N}).
\end{equation*}This proves the lemma. 
\end{proof}

\begin{lemma}\label{lem:Heat.asymptotic-Isymbol}
    As $t\rightarrow 0^{+}$ and uniformly on compact subsets of $U$, we have 
\begin{multline}
    \int_{|v|<\epsilon}(q^{E}_{m-j})^{\vee}\left(x,v;0,(1-\phi'(x))v;t\right)dv\\ \sim  \!\!\! 
  \sum_{\substack{|\alpha|+j+m+n \\ \textup{even}}}  \! t^{\frac{j-(m+a+2)+|\alpha|}{2}}
     \int_{\R^{b}} \frac{v^{\alpha}}{\alpha!}\left(\partial_{v}^{\alpha}q_{m-j}^{E}\right)^{\vee}\left(x,0;0,(1-\phi'(x))v;1\right)dv.
     \label{eq:Heat.int-checkqj-epsilon}
\end{multline}    
\end{lemma}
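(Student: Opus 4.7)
The plan is to exploit parabolic homogeneity to rescale by $t^{1/2}$, Taylor-expand the smooth base-variable dependence, extend the integration to $\R^b$, and conclude by parity. The key observation is Remark~\ref{eq:Heat.homogeneity-inverse-Fourier}: the inverse Fourier transform of the homogeneous extension of $q_{m-j}^E$ is positively parabolic-homogeneous of degree $-((m-j)+n+2)$, so that for $t>0$
\[
\check{q}_{m-j}^E(x,v;y,w;t)=t^{-\frac{(m-j)+n+2}{2}}\,\check{q}_{m-j}^E(x,v;t^{-1/2}y,t^{-1/2}w;1).
\]
Applying this with $y=0$ and $w=(1-\phi'(x))v$, and then substituting $v=t^{1/2}v'$ (so that $dv=t^{b/2}\,dv'$ with $b=n-a$), recasts the left-hand side of~(\ref{eq:Heat.int-checkqj-epsilon}) as
\[
t^{\frac{j-(m+a+2)}{2}}\int_{|v'|<\epsilon t^{-1/2}}\check{q}_{m-j}^E\bigl(x,t^{1/2}v';0,(1-\phi'(x))v';1\bigr)\,dv',
\]
using the identity $-\tfrac{(m-j)+n+2}{2}+\tfrac{b}{2}=\tfrac{j-(m+a+2)}{2}$.

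Next, the integrand depends smoothly on $v=t^{1/2}v'$ in the first slot, since the $v$-dependence of $q_{m-j}^E$ is smooth and $v$-differentiation commutes with the inverse Fourier transform in $(\xi,\nu,\tau)$. Taylor's formula at $v=0$ then yields, modulo arbitrary-order remainders,
\[
\check{q}_{m-j}^E(x,t^{1/2}v';0,w;1)\sim \sum_{\alpha}\frac{t^{|\alpha|/2}(v')^\alpha}{\alpha!}\,\bigl(\partial_v^\alpha q_{m-j}^E\bigr)^{\!\vee}(x,0;0,w;1),
\]
with $w=(1-\phi'(x))v'$. Combined with the rescaling prefactor above, this produces exactly the powers $t^{(j-(m+a+2)+|\alpha|)/2}$ appearing on the right-hand side of~(\ref{eq:Heat.int-checkqj-epsilon}).

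Two refinements then remain. First, I would extend the truncated integration $\{|v'|<\epsilon t^{-1/2}\}$ to all of $\R^b$; the resulting error is $\op{O}(t^N)$ for every $N$ provided $\check{q}_{m-j}^E(x,0;0,\cdot;1)$ and its $v$-derivatives decay rapidly at spatial infinity. Such rapid decay is \emph{not} automatic from the distributional nature of the homogeneous extension; it relies on the Paley--Wiener-type analyticity in $\tau$ on the lower half-plane built into the definition of Volterra symbols (Lemma~\ref{lem:volterra.volterra-extension}), which, after partial Fourier inversion at fixed $t=1$, propagates into Gaussian-like decay in the spatial dual variables. Second, a direct parity computation in the Fourier integral---the substitution $(\xi,\nu)\mapsto(-\xi,-\nu)$ combined with the $\lambda=-1$ case of parabolic homogeneity of $q_{m-j}^E$, paired with the sign $(-1)^{|\alpha|}$ coming from $(v')^\alpha$ under $v'\mapsto -v'$---forces the integral over $\R^b$ to vanish unless $|\alpha|+j+m+n$ is even. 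The central technical obstacle is the decay claim used in the first refinement: securing rapid decay of $\check{q}_{m-j}^E(x,0;0,\cdot;1)$ at spatial infinity is precisely where the specifically Volterra (as opposed to merely classical) structure enters in an essential way; once this is in hand, the remainder of the argument reduces to routine bookkeeping of Taylor expansion and changes of variable.
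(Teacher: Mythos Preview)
Your overall strategy---rescale by $t^{1/2}$ via parabolic homogeneity, Taylor-expand in the base variable $v$, extend the integration to $\R^b$, and kill odd-parity terms via $\lambda=-1$---is exactly the paper's, and your parity and exponent bookkeeping are correct.

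The gap is in the rapid-decay step, which you rightly flag as the crux but do not resolve correctly. Analyticity of the symbol in $\tau$ on the lower half-plane gives, via Paley--Wiener (Lemma~\ref{lem:volterra.volterra-extension}(iii)), that $\check q_{m-j}^E(x,v;y,w;t)$ vanishes for $t<0$; it does \emph{not} by itself yield spatial decay in $w$ at fixed $t=1$, and ``Gaussian-like'' is too optimistic in any case. The paper's argument is different and more elementary. Set $h(x,v,w,t)=(q_{m-j}^E)^\vee(x,v;0,(1-\phi'(x))w;t)$. Since $h$ is smooth on $U\times B(\epsilon_0)\times(\R^b\setminus 0)\times\R$ and vanishes identically for $t<0$, every $t$-derivative $\partial_t^\ell\bigl(w^\alpha\partial_v^\alpha h\bigr)(x,v,w,0)$ vanishes for $w\neq 0$. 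Taylor's formula at $t=0$ then gives $\bigl|w^\alpha\partial_v^\alpha h(x,v,w,t)\bigr|\leq C_\ell\,t^\ell$ uniformly for $(x,v,w)\in K\times B(\epsilon)\times S^{b-1}$ and $t\in(0,1)$. Now invoke parabolic homogeneity of $h$ in $(w,t)$: writing $h(x,v,w,1)=|w|^{j-(m+n+2)}h\bigl(x,v,|w|^{-1}w,|w|^{-2}\bigr)$ converts the previous bound into $\bigl|w^\alpha\partial_v^\alpha h(x,v,w,1)\bigr|\leq C_\ell\,|w|^{j-(m+n+2)+|\alpha|-2\ell}$ for all $\ell$, i.e.\ rapid polynomial decay as $|w|\to\infty$, uniformly on compacts in $(x,v)$. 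This decay both controls the Taylor remainder integrated over the expanding ball $B(\epsilon t^{-1/2})$ and justifies replacing that ball by $\R^b$ up to $\op{O}(t^\infty)$.

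In short, the Volterra structure enters not through a direct Paley--Wiener spatial-decay mechanism, but via the simpler route: vanishing for $t<0$ plus smoothness forces infinite-order vanishing at $t=0$ on the unit sphere, and homogeneity then transports small-$t$ behaviour into large-$|w|$ decay at $t=1$.
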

\begin{proof}
  Let $h(x,v,w,t)$ be the function on  $U\times B(\epsilon_{0})\times \left[(\R^{b}\times \R)\setminus 
    0\right]$ defined by 
    \begin{equation*}
        h(x,v,w,t):=(q^{E}_{m-j})^{\vee}\left(x,v;0,(1-\phi'(x))w;t\right)dv.
    \end{equation*}We observe that $h(x,v,w,t)$ is smooth on $U\times B(\epsilon_{0})\times (\R^{b}\setminus 0)\times 
    \R$ and vanishes for $t<0$. Moreover, the homogeneity of $\check{q}_{m-j}$ in the sense 
    of~(\ref{eq:homogeneous.Voterra.symbol}) implies that 
\begin{equation}
    h(x,v,\lambda w,\lambda^{2}t)=|\lambda|^{-(n+2)}\lambda^{j-m}h(x,v,w,t) \qquad \forall \lambda \in \R^{*}.
    \label{eq:Heat.homogeneity-h}
\end{equation}
   Setting $k=j-(m+n+2)$, we see that, for all $t>0$, we have
   \begin{equation}
     \int_{|v|<\epsilon}h(x,v,v,t)dv    = t^{\frac{b}{2}}\int_{B\left(\frac{\epsilon}{\sqrt{t}}\right)} 
     h(x,\sqrt{t}v,\sqrt{t}v,t)dv 
        = t^{\frac{k+b}{2}} \int_{B\left(\frac{\epsilon}{\sqrt{t}}\right)}h(x,\sqrt{t}v,v,1)dv. 
      \label{eq:Heat.int-h-rescaling}
   \end{equation}

    Let $N\in \N$. By Taylor's formula,
    \begin{equation}
       h(x,\sqrt{t}v,v,1)=\sum_{|\alpha|<N}\frac{t^{\frac{|\alpha|}{2}}}{\alpha!}\frac{v^{\alpha}}{\alpha !}\partial_{v}^{\alpha}h(x,0,v,1)+t^{\frac{N}{2}}R_{N}(x,\sqrt{t}v,v), 
       \label{eq:Heat.Taylor-expansion-h}
    \end{equation}where $R_{N}(x,v,w)$ is the function on $U\times B(\epsilon_{0})\times \R^{b}$ given by
    \begin{equation*}
        R_{N}(x,v,w)=\sum_{|\alpha|=N}\int_{0}^{1}(1-s)^{N-1}w^{\alpha}\partial_{v}^{\alpha}h(x,sv,w,1)ds.
    \end{equation*} 
   Let  $K$ be a compact subset of $U$. As $w^{\alpha}\partial_{v}^{\alpha}h(x,v,w,t)$ is smooth on $U\times B(\epsilon_{0})\times (\R^{b}\setminus 0)\times 
    \R$ and vanishes for $t<0$,  we see that $w^{\alpha}\partial_{t}^{l}\partial_{v}^{\alpha}h(x,v,w,0)=0$ for all $l\in 
    \N_{0}$. Therefore, using once more Taylor's formula at $t=0$ shows that, for all $l\in \N_{0}$, there is a constant $C_{K\alpha l}>0$ such that
    \begin{equation*}
        |w^{\alpha}\partial_{v}^{\alpha}h(x,v,w,t)|\leq C_{kl\alpha}|t|^{l} \qquad \forall 
        (x,v,w,t)\in K\times B(\epsilon)\times S^{b-1}\times (0,1).
    \end{equation*}
    In addition, the homogeneity of $h(x,v,w,t)$ implies that, when $w\neq 0$, we have
    \begin{equation*}
        w^{\alpha}\partial_{v}h(x,v,w,1)=w^{\alpha}|w|^{k}\partial_{v}h(x,v,|w|^{-1}w,|w|^{-2}).
    \end{equation*}Thus,
    \begin{equation*}
        \left |w^{\alpha}\partial_{v}^{\alpha}h(x,v,w,1)\right|\leq C_{kl\alpha } |w|^{k+|\alpha|-2l} \quad 
        \forall (x,v,w)\in K\times B(\epsilon) \times (\R^{b}\setminus 0).
    \end{equation*}
    
    The above estimate shows that $w^{\alpha}\partial_{v}h(x,v,w,1)$ has rapid decay in $w$ uniformly with respect to $x$ and $v$, as 
    $x$ ranges over $K$ and $v$ ranges over $B(\epsilon)$. Incidentally, both $w^{\alpha}\partial_{v}h(x,v,w,1)$ and 
    $R_{N}(x,v,w)$ are uniformly bounded on $K\times B(\epsilon)\times \R^{b}$. It then follows that there is a 
    constant $C_{KN}>0$ such that 
    \begin{equation*}
        |R_{N}(x,\sqrt{t}v,v)|\leq C_{KN} \qquad \forall (x,v)\in K\times B(\epsilon). 
    \end{equation*}
    Therefore, integrating both sides of~(\ref{eq:Heat.Taylor-expansion-h}) with respect to $v$ over $B\left(\frac{\epsilon}{\sqrt{t}}\right)$ 
    we see that, as $t\rightarrow 0^{+}$ and uniformly on $K$, we have
    \begin{equation*}
        \int_{B\left(\frac{\epsilon}{\sqrt{t}}\right)}h(x,\sqrt{t}v,v,1)dv 
       = \sum_{|\alpha|<N}t^{\frac{|\alpha|}{2}}\int_{B\left(\frac{\epsilon}{\sqrt{t}}\right)} 
        \frac{v^{\alpha}}{\alpha !}\partial_{v}^{\alpha}h(x,0,v,1)dv + \op{O}\biggl(t^{\frac{N-b}{2}}\biggl).
    \end{equation*}
    Together with~(\ref{eq:Heat.int-h-rescaling}) this proves that, as $t\rightarrow 0^{+}$ and uniformly on $K$, we have
    \begin{equation}
        \int_{|v|<\epsilon}h(x,v,v,t)dv \sim\sum t^{\frac{k+b+|\alpha|}{2}}\int_{B\left(\frac{\epsilon}{\sqrt{t}}\right)} 
        \frac{v^{\alpha}}{\alpha !}\partial_{v}^{\alpha}h(x,0,v,1)dv. 
        \label{eq:Heat.int-h-asymptotic-epsilon}
    \end{equation}
    We observe that $k+b= j-(m+a+2)$. Moreover, as mentioned above,  the function $w^{\alpha}\partial_{v}^{\alpha}h(x,0,w,1)$ has rapid decay uniformly with respect to $x$, as 
    $x$ ranges over $K$. Therefore, as $t\rightarrow 0^{+}$ and uniformly on $K$, we have
    \begin{equation}
        \int_{B\left(\frac{\epsilon}{\sqrt{t}}\right)} 
        v^{\alpha}\partial_{v}^{\alpha}h(x,0,v,1)dv = \int_{\R^{b}}v^{\alpha}\partial_{v}^{\alpha}h(x,0,v,1)dv+\op{O}(t^{\infty}).
        \label{eq:Heat.asymptotic-partial-h-epsilon}
    \end{equation}
    In addition, the homogeneity property~(\ref{eq:Heat.homogeneity-h}) for $\lambda=-1$ gives
    \begin{equation*}
       \int_{\R^{b}}v^{\alpha}\partial_{v}^{\alpha}h(x,0,v,1)dv  = 
       \int_{\R^{b}}(-v)^{\alpha}\partial_{v}^{\alpha}h(x,0,-v,(-1)^{2}1)dv 
       =(-1)^{|\alpha|+j-m}\int_{\R^{b}}v^{\alpha}\partial_{v}^{\alpha}h(x,0,v,1)dv.
    \end{equation*}Thus $\int_{\R^{b}}v^{\alpha}\partial_{v}^{\alpha}h(x,0,v,1)dv=0$ whenever 
    $|\alpha|+j-m$ is odd.  
    Combining this with~(\ref{eq:Heat.int-h-asymptotic-epsilon}) and (\ref{eq:Heat.asymptotic-partial-h-epsilon}) shows that, as $t\rightarrow 0^{+}$ and uniformly on $K$,  we have
     \begin{equation*}
        \int_{|v|<\epsilon}h(x,v,v,t)dv \sim  \!\!\! \!\!\! \sum_{\substack{|\alpha|+j-m \\ \text{even}}}  \! t^{\frac{j-(m+a+2)+|\alpha|}{2}} \int_{\R^{b}} 
        \frac{v^{\alpha}}{\alpha !}\partial_{v}^{\alpha}h(x,0,v,1)dv. 
    \end{equation*}This proves the lemma. %is nothing but the asymptotic~(\ref{eq:Heat.int-checkqj-epsilon}). The lemma is thus proved. 
\end{proof} 

Combining Lemmas~\ref{lem:Heat.asymptotic-IQ-symbols} and~\ref{lem:Heat.asymptotic-Isymbol} we see that, as $t\rightarrow 0^{+}$ and uniformly on compact subsets of $U$, we have
\begin{equation}
  I_{Q}(x,t) \sim \!\!\! \!\!\!  \sum_{\substack{|\alpha|+j-m \\ \text{even}}}  \! t^{\frac{j-(m+a+2)+|\alpha|}{2}} 
  \int_{\R^{b}} \frac{v^{\alpha}}{\alpha!}\left(\partial_{v}^{\alpha}q^{E}_{m-j}\right)^{\vee}\left(x,0;0,(1-\phi'(x))v;1\right)dv.  
  \label{eq:IQ}
\end{equation}
If  $|\alpha|+j-m$ is 
even, then $\frac{-(m+a+2)+j+|\alpha|}{2}$ is an integer and is greater than   $-\frac{m+a}{2}-1$, i.e.,  it is greater than 
or equal to $-\left[\frac{m+a}{2}\right]-1$. We actually have an equality when $j=|\alpha|=0$ and $m$ is even and 
when $|\alpha|+j=1$ and $m$ is odd. Thus, grouping together all the terms with same powers of $t$, we can rewrite the above 
asymptotic in the form,
\begin{equation}
\label{eq:I_Q.asymptoric.expansion.I_Q^j}
  I_{Q}(x,t)\sim   \sum_{j \geq 0}t^{-\left(\frac{a}{2}+\left[\frac{m}{2}\right]+1\right)+j}I_{Q}^{(j)}(x), 
\end{equation}where we have set
\begin{equation}
    I_{Q}^{(j)}(x):=\sum_{|\alpha|\leq m-2\left[\frac{m}{2}\right]+2j} \int 
    \frac{v^{\alpha}}{\alpha!}\left(\partial_{v}^{\alpha}q^{E}_{2\left[\frac{m}{2}\right]-2j+|\alpha|}\right)^{\vee}\left(x,0;0,(1-\phi'(x))v;1\right)dv.
    \label{eq:Heat.IQj}
\end{equation}Therefore, we arrive at the following result. 

\begin{proposition}\label{prop:Heat.asymptotic-IQ}
 Let $Q\in \pvdo^{m}(M\times \R,E)$, $m \in \Z$.  Uniformly on each fixed-point submanifold $M_a^{\phi}$, $a=0,2,...,n$, we have
    \begin{equation}
        I_{Q}(x,t)\sim  \sum_{j \geq 0} t^{-\left(\frac{a}{2}+\left[\frac{m}{2}\right]+1\right)+j}I_{Q}^{(j)}(x) \qquad 
        \text{as $t\rightarrow 0^{+}$}, 
        \label{eq:Heat.asymptotic-IQ}    \end{equation}where $I_{Q}^{(j)}(x)$ is the section of $\End E$ over $M^{\phi}$ defined 
    by~(\ref{eq:Heat.IQj}) in terms of the symbol of $Q$ in local tubular coordinates over which $E$ is trivialized.
\end{proposition}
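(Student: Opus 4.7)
The plan is to obtain Proposition~\ref{prop:Heat.asymptotic-IQ} by chaining the two preceding lemmas and then reorganizing the resulting doubly-indexed asymptotic expansion by powers of $t$.

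First I would work in tubular coordinates around a point $x_{0}\in M_a^\phi$ as in the setup preceding~(\ref{IQIntKer}), chosen small enough so that $E$ is trivialized over the corresponding neighborhood. Lemma~\ref{lem:Heat.asymptotic-IQ-symbols} then expresses $I_Q(x,t)$ as an asymptotic sum indexed by $j\geq 0$ of the integrals $\int_{|v|<\epsilon}(q_{m-j}^{E})^{\vee}(x,v;0,(1-\phi'(x))v;t)\,dv$, uniformly on compact subsets of the chart. Applying Lemma~\ref{lem:Heat.asymptotic-Isymbol} to each such integral and assembling the outputs yields the combined expansion~(\ref{eq:IQ}) already recorded in the text.

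The substance of the proof then reduces to bookkeeping. A term indexed by $(j,\alpha)$ in~(\ref{eq:IQ}), subject to the parity constraint $|\alpha|+j\equiv m \pmod{2}$, contributes at order $t^{(j-(m+a+2)+|\alpha|)/2}$. On $M_a^\phi$ with $a$ even, as listed in the proposition, this exponent is an integer, and a direct check shows it is bounded below by $-(\frac{a}{2}+[\frac{m}{2}]+1)$, with the minimum attained when $(j,|\alpha|)=(0,0)$ for $m$ even and when $j+|\alpha|=1$ for $m$ odd. Writing the exponent as $-(\frac{a}{2}+[\frac{m}{2}]+1)+k$ for $k\geq 0$, the relation $j+|\alpha|=2k+m-2[\frac{m}{2}]$ singles out the finite set of pairs $(j,\alpha)$ contributing at level $k$; substituting the corresponding symbol label $m-j=2[\frac{m}{2}]+|\alpha|-2k$ into the integrand recovers exactly the formula~(\ref{eq:Heat.IQj}) defining $I_Q^{(k)}(x)$.

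I expect the only real obstacle to be the combinatorial manipulation verifying that the reindexing from $(j,\alpha)$ to $(k,\alpha)$ correctly reproduces the upper bound $|\alpha|\leq m-2[\frac{m}{2}]+2k$ together with the precise symbol indices appearing in~(\ref{eq:Heat.IQj}). The upgrade from uniformity on compact subsets of each local chart to uniformity over all of $M_a^\phi$ is then immediate from the compactness of $M_a^\phi$ (inherited from that of $M$) via a finite covering argument.
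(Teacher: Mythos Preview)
Your proposal is correct and follows essentially the same route as the paper: the paper derives the proposition directly from the text preceding it by combining Lemmas~\ref{lem:Heat.asymptotic-IQ-symbols} and~\ref{lem:Heat.asymptotic-Isymbol} to obtain~(\ref{eq:IQ}) and then performing exactly the reindexing you describe to reach~(\ref{eq:I_Q.asymptoric.expansion.I_Q^j})--(\ref{eq:Heat.IQj}). The only addition you make explicit that the paper leaves implicit is the finite-cover argument upgrading local uniformity to uniformity on all of $M_a^\phi$.
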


\begin{remark}
 On $M_a^{\phi}$ the leading term in~(\ref{eq:Heat.asymptotic-IQ}) is 
 $t^{-\left(\frac{m+a}{2}+1\right)}I_{Q}^{(0)}(x)$, where $I_{Q}^{(0)}(x)$ depends only on the 
 principal symbol of $Q$. Namely,
 \begin{equation*}
     I_{Q}^{(0)}(x)=\int_{\R^{b}} (q_{m}^{E})^{\vee}(x,0;0,(1-\phi'(x))v;1)dv=|1-\phi'(x)|^{-1}\int_{\R^{b}} 
     \check{q}_{m}(x,0;0,v;1)dv. 
 \end{equation*}
\end{remark}

\begin{remark}
  The asymptotic~(\ref{eq:Heat.asymptotic-IQ}) is expressed in terms of the symbol of $Q$ in tubular coordinates. 
  However, we usually start with a symbol in some local coordinates before passing to tubular coordinates. We determine
  the symbol in the tubular coordinates by using the change of variable formula for symbols (as given, e.g., in~\cite{Ho:ALPDO3}) in the case of the change of variables  
  $\psi(x,v)=\exp_x(v)$. 
\end{remark}

We are now in a position to state and prove the main result of this section. 

 \begin{proposition}\label{TraceOfHeatKernelVB}
  Let $P:C^{\infty}(M,E)\rightarrow C^{\infty}(M,E)$ be a differential operator of order $m$. 
 \begin{enumerate}
     \item  Uniformly on each fixed-point submanifold $M^{\phi}_{a}$, $a=0,2,...,n$, we have 
    \begin{equation}
        I_{P(L+\partial_{t})^{-1}}(x,t)\sim  \sum_{j \geq 0} 
        t^{-\left(\frac{a}{2}+\left[\frac{m}{2}\right]\right)+j}I_{P(L+\partial_{t})^{-1}}^{(j)}(x) \qquad 
        \text{as $t\rightarrow 0^{+}$},
        \label{eq:Heat.asymptotic-IQE}
    \end{equation}where $I_{P(L+\partial_{t})^{-1}}^{(j)}(x)$ is the section of $\End E$ over $M^{\phi}$ defined 
    by~(\ref{eq:Heat.IQj}) in terms of the symbol of $P(L +\partial_{t})^{-1}$ in any tubular coordinates over 
    which $E$ is trivial. \smallskip

  \item  As $t\rightarrow 0^{+}$, we have
     \begin{align}
        \Tr \left [ 
    Pe^{-tL}U_{\phi}\right] & = \int_{M^{\phi}}\tr_{E}\left[ \phi^{E}(x)I_{P(L+\partial_{t})^{-1}}(x,t)\right]|dx|+\op{O}(t^{\infty})\\
     &\sim  \sum_{0\leq a\leq n} \sum_{j \geq 0} t^{-\left(\frac{a}{2}+\left[\frac{m}{2}\right]\right)+j} 
    \int_{M_a^{\phi}}\tr_{E}\left[ \phi^{E}(x)I_{P(L+\partial_{t})^{-1}}^{(j)}(x)\right] |dx|. 
    \label{eq:Trace.P.equivariant.heat.kernel.asymptotic.expansion}
     \end{align}
 \end{enumerate}
 \end{proposition}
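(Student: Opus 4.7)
The strategy is to reduce the statement to the Volterra pseudodifferential result Proposition~\ref{prop:Heat.asymptotic-IQ} by choosing the right operator $Q$ and then to integrate over the fixed-point set. The two pieces of the proposition are really a single computation packaged in two ways.

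For part (1), I would set $Q := P(L+\partial_{t})^{-1}$. By Proposition~\ref{thm:Heat.P-inverse-heat-operator-PsiDO}, $Q$ is a Volterra \psido\ of order $m-2$, and its Volterra kernel $K_{Q}(x,y,t)$ coincides for $t>0$ with the smooth kernel $h_{t}(x,y)$ of $Pe^{-tL}$. Therefore $I_{P(L+\partial_t)^{-1}}(x,t)$, defined through~(\ref{eq:definition.I_Q.Volterra.kernel}), is exactly the object $I_{Q}(x,t)$ for this $Q$. Applying Proposition~\ref{prop:Heat.asymptotic-IQ} to $Q$ of order $m-2$ gives an asymptotic expansion whose leading exponent is $-\left(\frac{a}{2}+\left[\frac{m-2}{2}\right]+1\right) = -\left(\frac{a}{2}+\left[\frac{m}{2}\right]\right)$, with subsequent terms shifted by integers. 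This matches~(\ref{eq:Heat.asymptotic-IQE}), and the coefficients $I_{P(L+\partial_t)^{-1}}^{(j)}(x)$ are the sections built from the symbol of $Q$ in tubular coordinates via~(\ref{eq:Heat.IQj}), as required.

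For part (2), the starting point is the identity~(\ref{eq:Heat.equivariant-trace-formula}), which gives
\begin{equation*}
    \Tr\left[Pe^{-tL}U_{\phi}\right] = \int_{M}\tr_{E}\!\left[\phi^{E}(x)\,h_{t}(x,\phi(x))\right]|dx| = \int_{M}\tr_{E}\!\left[\phi^{E}(x)\,K_{Q}(x,\phi(x),t)\right]|dx|.
\end{equation*}
Applying Lemma~\ref{lem:Heat-localization} to $Q$ localizes this integral to a neighborhood of $M^{\phi}$ and identifies it, modulo $\op{O}(t^{\infty})$, with $\int_{M^{\phi}}\tr_{E}[\phi^{E}(x)\,I_{Q}(x,t)]|dx|$. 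Decomposing $M^\phi$ into its components $M^\phi_a$ and inserting the asymptotic expansion from part (1) yields the final expansion~(\ref{eq:Trace.P.equivariant.heat.kernel.asymptotic.expansion}), provided the expansion on each component is uniform (which is exactly the content of part (1)) so that term-by-term integration against the smooth density on $M^{\phi}_{a}$ is justified.

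Once part (1) is in hand, part (2) is essentially bookkeeping, so the only nontrivial step is the verification of the exponent shift $\left[\frac{m-2}{2}\right]+1=\left[\frac{m}{2}\right]$ and the parity structure of the expansion, both of which follow cleanly from Proposition~\ref{prop:Heat.asymptotic-IQ}. I do not expect any serious obstacle: the hard analytic work, namely the asymptotic for $I_{Q}(x,t)$ built on Lemmas~\ref{lem:Heat.asymptotic-IQ-symbols} and~\ref{lem:Heat.asymptotic-Isymbol}, has already been carried out, and the present proposition is its application to the particular Volterra \psido\ $P(L+\partial_{t})^{-1}$.
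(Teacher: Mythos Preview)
Your proposal is correct and follows essentially the same approach as the paper: apply Proposition~\ref{prop:Heat.asymptotic-IQ} to the Volterra \psido\ $Q=P(L+\partial_{t})^{-1}$ of order $m-2$ for part~(1), then combine the trace identity~(\ref{eq:Heat.equivariant-trace-formula}) with Lemma~\ref{lem:Heat-localization} and part~(1) for part~(2). Your explicit verification of the exponent shift $\left[\frac{m-2}{2}\right]+1=\left[\frac{m}{2}\right]$ is a nice touch that the paper leaves implicit.
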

\begin{proof}
 The first part is an immediate consequence of 
  Proposition~\ref{prop:Heat.asymptotic-IQ}, since $P(L+\partial_{t})^{-1}$ is a Volterra \psido\ of 
  order $m-2$.  Combining Proposition~\ref{thm:Heat.P-inverse-heat-operator-PsiDO} and~(\ref{eq:Heat.equivariant-trace-formula}) shows that 
  \begin{equation*}
       \Tr \left [ Pe^{-tL}U_{\phi}\right]= \int_{M} \tr_{E} \left[ \phi^{E}(x) 
       K_{P(L+\partial_{t})^{-1}}(x,\phi(x),t)\right].
  \end{equation*}The 2nd part then follows from Lemma~\ref{lem:Heat-localization} and the first part. The proof is complete. 
\end{proof}

\begin{remark}
    When $P=1$, the asymptotic~(\ref{eq:Trace.P.equivariant.heat.kernel.asymptotic.expansion}) was originally established by Gilkey~\cite{Gi:LNPAM}
    and Shih-Chi Lee~\cite{SCLee:PhD}. 
\end{remark}

\begin{remark}\label{rem:Heat.computation-Ij-inverse-heat-operator}
   The formula (\ref{eq:Heat.IQj}) expresses the coefficients  $I_{P (L+\partial_{t})^{-1}}^{(j)}(x)$ in terms of 
   the homogeneous components of the symbol of $P(L+\partial_{t})^{-1}$. Therefore, in order to compute them at a given point 
   $x_{0}\in M^{\phi}$, we may replace $P(L+\partial_{t})^{-1}$ by $PQ$, where $Q$ is Volterra \psido\ parametrix 
   of $L+\partial_{t}$ defined near $x_{0}$. In particular, we have
   \begin{equation*}
       I_{P(L+\partial_{t})^{-1}}(x_{0},t)=I_{PQ}(x_{0},t)+\op{O}(t^{\infty}). 
   \end{equation*}
\end{remark}

\section{The Local Equivariant Index Theorem}
\label{sec:proof-key-thm}
In this section, we shall use the Volterra pseudodifferential calculus and the equivariant heat kernel asymptotics from 
the previous section to give a new proof of the local equivariant index theorem  for Dirac operators.   

Let $(M^{n}, g)$ be an even dimensional compact spin oriented Riemannian manifold with the spinor bundle $\sS=\sS^{+}\oplus 
\sS^{-}$. We let $G$ be a subgroup of the connected component of the group  of smooth orientation-preserving isometries of $(M,g)$ preserving the spin 
structure. Then any $\phi\in G$ uniquely lifts to a unitary vector bundle isomorphism $\phi^{\sS}: \sS\rightarrow 
\phi_{*}\sS$ (see~\cite{BG:SODVM}). In addition, we let $E$ be a $G$-equivariant Hermitian vector bundle over $M$ of rank $p$ 
equipped with a $G$-equivariant Hermitian connection $\nabla^{E}$.
%$\nabla^{W}:C^{\infty}(M,W)\rightarrow C^{\infty}(M,T^{*}M\otimes E)$. 
We also form the  $G$-equivariant 
$\Z_{2}$-graded Hermitian vector bundle $\sS\otimes E = ( \sS^{+}\otimes E) \oplus ( \sS^{-}\otimes E)$, 
and let $\sD_{\nabla^{E}}:C^{\infty}(M,\sS\otimes E)\rightarrow C^{\infty}(M,\sS\otimes E)$ be the Dirac operator associated to these data. Namely,
\begin{equation*}
    \sD_{\nabla^{E}}=\sD_{g}\otimes 1_{E}+(c\otimes 1_{E})\circ \nabla^{E},
\end{equation*}where $\sD_{g}:C^{\infty}(M,\sS)\rightarrow C^{\infty}(M,\sS)$ is the Dirac operator of $M$ and 
$c:T^{*}M\times \sS\rightarrow \sS$ is the Clifford action of $T^{*}M$ on $\sS$. Note that with respect to the splitting 
$\sS\otimes E = ( \sS^{+}\otimes E) \oplus ( \sS^{-}\otimes E)$ the Dirac operator $\sD_{\nabla^{E}}$ takes the form,
\begin{equation*}
    \sD_{\nabla^{E}}=
    \begin{pmatrix}
        0 & \sD_{\nabla^{E}}^{-} \\
        \sD_{\nabla^{E}}^{+} & 0
    \end{pmatrix}, \qquad \sD_{\nabla^{E}}^{\pm}:C^{\infty}(M,\sS^{\pm}\otimes E)\rightarrow C^{\infty}(M,\sS^{\mp}\otimes E).
\end{equation*}

Given $\phi\in G$, as in the previous section, we let $\phi^{E}$ be the unitary vector bundle isomorphism from $E$ onto $\phi_*E$ induced by the action of $\phi$ on $E$. 
Then $\phi^{\sS}\otimes \phi^{E}$ is a lift of $\phi$ to a unitary vector bundle isomorphism from $\sS\otimes E$ onto $\phi_{*}\sS\otimes 
\phi_{*}E=\phi_{*}(\sS\otimes E)$.   As in the previous section, this defines a unitary operator 
$U_{\phi}:L^{2}(M,\sS\otimes E)\rightarrow L^{2}(M,\sS\otimes E)$. Note that the $G$-equivariant setup implies that $U_{\phi}$ 
commutes with the Dirac operator $\sD_{\nabla^{E}}$. The equivariant index $\sD_{\nabla^{E}}:G\rightarrow \Z$ is defined by
\begin{equation*}
    \ind \sD_{\nabla^{E}}(\phi)= \Tr \left[U_{\phi|\ker \sD^{+}_{\nabla^{E}}}\right]- \Tr\left[U_{\phi|\ker \sD^{-}_{\nabla^{E}}}\right] \qquad 
    \forall \phi\in G. 
\end{equation*}When $\phi=\op{id}_{M}$ we recover the Fredholm index of $\sD_{\nabla^{E}}$. When $\phi$ has only isolated 
fixed-points, the equivariant index  of $\phi$ agrees with the Lefschetz number associated to the spin complex with coefficient in 
$E$ (see~\cite{AB:LFT1}). The equivariant index theorem of Atiyah-Segal-Singer~\cite{AS:IEO2, AS:IEO3} expresses the equivariant index in terms 
of universal curvature polynomials defined as follows. 

Let $\phi \in G$.  As in the previous section, we shall denote by $M^{\phi}$ the fixed-point set of $\phi$ and 
by $\cN^{\phi}$ the normal bundle of $M^{\phi}$. We observe that, as $\phi$ preserves the orientation, the fixed-point submanifolds $M_a^\phi$ have even dimensions. Moreover, we shall orient each fixed-point submanifold $M^{\phi}_{a}$, 
$a=0,2,\ldots,n$, as in~\cite[Prop.~6.14]{BGV:HKDO}, so that $\phi^{\sS}$ gives rise to a section of 
$\Lambda^{n-a}(\cN^{\phi})^{*}_{\left| M_{a}^{\phi}\right.}$ which is positive with respect to the orientation of 
$\cN^{\phi}$ defined by the orientations of $M$ and $M^{\phi}_{a}$. 

Let $R^{TM}$ be the curvature of $(M,g)$, regarded as a section of $\Lambda^{2}T^{*}M \otimes \End (TM)$. 
The $G$-equivariance of Levi-Civita connection $\nabla^{TM}$ implies that, its restriction to each fixed-point 
submanifold $M_{a}^{\phi}$, $a=0,2,\ldots,n$,  preserves the splitting $TM_{|M^{\phi}_{a}}=TM^{\phi}_{a}\oplus 
\cN^{\phi}$ over $M^{\phi}_{a}$. Therefore, it induces connections $\nabla^{TM^{\phi}}$ and 
$\nabla^{\cN^{\phi}}$ on $TM^{\phi}_{a}$ and $\cN^{\phi}$, respectively, so that we have
\begin{equation*}
    \nabla^{TM}_{|TM^{\phi}_{a}}=\nabla^{TM^{\phi}}\oplus \nabla^{\cN^{\phi}} \qquad \textup{on $M^{\phi}_{a}$}.
\end{equation*}Note that $\nabla^{TM^{\phi}}$ is the Levi-Civita connection of $TM^{\phi}_{a}$. Let $R^{TM^{\phi}}$ and $R^{\cN^{\phi}}$ 
be the respective curvatures of $\nabla^{TM^{\phi}}$ and 
$\nabla^{\cN^{\phi}}$.  We observe that
\begin{equation}
    R_{|\Lambda^{2}TM_{a}^{\phi}}=R^{TM^{\phi}}\oplus R^{\cN^{\phi}}. 
    \label{eq:LEIT.splitting-curvature}
\end{equation}
Define
\begin{equation}
    \hat{A}(R^{TM^{\phi}})= {\det}^{\frac{1}{2}} \left(\frac{R^{TM^{\phi}}/2}{\sinh(R^{TM^{\phi}}/2)}\right) 
    \quad \text{and} \quad  
    \nu_{\phi}\left(R^{\cN^{\phi}}\right)={\det}^{-\frac{1}{2}}\left(1- \phi^{\cN}e^{-R^{\cN^{\phi}}}\right),
    \label{eq:Conformal.characteristic-forms}
\end{equation}where ${\det}^{-\frac{1}{2}}\left(1- \phi^{\cN}e^{-R^{\cN^{\phi}}}\right)$ is defined in the same way as 
in~\cite[Section 6.3]{BGV:HKDO}. 

In addition, let $F^{E}$ be the curvature of the $G$-equivariant connection $\nabla^{E}$ and denote by $F^{E}_0 $ its 
restriction to $M_{a}^{\phi}$. Note that $F^{E}_0$ is a smooth section of 
$\left(\Lambda^{2}T^{*}M^{\phi}_{a}\right)\otimes \End(E)$. We then  define
\begin{equation*}
   \Ch_{\phi}\left(F^{E}\right):=\Tr \left[ \phi^{E}\exp\left(-F^{E}_0\right)\right]\in C^\infty\left(M_a^\phi,\Lambda^{2\bt}T^{*}M^{\phi}_{a}\right). 
\end{equation*}

We can now state the equivariant index theorem in the following form. 

\begin{theorem}[Equivariant Index Theorem~\cite{AS:IEO2, AS:IEO3}]\label{thm:EIT} For all $\phi\in G$, we have
\begin{equation}
    \ind \sD_{\nabla^E}(\phi)= (-i)^{\frac{n}{2}}\sum_{\substack{0\leq a \leq n\\ \textup{$a$ even}}} (2\pi)^{-\frac{a}{2}} 
            \int_{M^{\phi}_{a}}\hat{A}(R^{TM^{\phi}})\wedge \nu_{\phi}\left(R^{\cN^{\phi}}\right)\wedge 
            \Ch_{\phi}\left(F^{E}\right). 
            \label{eq:LEIT.EIT}
        \end{equation}
\end{theorem}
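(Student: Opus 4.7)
The starting point is the equivariant McKean--Singer formula: since $U_{\phi}$ commutes with $\sD_{\nabla^{E}}^{2}$ and preserves the $\Z_{2}$-grading, the quantity $\Str\!\left[e^{-t\sD_{\nabla^{E}}^{2}}U_{\phi}\right]$ is independent of $t>0$ and equals $\ind \sD_{\nabla^{E}}(\phi)$. The plan is therefore to compute the limit of this supertrace as $t\rightarrow 0^{+}$ using the equivariant heat kernel asymptotics of Proposition~\ref{TraceOfHeatKernelVB}, and then show that the $t$-independence forces all negative powers of $t$ to cancel, leaving the formula~(\ref{eq:LEIT.EIT}).

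First I would apply Proposition~\ref{TraceOfHeatKernelVB} with $P=1$ and $L=\sD_{\nabla^{E}}^{2}$ to obtain the asymptotic expansion
\begin{equation*}
\Str\!\left[e^{-t\sD_{\nabla^{E}}^{2}}U_{\phi}\right] \sim \sum_{0\leq a\leq n}\sum_{j\geq 0} t^{-\frac{a}{2}+j}\int_{M_{a}^{\phi}}\str_{\sS\otimes E}\!\left[(\phi^{\sS}\otimes\phi^{E})(x)\,I_{(\sD_{\nabla^{E}}^{2}+\partial_{t})^{-1}}^{(j)}(x)\right]|dx|.
\end{equation*}
The $t$-independence of the supertrace then implies that on each $M_{a}^{\phi}$ the coefficients of $t^{-a/2+j}$ with $j<a/2$ must integrate to zero (this is the supertrace cancellation principle) and that the constant-in-$t$ term computes the equivariant index. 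The task then reduces to identifying the coefficient at $t^{0}$, i.e.\ the integral over $M_{a}^{\phi}$ of the term with $j=a/2$.

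The heart of the proof is the computation of this coefficient via the Getzler rescaling. Working in tubular coordinates $(x,v)$ around a fixed point $x_{0}\in M_{a}^{\phi}$, with a synchronous frame for $\sS\otimes E$ so that the Clifford variables $c(e^{i})$ act as exterior multiplication modulo contractions, I would introduce the rescaling $v\to t^{1/2}v$, $c(e^{i})\to t^{-1/2}\varepsilon(e^{i})$, and $t\to t$. By Lichnerowicz's formula together with the $G$-invariance of $\nabla^{E}$ (which makes $\phi^{E}$ parallel along $M^{\phi}$), the rescaled operator $\sD_{\nabla^{E}}^{2}+\partial_{t}$ has a well-defined limit as $t\to 0^{+}$ of Getzler order $2$, namely the model heat operator
\begin{equation*}
H_{x_{0}}=\partial_{t}+\sum_{i}\left(\partial_{v^{i}}-\tfrac{1}{4}\sum_{j}R^{TM}_{ij}(x_{0})v^{j}\right)^{\!2} - F^{E}(x_{0}),
\end{equation*}
acting on the normal variables, with the action of $\phi$ on the normal bundle entering as a rotation of the fibers. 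Translating this into the Volterra calculus using the Getzler order on Volterra symbols, an elementary argument (of the type carried out in Lemmas~\ref{lem:Heat-localization}--\ref{lem:Heat.asymptotic-Isymbol}) shows that the coefficient $I^{(a/2)}$ depends only on the homogeneous symbol of the model parametrix and equals the Volterra kernel of $H_{x_{0}}^{-1}$ evaluated at the shifted point prescribed by~(\ref{eq:Heat.action-phi-Nphi}).

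The final step is to compute this model kernel by Mehler's formula for the shifted harmonic oscillator associated with $H_{x_{0}}$, splitting the curvature via~(\ref{eq:LEIT.splitting-curvature}): the $TM^{\phi}_{a}$ part yields the $\hat A$-form of~(\ref{eq:Conformal.characteristic-forms}); the normal part, where the harmonic oscillator is centered at $(1-\phi'(x_{0}))^{-1}\cdot 0$ and the Gaussian integral produces the determinant $\det^{-1/2}\!\left(1-\phi^{\cN}e^{-R^{\cN^{\phi}}}\right)$, yields the factor $\nu_{\phi}(R^{\cN^{\phi}})$; and the twist by $\nabla^{E}$ yields $\Ch_{\phi}(F^{E})$. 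Collecting the Berezin supertrace prefactor $(-i)^{n/2}(2\pi)^{-a/2}$ coming from the identification of top Clifford degree with the Pfaffian and the Gaussian normalization over the normal fibers of dimension $n-a$ reproduces~(\ref{eq:LEIT.EIT}). The main obstacle is the bookkeeping of the Getzler filtration in the presence of the tubular change of variables and the isometry $\phi$, in particular the verification that the rescaled model operator is indeed of Getzler order $2$ and that the shifted Mehler kernel reproduces the claimed $\nu_{\phi}$ factor; once Lemma~\ref{lem:Heat.asymptotic-Isymbol} and the Volterra calculus are available, this becomes a largely algebraic computation.
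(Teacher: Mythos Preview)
Your approach is essentially the paper's: equivariant McKean--Singer, localization to $M^\phi$ via Proposition~\ref{TraceOfHeatKernelVB}, Getzler rescaling to pass to a model harmonic oscillator, and Mehler's formula to evaluate the model kernel---this is exactly the route through Lemmas~\ref{lem:AS.approximation-asymptotic-kernel} and~\ref{lem:GetzlerOrderParametrix} and Theorem~\ref{thm:LEIT.local-equiv.-index-thm-pointwise}.

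Two small corrections. First, your model operator has sign errors: it should read $\partial_t-\sum_i\bigl(\partial_i-\tfrac14\sum_j R_{ij}x^j\bigr)^2+F^E(0)$ (see Example~\ref{ex:D^2.formula}); with your signs the heat semigroup does not exist. Second, in your second paragraph, $t$-independence of the global supertrace only forces the \emph{sum over all $a$} of the coefficients of each negative power of $t$ to vanish, not each $\int_{M_a^\phi}$ separately, since different components contribute to the same power. This does not actually matter for your argument, because the Getzler computation you sketch in paragraph three already yields the stronger \emph{pointwise} statement that $\str_{\sS\otimes E}\bigl[\phi^{\sS\otimes E}(x)I_{(\sD^2+\partial_t)^{-1}}(x,t)\bigr]$ has no negative powers of $t$ (this is precisely Lemma~\ref{lem:AS.approximation-asymptotic-kernel} applied to $Q$ of Getzler order $-2$); so the appeal to $t$-independence is redundant rather than wrong.
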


\begin{remark}
When $\phi=\op{id}_{M}$ we recover the index theorem of Atiyah-Singer~\cite{AS:IEO1, AS:IEO3} for Dirac operators. In case 
$\phi$ has only isolated fixed-points, the formula~(\ref{eq:LEIT.EIT}) reduces to the fixed-point formula of Atiyah-Bott~\cite{AB:LFT1, AB:LFT2}. 
\end{remark}

Let us recall how the equivariant index theorem can be proved by heat kernel techniques. In what follows we let
$\str_{\sS\otimes E}=\tr_{\sS^{+}\otimes E}-\tr_{\sS^{-}\otimes E}$ be the supertrace on the fibers of the $\Z_{2}$-graded 
bundle $\sS\otimes E=( \sS^{+}\otimes E) \oplus ( \sS^{-}\otimes E)$. We also denote by $\Str$ the corresponding 
supertrace  on $\cL^{1}\left( L^{2}(M,\sS\otimes E)\right)$. By the equivariant McKean-Singer 
formula~(see, e.g., \cite[Prop.~6.3]{BGV:HKDO}), for any given $\phi\in G$, we have
\begin{equation*}
     \ind \sD_{\nabla^{E}}(\phi)= \Str \left[e^{-t\sD_{\nabla^{E}}^{2}}U_{\phi}\right]\qquad \text{for all $t>0$}.
\end{equation*}
Therefore, the equivariant index theorem is a consequence of the following result. 

\begin{theorem}[Local Equivariant Index Theorem~\cite{DP:T, Gi:LNPAM, Ka:PhD}]\label{thm:LEIT.local-equiv.-index-thm}
    For all $\phi\in G$, we have
\begin{equation}
   \lim_{t\rightarrow 0^{+}} \Str \left[e^{-t\sD_{\nabla^{E}}^{2}}U_{\phi}\right] = (-i)^{\frac{n}{2}}
   \sum_{\substack{0\leq a\leq n\\ \textup{$a$ even}}} (2\pi)^{-\frac{a}{2}} 
            \int_{M^{\phi}_{a}}\hat{A}(R^{TM^{\phi}})\wedge \nu_{\phi}\left(R^{\cN^{\phi}}\right)\wedge 
            \Ch_{\phi}\left(F^{E}\right). 
            \label{eq:LEIT.local-equiv.-index-thm}
        \end{equation}
\end{theorem}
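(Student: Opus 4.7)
The plan is to derive the theorem as a direct corollary of Proposition~\ref{TraceOfHeatKernelVB} applied to $L=\sD_{\nabla^E}^2$ and $P=1$. Specializing~\eqref{eq:Trace.P.equivariant.heat.kernel.asymptotic.expansion} gives
\begin{equation*}
\Str\bigl[e^{-t\sD_{\nabla^E}^2}U_\phi\bigr]\sim\sum_{\substack{0\leq a\leq n\\ \textup{$a$ even}}}\sum_{j\geq 0} t^{-\frac{a}{2}+j}\int_{M_a^\phi}\str_{\sS\otimes E}\!\bigl[(\phi^{\sS}\otimes\phi^{E})(x)\,I^{(j)}_{(\sD_{\nabla^E}^2+\partial_t)^{-1}}(x)\bigr]\,|dx|,
\end{equation*}
so that existence of the limit requires the fibrewise supertrace to annihilate the coefficients $I^{(j)}$ for $j<a/2$ on each $M_a^\phi$, and its value is the integral of the $j=a/2$ coefficient.

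First I would fix $x_0\in M_a^\phi$ and, by Remark~\ref{rem:Heat.computation-Ij-inverse-heat-operator}, replace $(\sD^2_{\nabla^E}+\partial_t)^{-1}$ by a Volterra parametrix $Q$ constructed in tubular coordinates $(x,v)$ around $x_0$ in a parallel synchronous frame of $\sS\otimes E$ obtained by radial parallel transport away from $M_a^\phi$. The symbol of $\sD^2_{\nabla^E}$ in these coordinates is given by the Lichnerowicz formula, and its homogeneous components needed to assemble~\eqref{eq:Heat.IQj} depend only on the Riemann tensor of $M$, the twist curvature $F^E$, and the Clifford action at $x_0$.

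The core device is an equivariant extension of Getzler's rescaling, grafted onto the Volterra calculus as in~\cite{Po:CMP}. One filters Volterra symbols by assigning Getzler weight $0$ to the tangential $x$-variables, weight $1$ to the normal coordinates $v^j$ and their cotangent $\nu_j$, weight $2$ to the time covariable $\tau$, and weight $1$ to each Clifford generator acting on $\sS$. A bookkeeping check on symbol composition (the content of the unseen Lemma~\ref{lem:GetzlerOrderParametrix}) yields that $Q$ has Getzler order $-2$ and that its Getzler principal symbol is the inverse of the model operator $H_{x_0}+\partial_t$ on $T_{x_0}M^\phi_a\oplus\cN^\phi_{x_0}$, where
\begin{equation*}
H_{x_0}\;=\;-\sum_{j}\Bigl(\partial_{v^j}+\tfrac14 R^{\cN^\phi}_{jk}(x_0)\,v^k\Bigr)^{\!2}\;+\;\tfrac12 R^{TM^\phi}(x_0)\;+\;F^E(x_0)
\end{equation*}
is a generalized harmonic oscillator twisted by the exterior-algebra-valued curvatures. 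A second check (the unseen Lemma~\ref{lem:AS.approximation-asymptotic-kernel}) shows that, when injected into~\eqref{eq:Heat.IQj}, the coefficient $I^{(j)}_{Q}(x_0)$ has Getzler order at most $2j-a$, while $\str_{\sS}$ annihilates any Clifford element of degree $<n$. The top Clifford degree $n$ is reached precisely when $j=a/2$, so all negative-power coefficients vanish after the supertrace and the surviving term is, up to sign and a power of $2\pi$, the Berezin integral of the Volterra kernel of $(H_{x_0}+\partial_t)^{-1}$ at $t=1$ evaluated at $(v,(1-\phi'(x_0))v)$ and integrated over $v\in\cN^\phi_{x_0}$.

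The remaining step is to compute this model Volterra kernel via Mehler's formula for Gaussian harmonic-oscillator heat kernels, following~\cite{LM:DMJ}. Carrying out the Gaussian $v$-integration converts the normal-bundle factor into $|1-\phi'(x_0)|^{-1}\det^{-\frac12}(1-\phi^{\cN}e^{-R^{\cN^\phi}})=|1-\phi'(x_0)|^{-1}\nu_\phi(R^{\cN^\phi})$, the tangential model kernel into $\hat A(R^{TM^\phi})$, and the twist factor into $\Ch_\phi(F^E)$; the overall constant $(-i)^{n/2}(2\pi)^{-a/2}$ arises from the Berezin integration and the Fourier normalization. The main obstacle, as classically, is the control of the Getzler filtration on the Volterra calculus: proving sub-additivity of Getzler orders under composition, showing that the Getzler principal symbol of the parametrix is inverse to the symbol of the model heat operator, and tracking through the Taylor-expansion argument of Lemma~\ref{lem:Heat.asymptotic-Isymbol} that the Getzler orders of the $I^{(j)}$-coefficients behave as claimed. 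Once this is in place, the computation with Mehler's formula and the identification with the characteristic forms is straightforward.
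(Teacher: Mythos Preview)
Your overall strategy coincides with the paper's: reduce to the pointwise asymptotic via Proposition~\ref{TraceOfHeatKernelVB}, localize at a fixed point, apply a Getzler rescaling to the Volterra parametrix, identify the model operator, and evaluate via Mehler's formula. The references to Lemma~\ref{lem:GetzlerOrderParametrix} and Lemma~\ref{lem:AS.approximation-asymptotic-kernel} are the right hinge points.

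However, the Getzler filtration you describe is not the one that works, and the model operator you wrote down is incorrect. The paper (see~\eqref{eq:AS-Getzler-order}) assigns degree $+1$ to \emph{every} $\partial_j$ and degree $-1$ to \emph{every} coordinate $x^j$, with no distinction between tangential and normal directions; the tangential/normal split enters only later, through the normal-fiber integral defining $I_Q$ and the decomposition $R_{|\Lambda^2 TM_a^\phi}=R'\oplus R''$ in the Mehler computation of Lemma~\ref{lem:GetzlerOrderParametrix}. With your anisotropic assignment (tangential weight~$0$, normal weight~$1$), the tangential covariant derivatives $\nabla_i^{\sS\otimes E}$ would not rescale to $\partial_i-\tfrac14 R_{ij}x^j$, and the model operator would lose precisely the coupling to $R^{TM^\phi}$ that produces $\hat A(R^{TM^\phi})$. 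Your proposed $H_{x_0}$ has derivatives only in the normal directions and carries $\tfrac12 R^{TM^\phi}$ as a bare zeroth-order form; exponentiating that gives $e^{-\frac12 R^{TM^\phi}}$, not ${\det}^{1/2}\!\bigl(\tfrac{R/2}{\sinh(R/2)}\bigr)$. (Also, giving both $v^j$ and its dual covariable $\nu_j$ weight $+1$ is internally inconsistent: the pairing $v\cdot\nu$ must be scale-invariant.)

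The fix is to work in admissible synchronous normal coordinates centered at $x_0$ and apply the \emph{isotropic} Getzler rescaling~\eqref{eq:AS-Getzler-order}. Then $\sD_{\nabla^E}^2$ has Getzler order~$2$ with model $H_R+F^E(0)$, the parametrix $Q$ has Getzler order~$-2$ with model $(H_R+\partial_t)^{-1}\wedge e^{-tF^E(0)}$, and Lemma~\ref{lem:AS.approximation-asymptotic-kernel} (which compares Clifford degree $j$ against Getzler order $m$, not ``Getzler order of $I^{(j)}$'') shows the leading supertrace is governed by $I_{Q_{(-2)}}(0,1)$. The Gaussian $v$-integral over $\cN^\phi_{x_0}$ then splits the full Mehler kernel into the $\hat A$ and $\nu_\phi$ factors via the block decomposition $R'\oplus R''$.
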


\begin{remark}
The local equivariant index theorem for the Dirac complex with coefficients in a vector bundle is originally due to 
Gilkey~\cite{Gi:LNPAM}, who also obtained versions of this theorem for several other elliptic complexes. Versions for the 
signature complex were also given by Donnelly-Patodi~\cite{DP:T} and Kawasaki~\cite{Ka:PhD} around the same time. All 
these proofs partly rely on Riemanian invariant theory.  
Purely analytical proofs were subsequently given by Bismut~\cite{Bi:ASITPA2}, 
Berline-Vergne~\cite{BV:BSMF, BGV:HKDO} and Lafferty-Yu-Zhang~\cite{LYZ:TAMS}.  In addition, Liu-Ma~\cite{LM:DMJ} proved a 
version of the local equivariant index theorem for families of Dirac operators.  
\end{remark}

In what follows, given a differential form $\omega$ on $M$ (resp., $M_a^\phi$) we shall denote by $|\omega|^{(n)}$ (resp., $|\omega|^{(a,0)}$) its Berezin integral, i.e., 
its inner product with the volume form of $M$ (resp., the induced volume form of $M_a^\phi$).  We note that if $\omega$ is a differential form on $M_a^\phi$, then 
%given any point $x_{0}\in M_{a}^{\phi}$ and a differential form $\omega$ on $M_{a}^{\phi}$, we shall denote by 
%$|\omega(x_{0})|^{(a,0)}$ the inner-product of the top-degree component of $\omega(x_{0})$ with the volume form of 
%$M_{a}^{\phi}$. That is, $|\omega(x_{0})|^{(a,0)}$ is the Berezin integral of $\omega(x_{0})$. We note that
\begin{equation}
 \int_{M_a^\phi} |\omega(x)|^{(a,0)}|dx|=\int_{M_a^\phi} \omega. 
 \label{eq:LEIT.Berezin-integration}
\end{equation}
Bearing this in mind, let $\phi \in G$. Applying the 2nd part of Proposition~\ref{TraceOfHeatKernelVB} to 
$L=\sD_{\nabla^{E}}^{2}$ and $P=\gamma$, where $\gamma=\op{id}_{\sS^{+}\otimes 
E}-\op{id}_{\sS^{-}\otimes E}$ is the grading operator, we see that, as $t\rightarrow 0^{+}$, we have
\begin{equation*}
     \Str \left [ e^{-t\sD_{\nabla^{E}}^{2}}U_{\phi}\right]  = \int_{M^{\phi}}\str_{\sS\otimes E}\left[ \phi^{\sS\otimes 
     E}(x)I_{(\sD_{\nabla^{E}}^{2}+\partial_{t})^{-1}}(x,t)\right]|dx|+\op{O}(t^{\infty}). 
\end{equation*}
Therefore, we see that the local equivariant index theorem is a consequence of the following result. 

\begin{theorem}\label{thm:LEIT.local-equiv.-index-thm-pointwise}
  Let $\phi \in G$. Then, as $t\rightarrow 0^{+}$ and uniformly on each fixed-point submanifold $M_a^\phi$, we have
   \begin{multline}
      \str_{\sS\otimes E} \left[ \phi^{\sS\otimes E}(x_{0})I_{(\sD_{\nabla^{E}}^{2}+\partial_{t})^{-1}}(x_{0},t)\right]=\\ (-i)^{\frac{n}{2}} (2\pi)^{-\frac{a}{2}}
      \biggl|\hat{A}(R^{TM^{\phi}})\wedge \nu_{\phi}\left(R^{\cN^{\phi}}\right)\wedge 
            \Ch_{\phi}\left(F^{E}\right)(x_{0})\biggr|^{(a,0)} + \op{O}(t).
     %  \qquad \text{as $t\rightarrow 0^{+}$}.
      \label{eq:LEIT.local-equiv.-index-thm-pointwise}
  \end{multline} 
\end{theorem}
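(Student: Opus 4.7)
The plan is to apply the expansion of Proposition~\ref{prop:Heat.asymptotic-IQ} with $Q=(\sD_{\nabla^E}^2+\partial_t)^{-1}$ (a Volterra \psido\ of order $-2$) so that, uniformly on $M_a^\phi$,
\begin{equation*}
 I_{(\sD_{\nabla^E}^2+\partial_t)^{-1}}(x_0,t)\sim \sum_{j\geq 0} t^{-\frac{a}{2}+j}\,I^{(j)}_{(\sD_{\nabla^E}^2+\partial_t)^{-1}}(x_0),
\end{equation*}
and then show that, after pairing with $\str_{\sS\otimes E}[\phi^{\sS\otimes E}(x_0)\cdot\,]$, all coefficients with $j<a/2$ drop out and the $j=a/2$ coefficient produces exactly the Berezin integrand on the right-hand side of~\eqref{eq:LEIT.local-equiv.-index-thm-pointwise}. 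The main tool is an equivariant version of Getzler's rescaling applied to Volterra \psidos.

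First, fix $x_0\in M_a^\phi$, set $b=n-a$, and pick tubular coordinates $(x,v)\in U\times B(\epsilon_0)$ adapted to the splitting $TM|_{M_a^\phi}=TM^\phi\oplus\cN^\phi$. Trivialize $\sS\otimes E$ by parallel transport along geodesics with respect to $\nabla^{\sS}\otimes 1+1\otimes\nabla^{E}$; in this trivialization $\End_{\C}\sS$ is identified with the complexified Clifford algebra $\Cl(T_{x_0}M)\otimes\C$. Getzler's rescaling then assigns weight $1$ to each Clifford generator $c(e_i)$, weight $\tfrac12$ to the normal coordinates $v^\mu$ and their dual variables $\nu^\mu$, weight $1$ to the tangential cotangent variable $\xi$, and weight $2$ to $\tau$. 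This induces a filtration on the Volterra symbol algebra, the \emph{Getzler order}, together with a notion of \emph{model operator} obtained by keeping only top-weight terms. By Lichnerowicz's formula and the Taylor expansion of the metric and of the connection $\nabla^{\sS\otimes E}$ around $x_0$, the operator $\sD_{\nabla^E}^2+\partial_t$ has Getzler order $2$ and model operator
\begin{equation*}
    H+\partial_t:=-\sum_{i=1}^{a}\partial_{x^i}^{2}-\sum_{\mu=1}^{b}\Bigl(\partial_{v^\mu}-\tfrac14 R^{\cN^\phi}(x_0)_{\mu\nu}v^{\nu}\Bigr)^{\!2}+\tfrac14 R^{TM^\phi}(x_0)_{ij}c^{i}c^{j}+F^E_0(x_0)+\partial_t,
\end{equation*}
with $c^i=c(e^i)$, the normal part being further twisted by the isometry $\phi'(x_0)^{\cN}$ acting on $v$ through the defining formula~\eqref{IQIntKer}.

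Invoking Lemma~\ref{lem:GetzlerOrderParametrix}, the parametrix of $\sD_{\nabla^E}^2+\partial_t$ has Getzler order $-2$ with model operator $(H+\partial_t)^{-1}$; Lemma~\ref{lem:AS.approximation-asymptotic-kernel} then implies that the coefficients $I^{(j)}_{(\sD_{\nabla^E}^2+\partial_t)^{-1}}(x_0)$ with $j<a/2$ carry total Clifford degree strictly less than $n=a+b$, hence are annihilated by $\str_{\sS\otimes E}[\phi^{\sS\otimes E}(x_0)\cdot\,]$ by the Berezin principle, while the $j=a/2$ coefficient coincides, modulo such lower Clifford terms, with the value at $t=1$ of the Volterra kernel of $(H+\partial_t)^{-1}$ integrated over the normal fiber against $\phi'(x_0)^{\cN}$. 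Applying Mehler's formula to this twisted generalized harmonic oscillator (\emph{cf.} \cite{LM:DMJ} and Section~\ref{sec:proof-key-thm}) and using~\eqref{eq:LEIT.Berezin-integration}, the tangential oscillator contributes $\hat{A}(R^{TM^\phi})(x_0)$, the normal-fiber integral contributes $\nu_\phi(R^{\cN^\phi})(x_0)$, and the zeroth-order term $F^E_0(x_0)$ contributes $\Ch_\phi(F^E)(x_0)$, yielding after the Berezin bracket $|\cdot|^{(a,0)}$ the claimed expression. The subsequent terms $j>a/2$ contribute the $\op{O}(t)$ remainder. The main obstacle will be the equivariant bookkeeping: ensuring that the $\phi'(x_0)$-twist along $\cN^\phi$ is correctly absorbed into Mehler's formula to produce the $\nu_\phi$ factor (rather than a second $\hat{A}$-type factor), and reconciling the numerical constant $(-i)^{n/2}(2\pi)^{-a/2}$ arising from the Fourier inversion, the Mehler determinant, the supertrace normalization, and the orientation conventions on $M_a^\phi$ recalled before Theorem~\ref{thm:EIT}.
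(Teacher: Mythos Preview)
Your high-level outline matches the paper's: localize at $x_0$, apply Getzler rescaling to a Volterra parametrix, identify the model operator, and evaluate via Mehler's formula. But the implementation you describe is inconsistent with the lemmas you cite, and the discrepancy is not cosmetic. The Getzler rescaling in this paper (see~\eqref{eq:AS-Getzler-order}) is \emph{isotropic}: every $x^j$ has degree $-1$, every $\partial_j$ and every $c(dx^j)$ has degree $1$; there is no weight $\tfrac12$ for the normal coordinates. With the correct weights, the model operator of $\sD_{\nabla^E}^2$ is not the operator you wrote; by Example~\ref{ex:D^2.formula} it is $H_R+F^E(0)$ with $H_R=-\sum_{i=1}^n(\partial_i-\tfrac14 R_{ij}x^j)^2$ and $R_{ij}=\sum_{k<l}R_{ijkl}\,dx^k\wedge dx^l$ \emph{form}-valued in $\Lambda^2(n)$. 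The full curvature $R^{TM}(0)$ enters the oscillator in all $n$ directions; it is not pre-split into a tangential flat Laplacian, a normal oscillator, and a Clifford zeroth-order term $\tfrac14 R^{TM^\phi}_{ij}c^ic^j$.

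Consequently, the tangential/normal split and the factor $\nu_\phi$ do not arise from the model operator itself but only \emph{after} one computes $I_{Q_{(-2)}}(0,1)$ via the Gaussian integral~\eqref{eq:ProofProp.IHR-Theta} of the full Mehler kernel of $H_R$ over the normal fiber with the $\phi^{\cN}$-shifted second argument, and then extracts the $\Lambda^{\bullet,0}(n)$ component as in~\eqref{eq:LEIT.horizontal-component}. You also omit Lemma~\ref{lem:LEIT.symbol-phiS}: the pairing with $\phi^{\sS}$ selects, via~\eqref{eq:SymbolQuantization}, the $\Lambda^{(a,0)}$ component of $\sigma[I_Q(0,t)]$ and produces the factor $2^{-(n-a)/2}{\det}^{1/2}(1-\phi^{\cN})$ that cancels against the one coming from the normal Gaussian integral; your ``Berezin principle'' assertion that $I^{(j)}$ for $j<a/2$ has total Clifford degree $<n$ is not the mechanism that kills the lower terms. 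If you replace your model operator by that of Lemma~\ref{lem:GetzlerOrderParametrix}, drop the anisotropic weights, and route the $\phi$-dependence through~\eqref{IQonRn} together with Lemmas~\ref{lem:LEIT.symbol-phiS} and~\ref{lem:AS.approximation-asymptotic-kernel}, the argument becomes exactly the paper's.
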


We shall now prove Theorem~\ref{thm:LEIT.local-equiv.-index-thm-pointwise}. By the 1st part Proposition~\ref{TraceOfHeatKernelVB}, as $t\rightarrow 0^{+}$ and uniformly on each fixed-point submanifold $M_{a}^{\phi}$, we have
\begin{equation}
\label{eq:asympt.I.parametrix}
  I_{(\sD_{\nabla^{E}}^{2}+\partial_{t})^{-1}}(x,t)\sim  \sum_{j \geq 0} 
        t^{-\frac{a}{2}+j}I_{(\sD_{\nabla^{E}}^{2}+\partial_{t})^{-1}}^{(j)}(x).   
\end{equation}
Comparing the asymptotics~(\ref{eq:LEIT.local-equiv.-index-thm-pointwise}) and~(\ref{eq:asympt.I.parametrix}) we deduce that in order to prove 
Theorem~\ref{thm:LEIT.local-equiv.-index-thm-pointwise} it is enough to show that the asymptotics~(\ref{eq:LEIT.local-equiv.-index-thm-pointwise}) hold pointwise at any fixed-point of $\phi$.
 
Let $x_{0}\in M_{a}^{\phi}$, $a=0,2,\ldots,n$. By Remark~\ref{rem:Heat.computation-Ij-inverse-heat-operator}, given any Volterra \psido\ parametrix $Q$ defined near $x_{0}$, we have
\begin{equation}
    I_{(\sD_{\nabla^{E}}^{2}+\partial_{t})^{-1}}(x_{0},t)=I_{Q}(x_{0},t) +\op{O}(t^{\infty}).
    \label{eq:LIT.IQ-approximation}
\end{equation}As a result we may replace the Dirac operator $\sD_{\nabla^{E}}$ by any differential operator that agrees with 
$\sD_{\nabla^{E}}$ in a given local chart near $x_{0}$. In other words, this enables us to localize the problem and replace 
$\sD_{\nabla^{E}}$ by a Dirac operator on $\R^{n}$ and acting on the trivial bundle with fiber $\sS_{n}\otimes \C^{p}$,  where 
$\sS_{n}$ is the spinor space of $\R^{n}$ and $p$ is the rank of $E$. 

To wit let $e_{1},\ldots,e_{n}$ be an oriented orthonormal basis of $T_{x_{0}}M$ such that 
$e_{1},\ldots, e_{a}$ span $T_{x_{0}}M^{\phi}$ and $e_{a+1},\ldots, e_{n}$ span $\cN^{\phi}_{x_{0}}$. This provides us 
with normal coordinates $(x^{1},\ldots, x^{n})\rightarrow \exp_{x_{0}}\left(x^{1}e_{1}+\cdots +x^{n}e_{n}\right)$.  Moreover, 
using parallel transport enables us to construct a synchronous local oriented tangent frame $e_{1}(x),\ldots,e_{n}(x)$ such that $e_{1}(x), 
\ldots, e_{a}(x)$ form an oriented frame of $TM_{a}^{\phi}$ and $e_{a+1}(x),\ldots, e_{n}(x)$ form an (oriented) frame $\cN^{\phi}$ 
(when both frames are restricted to $M^\phi$). This  gives rise to trivializations of the tangent and spinor bundles. 
We also trivialize $E$ near $x_{0}$. Using these coordinates and 
trivializations, we let $\sD$ be a Dirac operator on $\R^{n}$ acting on the 
trivial bundle with fiber $\sS_{n}\otimes \C^{p}$ associated to a metric on $\R^{n}$ and a connection on $\C^{p}$ that
 agree near $x=0$  with the metric $g$ and connection $\nabla^{E}$, respectively. Incidentally, 
$\sD_{\nabla^{E}}$ agrees with $\sD$ near $x=0$. Note that $e_{j}(x)=\partial_{j}$ at $x=0$. Moreover, the coefficients $g_{ij}(x)$ of the metric and the 
coefficients $\omega_{ikl}:=\acou{\nabla_i^{TM}e_k}{e_l}$ of the Levi-Civita connection satisfy 
\begin{equation}
     g_{ij}(x)=\delta_{ij}+\op{O}(|x|^{2}), \qquad  \omega_{ikl}(x)= -\frac12 
R_{ijkl}x^j 
     +\op{O}(|x|^{2}),   
    \label{eq:AS-asymptotic-geometric-data}
\end{equation}
where  $R_{ijkl}:=\acou{R^{TM}(0)(\partial_{i},\partial_{j})\partial_{k}}{\partial_{l}}$ are the coefficients of the 
curvature tensor at $x=0$  (see, e.g,~\cite[Chap.~1]{BGV:HKDO}).   
Moreover, in order to simplify notations we shall denote by $\phi'$ the endomorphism $\phi'(0)$ of $\R^{n}$. We shall use a similar 
notation for $\phi^{\cN}(0)$, $\phi^{\sS}(0)$, and $\phi^{E}(0)$. In particular, $\phi^{\cN}$ is the element of $\op{SO}(n-a)$ such that 
\begin{equation*}
    \phi'= 
    \begin{pmatrix}
        1 & 0 \\
        0 & \phi^{\cN}
    \end{pmatrix}.
\end{equation*}

Let $Q\in \pvdo^{-2}(\R^{n}\times \R, \sS_{n}\otimes \C^{p})$ be a parametrix for $\sD^{2}+\partial_{t}$. 
Using~(\ref{eq:LIT.IQ-approximation})  we obtain
\begin{equation}
\str_{\sS\otimes E}\left[ \phi^{\sS\otimes E}(x_{0})I_{(\sD_{\nabla^{E}}^2+\partial_{t})^{-1}} (x_{0},t)\right]  = 
(\str_{\sS_{n}\otimes \C^{p}})\left[ (\phi^{\sS}\otimes \phi^{E})I_{Q} 
(0,t)\right] +\op{O}(t^{\infty}).  
\label{eq:ActionOnBundle}
% &  = (-2i)^{\frac{n}{2}}\left|\sigma \left[  \phi^{\sS} \tr_{\C^{p}}\left[\phi^{E}I_{Q}(0, t)\right]\right]\right|^{(n)} +\op{O}(t^{\infty}).
 \end{equation}

The supertrace of an endomorphism on spinors is computed as follows. Let $\Lambda(n)=\Lambda^{\bt}_{\C}\R^{n}$ be the complexified exterior algebra of $\R^{n}$.    
We shall use the following gradings on $\Lambda(n)$, 
\begin{equation*}
    \Lambda(n)=\bigoplus_{1\leq j \leq n}\Lambda^{j}(n)=\bigoplus_{\substack{1\leq k \leq a\\
    1\leq l\leq n-a}}\Lambda^{k,l}(n),
\end{equation*}where $\Lambda^{j}(n)$ is the space of forms of degree $j$ and $\Lambda^{k,l}(n)$ is the span of forms 
$dx^{i_{1}}\wedge \cdots \wedge dx^{i_{k+l}}$ with $1\leq i_{1}<\cdots <i_{k}\leq a$ and $a+1\leq i_{k+1}<\cdots 
<i_{k+l}\leq n$. Given a form $\omega \in \Lambda(n)$ we shall denote by $\omega^{(j)}$ (resp., $\omega^{(k,l)}$) its 
component in $\Lambda^{j}(n)$ (resp., $\Lambda^{k,l}(n)$). 
Let $\Cl(n)$ be the complexified Clifford algebra of $\R^{n}$ (seen as a subalgebra of $\End \Lambda(n)$) and 
let $c:\Lambda(n)\rightarrow \Cl(n)$ be the linear isomorphism given by the Clifford action of $\Lambda(n)$ on itself. Composing $c$ with the 
spinor representation $\Cl(n)\rightarrow \End \sS_{n}$ (which is an algebra isomorphism since $n$ is  even), we get a 
linear isomorphism $\Lambda(n)\rightarrow \End \sS_{n}$, which we shall also denote by $c$. We denote by $\sigma: \End \sS_{n}\rightarrow \Lambda(n)$ 
its inverse. We note that, although $c$ and $\sigma$ are not isomorphisms of algebras, for $\omega_{j}\in 
\Lambda^{k_{j},l_{j}}(n)$, $j=1,2$, we have
\begin{equation}
    \sigma\left[ c(\omega_{1})c(\omega_{2})\right]=\omega_{1} \wedge \omega_{2} \quad \bmod
  \bigoplus_{(k,l)\in \cK} \Lambda^{k,l}(n),
  \label{eq:SymbolQuantization}
\end{equation}where $\cK$ consists of all pairs $(k,l)$ such that, either $k\leq k_{1}+k_{2}-2$ and $l\leq l_{1}+l_{2}$, or 
$k\leq k_{1}+k_{2}$ and $l\leq l_{1}+l_{2}-2$.

\begin{lemma}[{\cite[Thm.~1.8]{Ge:POSASIT}}; see also~{\cite[Prop.~3.21]{BGV:HKDO}}]\label{lm:ActionOnSpinorBundle}
Let $\mathfrak{a}\in\End\sS_{n}$. Then 
    \begin{equation*}
    \str_{\sS_{n}}[\mathfrak{a}]=(-2i)^{\frac{n}{2}}\left| \sigma(\mathfrak{a})\right|^{(n)}. 
%      \label{eq:LEIT.supertrace-phiS-symbol1}
\end{equation*}
\end{lemma}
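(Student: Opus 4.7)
The plan is to verify the identity on a basis and then invoke linearity of both sides in $\mathfrak{a}$. Since $c:\Lambda(n)\to\End\sS_{n}$ is a linear isomorphism, it is enough to treat $\mathfrak{a}=c(e_{I})$ as $I=\{i_{1}<\cdots<i_{k}\}$ ranges over subsets of $\{1,\ldots,n\}$ and $e_{I}=e_{i_{1}}\wedge\cdots\wedge e_{i_{k}}$. Then $\sigma(c(e_{I}))=e_{I}$, and the Berezin integral $|e_{I}|^{(n)}$ vanishes unless $I=\{1,\ldots,n\}$, in which case it equals $1$. Thus the lemma reduces to the two assertions $\str_{\sS_{n}}[c(e_{I})]=0$ whenever $I\subsetneq\{1,\ldots,n\}$ and $\str_{\sS_{n}}[c(e_{1})\cdots c(e_{n})]=(-2i)^{n/2}$.

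The key tool for the vanishing is the chirality element $\gamma\in\End\sS_{n}$ defining the $\Z_{2}$-grading of $\sS_{n}$; it satisfies $\gamma=i^{n/2}c(e_{1})\cdots c(e_{n})$, squares to the identity, and anticommutes with every $c(e_{i})$. When $|I|$ is odd, $c(e_{I})$ anticommutes with $\gamma$ and is therefore off-diagonal in the block form induced by the grading, so both $\tr_{\sS_{n}}[c(e_{I})]$ and $\tr_{\sS_{n}}[\gamma c(e_{I})]$ vanish. When $|I|$ is even with $|I|<n$, I pick $j\notin I$. The anticommutation of $c(e_{j})$ with each $c(e_{i})$, $i\in I$, together with $c(e_{j})^{2}=-1$, yields $c(e_{j})^{-1}c(e_{I})c(e_{j})=(-1)^{|I|}c(e_{I})=c(e_{I})$, whereas $c(e_{j})^{-1}\gamma c(e_{j})=-\gamma$. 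Inserting $c(e_{j})c(e_{j})^{-1}=1$ and using cyclicity of the trace, one obtains
\begin{equation*}
\str_{\sS_{n}}[c(e_{I})] = \tr_{\sS_{n}}[\gamma c(e_{I})] = \tr_{\sS_{n}}\bigl[\bigl(c(e_{j})^{-1}\gamma c(e_{j})\bigr)\bigl(c(e_{j})^{-1}c(e_{I})c(e_{j})\bigr)\bigr] = -\str_{\sS_{n}}[c(e_{I})],
\end{equation*}
which forces the supertrace to be zero.

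For the top component $I=\{1,\ldots,n\}$, inverting the defining relation of $\gamma$ gives $c(e_{1})\cdots c(e_{n})=(-i)^{n/2}\gamma$, so
\begin{equation*}
\str_{\sS_{n}}[c(e_{1})\cdots c(e_{n})] = (-i)^{n/2}\tr_{\sS_{n}}[\gamma^{2}] = (-i)^{n/2}\dim\sS_{n} = (-i)^{n/2}\cdot 2^{n/2} = (-2i)^{n/2}.
\end{equation*}
Combining the three cases by linearity yields the formula. The argument (due to Getzler) has no serious obstacle; the only real subtlety is to be consistent with the sign and $i$-power conventions used in the Clifford relation $c(e_{i})^{2}=-1$ and in the definition of $\gamma$, since these are precisely what pin down the constant $(-2i)^{n/2}$.
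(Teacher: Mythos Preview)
Your proof is correct and is precisely the standard argument found in the references the paper cites (Getzler and Berline--Getzler--Vergne). Note that the paper itself does not supply a proof for this lemma; it simply states the result with attribution, so there is nothing further to compare.
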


In the case of $\phi^{\sS}$ we have the following result. 
\begin{lemma}[\cite{BGV:HKDO, LYZ:TAMS}]\label{lem:LEIT.symbol-phiS}
    The form $\sigma\left[ \phi^{\sS}\right]$ belongs to $\Lambda^{(0,2\bt)}(n)$ and we have
    \begin{equation}
       \sigma\left[ \phi^{\sS}\right]^{(0,n-a)}=2^{-\frac{n-a}{2}} {\det}^{\frac{1}{2}}(1-\phi^{\cN})dx^{a+1}\wedge 
       \cdots \wedge dx^{n}.
       \label{eq:LEIT.top-symbol-phiS}
    \end{equation}
\end{lemma}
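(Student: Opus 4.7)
Since $\phi'(0)$ acts as the identity on the first $a$ coordinates and as $\phi^{\cN}$ on the last $n-a$ coordinates, the spin lift $\phi^{\sS}$ lies in the sub-Clifford algebra generated by $c(e^{a+1}),\ldots, c(e^n)$. The plan is to diagonalize $\phi^{\cN}$ into commuting plane rotations, compute $\phi^{\sS}$ via the explicit formula for the spin lift of a plane rotation, apply $\sigma$ using~(\ref{eq:SymbolQuantization}), and extract the top-degree component.

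First I would choose an oriented orthonormal basis $(f_{a+1},\ldots, f_n)$ of $\cN^\phi_{x_0}$ that block-diagonalizes $\phi^{\cN}$ into $2\times 2$ rotation blocks of angles $\theta_k$ acting on the planes $\op{span}(f_{a+2k-1},f_{a+2k})$, $1 \leq k \leq (n-a)/2$. Since the change of basis from $(e_{a+j})$ to $(f_{a+j})$ belongs to $SO(n-a)$, it lifts canonically to $\op{Spin}(n-a)$ and leaves $dx^{a+1}\wedge \cdots \wedge dx^n = f^{a+1}\wedge\cdots\wedge f^n$ unchanged. The standard formula for the spin cover then gives
\begin{equation*}
\phi^{\sS}= \pm\prod_{k=1}^{(n-a)/2}\Bigl(\cos\tfrac{\theta_k}{2}+\sin\tfrac{\theta_k}{2}\,c(f^{a+2k-1})c(f^{a+2k})\Bigr),
\end{equation*}
where the factors commute because the index pairs are disjoint. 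Applying $\sigma$ and invoking~(\ref{eq:SymbolQuantization})---the correction terms vanish since each $f^{a+2k-1}$ and $f^{a+2k}$ lies in $\Lambda^{(0,1)}$ with no repeated indices across factors---yields
\begin{equation*}
\sigma[\phi^{\sS}]= \pm\prod_{k=1}^{(n-a)/2}\Bigl(\cos\tfrac{\theta_k}{2}+\sin\tfrac{\theta_k}{2}\,f^{a+2k-1}\wedge f^{a+2k}\Bigr).
\end{equation*}
Each factor lies in $\Lambda^{(0,0)}\oplus\Lambda^{(0,2)}$, so the whole product lies in $\Lambda^{(0,2\bt)}(n)$, which proves the first assertion. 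Picking out the top bidegree, only the $\sin$-terms survive, giving
\begin{equation*}
\sigma[\phi^{\sS}]^{(0,n-a)}=\pm\Bigl(\prod_k \sin\tfrac{\theta_k}{2}\Bigr)\,dx^{a+1}\wedge\cdots\wedge dx^n.
\end{equation*}

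To match this with the right-hand side of~(\ref{eq:LEIT.top-symbol-phiS}), I would use the elementary identity $\det(1-\phi^{\cN})=\prod_k((1-\cos\theta_k)^2+\sin^2\theta_k)=\prod_k 4\sin^2(\theta_k/2)$, giving $2^{-(n-a)/2}{\det}^{\frac12}(1-\phi^{\cN})=\prod_k|\sin(\theta_k/2)|$. The main obstacle is the overall sign: one must verify that the orientation convention on $M^\phi_a$ of~\cite[Prop.~6.14]{BGV:HKDO}---the one forcing $\phi^{\sS}$ to produce a \emph{positive} section of $\Lambda^{n-a}(\cN^\phi)^*_{|M^\phi_a}$---is precisely what pins down the lift $\phi^{\sS}$ (among its two possible preimages in $\op{Spin}$) and the signs of the angles $\theta_k$ so that $\prod_k\sin(\theta_k/2)=\prod_k|\sin(\theta_k/2)|$. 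This is a careful bookkeeping at the level of the spin double cover and the chosen orientations of $TM^\phi_a$ and $\cN^\phi$, but once granted, the stated formula follows at once.
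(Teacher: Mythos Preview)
Your proposal is correct and follows essentially the same route as the paper: block-diagonalize $\phi^{\cN}$ into plane rotations, write $\phi^{\sS}$ as the product of the standard spin lifts $\cos(\theta_k/2)+\sin(\theta_k/2)c(f^{a+2k-1})c(f^{a+2k})$, read off that $\sigma[\phi^{\sS}]\in\Lambda^{(0,2\bt)}(n)$, and identify the top component via $4\sin^2(\theta/2)=\det(1-R_\theta)$. The only cosmetic difference is in how the sign is disposed of: the paper simply restricts the rotation angles to $0<\theta_k\le\pi$ (so each $\sin(\theta_k/2)>0$) and invokes~\cite[Eqs.~(3.4)--(3.5)]{BGV:HKDO} for the lift without a $\pm$, whereas you carry the $\pm$ and then appeal to the orientation convention of~\cite[Prop.~6.14]{BGV:HKDO} to fix it; these amount to the same thing.
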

\begin{proof}
    As $\phi^{\cN}$ is an element of $\op{SO}(n-a)$, there is an oriented orthonormal basis $\{v_{a+1},\ldots, v_{n}\}$ of  $\{0\}^a\times \R^{n-a}$ such for 
$j=\frac{a}{2}+1,\ldots,\frac{n}{2}$ the subspace $\op{Span}\{v_{2j-1},v_{2j}\}$ is invariant under $\phi^{\cN}$ and 
the matrix of $\phi^{\cN}$ with respect to the basis $\{v_{2j-1},v_{2j}\}$ is a rotation matrix of the form, 
\begin{equation*}
    \begin{pmatrix}
        \cos \theta_{j} & -\sin \theta_{j} \\
        \sin \theta_{j} & \cos \theta_{j}
    \end{pmatrix}= \exp  
    \begin{pmatrix}
        0 & -\theta_{j} \\
       \theta_{j} & 0
    \end{pmatrix}, \qquad 0<\theta_{j}\leq \pi.
\end{equation*}
Using~\cite[Eqs.~(3.4)--(3.5)]{BGV:HKDO} we then see that 
\begin{equation}
    \phi^{\sS}= \prod_{\frac{a}{2}<j\leq \frac{n}{2}}\left( \cos\left(\frac{\theta_{j}}{2}\right) + 
    \sin\left(\frac{\theta_{j}}{2}\right)c(v^{2j-1})c(v^{2j})\right),
    \label{eq:LEIT.phiS}
\end{equation}where $\{v^{a+1},\ldots,v^{n}\}$ is the basis of $\Lambda^{0,1}(n)$ that is dual to $\{v_{a+1},\ldots, v_{n}\}$. 
It  then follows that $\sigma(\phi^{\sS})$ is an element of $\Lambda^{0,2\bt}(n)$ and we have
\begin{equation*}
    \sigma[\phi^{\sS}]^{(0,n-a)}=\prod_{\frac{a}{2}<j\leq \frac{n}{2}} \sin\left(\frac{\theta_{j}}{2}\right)v^{a+1}\wedge \cdots 
    \wedge v^{n}= 2^{-\frac{n-a}{2}} {\det}^{\frac{1}{2}}(1-\phi^{\cN})dx^{a+1}\wedge \cdots \wedge dx^{n},
\end{equation*}where we have used the equality $4\sin^{2}\frac{\theta}{2}=
\begin{vmatrix}
    1-\cos \theta & \sin \theta \\
    -\sin \theta & 1-\cos \theta
\end{vmatrix}$. The proof is complete. 
\end{proof}

We shall determine the short-time behavior of $\sigma \left[  \phi^{\sS} \tr_{\C^{p}}\left[\phi^{E}I_{Q}(0, 
t)\right]\right]$ in~(\ref{eq:ActionOnBundle}) by means of elementary considerations on Getzler orders of 
Volterra \psidos\ as in~\cite{Po:CMP}. This notion of order is intimately related to the rescaling of 
Getzler~\cite{Ge:SPLASIT}, which is motivated by the following assignment of degrees:
\begin{equation}
\deg \partial_{j} =\deg c(dx^j)=1, \qquad  \deg \partial_{t}=2, \qquad  \deg x^j=-1.
%  \deg \partial_{j}=\frac12 \deg \partial_{t}=\deg c(dv^j)=- \deg v^j =\deg c(dx^j)=- \deg x^j=1 . \qquad 
  \label{eq:AS-Getzler-order}
\end{equation}As observed in \cite{Po:CMP} this defines another  filtration on Volterra \psidos\ as follows.  

Let $Q\in  \Psi_{\op{v}}^{m'}(\R^{n}\times\R, \sS_n\otimes \C^{p})$ have symbol $q(x,\xi,\tau) \sim \sum_{r\geq m'} q_{m'-r}(x, \xi, \tau)$. 
Taking components in each subspace 
$\Lambda^j T_{\C}^{*}\R^{n}$ and using Taylor expansions at $x=0$ we get asymptotic expansions of symbols,  
\begin{equation} 
%    \begin{split}
    \sigma[q(x, \xi, \tau)] \sim  \sum_{j,r} \sigma[q_{m'-r}(x, \xi, \tau)]^{(j)} \\
 \sim  \sum_{j,r,\alpha} 
    \frac{x^{\alpha}}{\alpha!}  \sigma[\partial_{x}^{\alpha}\partial_v^{\beta}q_{m'-r}(0,\xi, \tau)]^{(j)},
%    \end{split}
    \label{eq:AS.Getzler-asymptotic}
\end{equation}The last asymptotic is meant in the following sense:  for $j=0,\ldots, n$ and all $N\in \N$, as 
$x\rightarrow 0$ and $|\xi|+|\tau|^{\frac{1}{2}}\rightarrow \infty$,  we have
\begin{equation}
   \sigma[q(x, \xi, \tau)]^{(j)}-\sum_{r+|\alpha|=N+j}\frac{x^{\alpha}}{\alpha!}  
    \sigma[\partial_{x}^{\alpha}\partial_v^{\beta}q_{m'-r}(0,\xi, \tau)]^{(j)} =\op{O}\left( 
    \|\xi,\tau\|^{m'}\left(|x|+\|\xi,\tau\|^{-1}\right)^{N}\right),
    \label{eq:AS.Getzler-asymptotic-def}
\end{equation}where we have set $\|\xi,\tau\|:=|\xi|+|\tau|^{\frac{1}{2}}$, and there are similar asymptotics for all  
$\partial_{\xi}^{\beta}\partial_{\tau}^{k}$-derivatives (upon replacing the exponent $m'$ by $m'-|\beta|-k$). Moreover, the degree assignment~(\ref{eq:AS-Getzler-order}) leads us to define (Getzler-)rescaling operators $\delta_{\lambda}^{*}$, $\lambda \in \R$, on Volterra symbols 
 with coefficients in $\Lambda(n)\otimes M_{p}(\C)$  by letting
 \begin{equation*}
     \delta_{\lambda}^{*}q(x,\xi,\tau):=\lambda^{j}q(\lambda^{-1} x,\lambda\xi,\lambda^2 \tau) \qquad \forall 
     q\in \Sv^{*}(\R^{n}\times \R^{n}\times \R)\otimes \Lambda^{j}(n)\otimes M_{p}(\C).
 \end{equation*}
Note that in~(\ref{eq:AS.Getzler-asymptotic}) each symbol $ x^{\alpha}
\partial_{x}^{\alpha}\sigma[q_{m'-r}(0,\xi, \tau)]^{(j)}$ is homogeneous of degree $\mu:=m'-r+j-|\alpha|$ with respect to this 
rescaling. We shall say that such a symbol is  \emph{Getzler homogeneous} of degree $\mu$. 
The asymptotic expansion~(\ref{eq:AS.Getzler-asymptotic}) then implies that, in the sense of~(\ref{eq:AS.Getzler-asymptotic-def}), we have
\begin{equation*}
     \sigma[q(x, \xi, \tau)] \sim \sum_{\mu\leq m}q_{(\mu)}(x, \xi, \tau), 
\end{equation*}where $q_{(\mu)}(x, \xi, \tau)$ is the Getzler-homogeneous symbol of degree $\mu$ given by
\begin{equation}
    q_{(\mu)}(x, \xi, \tau):= \!\!\!\! \sum_{m'-r+j-|\alpha|=\mu}  \!\!\!\! \frac{x^{\alpha}}{\alpha!}  
    \sigma[\partial_{x}^{\alpha}\partial_v^{\beta}q_{m'-r}(0,\xi, \tau)]^{(j)},
     \label{eq:index.asymptotic-symbol}
\end{equation}and $m$ is the greatest integer $\mu$ such that $q_{(\mu)}\neq 0$. 

Alternatively, in terms of the rescaling operators $\delta_{\lambda}^{*}$, for all $(x,\xi,\tau)\in \R^{n}\times \R^{n}\times \R$, $(\xi,\tau)\neq 0$, we have 
\begin{equation*}
    \delta_{\lambda}^{*}\sigma[q(x, \xi, \tau)] \sim \sum_{\mu\leq m}\lambda^{\mu}q_{(\mu)}(x, \xi, \tau) \qquad 
    \text{as $\lambda \rightarrow 0$}.
\end{equation*}We observe that the homogeneous symbols $q_{(\mu)}(x,\xi,\tau)$ are uniquely determined by the above 
asymptotic. In particular, the leading Getzler-homogeneous symbol $q_{(m)}(x,\xi,\tau)$ is uniquely determined by 
\begin{equation}
      \delta_{\lambda}^{*}\sigma[q(x, \xi, \tau)]=\lambda^{m} q_{(m)}(x,\xi,\tau)+\op{O}(\lambda^{m-1}).
      \label{eq:Ge.leading-deltal}
\end{equation}

\begin{remark}
 Let $\cS(\R^{n}\times \R)$  be the Fr\'echet space of Schwartz-class functions on $\R^{n}\times \R$. 
As each symbol $q_{(\mu)}(x,\xi,\tau)$ is a polynomial with respect to the variable $x$, it 
defines an element $q_{(\mu)}(x,D_{x},D_{t})$ of $\cL\left( \cS(\R^{n}\times \R, \C^{p})\right)\otimes \Lambda(n)$ by
\begin{equation}
q_{(\mu)}(x,D_{x},D_{t})u(x,s)= \acou{\check{q} (x,x-y,s-t)}{u(y,t)} \qquad \forall u \in  \cS(\R^{n}\times \R, \C^{p}). 
\label{eq:Volterra.getzler-homogeneous-Volterra-PsiDO}
\end{equation}
Note that by using the action of $\Lambda(n)$ on itself by left-multiplication we may regard 
$q_{(\mu)}(x,D_{x},D_{t})$ as an actual operator of $\cS(\R^{n}\times \R, \Lambda(n)\otimes\C^{p})$ to itself. 
\end{remark}

\begin{definition}[\cite{Po:CMP}]\label{def:getzler.model-operator}
Bearing in mind the notation~(\ref{eq:index.asymptotic-symbol}) and~(\ref{eq:Volterra.getzler-homogeneous-Volterra-PsiDO}), we make the following definitions: 
\begin{enumerate}
\item The integer $m$ is called the Getzler order of $Q$,
  
\item The symbol $q_{(m)}$ is called the principal Getzler-homogeneous symbol  of $Q$,
  
\item The operator $Q_{(m)}=q_{(m)}(x,D_{x}, D_{t})$ is called the model operator of $Q$.
\end{enumerate}
\end{definition}

\begin{remark}
    As the symbol $ \sigma[\partial_{x}^{\alpha}\partial_v^{\beta}q_{m'-r}(0,\xi, \tau)]^{(j)}$ is 
    Getzler-homogeneous of degree $m'-r+j-|\alpha|\leq m'+n$, we see that the Getzler order of $Q$ is always~$\leq 
    m+n$. 
\end{remark}

\begin{example}
Let $A=A_{i}dx^{i}$ be the connection 1-form on $\C^{p}$. Then it follows from~(\ref{eq:AS-asymptotic-geometric-data}) 
that the covariant derivative $ \nabla_{i}= \partial_{i}+ \frac14 \omega_{ikl}(x)c(e^k)c(e^l)+A_{i}$ on $\sS_{n}\otimes 
\C^{p}$ has Getzler order 1 and its model operator is
    \begin{equation}
        \nabla_{i (1)} =\partial_{i}-\frac14 R_{ij}x^j , \qquad \text{where}\ 
        R_{ij}=\sum_{k<l} R_{ijkl}^{TM}(0) dx^k \wedge dx^l. 
        \label{eq:AS.model-spin-connection}
    \end{equation}
\end{example}

\begin{remark}
In what follows, we shall often look at symbols or operators up to terms that have lower Getzler orders. For this reason, 
it is convenient to use the notation $\op{O}_{G}(m)$ to denote any remainder term (symbol or operator) of Getzler 
order $\leq m$. 
Note that in view of~(\ref{eq:Ge.leading-deltal}) a Volterra symbol $q\in \Sv^{*}(\R^{n}\times \R^{n}\times \R, 
M_{p}(\C))\otimes \Lambda(n)$ is $\op{O}_{G}(m)$ if and only if, for all $(x,\xi,\tau)\in \R^{n}\times \R^{n}\times \R$, $(\xi,\tau)\neq 0$, 
\begin{equation}
   \delta_{\lambda}^{*}q(x,\xi,\tau)= \op{O}(\lambda^{m}) \qquad   \text{as $\lambda \rightarrow 0$}.
   \label{eq:Ge.rescaling-OG(m)}
\end{equation}Moreover, two Volterra symbols have Getzler order $m$ and same leading Getzler-homogeneous symbol if and 
only if their difference is $\op{O}_{G}(m-1)$. 
 \end{remark}

\begin{lemma}[\cite{Po:CMP}]\label{lem:index.top-total-order-symbol-composition}
For  $j=1,2$ let $Q_{j}\in \Psi^{*}_{\op{v}}(\R^{n}\times\R, \sS_n\otimes \C^{p})$ have Getzler order $m_{j}$ 
and model 
operator $Q_{(m_{j})}$ and assume that either $Q_{1}$ or $Q_{2}$ is properly supported. Then 
\begin{equation}
    \sigma\left[Q_{1}Q_{2}\right]= Q_{(m_{1})} Q_{(m_{2})} +\op{O}_{G}(m_{1}+m_{2}-1).
\end{equation}
% where $\op{O}_{G}(m)$ means Volterra \psido\ of Getzler order no more than $m.$ 
\end{lemma}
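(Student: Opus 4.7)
The plan is to start from the composition formula for Volterra symbols in Proposition~\ref{prop:Volterra-properties}(3), pass to the $\sigma$-image using the Clifford-to-wedge identity~(\ref{eq:SymbolQuantization}), and then track Getzler orders term by term. By the symbol composition formula, the full symbol of $Q_1 Q_2$ is
\begin{equation*}
    q_1 \# q_2 \sim \sum_{\alpha} \frac{1}{\alpha!} \partial_\xi^\alpha q_1 \cdot D_x^\alpha q_2,
\end{equation*}
where the dot is the product in $\End \sS_n \otimes M_p(\C)$. Applying $\sigma$ termwise and invoking~(\ref{eq:SymbolQuantization}), each summand becomes
\begin{equation*}
    \sigma[\partial_\xi^\alpha q_1 \cdot D_x^\alpha q_2] = \sigma[\partial_\xi^\alpha q_1] \wedge \sigma[D_x^\alpha q_2] + R_\alpha,
\end{equation*}
where the remainder $R_\alpha$ has form-degree strictly lower (by at least two in some $\Lambda^{k,l}$-block) than that of the wedge product.

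Next I would carry out the Getzler-order bookkeeping from the expansion~(\ref{eq:AS.Getzler-asymptotic}). A direct inspection shows that $\partial_\xi^\alpha q_1$ has Getzler order $m_1 - |\alpha|$ with leading Getzler-homogeneous symbol $\partial_\xi^\alpha (q_1)_{(m_1)}$, while $D_x^\alpha q_2$ has Getzler order $m_2 + |\alpha|$ with leading symbol $D_x^\alpha (q_2)_{(m_2)}$. The key point for $D_x^\alpha$ is that differentiating in $x$ lowers the $x^\beta$-exponent in the Taylor expansion by $|\alpha|$, and since $\deg x = -1$ in~(\ref{eq:AS-Getzler-order}), the Getzler degree rises by $|\alpha|$. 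Because these two shifts cancel, the wedge product $\sigma[\partial_\xi^\alpha q_1] \wedge \sigma[D_x^\alpha q_2]$ has Getzler order at most $m_1 + m_2$, with leading part $\partial_\xi^\alpha (q_1)_{(m_1)} \wedge D_x^\alpha (q_2)_{(m_2)}$ modulo $\op{O}_G(m_1+m_2-1)$. The remainder $R_\alpha$ loses at least two in form degree, hence (since Getzler degree equals classical order plus form degree minus Taylor exponent) at least two in Getzler degree, so $R_\alpha = \op{O}_G(m_1 + m_2 - 2)$.

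Summing over $\alpha$---a finite sum, since $(q_2)_{(m_2)}$ is polynomial in $x$---then yields
\begin{equation*}
    \sigma[q_1 \# q_2] \equiv \sum_\alpha \frac{1}{\alpha!} \partial_\xi^\alpha (q_1)_{(m_1)} \wedge D_x^\alpha (q_2)_{(m_2)} \pmod{\op{O}_G(m_1+m_2-1)}.
\end{equation*}
The right-hand side is precisely the symbol of the composition $Q_{(m_1)} Q_{(m_2)}$: the model operators act on $\cS(\R^n \times \R, \Lambda(n) \otimes \C^p)$ via left wedge multiplication, so their composition's symbol is the standard $\#$-product, with the wedge replacing Clifford multiplication. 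This establishes the identity.

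The main obstacle will be the Getzler bookkeeping of the second paragraph: because $\partial_\xi$ lowers and $D_x$ raises Getzler order by the same amount, every $\alpha$ in the composition contributes at the top Getzler order $m_1+m_2$, and one must verify carefully that these contributions reassemble into the symbol of $Q_{(m_1)} Q_{(m_2)}$---which is exactly what the Clifford-to-wedge reduction achieves. A secondary point to check is that ``lower form degree'' in~(\ref{eq:SymbolQuantization}) genuinely means lower Getzler degree by at least two; this is immediate from the definition of Getzler degree, but warrants a brief sanity check.
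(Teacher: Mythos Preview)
The paper does not actually give a proof of this lemma; it is cited from~\cite{Po:CMP} without argument. Your proof is correct and follows what is essentially the only natural route: pass from the Clifford composition formula for Volterra symbols to the exterior algebra via~(\ref{eq:SymbolQuantization}), then do Getzler-degree bookkeeping using the rescaling characterization~(\ref{eq:Ge.leading-deltal})--(\ref{eq:Ge.rescaling-OG(m)}). The key observation that $\partial_\xi^\alpha$ lowers and $D_x^\alpha$ raises Getzler order by exactly $|\alpha|$, so that every term of the $\#$-expansion contributes at top Getzler order $m_1+m_2$ and reassembles into the symbol of $Q_{(m_1)}Q_{(m_2)}$, is precisely the content of the lemma and you have identified it clearly. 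Your handling of the Clifford remainder $R_\alpha$ is also right: the drop of at least two in total form degree from~(\ref{eq:SymbolQuantization}) translates directly, via the $\lambda^j$ weight in $\delta_\lambda^*$, into a drop of at least two in Getzler order.

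One small point worth tightening in the write-up: the finiteness of the $\alpha$-sum at the level of leading Getzler symbols follows, as you say, from the polynomiality in $x$ of $(q_2)_{(m_2)}$; but you should also remark that at the level of the full symbols the composition expansion is asymptotic, and the $\op{O}_G$-estimate for the tail follows because the $\alpha$-th term of $q_1\# q_2$ has ordinary Volterra order dropping with $|\alpha|$, hence (since Getzler order is bounded above by Volterra order plus $n$) eventually falls below any fixed threshold.
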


\begin{example}\label{ex:D^2.formula}
By Lichnerowicz's formula, near $x=0$, we have
    \begin{equation}
 \sD^2=  \sD^{2}_{\nabla^{E}}= -g^{ij}(\nabla_{i}^{\sS\otimes E}\nabla_{j}^{\sS\otimes E} -\Gamma_{ij}^k 
     \nabla_{k}^{\sS\otimes E})  + \frac{\kappa}4 +\frac{1}{2}c(e^{i})c(e^{j})F^{E}(e^{i},e^{j}),
     \label{eq:AS.lichnerowicz-bis}
\end{equation}
where the $\Gamma_{ij}^k$ are the Christoffel symbols of the metric, $\kappa$ is the scalar curvature, and $\{e^{j}\}$ 
is the coframe dual to the synchronous frame $\{e_{i}\}$. Therefore, 
 combining Lemma~\ref{lem:index.top-total-order-symbol-composition} with~(\ref{eq:AS-asymptotic-geometric-data}) 
 and~(\ref{eq:AS.model-spin-connection}) shows that $\sD_{\nabla^{E}}^{2}$ has Getzler order $2$ and its model operator is 
\begin{equation}
\sD^2_{(2)}=H_{R}+F^{E}(0), %\qquad \text{where}\ H_{R}:=- \sum_{i=1}^n (\partial_{i}-\frac14 R_{ij}x^j)^{2},
\end{equation}where we have set 
\begin{equation*}
   H_{R}=- \sum_{i=1}^n (\partial_{i}-\sum_{j=1}^n\frac14 R_{ij}x^j)^{2} \qquad \text{and} \qquad F^{E}(0)=\sum_{i<j} 
   F^{E}(0)(\partial_{i},\partial_{j})dx^{i}\wedge dx^{j}. 
\end{equation*}
\end{example}

In what follows, it would be convenient to introduce the variables $x'=(x^{1},\ldots, x^{a})$ and $x''=(x^{a+1},\ldots, 
x^{n})$, so that $x=(x',x'')$. When using these variables we shall denote by $q(x',x'';\xi',\xi'';\tau)$ and 
$K_{Q}(x',x'';y',y'';t)$ the respective symbol and kernel of any given "operator'' $Q\in \pvdo^{*}(\R^{n}\times 
\R, \C^{p})\otimes\Lambda(n)$.  We then define 
 \begin{equation}
I_{Q}(x',t):=\int_{\R^{n-a}} K_{Q}\left(x', x''; 0, (1-\phi^{\cN})x''; t\right)dx'', \qquad x'\in \R^{a}.
\label{IQonRn}
\end{equation}

 \begin{lemma}\label{lem:AS.approximation-asymptotic-kernel-j}
     Let $Q\in \Psi_{\op{v}}^{*}(\R^{n}\times\R, \sS_{n}\otimes \C^{p})$ have Getzler order $m$ and model operator 
     $Q_{(m)}$. In addition, let $j$ be an integer~$\leq n$. 
 \begin{enumerate}
  \item If $m-j$ is an odd integer, then
  \begin{equation*}
     \sigma[I_{Q}(0,t)]^{(j)}= \op{O}(t^{\frac{j-m-a-1}2}) \qquad \text{as $t\rightarrow 0^{+}$}. 
  \end{equation*}
 
   \item If $m-j$ is an even integer, then% as $t\rightarrow 0^{+}$, we have 
   \begin{equation}
   \label{eq:I_Q.rel.its Gezler.sym}
   \sigma[I_{Q}(0,t)]^{(j)}= t^{\frac{j-m-a}2-1} 
         I_{Q_{(m)}}(0,1)^{(j)} + \op{O}(t^{\frac{j-m-a}2}) \qquad \text{as $t\rightarrow 0^{+}$}.  
   \end{equation}
 \end{enumerate} 
 \end{lemma}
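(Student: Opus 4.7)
The plan is to exploit the Getzler rescaling with $\lambda = \sqrt t$ combined with the change of variable $x'' = \sqrt t\, v$ in the definition~(\ref{IQonRn}) of $I_Q(0, t)$. Let $q \sim \sum q_{m'-r}$ denote the symbol of $Q$ in the chosen coordinates. Since Fourier inversion in $(\xi, \tau)$ commutes with the pointwise application of $\sigma$, and the tail $|x''| \geq \epsilon$ contributes only an $\op{O}(t^{\infty})$ error by pseudolocality plus the Taylor-vanishing of $K_Q$ at $t = 0$, the problem reduces to understanding the small-$t$ behavior of
\begin{equation*}
\int (\sigma[q])^{\vee}\bigl(0, x''; 0, \phi^{\cN} x''; t\bigr)\, dx''.
\end{equation*}
By~(\ref{eq:AS.Getzler-asymptotic}), the symbol $\sigma[q]$ admits a Getzler-homogeneous expansion $\sigma[q] \sim \sum_{\mu \leq m} q_{(\mu)}$, and each $j$-form component satisfies $q_{(\mu)}^{(j)}(\lambda^{-1}x, \lambda \xi, \lambda^2 \tau) = \lambda^{\mu - j} q_{(\mu)}^{(j)}(x, \xi, \tau)$.

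Fourier-inverting this scaling identity in $(\xi, \tau)$ yields
\begin{equation*}
(q_{(\mu)}^{(j)})^{\vee}(\sqrt t\, x, \sqrt t\, y, t) = t^{(j - \mu - n - 2)/2}\, (q_{(\mu)}^{(j)})^{\vee}(x, y, 1).
\end{equation*}
Applying this with $x = (0, v)$ and $y = (0, \phi^{\cN} v)$ and performing the substitution $x'' = \sqrt t\, v$ in the integral over $\R^{n-a}$ then produces the exact equality
\begin{equation*}
\int_{\R^{n-a}} (q_{(\mu)}^{(j)})^{\vee}\bigl(0, x''; 0, \phi^{\cN} x''; t\bigr)\, dx'' = t^{(j - \mu - a)/2 - 1}\, I_{Q_{(\mu)}}(0, 1)^{(j)}.
\end{equation*}
Specializing the same scaling identity to $\lambda = -1$, and using that $n$ is even, gives $(q_{(\mu)}^{(j)})^{\vee}(-x, -y, t) = (-1)^{\mu - j} (q_{(\mu)}^{(j)})^{\vee}(x, y, t)$; the substitution $v \to -v$ in the integral defining $I_{Q_{(\mu)}}(0, 1)^{(j)}$ then forces this integral to vanish whenever $\mu - j$ is odd.

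Summing the contributions over $\mu \leq m$: when $m - j$ is even, the $\mu = m$ term produces the announced leading coefficient $t^{(j - m - a)/2 - 1} I_{Q_{(m)}}(0, 1)^{(j)}$, the $\mu = m - 1$ term vanishes by parity, and the next candidate $\mu = m - 2$ is $\op{O}(t^{(j - m - a)/2})$; when $m - j$ is odd, $\mu = m$ vanishes by parity and $\mu = m - 1$ supplies the claimed leading bound $\op{O}(t^{(j - m - a - 1)/2})$. The main technical obstacle is the remainder estimate coming from truncating the Getzler expansion: writing $\sigma[q]^{(j)} = \sum_{\mu > m - N} q_{(\mu)}^{(j)} + r_N^{(j)}$, the estimate~(\ref{eq:AS.Getzler-asymptotic-def}) yields $|r_N^{(j)}(x, \xi, \tau)| \leq C\|\xi, \tau\|^{m - j}(|x| + \|\xi, \tau\|^{-1})^N$, with analogous bounds for all $(\xi, \tau)$-derivatives. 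A Taylor-remainder argument in the rescaled variable $\sqrt t\, v$, modelled on the proof of Lemma~\ref{lem:Heat.asymptotic-Isymbol}, converts this pointwise estimate into an $\op{O}(t^{(j - m - a + N)/2 - 1})$ contribution to $\sigma[I_Q(0, t)]^{(j)}$, which for $N$ sufficiently large is absorbed into the claimed remainders and completes the proof.
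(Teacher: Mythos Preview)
Your approach and the paper's rest on the same three ingredients: the Getzler-homogeneous decomposition of $\sigma[q]^{(j)}$, the exact rescaling $x''=\sqrt t\,v$ on each homogeneous piece, and the parity observation that kills the $\mu-j$-odd terms. The difference is organizational. The paper first invokes Proposition~\ref{prop:Heat.asymptotic-IQ}, which was proved in \emph{tubular} coordinates, and therefore has to feed the normal-coordinate symbol through H\"ormander's change-of-variable formula; the extra terms with $(\beta,\gamma)\neq(0,0)$ are then shown to be lower order. You instead work directly in normal coordinates at $x'=0$, which is legitimate because $\psi(0,v)=(0,v)$ there (equation~\eqref{eq:ProfProp.tubular-coordinates-normal}) and $\phi^{\sS}$ is constant in the synchronous frame, so the tubular detour is genuinely avoidable at that single point.

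That said, a few points in your write-up need tightening. First, the reduction ``the tail $|x''|\geq\epsilon$ contributes $\op O(t^\infty)$, so the problem reduces to $\int_{\R^{n-a}}(\sigma[q])^\vee(\ldots)\,dx''$'' is stated in the wrong order: for the full non-homogeneous symbol the integral over all of $\R^{n-a}$ need not converge, so you must first decompose into (ordinary or Getzler-) homogeneous pieces and only then extend each piece to $\R^{n-a}$ with an $\op O(t^\infty)$ error, exactly as in~\eqref{eq:Heat.asymptotic-partial-h-epsilon}. Second, the second argument of $(\sigma[q])^\vee$ should be $(1-\phi^{\cN})x''$, not $\phi^{\cN}x''$; this does not affect your scaling or parity arguments but should be corrected. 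Third, the remainder bound you extract from~\eqref{eq:AS.Getzler-asymptotic-def} carries the ordinary order $m'$ in the exponent, not $m-j$; after reindexing $N$ this gives the same final $t$-power, but as stated it is inaccurate. Finally, the phrase ``modelled on the proof of Lemma~\ref{lem:Heat.asymptotic-Isymbol}'' hides the real work: since $r_N^{(j)}$ is not parabolically homogeneous, the cleanest way to control it is to expand first in ordinary homogeneous components $\sigma[q_k]^{(j)}$ and then Taylor-expand each in $v$ --- at which point you have recovered the paper's computation (minus the $(\beta,\gamma)\neq(0,0)$ terms, which you rightly do not need).
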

\begin{proof}
 Let $q(x, \xi, \tau)\sim \sum_{k\leq m'} q_{k}(x, \xi, \tau)$ be the symbol of $Q$ and denote by $q_{(m)}(x,\xi, \tau)$ its principal 
 Getzler homogeneous symbol. Recall that Proposition~\ref{prop:Heat.asymptotic-IQ} provides us with an asymptotic for $I_{Q}(x,t)$ in terms of the symbol of $Q$ in 
 tubular coordinates. We shall use the tubular coordinates $(x',v)\in \R^{a}\times \R^{n-a}$ given by the change of 
 variable,
\begin{equation*}
   x=\psi(x',v):=\exp_{x'}\left(v_{1}e_{a+1}(x')+\cdots+ v_{b}e_{n}(x')\right), \qquad  (x',v)\in \R^{a}\times \R^{n-a},
\end{equation*}where on the far right-hand side we have identified $x'$ with $(x',0)\in\R^{n}$. 
 Note that, as the original coordinates are normal coordinates, for all $v \in \R^{n-a}$, we have
 \begin{equation}
     \psi(0,v)=\exp_{x}\left(v_{1}\partial_{a+1}+\cdots+ 
      v_{n-a}\partial_{n}\right)=(0,v) .
      \label{eq:ProfProp.tubular-coordinates-normal}
 \end{equation}
 Furthermore, in  what follows, upon identifying $\R^{n}$ and $\R^{a}\times \R^{n-a}$, it will be convenient to regard functions on $\R^{n}\times 
 \R^{n}\times \R$ as functions on $\R^{a}\times \R^{n-a}\times \R^{a}\times \R^{n-a}\times \R$.

 Let $\tilde{q}(x', v;\xi',\nu;\tau)\sim  \sum_{k\leq m'} \tilde{q}_{k}(x', v;\xi',\nu;\tau)$ be the symbol of $Q$ in the tubular 
 coordinates, i.e., $\tilde{q}(x', v;\xi',\nu;\tau)$ is the symbol of $\psi^{*}Q$. As in the tubular coordinates, the 
 derivative  
 $\phi'$ is constant along the fibers of $\cN^{\phi}$, we see that 
 $\phi^{\sS}$ too is fiberwise constant. Incidentally, in the notation of~(\ref{eq:Equivariant.twisted-symbol}) the symbols $\tilde{q}^{\sS}_{k}$ and 
 $\tilde{q}_{k}$ agree for all $k\leq m'$. Bearing this mind, Proposition~\ref{prop:Heat.asymptotic-IQ} shows that, as $t\rightarrow 0^{+}$, 
\begin{equation}
     \sigma[I_{Q}(0,t)]^{(j)}\sim  \sum_{\substack{\textup{$|\alpha|-k$ even} \\ k\leq m'}} t^{\frac{|\alpha|-(k+a+2)}{2}}  
     \int_{\R^{n-a}}\frac{v^{\alpha}}{\alpha !}\left(\partial_v^{\alpha}\sigma[\tilde{q}_{k}]^{(j)}\right)^{\vee}(0, 0;0, 
     (1-\phi^{\cN}(0))v;1)dv.
     \label{eq:sigmaIQ}
\end{equation}
Using~(\ref{eq:ProfProp.tubular-coordinates-normal}), the change of variable formula for symbols~(\cite[Thm.~18.1.17]{Ho:ALPDO3}) gives
\begin{equation*}
    \tilde{q}_{k}(0,v;\xi',\nu;\tau) =  \sum_{\substack{l-|\beta|+|\gamma|=k\\ 2|\gamma|\leq  |\beta|}} 
     a_{\alpha\beta}(0,v)\xi^{\gamma}D_{\xi}^{\beta}q_{k}\left(0,v;\xi',\nu;\tau\right) 
 \end{equation*}where the $a_{\beta\gamma}(x',v)$ are some smooth functions such that $a_{\beta\gamma}(x)=1$ when 
 $\beta=\gamma=0$. 
 Thus, % as $t\rightarrow 0^{+}$,
\begin{equation}
 \sigma[I_{Q}(0,t)]^{(j)}\sim   \sum_{\substack{\textup{$|\alpha|-l+|\beta|-|\gamma|$ even} \\ l\leq m', \ 2|\gamma|\leq  
 |\beta|}} t^{\frac{|\alpha|-l+|\beta|-|\gamma|-(a+2)}{2}}  I^{(j)}_{l\alpha\beta\gamma}, 
 \label{eq:ProofProp.asymptotic-sigmaIQj}
\end{equation}where we have set
\begin{equation*}
 I^{(j)}_{l\alpha\beta\gamma}:= \int_{\R^{n-a}}a_{\alpha\beta}(0,v) \frac{v^{\alpha}}{\alpha!} 
 \left(\partial_v^{\alpha}\sigma[\xi^{\gamma}D_{\xi}^{\beta}q_{l}]^{(j)}\right)^{\vee}\left(0,0;0, 
 (1-\phi^{\cN}(0))v;1\right)dv.\nonumber
\end{equation*}

Note that the symbol $v^{\alpha}\partial_v^{\alpha}\sigma[q_{l}]^{(j)}(0,0;\xi',\nu;\tau)$ 
is Getzler homogeneous of degree $l+j-|\alpha|$. Therefore, it must be zero if $l+j-|\alpha|>m$, since otherwise  $Q$ 
would have Getzler order~$>m$. This implies that in~(\ref{eq:ProofProp.asymptotic-sigmaIQj}) all the coefficients $I^{(j)}_{l\alpha\beta\gamma}$ with 
$l+j-|\alpha|>m$ must be zero. Furthermore, the condition $2|\gamma|\leq |\beta|$ and implies that 
$|\gamma|-|\beta|\leq -\frac{1}{2}|\beta|$, and hence $|\gamma|-|\beta|\leq - 1$ unless $\beta=\gamma=0$. Therefore, 
 if $l+j-|\alpha|\leq m$ and $2|\gamma|\leq |\beta|$, then $t^{\frac{1}{2}(|\alpha|-l+|\beta|-|\gamma|-(a+2))}$ is 
 $\op{O}(t^{\frac{1}{2}(j-m-(a+2))})$ and even is $\op{o}(t^{\frac{1}{2}(j-m-(a+2))})$ if we further have $l+j-|\alpha|< m$ or 
 $(\beta,\gamma)\neq (0,0)$. Observe that the asymptotic~(\ref{eq:ProofProp.asymptotic-sigmaIQj}) contains only integer powers of $t$ (non-negative or negative). Therefore, from 
 the above observations  we deduce that if $m-j$ is odd, then  all the (non-zero) terms in~(\ref{eq:ProofProp.asymptotic-sigmaIQj}) are 
 $\op{O}(t^{\frac{1}{2}(j-m-(a+1))})$, and hence
\begin{equation*}
         \sigma[I_{Q}(0,t)]^{(j)} =\op{O}\left(t^{\frac{1}{2}(j-m-(a+1))}\right). 
\end{equation*}
Likewise, if $m-j$ is even, then all the terms in~(\ref{eq:ProofProp.asymptotic-sigmaIQj}) with $l+j-|\alpha|\neq  m$ or with $l-|\alpha|=m-j $ and 
 $(\beta,\gamma)\neq (0,0)$ are $\op{O}(t^{\frac{1}{2}(j-m-a)})$. Thus, 
 \begin{equation}
\sigma[I_{Q}(0,t)]^{(j)}= t^{\frac{j-(m+a+2)}2}
        \sum_{l-|\alpha|=m-j}  I^{(j)}_{l\alpha00} + \op{O}(t^{\frac{j-(m+a)}2}).
\label{middle} 
\end{equation}

To complete the proof it remains to identify the coefficient of $t^{\frac{j-(m+a+2)}2}$ in~(\ref{middle}) with 
$I_{Q_{(m)}}(0,1)^{(j)}$. To this end observe that the formula~(\ref{eq:index.asymptotic-symbol}) for $q_{(m)}$ at $x'=0$ gives 
\begin{equation*}
    q_{(m)}(0,v;\xi,\nu;\tau)^{(j)}=\sum_{k+j-|\alpha|=m}  \frac{v^{\alpha}}{\alpha!}  
     \partial_{v}^{\alpha}\left(\sigma[q_{k}]^{(j)}\right)(0,0;\xi,\nu;\tau).
\end{equation*}Thus, 
 \begin{align*} 
 I_{Q_{(m)}}(0,1)^{(j)}= & \sum_{k-|\alpha|=m-j} 
 \int_{\R^{n-a}}\frac{v^{\alpha}}{\alpha!}\left(\partial_v^{\alpha}\sigma[q_{m-j+|\alpha|}]^{(j)}\right)^{\vee}\left(0,0;0,(1-\phi^{\cN}(0))v;1\right)dv\\ 
 = & \sum_{l-|\alpha|=m-j}  I^{(j)}_{l\alpha00}. 
 \end{align*} 
This completes the proof.        
\end{proof}

We are now in a position to prove the key lemma of the proof of 
Theorem~\ref{thm:LEIT.local-equiv.-index-thm-pointwise}.  

 \begin{lemma}\label{lem:AS.approximation-asymptotic-kernel}
     Let $Q\in \Psi_{\op{v}}^{*}(\R^{n}\times\R, \sS_{n}\otimes \C^{p})$ have Getzler order $m$ and model operator 
     $Q_{(m)}$.  
 \begin{enumerate}
  \item If $m$ is an odd integer, then
  \begin{equation}
  \label{eq:Str.phi.I_Q.local.odd}
     \str_{\sS_{n}\otimes \C^{p}}\left[\phi^{\sS\otimes E}I_{Q}(0,t)\right]= \op{O}(t^{-\frac{m+1}2}) \qquad \text{as $t\rightarrow 0^{+}$}. 
  \end{equation}
 
   \item If $m$ is an even integer, then, as $t\rightarrow 0^{+}$, we have
   \begin{multline} 
    (\str_{\sS_{n}\otimes \C^{p}})\left[\phi^{\sS\otimes E}I_{Q}(0,t)\right]= \\
        (-i)^{\frac{n}{2}}t^{-(\frac{m}2+1)} 
        2^{\frac{a}{2}}  {\det}^{\frac12}\left(1-\phi^{\cN}\right) 
        \left|\tr_{\C^{p}}[\phi^{E}I_{Q_{(m)}}(0,1)]\right|^{(a,0)}  + \op{O}(t^{-\frac{m}2}).% \qquad \text{as $t\rightarrow 0^{+}$}.  
         \label{eq:AS-convergence-symbol-KQ}
   \end{multline}
 \end{enumerate} 
 \end{lemma}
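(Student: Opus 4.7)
The plan is to reduce the lemma to the componentwise asymptotic in Lemma~\ref{lem:AS.approximation-asymptotic-kernel-j} by extracting the top-degree component of $\sigma$ through Getzler's supertrace formula. Writing $I_Q(0,t)=\sum_i A_i\otimes B_i$ with $A_i\in\End(\sS_n)$ and $B_i\in\End(\C^p)$, the supertrace factorises as $\str_{\sS_n}[\phi^\sS A_i]\tr_{\C^p}[\phi^E B_i]$, and Lemma~\ref{lm:ActionOnSpinorBundle} together with the symbol product rule~(\ref{eq:SymbolQuantization}) give
\begin{equation*}
 (\str_{\sS_n\otimes\C^p})\left[\phi^{\sS\otimes E} I_Q(0,t)\right] = (-2i)^{n/2}\left|\sigma[\phi^\sS]\wedge\eta(t)\right|^{(n)},\qquad \eta(t):=\tr_{\C^p}\!\left[\phi^E\sigma[I_Q(0,t)]\right].
\end{equation*}
The $\cK$-remainder from~(\ref{eq:SymbolQuantization}) lives in total bigrading $\leq n-2$, so it cannot contribute to the $(n)$-component and is discarded.

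By Lemma~\ref{lem:LEIT.symbol-phiS}, $\sigma[\phi^\sS]$ sits in $\Lambda^{(0,2\bullet)}(n)$, so the top component decomposes as
\begin{equation*}
\left|\sigma[\phi^\sS]\wedge\eta(t)\right|^{(n)} = \sum_{\substack{l \text{ even}\\ 0\leq l\leq n-a}} \sigma[\phi^\sS]^{(0,l)}\wedge \eta(t)^{(a,n-a-l)},
\end{equation*}
all other bigraded contributions being forbidden by the constraints $k\leq a$ and $l\leq n-a$ in the $(n)$-bigrading. Each $\eta(t)^{(a,n-a-l)}$ is the $\phi^E$-twisted $\C^p$-trace of $\sigma[I_Q(0,t)]^{(a,n-a-l)}$, and thus its short-time behaviour is governed by Lemma~\ref{lem:AS.approximation-asymptotic-kernel-j} with $j=n-l$. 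Since $n$ is even and $a$ is even (the latter because $\phi$ preserves orientation on $M_a^\phi$), the parity of $m-(n-l)$ agrees with that of $m$ for every even $l$ in the sum, so the whole summation falls within a single case of Lemma~\ref{lem:AS.approximation-asymptotic-kernel-j}.

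When $m$ is odd, each summand is $\op{O}(t^{(n-l-m-a-1)/2})$, with the most singular exponent attained at $l=n-a$ giving~$\op{O}(t^{-(m+1)/2})$, which proves~(\ref{eq:Str.phi.I_Q.local.odd}). When $m$ is even, Lemma~\ref{lem:AS.approximation-asymptotic-kernel-j} yields
\begin{equation*}
\eta(t)^{(a,n-a-l)} = t^{\frac{n-l-m-a}{2}-1}\tr_{\C^p}\bigl[\phi^E I_{Q_{(m)}}(0,1)^{(a,n-a-l)}\bigr] + \op{O}\bigl(t^{(n-l-m-a)/2}\bigr),
\end{equation*}
and only $l=n-a$ produces the leading power $t^{-m/2-1}$ while all other even $l$ contribute $\op{O}(t^{-m/2})$. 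Substituting $\sigma[\phi^\sS]^{(0,n-a)} = 2^{-(n-a)/2}{\det}^{1/2}(1-\phi^\cN)\,dx^{a+1}\wedge\cdots\wedge dx^n$ from Lemma~\ref{lem:LEIT.symbol-phiS}, using $a$ even so that $dx^{a+1}\wedge\cdots\wedge dx^n\wedge dx^1\wedge\cdots\wedge dx^a=dx^1\wedge\cdots\wedge dx^n$, and collecting the constant $(-2i)^{n/2}\cdot 2^{-(n-a)/2}=(-i)^{n/2}\,2^{a/2}$ delivers~(\ref{eq:AS-convergence-symbol-KQ}).

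The main subtlety is the bigrading bookkeeping: one must verify that the lower-bigrading remainder in~(\ref{eq:SymbolQuantization}) never reaches the top degree and that the parities prescribed by Lemma~\ref{lem:AS.approximation-asymptotic-kernel-j} are synchronised across the summation range — a small miracle secured by the even dimensionality of $M$, of $M_a^\phi$, and of $\cN^\phi$ together with Lemma~\ref{lem:LEIT.symbol-phiS}. No further analytic estimates are needed beyond these parity and bigrading matchings.
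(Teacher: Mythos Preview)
Your proof is correct and tracks the paper's argument closely: both pass through Lemma~\ref{lm:ActionOnSpinorBundle}, decompose along the bigrading using~(\ref{eq:SymbolQuantization}) and Lemma~\ref{lem:LEIT.symbol-phiS}, and then feed the resulting $(a,2\ell)$-components into Lemma~\ref{lem:AS.approximation-asymptotic-kernel-j}. The only substantive difference is cosmetic: the paper packages the subleading contributions into abstract linear maps $\varphi_\ell:\Lambda^{(a,2\ell)}(n)\to\Lambda^n(n)$ depending only on $\phi^\sS$, while you identify them explicitly as wedging with $\sigma[\phi^\sS]^{(0,n-a-2\ell)}$.

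One word of caution on your first displayed identity. Your justification ``the $\cK$-remainder lives in total bigrading $\leq n-2$'' only follows from~(\ref{eq:SymbolQuantization}) when the homogeneous components being multiplied have total degree at most $n$; for pairs of components of $\sigma[\phi^\sS]$ and $\eta(t)$ whose total degree exceeds $n$, the bound from~(\ref{eq:SymbolQuantization}) is weaker and does not by itself exclude a degree-$n$ contribution from the remainder. The identity $\sigma[\phi^\sS\fa(t)]^{(n)}=\bigl(\sigma[\phi^\sS]\wedge\sigma[\fa(t)]\bigr)^{(n)}$ is nonetheless true --- for monomials one has $c(e^I)c(e^J)=\pm c(e^{I\triangle J})$, whose degree equals $n$ only when $I,J$ are disjoint with $|I|+|J|=n$, i.e., precisely when the wedge already carries the top part --- but this extra observation should be stated. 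The paper's formulation with abstract $\varphi_\ell$ sidesteps this point, since only linearity of $\varphi_\ell$ (not its explicit form) is used for the asymptotic bookkeeping.
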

\begin{proof}
For $t>0$ set $\fa(t)= (1_{\sS_{n}}\otimes \tr_{\C^{p}})[\phi^{E}I_{Q}(0,t)]$. Then by Lemma~\ref{lm:ActionOnSpinorBundle} we have
\begin{equation}
\label{eq:str.phi.I_Q.sigma}
  (\str_{\sS_{n}\otimes \C^{p}})\left[\phi^{\sS\otimes E}I_{Q}(0,t)\right]=(-2i)^{\frac{n}{2}}\left| 
  \sigma\left[\phi^{\sS}\fa(t)\right]\right|^{(n)} \qquad \text{for all $t>0$}.
\end{equation}
As Lemma~\ref{lem:LEIT.symbol-phiS} ensures us that $\sigma\left[ \phi^{\sS}\right]$ is an element of $\Lambda^{(0,2\bt)}(n)$, 
using~(\ref{eq:SymbolQuantization}) we deduce that
\begin{equation*}
  \sigma\left[ \phi^{\sS}\fa(t)\right]^{(n)}= \sigma\left[\phi^{\sS}\right]^{(0,n-a)}\wedge  
  \sigma\left[\fa(t)\right]^{(a,0)} + \sum_{1\leq \ell \leq 
  \frac{1}{2}(n-a)}\varphi_{\ell}\left(\sigma\left[\fa(t)\right]^{(a,2\ell)} \right),
\end{equation*}where $\varphi_{\ell}:\Lambda^{(0,2\ell)}(n)\rightarrow \Lambda^{n}(n)$ is a linear map which does not depend on $\fa(t)$. 
Moreover, for $\ell=0,1,\ldots,\frac{1}{2}(n-a)$, Lemma~\ref{lem:AS.approximation-asymptotic-kernel-j} ensures that, as $t\rightarrow 0^{+}$, we have
\begin{itemize}
    \item  $\sigma\left[\fa(t)\right]^{(a,2\ell)}=\op{O}\left(t^{\ell -\frac{m+1}{2}}\right)$ when $m$ is odd.

    \item  $\sigma\left[\fa(t)\right]^{(a,2\ell)}=t^{\ell -\frac{m}{2}-1}\tr_{\C^{p}}\left[\phi^{E}I_{Q_{(m)}}(0,1)^{(a,2\ell)}\right] +  
    \op{O}\left(t^{\ell -\frac{m}{2}}\right)$ when $m$ is even. 
\end{itemize}
Therefore, when $m$ is odd, we obtain 
\begin{equation*}
     \sigma\left[ \phi^{\sS}\fa(t)\right]^{(n)}=\op{O}\left(t^{-\frac{m+1}{2}}\right) \qquad \text{as $t\rightarrow 0^{+}$}. 
\end{equation*}
Combining this with~(\ref{eq:str.phi.I_Q.sigma}) we get the asymptotic~(\ref{eq:Str.phi.I_Q.local.odd}) when $m$ is odd. When $m$ is even,
we see that, as $t\rightarrow 0^{+}$, we have 
\begin{equation*}
    \sigma\left[ \phi^{\sS}\fa(t)\right]^{(n)}=  t^{-(\frac{m}2+1)}\sigma\left[\phi^{\sS}\right]^{(0,n-a)}\wedge 
    \tr_{\C^{p}}[\phi^{E}I_{Q_{(m)}}(0,1)]^{(a,0)}+  \op{O}(t^{-\frac{m}2}).
\end{equation*}
Combining this with~(\ref{eq:str.phi.I_Q.sigma}) and the formula~(\ref{eq:LEIT.top-symbol-phiS}) for $\sigma\left[\phi^{\sS}\right]^{(0,n-a)}$ yields the 
asymptotic~(\ref{eq:AS-convergence-symbol-KQ}) when $m$ is even. The proof is complete. 
\end{proof}

As in~\cite{LYZ:TAMS} it is convenient to introduce the following curvature matrices:
\begin{equation*}
    R':=(R_{ij})_{1\leq  i,j\leq a} \qquad \text{and} \qquad  R'':=(R_{a+i,a+j})_{1\leq  i,j\leq n-a}.
\end{equation*}
Note that~(\ref{eq:LEIT.splitting-curvature}) implies that the component in $\Lambda^{\bt,0}(n)$ of $R'$ (resp., $R''$) is $R^{TM^{\phi}}(0)$ (resp., $R^{\cN^{\phi}}(0)$). 

\begin{lemma}\label{lem:GetzlerOrderParametrix} Let $Q\in \pdo^{-2}(\R^{n}\times \R, \sS_{n}\otimes \C^{p})$ be a parametrix for 
    $\sD^{2}+\partial_{t}$. Then 
\begin{enumerate}
    \item  $Q$ has Getzler order~$-2$ and its model operator is
\begin{equation*}
        Q_{(-2)}=(H_{R}+\partial_{t})^{-1}\wedge \exp\left(-tF^{E}(0)\right).
\end{equation*}
    
    \item  For all $t>0$, we have 
    \begin{gather}
        I_{Q_{(-2)}}(0,t)= I_{(H_{R}+\partial_{t})^{-1}}(0,t)\wedge \exp\left(-tF^{E}(0)\right),\label{eq:ProofProp.IHR0}\\
        I_{(H_{R}+\partial_{t})^{-1}}(0,t)=\frac{(4\pi t)^{-\frac{a}{2}}}{
        {\det}^{\frac12}\left(1-\phi^{\cN}\right)}   
        {\det}^{\frac12}\left(\frac{tR'/2}{\sinh(tR'/2)}\right) {\det}^{-\frac12} \left(1-\phi^{\cN}e^{-tR''}\right) .
        \label{eq:ProofProp.IHR}
    \end{gather}
\end{enumerate}   
\end{lemma}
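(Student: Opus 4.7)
The plan is to split the claim into a formal part obtained from the Getzler calculus (Part~(1) together with the first formula of Part~(2)) and a substantive Mehler-type calculation for the closed-form expression~(\ref{eq:ProofProp.IHR}). For Part~(1), I would combine the parametrix relation $Q(\sD^2+\partial_t)\equiv 1$ modulo smoothing with Example~\ref{ex:D^2.formula} and Lemma~\ref{lem:index.top-total-order-symbol-composition}. Example~\ref{ex:D^2.formula} gives that $\sD^2+\partial_t$ has Getzler order~$2$ with model operator $H_R+F^E(0)+\partial_t$, and this model operator will be invertible once Mehler's formula is established. If $m$ denotes the Getzler order of $Q$, the composition lemma identifies the leading Getzler-homogeneous part of $Q(\sD^2+\partial_t)$ with $Q_{(m)}\bigl(H_R+F^E(0)+\partial_t\bigr)$ and forces this to equal the leading part of~$1$, which has Getzler degree~$0$. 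Invertibility of the model operator of $\sD^2+\partial_t$ then forces $m=-2$ and $Q_{(-2)}=\bigl(H_R+F^E(0)+\partial_t\bigr)^{-1}$. Because $F^E(0)\in\Lambda^{2}(n)\otimes\End(\C^p)$ is a nilpotent constant-coefficient wedge operator commuting with $H_R+\partial_t$, this inverse factors as $(H_R+\partial_t)^{-1}\wedge e^{-tF^E(0)}$, the exponential being a finite Taylor sum. The first formula of Part~(2) is then immediate from~(\ref{IQonRn}): the factor $e^{-tF^E(0)}$ is independent of $x''$ and pulls outside the integral.

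The substantive step is the closed-form formula~(\ref{eq:ProofProp.IHR}) for $I_{(H_R+\partial_t)^{-1}}(0,t)$. I would invoke Mehler's formula for the heat kernel of the generalized harmonic oscillator $H_R$ (\emph{cf.}~\cite{BGV:HKDO,LM:DMJ}), which gives $K_{(H_R+\partial_t)^{-1}}(x,y;t)$ as $(4\pi t)^{-n/2}\,{\det}^{1/2}\bigl(tR/2/\sinh(tR/2)\bigr)$ times a Gaussian in $x,y$ whose quadratic form is built algebraically from $R$. Using the block splitting $R=R'\oplus R''$ coming from $TM_{|x_0}=T_{x_0}M_a^\phi\oplus\cN_{x_0}^\phi$, the prefactor factors and the Gaussian decouples into tangential and normal pieces. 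Specializing to $x=(0,x'')$ and $y=(0,(1-\phi^{\cN})x'')$ kills the $R'$-Gaussian while retaining the $R'$-prefactor, leaving a Gaussian integral over $x''\in\R^{n-a}$ whose quadratic form is built from $R''$ and $1-\phi^{\cN}$. Standard Gaussian integration produces $(4\pi t)^{(n-a)/2}$ times the reciprocal square root of this quadratic form's determinant, and the powers of $(4\pi t)$ collapse to $(4\pi t)^{-a/2}$.

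The main obstacle is the algebraic identity that simplifies the combination of the $R''$-prefactor and the Gaussian determinant into ${\det}^{-1/2}(1-\phi^{\cN})\cdot{\det}^{-1/2}(1-\phi^{\cN}e^{-tR''})$. This is the equivariant Mehler identity in the normal direction. I would verify it by simultaneously block-diagonalizing $R''$ and $\phi^{\cN}$ on the invariant $2$-planes exploited in the proof of Lemma~\ref{lem:LEIT.symbol-phiS} and reducing to an elementary $2\times 2$ computation; alternatively, the identity is explicit in~\cite{LYZ:TAMS,LM:DMJ}. The ${\det}^{-1/2}(1-\phi^{\cN})$ factor arises geometrically from the Jacobian of $x''\mapsto(1-\phi^{\cN})x''$, reflecting the use of $dx''$ rather than $d\bigl((1-\phi^{\cN})x''\bigr)$ as measure in~(\ref{IQonRn}).
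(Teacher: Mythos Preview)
Your proposal is correct and follows essentially the same route as the paper: Part~(1) via the parametrix relation and Lemma~\ref{lem:index.top-total-order-symbol-composition} (the paper simply cites~\cite{Po:CMP} for this), and Part~(2) via Mehler's formula followed by a Gaussian integral over the normal fiber. The only point requiring care is your suggestion to simultaneously block-diagonalize $R''$ and $\phi^{\cN}$: since $R''$ is a matrix with entries in the nilpotent ring $\Lambda^{2\bullet}(n)$ rather than $\R$, commuting with $\phi^{\cN}$ does not immediately yield a common block decomposition, so the paper instead manipulates the quadratic form directly using $[\phi^{\cN},R'']=0$ to factor the Gaussian determinant as $\det\bigl[e^{tR''/2}(1-(\phi^{\cN})^{-1})\bigr]\det(1-\phi^{\cN}e^{-tR''})$; your alternative of citing~\cite{LM:DMJ} is exactly what the paper does.
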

\begin{proof}
The first part is contained in~\cite[Lemma 5]{Po:CMP}.  This immediately gives the formula~(\ref{eq:ProofProp.IHR0}). 
The formula for $ I_{(H_{R}+\partial_{t})^{-1}}(0,t)$ is obtained exactly like in~\cite[p.\ 459]{LM:DMJ}. For reader's 
convenience we mention the main details of this computation. 

The kernel of $(H_{R}+\partial_{t})^{-1}$ can be determined from the arguments of~\cite{Ge:SPLASIT}. More 
precisely, let $A\in \mathfrak{so}_{n}(\R)$ and set $B=A^{t}A$. Consider the harmonic oscillators,
\begin{equation*}
    H_{A}:=-\sum_{1\leq i\leq n}(\partial_{i}+\ii A_{ij}x^{j})^2 \quad \text{and} \quad H_{B}:=-\sum_{1\leq i\leq n}\partial_{i}^{2} 
    +\frac{1}{4}\acou{Bx}{x}.
\end{equation*}In particular substituting $A=\frac{1}{2}\ii R$ in the formula for $H_{A}$ gives $H_{R}$. In addition, define
\begin{equation*}
    X:= \ii \sum_{i,j} A_{ij}x^{i}\partial_{j} = \ii \sum_{i<j}A_{ij}(x^{i}\partial_{j}-x^{j}\partial_{i}).
\end{equation*}Note that $H_{A}=H_{B}+X$. Observe also that, as $X$ is a linear combination of the infinitesimal rotations 
$x^{i}\partial_{j}-x^{j}\partial_{i}$, the $O(n)$-invariance of $H_{B}$ implies that $[H_{B},X]=0$. Thus, 
\begin{equation}
    e^{-tH_{A}}=e^{-tX}e^{-tH_{B}} \qquad \forall t\geq 0.
    \label{eq:ProofProp.commutationHB-X}
\end{equation}

The heat kernel of $H_{B}$ is determined by using Melher's formula in its version of~\cite{Ge:SPLASIT}. We get
\begin{equation}
    K_{(H_{B}+\partial_{t})^{-1}}(x,y,t)= (4\pi t)^{-\frac{n}{2}} {\det}^{\frac{1}{2}}\left( \frac{t\sqrt{B}}{\sinh 
    (t\sqrt{B})}\right) \exp \left( -\frac{1}{4t}\Theta_{B}(x,y,t)\right), \quad t>0, 
    \label{eq:ProofProp.MelherHB}
 \end{equation}where we have set
\begin{equation*}  \Theta_{B}(x,y,t):= \biggl\langle\frac{t\sqrt{B}}{\tanh (t\sqrt{B})}x,x \biggr \rangle + 
    \biggl\langle\frac{t\sqrt{B}}{\tanh (t\sqrt{B})}y,y \biggr \rangle 
    -2\biggl\langle\frac{t\sqrt{B}}{\sinh (t\sqrt{B})}x,y \biggr \rangle , \nonumber
\end{equation*}Here $\sqrt{B}$ is any square root of $B$ (e.g., $\sqrt{B}=\ii A$). Note that the right-hand side of~(\ref{eq:ProofProp.MelherHB}) is 
actually an analytic function of $(\sqrt{B})^{2}$. 

We also observe that for $t\in \R$ the matrix $e^{-t\sqrt{-1}A}$ is an element of $\op{O}(n)$, since in a suitable 
orthonormal basis it can 
be written as a block diagonal of $2\times 2$ rotation matrices with purely imaginary angles. Moreover, the family of 
operators $u\rightarrow u(e^{-t\sqrt{-1}A})$, $t\in \R$, is a one-parameter group of operators on $L^{2}(\R^{n})$ with 
infinitesimal generator $X$, so it agrees with $e^{-tX}$ for $t>0$. 
Combining this with~(\ref{eq:ProofProp.commutationHB-X}) and~(\ref{eq:ProofProp.MelherHB}) then gives 
\begin{gather*}
    K_{(H_{A}+\partial_{t})^{-1}}(x,y,t)= (4\pi t)^{-\frac{n}{2}} {\det}^{\frac{1}{2}}\left( \frac{t\sqrt{B}}{\sinh 
   (t \sqrt{B})}\right) \exp \left( -\frac{1}{4t}\Theta_{A}(x,y,t)\right),\\
     \Theta_{A}(x,y,t):= \biggl\langle\frac{t\sqrt{B}}{\tanh (t\sqrt{B})}x,x \biggr \rangle + 
    \biggl\langle\frac{t\sqrt{B}}{\tanh (t\sqrt{B})}y,y \biggr \rangle 
    -2\biggl\langle\frac{t\sqrt{B}}{\sinh (t\sqrt{B})}e^{- t\ii A}x,y \biggr \rangle, 
\end{gather*}where we have used the fact that $e^{- t\ii A}$ is an orthogonal matrix. 
Substituting $A=\frac{1}{2}\ii R$ and  $\sqrt{B}=\frac{1}{2}R$ then gives the kernel of 
$(H_{R}+\partial_{t})^{-1}$. We obtain
\begin{equation}
    K_{(H_{R}+\partial_{t})^{-1}}(x,y,t)= (4\pi t)^{-\frac{n}{2}} {\det}^{\frac{1}{2}}\left( \frac{tR/2}{\sinh (
   tR/2)}\right) \exp \left( -\frac{1}{4t}\Theta_{R}(x,y,t)\right), \quad t>0, 
  \label{eq:ProofProp.MelherHR}
  \end{equation}where we have set
\begin{equation*}
     \Theta_{R}(x,y,t):= \biggl\langle\frac{tR/2}{\tanh (tR/2)}x,x \biggr \rangle + 
    \biggl\langle\frac{tR/2}{\tanh (tR/2)}y,y \biggr \rangle 
    -2\biggl\langle\frac{tR/2}{\sinh (tR/2)}e^{tR/2}x,y \biggr \rangle. \nonumber
\end{equation*}

We are ready to compute $I_{(H_{R}+\partial_{t})^{-1}}(0,t)$. From~(\ref{IQonRn}) and~(\ref{eq:ProofProp.MelherHR}) we get 
\begin{equation}
   I_{(H_{R}+\partial_{t})^{-1}}(0,t)=(4\pi t)^{-\frac{n}{2}} {\det}^{\frac{1}{2}}\left( \frac{ tR/2}{\sinh 
   (tR/2)}\right) \int_{\R^{n-a}} \exp \left( -\frac{1}{4t}\Theta(v,t)\right)dv, 
   \label{eq:ProofProp.IHR-Theta}
\end{equation}where $\Theta(v,t):= \Theta_{R}(v,\phi^{\cN} v,t)$. Set $\scA=\frac{1}{2}tR''$. As
$[\phi^{\cN}, \scA]=0$, we see that
\begin{align*}
\Theta(v,t) & =  \biggl\langle\frac{\scA}{\tanh \scA}v,v \biggr \rangle + 
    \biggl\langle\frac{\scA}{\tanh \scA}\phi^{\cN} v, \phi^{\cN} v \biggr \rangle 
    -2\biggl\langle\frac{\scA}{\sinh \scA}e^{\scA}v,\phi^{\cN} v \biggr \rangle \\
    &  = 2 \biggl\langle\frac{\scA}{\sinh \scA}\left(\cosh \scA-  \left(\phi^{\cN }\right)^{-1}e^{\scA}\right)v, v \biggr \rangle .
\end{align*}Note that
\begin{align*}
    \left(\cosh \scA-  \left(\phi^{\cN }\right)^{-1}e^{\scA}\right)+\left(\cosh \scA- 
     \left(\phi^{\cN }\right)^{-1}e^{\scA}\right)^{T} & = e^{\scA}+e^{-\scA}- \left(\phi^{\cN }\right)^{-1}e^{\scA}-\phi^{\cN} e^{-\scA}\\ & =
     e^{\scA}\left(1- \left(\phi^{\cN }\right)^{-1}\right)(1-\phi^{\cN} e^{-2\scA}).
\end{align*}Therefore, using the formula for the integral of a Gaussian function and its extension to 
Gaussian functions associated to form-valued symmetric matrices, we get
\begin{multline*}
  \int_{\R^{n-a}} \exp \left( -\frac{1}{4t}\Theta(v,t)\right)dv =    \int_{\R^{n-a}} \exp \left( -\frac{1}{4t} 
  \biggl\langle\frac{\scA}{\sinh \scA}e^{\scA}\left(1- \left(\phi^{\cN }\right)^{-1}\right)(1-\phi^{\cN} e^{-2\scA}) v, v \biggr \rangle  \right)dv\\
    =  (4\pi)^{\frac{n-a}{2}}{\det}^{-\frac{1}{2}} \left(\frac{\scA}{\sinh \scA}\right) {\det}^{-\frac{1}{2}} 
    \left[e^{\scA}\left(1- \left(\phi^{\cN }\right)^{-1}\right)\right]{\det}^{-\frac{1}{2}}(1-\phi^{\cN} e^{-2\scA}).
\end{multline*}
We observe that ${\det}^{-\frac{1}{2}} \left[e^{\scA}\left(1- \left(\phi^{\cN }\right)^{-1}\right)\right]={\det}^{-\frac{1}{2}} (1-\phi^{\cN})$, and so
using~(\ref{eq:ProofProp.IHR-Theta}) we get 
\begin{equation*}
            I_{(H_{R}+\partial_{t})^{-1}}(0,t)={(4\pi)^{-\frac{a}{2}}}{
        {\det}^{-\frac12}\left(1-\phi^{\cN}\right)}   
        {\det}^{\frac12}\left(\frac{tR'/2}{\sinh(tR'/2)}\right) {\det}^{-\frac12} \left(1-\phi^{\cN} e^{-tR''}\right).
\end{equation*}This proves~(\ref{eq:ProofProp.IHR}) and completes the proof. 
\end{proof}

Let us go back to the proof of Theorem~\ref{thm:LEIT.local-equiv.-index-thm-pointwise}. Let $Q\in \pdo^{-2}(\R^{n}\times \R, \sS_{n}\otimes \C^{p})$ be a 
parametrix for $\sD^{2}+\partial_{t}$. Thanks to 
Lemma~\ref{lem:GetzlerOrderParametrix} we know that $Q$ has Getzler order $-2$. Therefore, 
using~(\ref{eq:ActionOnBundle}) and (\ref{eq:AS-convergence-symbol-KQ}) we see that, as $t\rightarrow 0^{+}$, 
\begin{align}
\str_{\sS\otimes E} \left[ \phi^{\sS\otimes E}(x_{0})I_{(\sD^{2}_{\nabla^{E}}+\partial_{t})^{-1}} (x_{0},t)\right] & =    
(\str_{\sS_{n}\otimes \C^{p}}) \left[ (\phi^{\sS}\otimes \phi^{E}) I_{Q} (0,t)\right]  
+\op{O}(t^{\infty}) \nonumber \\
& = (-i)^{\frac{n}{2}}2^{\frac{a}{2}}{\det}^{\frac12}\left(1-\phi^{\cN}\right)\left| \tr_{\C^{p}}[ \phi^{E}
I_{Q_{(-2)}}(0,1)]\right|^{(a,0)} +\op{O}(t).
\label{eq:LEIT.reduction-model}
\end{align}
%Moreover, thanks to~(\ref{eq:ProofProp.IHR0}) we have
%\begin{equation}
%\label{eq:tr.EndW.heat.op}
%   \tr_{\C^{p}}[ I_{Q_{(-2)}}(0,1)]  =I_{(H_{R}+\partial_{t})^{-1}}(0,1)\wedge 
%   \tr_{\C^{p}}\left[\phi^{E}\exp\left(-F^{E}(0)\right)\right]. 
%%    & =I_{(H_{R}+\partial_{t})^{-1}}(0,1)\wedge \Ch_{\phi}\left(F^{E}(0)\right) .
%\end{equation}
As noted above, the components in $\Lambda^{\bt,0}(n)$ of the curvatures $R'$ and $R''$ are $R^{TM^{\phi}}(0)$ and $R^{\cN^{\phi}}(0)$, 
respectively. Likewise, the component in $\Lambda^{\bt,0}(n)$ of the curvature $F^{E}(0)$ is $F^{E}_{0}(0)$. 
Therefore, using~(\ref{eq:ProofProp.IHR0})--(\ref{eq:ProofProp.IHR}) we see that the component in $\Lambda^{\bt,0}(n)$ of $I_{Q_{(-2)}}(0,1)$ is 
\begin{equation}
% \frac{(4\pi)^{-\frac{a}{2}}}{
%        {\det}^{\frac12}\left(1-\phi^{\cN}\right)} 
%        {\det}^{\frac12}\biggl(\frac{R^{TM^{\phi}}(0)/2}{\sinh(R^{TM^{\phi}}(0)/2)}\biggr) {\det}^{-\frac12} 
%        \biggl(1-\phi^{\cN}e^{-R^{\cN^{\phi}}(0)}\biggr) \wedge  \tr_{\C^{p}}\left[\phi^{E}\exp\left(-F^{E}_{0}(0)\right)\right] \\
  I_{Q_{(-2)}}(0,1)^{(\bt,0)}=       \frac{(4\pi)^{-\frac{a}{2}}}{
        {\det}^{\frac12}\left(1-\phi^{\cN}\right)}  \hat{A}(R^{TM^{\phi}}(0))\wedge 
        \nu_{\phi}\left(R^{\cN^{\phi}}(0)\right)\wedge \Ch_{\phi}\left(F^{E}(0)\right).
        \label{eq:LEIT.horizontal-component}
\end{equation}
Combining this with~(\ref{eq:LEIT.reduction-model}) we deduce that, as $t\rightarrow 0^{+}$, we have
\begin{multline*}
  \str_{\sS\otimes E}\left[ \phi^{\sS}(x_{0})I_{(\sD^{2}_{\nabla^E}+\partial_{t})^{-1}} (x_{0},t)\right] \\= (-i)^{\frac{n}{2}} 
  (2\pi)^{-\frac{a}{2}} \biggl|\hat{A}(R^{TM^{\phi}}(0))\wedge 
  \nu_{\phi}\left(R^{\cN^{\phi}}(0)\right) \wedge \Ch_{\phi}\left(F^{E}(0)\right)\biggr|^{(a,0)}+ \op{O}(t). 
\end{multline*}This gives the asymptotic~(\ref{eq:LEIT.local-equiv.-index-thm-pointwise}) at the point $x_0$. This 
completes the proof of Theorem~\ref{thm:LEIT.local-equiv.-index-thm-pointwise} and the local equivariant index theorem. 

\begin{remark}
 As Theorem~\ref{thm:LEIT.local-equiv.-index-thm-pointwise} is a purely local statement, it also allows us to obtain the local equivariant index theorem for 
 Dirac operators acting on sections of any $G$-equivariant Clifford module $E$, where $G$ is any compact group of 
 orientation-preserving isometries. This only amounts to replace $\Ch_{\phi}(F^{E})$ by $\Ch_{\phi}(F^{E/\sS})$, where 
 $F^{E/\sS}$ is the twisted curvature in the sense of~\cite{BGV:HKDO}. In particular, this enables us to recover the 
 local equivariant index theorem for the de Rham and signature complexes with coefficients in any $G$-equivariant 
 Hermitian vector bundle. 
\end{remark}

It should be stressed out that the above proof of the local equivariant index theorem actually gives a more general 
result. The key lemma in the proof is Lemma~\ref{lem:AS.approximation-asymptotic-kernel}, which was specialized to $Q=(\sD^{2}+\partial_{t})^{-1}$. This lemma 
actually holds for general Volterra \psidos. As a result, this allows us to obtain a version of Theorem~\ref{thm:LEIT.local-equiv.-index-thm-pointwise} for Volterra \psidos\ as 
follows. 

Following~\cite{Po:CMP} we shall call \emph{synchronous normal coordinates} centered at a point $x_{0}\in M$ the data 
of normal coordinates centered at $x_{0}$ and trivialization of the spinor bundle via synchronous tangent frame 
associated to an oriented orthonormal basis $e_{1},\ldots,e_{n}$ of $T_{x_{0}}M$. If $x_{0}\in M^{\phi}_{a}$ for a 
given $\phi \in G$, we shall call such a basis \emph{admissible} if $e_{1},\ldots,e_{a}$ is an oriented orthonormal 
basis of $T_{x_{0}}M_{a}^{\phi}$ (which implies that $e_{a+1},\ldots,e_{n}$ is an oriented orthonormal basis of 
$\cN_{x_{0}}^{\phi}$). 

\begin{definition}
Let $Q\in \pvdo^{\bt}(M\times \R,\sS\otimes E)$.  
\begin{enumerate}
    \item  We shall say that $Q$ has Getzler order $m$ at a given point $x_{0}\in M$, when, for any synchronous normal 
    coordinates centered at $x_{0}$ over which $E$ is trivialized, the operator $Q$ agrees up near $x=0$ with an operator 
    $\tilde{Q}\in \pvdo^{\bt}(\R^{n}\times \R, \sS_{n}\otimes \C^{p})$ 
that has  Getzler order~$m$.

    \item  Given a subset $S\subset M$, we shall say that $Q$ has Getzler order $m$ along $S$ when it has Getzler 
    order~$\leq m$ at every point  of $S$. 
\end{enumerate}
\end{definition}

\begin{example}
    The operator  $(\sD_{\nabla^{E}}^{2}+\partial_{t})^{-1}$ has Getzler order~$-2$ at every point of $M$.
\end{example}

\begin{remark}
    It can be shown that the condition in (1) holds for some synchronous normal coordinates centered at $x_{0}$ and 
    some trivialization of $E$ over these coordinest, then it holds for any such data. That is, the notion of Getzler 
    of order at a given point of $M$ is independent of the choice of the synchronous normal coordinates and trivialization of 
    $E$ near that point. 
 \end{remark}

\begin{theorem}\label{thm:LEIT.Volterra-LEIT}
 Given  $\phi\in G$, let $Q\in \pvdo^{\bt}(M\times \R,\sS\otimes E)$ have Getzler order~$m$ along the fixed-point set 
 $M^{\phi}$. 
    \begin{enumerate}
        \item  If $m$ is odd, then, uniformly on $M^{\phi}$, 
        \begin{equation}
            \str_{\sS\otimes E}\left[ \phi^{\sS\otimes E}(x)I_{Q}(x,t)\right]=\op{O}\left( t^{-\frac{m+1}{2}}\right) \qquad 
            \text{as $t\rightarrow 0^{+}$}.
            \label{eq:LEIT.Volterra-LEIT-odd}
        \end{equation}
    
        \item  If $m$ is even, then, as $t\rightarrow 0^{+}$ and uniformly on each fixed-point submanifold 
        $M_{a}^{\phi}$, we have
        \begin{equation}
            \str_{\sS\otimes E}\left[ \phi^{\sS\otimes E}(x)I_{Q}(x,t)\right]=    \gamma_{\phi}(Q)(x)t^{-\frac{m}{2}-1}             + 
            \op{O}\left( t^{-\frac{m}{2}}\right),
             \label{eq:LEIT.Volterra-LEIT-even}
        \end{equation}where $\gamma_{\phi}(Q)(x)$ is a function on $M^{\phi}_{a}$ such that, for all $x_{0}\in M_{a}^{\phi}$, in 
        any synchronous normal coordinates centered at $x_{0}$ over which $E$ is trivialized, we have
        \begin{equation*}
            \gamma_{\phi}(Q)(0)= (-i)^{\frac{n}{2}} 2^{\frac{a}{2}}{\det}^{\frac12}\left(1-\phi^{\cN}(0)\right)  \left| \tr_{E}\left[ 
            \phi^{E}(0)I_{Q_{(m)}}(0,1)\right]\right|^{(a,0)},
        \end{equation*} where $Q_{(m)}$ is the model operator of $Q$.  
          \end{enumerate}
\end{theorem}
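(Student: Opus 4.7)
The plan is to reduce the asymptotics in~(\ref{eq:LEIT.Volterra-LEIT-odd}) and~(\ref{eq:LEIT.Volterra-LEIT-even}) to pointwise statements at each fixed point $x_{0}\in M^{\phi}$ and then invoke Lemma~\ref{lem:AS.approximation-asymptotic-kernel} directly. The crucial observation is that this lemma was stated and proved for an \emph{arbitrary} Volterra \psido\ on $\R^{n}\times \R$ of prescribed Getzler order; it was only specialized to a parametrix of $\sD^{2}+\partial_{t}$ at the very last step of the proof of Theorem~\ref{thm:LEIT.local-equiv.-index-thm-pointwise}. The present theorem simply extracts the portion of that argument which does not rely on any particular choice of $Q$.

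More concretely, I would fix $a\in\{0,2,\ldots,n\}$, pick $x_{0}\in M^{\phi}_{a}$, and choose admissible synchronous normal coordinates centered at $x_{0}$ over which $E$ is trivialized. By hypothesis, in these coordinates $Q$ agrees near $x=0$ with an operator $\tilde Q\in \pvdo^{\bt}(\R^{n}\times \R,\sS_{n}\otimes \C^{p})$ of Getzler order~$m$. The localization argument underlying Lemma~\ref{lem:Heat-localization}, together with the observation of Remark~\ref{rem:Heat.computation-Ij-inverse-heat-operator}, then yields
\begin{equation*}
I_{Q}(x_{0},t)=I_{\tilde Q}(0,t)+\op{O}(t^{\infty}) \qquad \text{as $t\rightarrow 0^{+}$}.
\end{equation*}
Applying Lemma~\ref{lem:AS.approximation-asymptotic-kernel} to $\tilde Q$ then produces the pointwise versions of~(\ref{eq:LEIT.Volterra-LEIT-odd})--(\ref{eq:LEIT.Volterra-LEIT-even}), with the coefficient $\gamma_{\phi}(Q)(x_{0})$ defined by the stated formula in terms of the model operator $\tilde Q_{(m)}$. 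Independence of $\gamma_{\phi}(Q)(x_{0})$ from the choice of synchronous normal coordinates and $E$-trivialization is automatic, since the left-hand side of~(\ref{eq:LEIT.Volterra-LEIT-even}) is intrinsic.

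The step requiring the most care will be the uniformity of these asymptotics across $M^{\phi}$ and each $M_{a}^{\phi}$. Since $M$ is compact, so are $M^{\phi}$ and each $M_{a}^{\phi}$, and I expect uniformity to follow by covering each $M_{a}^{\phi}$ with finitely many admissible synchronous normal coordinate charts over which $E$ is trivial: the asymptotic estimates underlying Lemma~\ref{lem:AS.approximation-asymptotic-kernel}---namely those of Proposition~\ref{prop:Heat.asymptotic-IQ} and of Lemmas~\ref{lem:Heat.asymptotic-IQ-symbols}--\ref{lem:Heat.asymptotic-Isymbol}---hold uniformly on compact subsets of each coordinate domain, and patching over a finite cover yields the required global uniform asymptotic. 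Once this uniformity is secured, the theorem is an immediate corollary of the pointwise formula derived above.
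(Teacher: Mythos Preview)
Your proposal is correct and follows essentially the same route as the paper: reduce to the pointwise statement and invoke Lemma~\ref{lem:AS.approximation-asymptotic-kernel}, which was indeed proved for arbitrary Volterra \psidos\ of a given Getzler order. The only difference is in how you handle uniformity: the paper simply observes that Proposition~\ref{prop:Heat.asymptotic-IQ} already furnishes a \emph{uniform} asymptotic expansion of $I_{Q}(x,t)$ on each $M_{a}^{\phi}$, so that once the coefficients are identified pointwise via Lemma~\ref{lem:AS.approximation-asymptotic-kernel}, the uniform versions of~(\ref{eq:LEIT.Volterra-LEIT-odd})--(\ref{eq:LEIT.Volterra-LEIT-even}) follow automatically---no finite-cover patching of coordinate charts is needed.
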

\begin{proof}
 The proof follows the outline of the proof of Theorem~\ref{thm:LEIT.local-equiv.-index-thm-pointwise}. More precisely, as in 
 the proof of Theorem~\ref{thm:LEIT.local-equiv.-index-thm-pointwise} it is enough to prove the 
 asymptotics~(\ref{eq:LEIT.Volterra-LEIT-odd})--(\ref{eq:LEIT.Volterra-LEIT-even}) pointwise, since we are already 
 provided by a \emph{uniform} short-time asymptotic for $I_{Q}(x,t)$ thanks to 
 Proposition~\ref{prop:Heat.asymptotic-IQ}. However, these asymptotics are nothing but the contents of 
 Lemma~\ref{lem:AS.approximation-asymptotic-kernel}. The proof is complete. 
\end{proof}

\begin{remark}
    With some additional work it can be shown that, if $Q\in \pvdo^{\bt}(M\times \R, \sS\otimes E)$ have Getzler 
    order~$m$ along $M^{\phi}$, then, for all $a=0,2,\ldots,n$, there is a 
  unique section $\Upsilon_{\phi}(Q)(x)$ of $\left( \Lambda^{\bt}_{\C}T^{a}M^{\phi}_{a}\right) \otimes \End (E)$ over $M_{a}^{\phi}$ such that, 
  for every point $x_{0}\in M_{a}^{\phi}$, in any admissible synchronous normal coordinates centered at $x_{0}$ over which $E$ is 
  trivialized, we have 
  \begin{equation*}
    \Upsilon_{\phi}(Q)(0)= (-i)^{\frac{n}{2}} 2^{\frac{a}{2}}{\det}^{\frac12}\left(1-\phi^{\cN}(0)\right)  
    I_{Q_{(m)}}(0,1)^{(\bt,0)}. 
  \end{equation*}We then have 
  \begin{equation*}
      \gamma_{\phi}(Q)(x)= \left| \tr_{E}\left[ 
            \phi^{E}(x) \Upsilon_{\phi}(Q)(x)\right]\right|^{(a,0)} \qquad \text{for all $x\in M^{\phi}$}. 
  \end{equation*}
\end{remark}

An interesting consequence of Theorem~\ref{thm:LEIT.Volterra-LEIT} is that it provides us with the following  \emph{differentiable} version of the local equivariant index 
theorem.

\begin{proposition}\label{thm:Diff.local.equiv.index.thm} 
Given $\phi\in G$, let $P:C^{\infty}(M,\sS\otimes E)\rightarrow C^{\infty}(M,\sS\otimes E)$ be a differentiable operator 
of Getzler order~$m$ along the fixed-point set $M^{\phi}$. 
\begin{enumerate}
    \item   If $m$ is odd, then
    \begin{equation}
        \Str\left[ Pe^{-t\sD_{\nabla^{E}}^{2}}U_{\phi}\right]= \op{O}\left( t^{-\frac{m-1}{2}}\right) \qquad 
            \text{as $t\rightarrow 0^{+}$}.
            \label{eq:LEIT.DLEIT-odd}
    \end{equation}

    \item  If $m$ is even, then, as $t\rightarrow 0^{+}$, we have 
    \begin{equation}
        \Str\left[ Pe^{-t\sD_{\nabla^{E}}^{2}}U_{\phi}\right]=   t^{-\frac{m}{2}}
        \int_{M^{\phi}} \gamma_{\phi}(P;\sD_{\nabla^{E}})(x)|dx| + \op{O}\left( t^{-\frac{m}{2}+1}\right) , 
                    \label{eq:LEIT.DLEIT-even}
    \end{equation}where $\gamma_{\phi}(P;\sD_{\nabla^{E}})(x)$ is a function on $M^{\phi}$ such that, for all $x_{0}\in 
    M_{a}^{\phi}$, $a=0,2,\ldots,n$, in 
        any synchronous normal coordinates centered at $x_{0}$ over which $E$ is trivialized, we have
        \begin{equation*}
            \gamma_{\phi}(P;\sD_{\nabla^{E}})(x)(0)= (-i)^{\frac{n}{2}} 2^{\frac{a}{2}}{\det}^{\frac12}\left(1-\phi^{\cN}(0)\right)  \left| \tr_{E}\left[ 
            \phi^{E}(0)I_{P_{(m)}(H_{R}+\partial_{t})^{-1}}(0,1)\wedge \exp(-F^{E}_{0}(0))\right]\right|^{(a,0)},
        \end{equation*} where $P_{(m)}$ is the model operator of $P$.  
\end{enumerate}
 \end{proposition}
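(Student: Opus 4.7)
The plan is to apply Theorem~\ref{thm:LEIT.Volterra-LEIT} to the Volterra $\psi$DO $Q := P(\sD_{\nabla^{E}}^{2} + \partial_{t})^{-1}$ and then integrate the resulting pointwise asymptotic over $M^{\phi}$ via Proposition~\ref{TraceOfHeatKernelVB}(2), which will yield the sought-for short-time asymptotic of $\Str[Pe^{-t\sD_{\nabla^{E}}^{2}} U_{\phi}]$.

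The first step is to determine the Getzler order and model operator of $Q$ along $M^{\phi}$. By Proposition~\ref{thm:Heat.P-inverse-heat-operator-PsiDO}, $Q$ is a Volterra $\psi$DO of classical order $m-2$. By Lemma~\ref{lem:GetzlerOrderParametrix}, $(\sD_{\nabla^{E}}^{2} + \partial_{t})^{-1}$ has Getzler order $-2$ at every point of $M$, with model operator $(H_{R} + \partial_{t})^{-1} \wedge \exp(-tF^{E}(0))$. Combined with the hypothesis that $P$ has Getzler order $m$ along $M^{\phi}$ with model operator $P_{(m)}$, the composition rule of Lemma~\ref{lem:index.top-total-order-symbol-composition} yields that $Q$ has Getzler order $\le m-2$ along $M^{\phi}$, with model operator
\[
Q_{(m-2)} \;=\; P_{(m)}\,(H_{R} + \partial_{t})^{-1} \wedge \exp(-tF^{E}(0))
\]
at each point of $M^{\phi}$ (in admissible synchronous normal coordinates centered at that point).

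Next I apply Theorem~\ref{thm:LEIT.Volterra-LEIT} to $Q$. In the odd case ($m$ odd, so $m-2$ is odd) the theorem furnishes the uniform bound $\str_{\sS\otimes E}[\phi^{\sS\otimes E}(x) I_{Q}(x,t)] = \op{O}(t^{-(m-1)/2})$ on $M^{\phi}$; integration over the compact set $M^{\phi}$ in Proposition~\ref{TraceOfHeatKernelVB}(2) then gives \eqref{eq:LEIT.DLEIT-odd}. In the even case, the theorem gives, uniformly on each $M_{a}^{\phi}$,
\[
\str_{\sS\otimes E}\left[\phi^{\sS\otimes E}(x)\, I_{Q}(x,t)\right] \;=\; \gamma_{\phi}(Q)(x)\, t^{-m/2} + \op{O}(t^{-m/2+1}),
\]
and integration then produces the leading term $t^{-m/2} \int_{M^{\phi}} \gamma_{\phi}(Q)(x)\,|dx|$ of \eqref{eq:LEIT.DLEIT-even}.

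The remaining step is to identify $\gamma_{\phi}(Q)(x)$ with the claimed $\gamma_{\phi}(P; \sD_{\nabla^{E}})(x)$. Since $\exp(-tF^{E}(0))$ is constant in the normal variable $v$, it factors out of the integral~\eqref{IQonRn} defining $I_{Q_{(m-2)}}$, giving $I_{Q_{(m-2)}}(0,1) = I_{P_{(m)}(H_{R}+\partial_{t})^{-1}}(0,1) \wedge \exp(-F^{E}(0))$. The Berezin integral $|\cdot|^{(a,0)}$ extracts only the $\Lambda^{a,0}(n)$ component of the top wedge, so only the $(\bullet,0)$ part of $\exp(-F^{E}(0))$—namely $\exp(-F^{E}_{0}(0))$—can contribute, and the formula for $\gamma_{\phi}(Q)(0)$ from Theorem~\ref{thm:LEIT.Volterra-LEIT} reduces to the stated expression. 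The principal technical point in this program is the careful tracking of the Getzler symbol of $P \cdot (\sD_{\nabla^{E}}^{2}+\partial_{t})^{-1}$ together with its form factor $\exp(-tF^{E}(0))$; however, Lemmas~\ref{lem:index.top-total-order-symbol-composition} and~\ref{lem:GetzlerOrderParametrix} handle this cleanly, so the proof amounts to assembling ingredients already in place.
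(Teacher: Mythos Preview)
Your proposal is correct and follows essentially the same route as the paper: apply Theorem~\ref{thm:LEIT.Volterra-LEIT} to $Q=P(\sD_{\nabla^{E}}^{2}+\partial_{t})^{-1}$, whose Getzler order and model operator are read off from Lemmas~\ref{lem:index.top-total-order-symbol-composition} and~\ref{lem:GetzlerOrderParametrix}, and then integrate via Proposition~\ref{TraceOfHeatKernelVB}(2). Your explicit justification that only the $(\bullet,0)$ component $\exp(-F^{E}_{0}(0))$ of $\exp(-F^{E}(0))$ survives the Berezin integral is a nice clarification the paper leaves implicit.
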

  \begin{proof}
      By Proposition~\ref{TraceOfHeatKernelVB}, as $t\rightarrow 0^{+}$, we have 
      \begin{equation*}
            \Str\left[ Pe^{-t\sD_{\nabla^{E}}^{2}}U_{\phi}\right]= \int_{M^{\phi}}\str_{\sS\otimes 
            E}\left[\phi^{\sS\otimes E}(x)I_{P(\sD_{\nabla^{E}}^{2}+\partial_{t})^{-1}}(x,t)\right]|dx| +\op{O}(t^{\infty}).
      \end{equation*}
As the operator $Q=P(\sD_{\nabla^{E}}^{2}+\partial_{t})^{-1}$ has Getzler order $m-2$, using Theorem~\ref{thm:LEIT.Volterra-LEIT} we see 
that, when $m$ is odd the asymptotics~(\ref{eq:LEIT.DLEIT-odd}) holds. Moreover, when $m$ is even, as $t\rightarrow 0^{+}$, we obtain 
\begin{equation*}
    \Str\left[ Pe^{-t\sD_{\nabla^{E}}^{2}}U_{\phi}\right]= 
    t^{-\frac{m}{2}}\int_{M^{\phi}}\gamma_{\phi}(P;\sD_{\nabla^{E}})(x)dx| + \op{O}\left( t^{-\frac{m}{2}+1}\right), 
\end{equation*}where we have set $\gamma_{\phi}(P;\sD_{\nabla^{E}})(x)=\gamma(Q)(x)$. 

Let $x_{0}\in M^{\phi}_{a}$, $a=0,2,\ldots,n$, and let us consider admissible normal coordinates centered at $x_{0}$. 
Then the operator $Q=P(\sD_{\nabla^{E}}^{2}+\partial_{t})^{-1}$ has model operator 
\begin{equation*}
    Q_{(m-2)}=P_{(m)}(H_{R}+\partial_{t})^{-1}\wedge \exp(-tF^{E}(0)).
\end{equation*}
Therefore, setting $\mu= (-i)^{\frac{n}{2}} 2^{\frac{a}{2}}{\det}^{\frac12}\left(1-\phi^{\cN}(0)\right)$, we get
\begin{align*}
 \gamma_{\phi}(Q)(0)& =  \mu \left| \tr_{E}\left[\phi^{E}(0)I_{Q_{(m-2)}}(0,1)\right]\right|^{(a,0)}\\ 
 &=  \mu \left| 
 \tr_{E}\left[\phi^{E}(0)I_{P_{(m)}(H_{R}+\partial_{t})^{-1}}(0,1)\wedge 
 \exp\left(-tF_{0}(0)\right)\right]\right|^{(a,0)}.
\end{align*}This proves~(\ref{eq:LEIT.DLEIT-even}) and completes the proof. 
\end{proof}

\begin{remark}\label{rmk:LEIT.odd}
The considerations of this section on Getzler order and model operators of operators in $\pvdo^{\bt}(\R^{n}\times \R, 
\sS_{n}\otimes \C^{p})$ are restricted to even dimension $n$ in order to use the isomorphism $\End \sS_{n}\simeq 
\Cl_{\C}(n)$. As pointed out in~\cite{Po:CMP} the notions of Getzler order and model operators make sense for elements of  $\pvdo^{\bt}(\R^{n}\times \R, 
\C^{p})\otimes \Cl_{\C}(n)$ independently of the parity of $n$. In particular,  Lemma~\ref{lem:AS.approximation-asymptotic-kernel-j} holds \emph{verbatim} in 
this context.  When $n$ is odd, $\Cl_{\C}(n)$ is a $2$-cover to $\End \sS_{n}$, but, as observed in~\cite{BF:AEF1}, the 
trace on $\Cl_{\C}(n)^{-}=c\left( \Lambda^{2\bt+1}(n)\right)$ behaves essentially like the supertrace on $\End \sS_{n}$ 
in even dimension. We refer to~\cite{Po:Odd} for odd-dimensional analogues of 
Lemma~\ref{lem:AS.approximation-asymptotic-kernel} and Theorem~\ref{thm:LEIT.Volterra-LEIT} 
and some of their applications. 
\end{remark}

\begin{remark}\label{rmk:LEIT-family-settings}
    It is not difficult to various family settings the considerations of this sections on Getzler orders and model 
    operators on Volterra \psidos\ (see, e.g., \cite{YWang:AJM, YWang:JKT14}). In particular, this provides us with 
    proofs of the local equivariant family index theorem of Liu-Ma~\cite{LM:DMJ} and the infinitesimal equivariant index 
    theorem (a.k.a.~Kirillov Formula) of Berline-Vergne~\cite{BV:EIKCF} (see also~\cite{Bi:ILFHEP}). We refer to~\cite{YWang:AJM} 
    for a proof of the former result and to~\cite{YWang:JKT14} of the latter. We note that the proofs 
    of~\cite{YWang:AJM, YWang:JKT14} rely on Lemma~9.5 of ~\cite{PW:Preprint} which are not correct. This can fixed 
    by using the approriate extensions of Lemma~\ref{lem:AS.approximation-asymptotic-kernel} and Theorem~\ref{thm:LEIT.Volterra-LEIT} to the respective family settings 
    at stake in~\cite{YWang:AJM} and~\cite{YWang:JKT14}. 
\end{remark}

\section{Connes-Chern Character and CM Cocycle}\label{sec:spectral-triples}
In this section, we briefly recall the framework for the local index formula and explain how to extend it to the setup 
of spectral triples over locally convex algebras. In what follows we assume the notation, definitions and results of  the 
prequel~\cite{PW:NCGCGI.PartI} (which we shall refer as Part~I).

\subsection{The Connes-Chern character} The role of manifolds in noncommutative geometry is played by spectral triples. 
More specifically, a \emph{spectral triple} is a triple $(\cA,\cH,D)$, where
\begin{itemize}
    \item $\cH=\cH^{+}\oplus \cH$ is a $\Z_{2}$-graded Hilbert space.
    
    \item $\cA$ is a $*$-algebra represented by bounded operators on $\cH$ preserving its $\Z_{2}$-grading. 

    \item $D$ is a selfadjoint unbounded operator  on $\cH$ such that 
   \begin{itemize}
          \item $D$ maps $\dom (D)\cap \cH^{\pm}$ to $\cH^{\mp}$. 
          \item The resolvent $(D+i)^{-1}$ is a compact operator.
           \item $a \dom (D) \subset \dom (D)$ and $[D, a]$ is bounded for all $a \in \cA$. 
\end{itemize}
\end{itemize}
In particular, with respect to the splitting $\mathcal{H}=\mathcal{H}^+\oplus \mathcal{H}^-$ the 
    operator $D$ takes the form,
    \begin{equation*}
        D=
        \begin{pmatrix}
            0 & D^{-} \\
            D^{+} & 0
        \end{pmatrix}, \qquad D^{\pm}:\dom(D)\cap \cH^{\pm}\rightarrow \cH^{\mp}.
    \end{equation*}

The paradigm of a spectral triple is given by a Dirac spectral triple, 
\begin{equation*}
( C^{\infty}(M), L^{2}_{g}(M,\sS), \sD_{g}),
\end{equation*}
where $(M^{n},g)$ is a compact spin Riemannian manifold ($n$ even) and $\sD_{g}$ is its Dirac operator 
acting on the spinor bundle $\sS=\sS^{+}\oplus \sS^{-}$. Given any Hermitian  vector bundle $E$ over $M$, the datum of 
any Hermitian  connection on $E$ enables us to form the coupled operator $\sD_{\nabla^{E}}$ acting on the sections of $\sS\otimes E$. Its Fredholm index depends only on the 
$K$-theory class of $E$ and is given by the local index formula for Atiyah-Singer~\cite{AS:IEO1, AS:IEO3}, i.e., the 
equivariant index formula~(\ref{thm:EIT}) in the case $\phi=\op{id}_{M}$. 

Likewise, given any spectral triple $(\cA,\cH,D)$ and a Hermitian finitely generated projective module $\cE$ over 
$\cA$, the datum of any Hermitian connection $\nabla^{\cE}$ on $\cE$ gives rise to an unbounded operator 
$D_{\nabla^{\cE}}$ on the Hilbert 
space $\cH(\cE)=\cH\otimes_{\cA} \cE$. Furthermore, with respect to the splitting $\cH(\cE)=\cH(\cE^{+})\oplus \cH(\cE^{-})$, where 
$\cH(\cE^{\pm}):=\cH^{\pm}\otimes_{\cA} \cE$, the coupled operator takes the form,
 \begin{equation*}
     D_{\nabla^{\cE}}=
        \begin{pmatrix}
            0 & D^{-}_{\nabla^{\cE}} \\
            D^{+}_{\nabla^{\cE}} & 0
        \end{pmatrix}, \qquad D^{\pm}_{\nabla^{\cE}}:\dom(D^{\pm})\otimes_{\cA}\cE\rightarrow \cH^{\mp}(\cE).
    \end{equation*}
The operator $D_{\nabla^{\cE}}$ is selfadjoint and Fredholm, and its Fredholm index is then defined by
\begin{equation*}
    \ind D_{\nabla^{\cE}}=\ind D_{\nabla^{\cE}}^{+}=\dim \ker D_{\nabla^{\cE}}^{+}-\dim \ker D_{\nabla^{\cE}}^{-}. 
\end{equation*}
This index depends only on the class of $\cE$ in the $K$-theory $K_{0}(\cA)$, and hence gives rise to an additive index 
map $\ind_{D}:K_{0}(\cA)\rightarrow \Z$. 

Let us further assume that the spectral triple is \emph{$p$-summable} for some $p\geq 1$, i..e, the operator $|D|^{-p}$ 
is trace-class. In this case, 
Connes~\cite{Co:NCDG} constructed an even periodic cyclic cohomology class $\Ch(D)\in \op{HP}^{0}(\cA)$, called 
\emph{Connes-Chern character}, whose pairing 
with $K$-theory computes the index map. This construction is described in Part~I in the more general setting of twisted 
spectral triples. For reader's convenience, we recall this construction in the special case of ordinary spectral 
triples. 

\begin{proposition}[Connes~\cite{Co:NCDG}]
    Assume that $(\cA,\cH,D)$ is $p$-summable and $D$ is invertible.
    \begin{enumerate}
        \item  For any $q\geq \frac{1}{2}(p-1)$ the following formula defines a $2q$-cyclic cocycle on the algebra 
        $\cA$:
        \begin{equation*}
            \tau_{2q}^{D}(a^{0},\ldots, a^{2q}):= \frac{1}{2}\frac{q!}{(2q)!}\Str \left\{D^{-1}[D, a^0]\cdots D^{-1}[D, a^{2q}]\right\}, \quad a^{j}\in \cA,
%     \label{eq:Connes-Chern}
\end{equation*}where $\Str=\Tr_{\cH^{+}}-\Tr_{\cH^{-}}$ is the supertrace on $\cL^{1}(\cH)$.
    
        \item The class of the cocycle $\tau_{2q}^{D}$ in the periodic cyclic cohomology $\op{HP}^{0}(\cA)$ is independent of the 
        value of $q$. 
    \end{enumerate}
\end{proposition}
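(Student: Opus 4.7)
The first task is to verify the trace-class property that makes the defining formula meaningful. Since $(\cA,\cH,D)$ is $p$-summable and $D$ is invertible, $|D|^{-1}$ lies in the Schatten ideal $\cL^{p}(\cH)$, and hence so does $D^{-1}=\op{sgn}(D)|D|^{-1}$. Each commutator $[D,a^{j}]$ is bounded by the spectral triple axioms, and Hölder's inequality for Schatten norms then puts the product $D^{-1}[D,a^{0}]\cdots D^{-1}[D,a^{2q}]$ into $\cL^{p/(2q+1)}(\cH)\subset \cL^{1}(\cH)$ as soon as $2q+1\geq p$. Thus $\tau_{2q}^{D}$ is well defined whenever $q\geq \tfrac{1}{2}(p-1)$.

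Next I would check that $\tau_{2q}^{D}$ is a cyclic cocycle. Since $D$ is odd for the $\Z_{2}$-grading and each $a^{j}$ is even, the operators $D^{-1}[D,a^{j}]$ are even. The graded-trace property $\Str(XY)=(-1)^{|X||Y|}\Str(YX)$ applied to these even factors gives invariance of the expression under cyclic permutation of the blocks $D^{-1}[D,a^{j}]$, which is exactly the cyclicity $\lambda \tau_{2q}^{D}=\tau_{2q}^{D}$. To verify that $b\tau_{2q}^{D}=0$, I would combine the Leibniz rule $[D,a^{j}a^{j+1}]=[D,a^{j}]a^{j+1}+a^{j}[D,a^{j+1}]$ with the elementary identity $D^{-1}a=aD^{-1}-D^{-1}[D,a]D^{-1}$ to expand each term $\tau_{2q}^{D}(a^{0},\ldots,a^{j}a^{j+1},\ldots,a^{2q+1})$ as a sum of three expressions, each of which is of the standard form $\Str\{D^{-1}[D,b^{0}]\cdots D^{-1}[D,b^{2q}]\}$. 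A careful accounting of signs shows that consecutive indices $j$ and $j+1$ produce matching contributions with opposite signs, and the last summand $(-1)^{2q+1}\tau_{2q}^{D}(a^{2q+1}a^{0},a^{1},\ldots,a^{2q})$ closes the telescope after one application of the graded cyclicity of $\Str$.

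Finally, for the independence of $q$ in $\HP^{0}(\cA)$, the aim is to show that $[\tau_{2q+2}^{D}]$ coincides with $S[\tau_{2q}^{D}]$ in $HC^{2q+2}(\cA)$, where $S$ is the periodicity operator; the normalization $\tfrac{1}{2}\tfrac{q!}{(2q)!}$ is calibrated precisely so that no residual constant survives. The plan is to exhibit an explicit transgression cochain $\psi_{2q+1}$ of the form
\begin{equation*}
 \psi_{2q+1}(a^{0},\ldots,a^{2q+1}) = c_{q}\,\Str\bigl\{a^{0}D^{-1}[D,a^{1}]\cdots D^{-1}[D,a^{2q+1}]D^{-1}\bigr\},
\end{equation*}for a suitable $c_{q}$, and verify that $b\psi_{2q+1}$ realizes the difference between $\tau_{2q+2}^{D}$ and a standard $S$-representative of $[\tau_{2q}^{D}]$. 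The analytic content is modest once the Hölder bounds of step one are in place; the main obstacle is the combinatorial book-keeping of the signs and constants that appear in the transgression identity, which I expect to be the most delicate part of the argument.
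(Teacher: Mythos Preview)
The paper does not prove this proposition; it is stated as background with attribution to Connes~\cite{Co:NCDG}, so there is no proof in the paper to compare against. Your outline follows the standard route from Connes' original paper: H\"older's inequality in the Schatten ideals for well-definedness, cyclicity of the supertrace on even operators for $\lambda$-invariance, the Leibniz rule combined with the identity $D^{-1}a=aD^{-1}-D^{-1}[D,a]D^{-1}$ for the telescoping that kills $b\tau_{2q}^{D}$, and an explicit transgression cochain to witness $[\tau_{2q+2}^{D}]=S[\tau_{2q}^{D}]$.

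One small caution: in the transgression step the cochain you wrote, with a bare $a^{0}$ rather than $D^{-1}[D,a^{0}]$ and an extra $D^{-1}$ at the end, involves an odd operator inside the supertrace (there are $2q+2$ copies of $D^{-1}$ and $2q+1$ bounded commutators, all odd), so $\Str$ of it vanishes identically as stated. The correct transgression in Connes' argument is built differently---either via the operator $F=D|D|^{-1}$ and the Fredholm-module picture, or via a homotopy $D_{s}$ and the formula $\dot\tau = (bB^{-1}b)\,(\cdots)$ in the $(b,B)$-bicomplex. You will need to revisit the precise form of $\psi_{2q+1}$; the rest of your plan is sound.
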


The case where $D$ is non-invertible is dealt with by passing to the invertible double 
$(\tilde{\cA},\tilde{\cH},\tilde{D})$. Here $\tilde{\cH}={\cH}\oplus \cH$ is equipped with the 
$\Z_{2}$-grading $\tilde{\gamma}=\gamma\oplus \gamma$ and $\tilde{\cA}=\cA\oplus \C$ is the unitalization of $\cA$ 
represented in $\tilde{\cH}$ by $(a,\lambda)\rightarrow \begin{pmatrix}
    a+\lambda & 0 \\
    0 & \lambda
\end{pmatrix}$. The operator $\tilde{D}$ is given by
\begin{equation}
\label{eq:define.invertible.D}
    \tilde{D}= 
    \begin{pmatrix}
        D & \Pi_{0} \\
        \Pi_{0} & -D
    \end{pmatrix},
\end{equation}
where $\Pi_{0}$ is the orthogonal projection onto $\ker D$. This provides us with a $p$-summable spectral triple $(\tilde{\cA},\tilde{\cH},\tilde{D})$ and it can be checked that 
the operator $\tilde{D}$ is invertible. We thus have well defined cyclic cocycles $\tau_{2q}^{\tilde{D}}$, 
$q\geq \frac{1}{2}(p-1)$, on $\tilde{A}$. 

\begin{proposition}[Connes~\cite{Co:NCDG}]
 Assume that $(\cA,\cH,D)$ is $p$-summable. For $q\geq   \frac{1}{2}(p-1)$, denote by $\overline{\tau}_{2q}^{D}$ 
 the restriction to $\cA$ of the cyclic cocycle $\tau_{2q}^{\tilde{D}}$. 
 \begin{enumerate}
     \item  The cochain $\overline{\tau}_{2q}^{D}$ is a cyclic cocycle on $\cA$ whose class in  $\op{HP}^{0}(\cA)$ is independent of the 
        value of $q$. 
 
     \item  When $D$ is invertible, the cyclic cocycles $\tau_{2q}^{D}$ and $\overline{\tau}^{D}_{2q}$ are cohomologous in 
     $\op{HC}^{2q}(\cA)$, and hence define the same class in 
     $\op{HP}^{0}(\cA)$. 
 \end{enumerate}
\end{proposition}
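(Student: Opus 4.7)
The plan is to derive both parts by showing that the invertible double $(\tilde{\cA},\tilde{\cH},\tilde{D})$ is itself a $p$-summable spectral triple, so that the \emph{previous} proposition applies to it, and then to exploit the fact that $\cA$ sits in $\tilde{\cA}=\cA\oplus \C$ as the kernel of the augmentation. Under the natural inclusion $\iota:\cA\hookrightarrow \tilde{\cA}$ (sending $a$ to $(a,0)$), this makes $\cA$ a two-sided ideal in $\tilde{\cA}$, so restriction of cochains induces a chain map $\iota^{*}:C^{\bullet}(\tilde{\cA})\to C^{\bullet}(\cA)$ which preserves both the cyclic and Hochschild structures, and descends to $\iota^{*}:\HP^{0}(\tilde{\cA})\to \HP^{0}(\cA)$.

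For Part (1), the first task is to verify that $(\tilde{\cA},\tilde{\cH},\tilde{D})$ is a $p$-summable spectral triple. The representation of $\tilde{\cA}$ on $\tilde{\cH}$ is even with respect to $\tilde{\gamma}=\gamma\oplus \gamma$, and each $\tilde{a}=(a,\lambda)$ preserves $\dom(\tilde{D})=\dom(D)\oplus \dom(D)$ with $[\tilde{D},\tilde{a}]$ bounded (the off-diagonal commutator only picks up the finite-rank operator $\Pi_{0}$, while the diagonal commutator is $[D,a]$). The key computation is
\begin{equation*}
 \tilde{D}^{2}=\begin{pmatrix} D^{2}+\Pi_{0} & 0 \\ 0 & D^{2}+\Pi_{0}\end{pmatrix},
\end{equation*}
which uses $\Pi_{0}^{2}=\Pi_{0}$ and $D\Pi_{0}=\Pi_{0}D=0$. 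Since $\ker D$ is finite-dimensional (by the compactness of the resolvent of $D$), the $p$-summability of $|D|^{-p}$ transfers to $|\tilde{D}|^{-p}$. Moreover $\tilde{D}$ is invertible because $D^{2}+\Pi_{0}\geq 1$ on $\ker D$ and is bounded below by the square of the smallest nonzero singular value of $D$ on $(\ker D)^{\perp}$. The previous proposition then gives that $\tau_{2q}^{\tilde{D}}$ is a cyclic cocycle on $\tilde{\cA}$ whose class in $\HP^{0}(\tilde{\cA})$ is independent of $q\geq \tfrac{1}{2}(p-1)$. Applying $\iota^{*}$ yields Part~(1) immediately.

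For Part (2), I would simply carry out a direct computation. When $D$ is invertible, $\Pi_{0}=0$, so $\tilde{D}=D\oplus(-D)$ is block-diagonal with $\tilde{D}^{-1}=D^{-1}\oplus(-D^{-1})$. For $a^{j}\in \cA$, the element $\iota(a^{j})$ acts as $\begin{pmatrix} a^{j} & 0 \\ 0 & 0\end{pmatrix}$, so
\begin{equation*}
 \tilde{D}^{-1}[\tilde{D},\iota(a^{j})]=\begin{pmatrix} D^{-1}[D,a^{j}] & 0 \\ 0 & 0\end{pmatrix}.
\end{equation*}
The product $\tilde{D}^{-1}[\tilde{D},\iota(a^{0})]\cdots \tilde{D}^{-1}[\tilde{D},\iota(a^{2q})]$ is therefore block-diagonal with lower-right block zero, and since the supertrace associated with $\tilde{\gamma}=\gamma\oplus \gamma$ satisfies $\widetilde{\Str}(A\oplus B)=\Str(A)+\Str(B)$, one reads off $\overline{\tau}_{2q}^{D}(a^{0},\ldots,a^{2q})=\tau_{2q}^{D}(a^{0},\ldots,a^{2q})$. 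Thus the two cochains are in fact equal on $\cA$, not merely cohomologous, which is strictly stronger than the claim.

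I do not expect serious obstacles: the only slightly delicate point is the verification of $p$-summability and invertibility of $\tilde{D}$ in Part (1), which relies entirely on the block-diagonal shape of $\tilde{D}^{2}$; everything else is a formal consequence of the inclusion $\cA\subset \tilde{\cA}$ and the preceding proposition.
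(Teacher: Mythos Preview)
The paper does not give its own proof of this proposition; it is simply quoted from Connes~\cite{Co:NCDG}. So there is nothing to compare against, and your task was really to supply an argument from scratch.

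Your argument is correct and efficient. The verification that $(\tilde{\cA},\tilde{\cH},\tilde{D})$ is a $p$-summable spectral triple with $\tilde{D}$ invertible is exactly the right first step, and the block-diagonal form $\tilde{D}^{2}=(D^{2}+\Pi_{0})\oplus(D^{2}+\Pi_{0})$ makes both invertibility and $p$-summability transparent. Pulling back along the algebra map $\iota:\cA\hookrightarrow\tilde{\cA}$ is a chain map for the mixed $(b,B)$-complex, so Part~(1) follows as you indicate. For Part~(2), your direct computation showing that $\overline{\tau}_{2q}^{D}=\tau_{2q}^{D}$ on the nose (rather than merely up to coboundary) is valid and indeed stronger than what is stated.

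One small caveat: the paper writes $\tilde{\gamma}=\gamma\oplus\gamma$, but with that grading $\tilde{D}$ fails to be odd (the off-diagonal $\Pi_{0}$-blocks commute with $\gamma$ rather than anticommute). The standard convention for the invertible double is $\tilde{\gamma}=\gamma\oplus(-\gamma)$, under which $\widetilde{\Str}(A\oplus B)=\Str(A)-\Str(B)$. This does not affect your conclusion in Part~(2), since the lower-right block of the product is zero anyway; but you should be aware that the formula $\widetilde{\Str}(A\oplus B)=\Str(A)+\Str(B)$ you wrote is tied to a grading choice that is likely a typo in the paper.
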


\begin{remark}
The homotopy invariance of the cohomology class of $\overline{\tau}_{2q}^{D}$ shows that, for $q\geq \frac{1}{2}(p+1)$, 
 the class of $\overline{\tau}_{2q}^{D}$ in $\op{HC}^{2k}(\cA)$ 
remains unchanged if in the formula~(\ref{eq:define.invertible.D}) for $\tilde{D}$ we 
replace the projection $\Pi_{0}$ by any operators of the form $t_{1}+t_{2}\Pi_{0}$, with $t_{j}\geq 0$, $t_{1}+t_{2}>0$ (\emph{cf}.~\cite{Co:NCDG}). 
\end{remark}

\begin{definition}[Connes~\cite{Co:NCDG}]
Assume that $(\cA,\cH,D)$ is $p$-summable. The Connes-Chern character of $(\cA,\cH,D)$, denoted by $\Ch(D)$, is defined 
as follows:
\begin{itemize}
    \item  If $D$ is invertible, then $\Ch(D)$ is the common class in $\op{HP}^{0}(\cA)$ of the 
    cyclic cocycles $\tau_{2q}^{D}$ and $\overline{\tau}_{2q}^{D}$, with $q\geq \frac{1}{2}(p-1)$. 

    \item  If $D$ is not invertible, then $\Ch(D)$ is the common class in $\op{HP}^{0}(\cA)$ of the
    cyclic cocycles $\overline{\tau}_{2q}^{D}$, $q\geq \frac{1}{2}(p-1)$. 
\end{itemize}
\end{definition}

The above definition is motivated by the following index formula. 

\begin{proposition}[Connes~\cite{Co:NCDG}]
For any Hermitian finitely generated projective module $\cE$ over $\cA$ and any Hermitian connection $\nabla^{\cE}$ on 
$\cE$, we have
\begin{equation}
\label{eq:index.coupled.D.pairing.Conne-Chern.character}
 \ind D_{\nabla^{\cE}}= \acou{\Ch(D)}{\Ch(\cE)},
\end{equation}where $\acou{\cdot}{\cdot}$ is the duality pairing between cyclic cohomology and cyclic homology and 
$\Ch(\cE)$ is the Connes-Chern character of $\cE$ in the periodic cyclic homology $\op{HP}^{0}(\cA)$ (\emph{cf.}~Part~I).  
\end{proposition}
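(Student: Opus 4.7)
The plan is to follow Connes' original argument~\cite{Co:NCDG} in three steps: reduction to invertible $D$, reduction to an idempotent representative of $[\cE]$, and direct identification of the cyclic pairing with the Fredholm index.

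\textbf{Step 1 (reduction to invertible $D$).} If $D$ is not invertible, pass to the invertible double $(\tilde{\cA},\tilde{\cH},\tilde{D})$ introduced above and to a canonical extension $\tilde{\cE}$ of $\cE$ to $\tilde{\cA}$, equipped with a Hermitian connection $\tilde{\nabla}^{\cE}$ extending $\nabla^{\cE}$. A direct computation shows $\ind \tilde{D}_{\tilde{\nabla}^{\cE}}=\ind D_{\nabla^{\cE}}$, the augmentation factor $\tilde{\cA}/\cA=\C$ being index-neutral. Since $\Ch(D)$ is by definition the class of the restriction $\overline{\tau}_{2q}^{D}=\tau_{2q}^{\tilde{D}}\big|_{\cA}$, the right-hand side is also unchanged. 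I therefore assume $D$ is invertible throughout.

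\textbf{Step 2 (reduction to an idempotent).} The left-hand side depends only on the $K$-theory class $[\cE]\in K_{0}(\cA)$: any two Hermitian connections on $\cE$ differ by a bounded operator-valued $1$-form, so the corresponding coupled operators are bounded perturbations of each other, and their Fredholm indices agree. The right-hand side depends only on $[\cE]$ by definition of the Chern character in cyclic homology. Represent $[\cE]$ by a self-adjoint idempotent $e=e^{*}=e^{2}\in M_{N}(\cA^{+})$ and choose the Grassmann connection $\nabla^{\cE}=e\circ d$; then $D_{\nabla^{\cE}}$ identifies with the compression $e(D\otimes 1_{N})e$ acting on $e(\cH\otimes\C^{N})$.

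\textbf{Step 3 (identification with the index).} In the $(b,B)$-bicomplex the cyclic $2q$-component $\Ch_{2q}(\cE)$ of $\Ch(\cE)$ is represented by a universal constant times $\tr_{N}\!\left(e^{\otimes(2q+1)}\right)$. Inserting this into the formula defining $\tau_{2q}^{D}$ and collecting constants yields
\[
\acou{\tau_{2q}^{D}}{\Ch_{2q}(\cE)}
=C_{q}\,\Str\bigl\{D^{-1}[D,e]\,D^{-1}[D,e]\cdots D^{-1}[D,e]\bigr\},
\]
with $2q+1$ factors $D^{-1}[D,e]$ and an explicit combinatorial constant $C_{q}$. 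Setting $F=D|D|^{-1}$, one has $F^{2}=1$ and $[F,e]\in\cL^{p}(\cH)$; a Calder\'on-type manipulation (see~\cite[\S IV]{Co:NCDG}) rewrites the right-hand side as the Fredholm index of the compression $eF^{+}e:e\cH^{+}\otimes\C^{N}\to e\cH^{-}\otimes\C^{N}$, which by Atkinson's theorem coincides with $\ind D_{\nabla^{\cE}}$.

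The delicate point is Step 3: passing from a supertrace of products of $D^{-1}[D,e]$ to the index of $eF^{+}e$ requires careful use of cyclicity of $\Str$ on $\cL^{1}(\cH)$ together with the $p$-summability hypothesis to ensure that all intermediate operators obtained by replacing $D^{-1}$ with $F|D|^{-1}$ are trace-class, so that the correction terms contribute no extra supertrace. The combinatorial constant $C_{q}$ is then pinned down once and for all by a verification on the rank-one free module $\cE=\cA$, where both sides reduce to $\dim\ker D^{+}-\dim\ker D^{-}$. Once these technicalities are handled, the identity $\ind D_{\nabla^{\cE}}=\acou{\Ch(D)}{\Ch(\cE)}$ follows.
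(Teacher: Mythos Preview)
The paper does not give its own proof of this proposition; it is simply stated and attributed to Connes~\cite{Co:NCDG}. So there is nothing in the paper to compare your argument against, and your outline is indeed the standard route from Connes' original work.

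That said, there is one genuine gap in your sketch. In Step~3 you propose to pin down the combinatorial constant $C_{q}$ by evaluating both sides on the free rank-one module $\cE=\cA$. But you have already reduced to the case where $D$ is invertible in Step~1; for $\cE=\cA$ with the trivial connection we have $D_{\nabla^{\cE}}=D$, so $\ind D_{\nabla^{\cE}}=0$, and on the cocycle side $e=1$ gives $[D,e]=0$, so the pairing vanishes trivially as well. The identity $0=0$ does not determine $C_{q}$. The constant is actually fixed by the Calder\'on formula itself: for $F=D|D|^{-1}$ one has the trace identity
\[
\ind(eF^{+}e)=\tfrac{1}{2}\,\Str\bigl((e[F,e])^{2q}\bigr)\qquad(2q\geq p),
\]
and the passage from $D^{-1}[D,e]$ to $F[F,e]$ (together with the normalisation $\tfrac{q!}{2(2q)!}$ built into $\tau_{2q}^{D}$ and the factorials in $\Ch_{2q}(\cE)$) is what produces the correct constant. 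You should carry this out explicitly rather than appeal to a degenerate test case.

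A second, smaller point: in Step~1 you assert that $\ind \tilde{D}_{\tilde{\nabla}^{\cE}}=\ind D_{\nabla^{\cE}}$ ``by direct computation'', but this deserves a line of justification since the doubled Hilbert space carries an extra copy of $\cH$ on which $\tilde{D}$ acts as $-D$ (plus the $\Pi_0$ off-diagonal), and one must check that the contribution of this copy to the index vanishes.
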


\begin{remark}
   All the above results continue to hold if in the definition of the cocycles $\tau_{2q}^{D}$ 
    and $\overline{\tau}_{2q}^{D}$ we replace the operator $D$ by any operator $D|D|^{-t}$ with $t\in [0,1]$. In 
    particular, for $t=1$ we obtain the sign operator $F=D|D|^{-1}$, which is often convenient to use for defining the Connes-Chern 
    character.
\end{remark}

\subsection{The CM cocycle}
The cocycles $\tau_{2q}^{D}$ and $\overline{\tau}_{2q}^{D}$ used in the definition of the Connes-Chern character are difficult to compute in practice, even in the case of a Dirac spectral 
triple (see~\cite{Co:LQ, BF:APDOIT}). Therefore, it was sought for an alternative representative of the Connes-Chern 
character which would be easier to compute. Such a representative is provided by the CM cocycle~\cite{CM:LIFNCG}, which 
is constructed  as follows. 

In what follows we shall assume that $(\cA,\cH,D)$ is $p^{+}$-summable for some $p\geq 1$, that is,
\begin{equation*}
    \mu_{j}(D^{-1})=\op{O}(j^{-\frac{1}{p}}) \qquad \text{as $j\rightarrow \infty$},
\end{equation*}where $ \mu_{j}(D^{-1})$ is the $(j+1)$-th eigenvalue of $|D^{-1}|=|D|^{-1}$ counted with multiplicity. 
This implies that $(\cA,\cH,D)$ is $p$-summable for all $q>p$. In addition, we set
\begin{equation}
\label{eq:def.D^0_A.op.class}
  \cD_{D}^{0}(\cA)=\cA+[D,\cA],
\end{equation}
where $\gamma=1_{\cH^+}-1_{\cH^-}$ is the grading operator. Note that the very definition of a spectral triple implies that the operators in $\cD_{D}^{0}(\cA)$ are bounded. 

\begin{definition}
    We shall say that $(\cA,\cH,D)$ is hypo-regular when, for any $X\in \cD_{D}^{0}(\cA)$, all the operators $D^{m}XD^{-m}$, $m\in \N_{0}$, are bounded.
\end{definition}

From now on we further assume that  $(\cA,\cH,D)$ is hypo-regular.  Set $\cH^{\infty}=\bigcap_{m\geq 0} \dom (D^{-m})$ and equip $\cH^{\infty}$ with the Fr\'echet space topology 
 defined by the seminorms $\xi\rightarrow \acou{(1+D^{2m})\xi}{\xi}$, $m\geq 0$. Then the hyporegularity condition 
 implies that any $X\in \cD_{D}^{0}(\cA)$ induces a continuous linear operator from $\cH^{\infty}$ to itself. Note also 
 that $\cH^{\infty}$ is a dense subspace of $\cH$. In addition, for $m\geq 0$, we denote by $\cD_{D}^{m}(\cA)$ 
the class of unbounded operators on $\cH$, where $ \cD_{D}^{0}(\cA)$ is defined as in~(\ref{eq:def.D^0_A.op.class}), and
  \begin{gather*}
  \cD^{1}(\cA)= D\cD_{D}^{0}(\cA)+\cD_{D}^{0}(\cA)D,\\
     \cD^{m}_{D}(\cA) = \cD^{1}(\cA)\cD_{D}^{m-1}(\cA)+ \cD^{2}(\cA)\cD_{D}^{m-2}(\cA) +\cdots+ \cD_{D}^{m-1}(\cA)\cD^{1}(\cA), \quad m\geq 2.
  \end{gather*}

Alternatively, we may regard each class $ \cD^{m}_{D}(\cA)$ as a subspace of $\cL(\cH^{\infty})$. 
%, so that the class
%$\cD_{D}^{\bt}(\cA):=\bigcup_{m\geq 0}\cD^{m}_{D}(\cA)$ can be regarded as the subalgebra of $\cL(\cH^{\infty})$ generated by 
%$\cD_{D}^{1}(\cA)$. 
%We may think of $\cD^{\bt}_{D}(\cA)$ as an algebra of differential operators for the 
%spectral triple $(\cA,\cH,D)$. 
A spanning set of $ \cD^{m}_{D}(\cA)$ is then obtained as follows. For $m\in \N_{0}$, define
\begin{equation*}
    \cP(m)=\left\{(\alpha,\beta)\in \N_{0}^{m+2}\times \{0,1\}^{m+1}; |\alpha|=\alpha_{0}+\cdots +\alpha_{m+1}=m\right\}. 
\end{equation*}
Then $\cD^{m}_{D}(\cA)$ is spanned by operators of the form,
\begin{equation}
\label{eq:definition.P_alphabeta}
    P_{\alpha,\beta}(a^{0},\ldots,a^{l})= D^{\alpha_{0}}(a^{0})^{\beta_{0}}[D,a^{0}]^{1-\beta_{0}}\cdots 
    D^{\alpha_{m'}}(a^{l})^{\beta_{l}}[D,a^{0}]^{1-\beta_{l}}D^{\alpha_{l+1}}, %\quad a^{j}\in \cA, m'\leq m,
\end{equation}where the pair $(\alpha,\beta)$ ranges over $\cP(m)$ and 
$a^{0},\ldots,a^{l}$ range over $\cA$. Setting 
$X^{j}=(a^{j})^{\beta_{j}}[D,a^{j}]^{1-\beta_{j}}$ such an operator can be rewritten as
\begin{equation*}
    (D^{\alpha_{0}}X^{0}D^{-\alpha_{0}})(D^{\alpha_{0}+\alpha_{1}}X^{1}D^{-(\alpha_{0}+\alpha_{1})})\cdots (D^{\alpha_{0}+\cdots 
    +\alpha_{m}}X^{m}D^{-(\alpha_{0}+\cdots +\alpha_{m})})D^{m},
\end{equation*}This and the hypo-regularity condition imply that the operator $P_{\alpha,\beta}(a^{0},\ldots,a^{m})D^{-m}$ is bounded. 
Combining this with the 
$p^{+}$-summability of $D$ we then deduce that, for all $X\in \cD^{m}_{D}(\cA)$, the operator $X|D|^{-z-m}$ is 
trace-class for $\Re z>p$, and so by taking its super-trace we then obtain a function $z\rightarrow 
\Str\left[X|D|^{-z-m}\right]$ which is holomorphic on the half-plane $\Re z>p$. 

\begin{definition}
    We say that $(\cA,\cH,D)$ has discrete and simple dimension spectrum when $(\cA,\cH,D)$  is hypo-regular and there is a discrete subset $\Sigma\subset 
    \C$ such that, for all $m\in \N_{0}$ and $X\in \cD^{m}_{D}(\cA)$, the function $\Str \left[ 
    X|D|^{-z-m}\right]$ has an analytic continuation to $\C\setminus \Sigma$ with at worst simple pole singularities on $\Sigma$. The dimension spectrum of $(\cA,\cH,D)$ 
    is then defined as the smallest such set. 
\end{definition}

\begin{remark}
   The $p^{+}$-summability of $D$ implies that the dimension spectrum of $(\cA,\cH,D)$ is contained in the half-space $\Re z\leq p$. 
\end{remark}

In what follows, given $X\in \cD_{D}^{0}(\cA)$, for $j=0,1,2,\ldots$ we denote by $X^{[j]}$ the $j$-th iterated commutator of 
$D^{2}$ with $X$, i.e., 
\begin{equation*}
 X^{[0]}=X, \qquad    X^{[j]}= \overbrace{[D^{2},[D^{2},\ldots [D^{2},}^{\textrm{$j$ times}}X]\cdots ]] , \quad j\geq 1.
%     \label{eq:ST.iterated-commutators-D2}
\end{equation*}
Note that $X^{[j]}$ is an element of $\cD_{D}^{2j}(\cA)$. 
 
\begin{definition}\label{eq:CM.regularity}
    We say that $(\cA,\cH,D)$ is regular when, for any $X\in \cD_{D}^{0}(\cA)$, all the operators $X^{[m]}D^{-m}$, $m\in 
    \N_{0}$, are bounded.  
\end{definition}

As shown by Connes-Moscovici~\cite{CM:LIFNCG}, assuming that $(\cA,\cH,D)$ is regular and has discrete and simple dimension spectrum enables us to construct a class 
of \psidos\ and an analogue of Guillemin-Wodzicki's noncommutative residue trace as follows. 

In the following we denote by $\cB$ the class of unbounded operators on $\cH$ that are linearly combination of 
operators of the form $X^{[m]}|D|^{-m}$ with $X\in \cD_{D}^{0}(\cA)$ and $m\in \N_{0}$. The regularity assumption implies that all the 
operators in $\cB$ are bounded. Moreover, for $r\in \R$ we denote by $\OP^{r}$ the class of unbounded operators $T$ on 
$\cH$ such that $\dom(T)\supset \cH_{(\infty)}$ and $T|D|^{-r}$ is bounded. 

\begin{definition}
 $\Psi_{D}^{q}(\cA)$, $q\in \C$, consists of unbounded operators $P$ on $\cH$ such that the domain of $P$ contains 
$\cH^{\infty}$ and there is an asymptotic expansion of the form,
\begin{equation*}
    P\sim \sum_{j \geq 0} b_{j}|D|^{q-j}, \qquad b_{j} \in \cB,
%     \label{eq:ST.PsiDOs1}
\end{equation*}in the sense that
\begin{equation*}
    \biggl( P -\sum_{j<N}b_{j}|D|^{q-j}\biggr) \in \OP^{\Re q -N} \qquad \forall N\in \N.
        \label{eq:ST.PsiDOs2}
\end{equation*}
\end{definition}

In particular, the above definition implies that any operator in $\Psi_{D}^{q}(\cA)$ induces a continuous linear operator from $\cH^{\infty}$ to 
itself. Moreover, the operators in $\Psi^{q}_{D}(\cA)$  with $\Re q\leq 0$ extend to bounded operators of $\cH$ to itself. 
Those operators are compact (resp., trace-class) when $\Re q<0$ (resp., $\Re q<-p$).  
Furthermore, it can be shown (see~\cite{CM:LIFNCG, Hi:RITCM}) that if $X \in \cD_{D}^{0}(\cA)$, then, for all $j\in \N$ and $z\in \C$,  
\begin{equation*}
    |D|^{2z}X^{[j]}\sim  \sum_{k\geq 0} \binom{z}{j}X^{[j+k]}|D|^{2z-2k}, 
\end{equation*}and hence $|D|^{2z}X^{[j]}$ is contained in $\Psi^{2z+j}_{D}(\cA)$. This implies that 
\begin{equation*}
    \Psi_{D}^{q_{1}}(\cA)\Psi_{D}^{q_{2}}(\cA) \subset \Psi_{D}^{q_{1}+q_{2}}(\cA) \qquad \forall q_{j}\in \C.
\end{equation*}

In addition, the existence of a discrete and simple dimension spectrum implies that, for any $P \in \Psi^{q}_{D}(\cA)$,  the 
function $z \rightarrow \Str \left[P|D|^{-2z}\right]$ has a 
meromorphic extension to the entire complex plane with at worst simple pole singularities. Note that the poles are 
contained in the half-plane $\Re z\leq \frac{1}{2}(p+\Re q)$. We then set
\begin{equation}
    \bint P:= \Res_{z=0} \Str \left[P|D|^{-2z}\right] \qquad \forall P \in \Psi^{*}_{D}(\cA). 
    \label{eq:ST.residual-trace}
\end{equation}As it turns out (\emph{cf.}~\cite{CM:LIFNCG}), this defines a trace on $\Psi^{\bt}_{D}(\cA)$, i.e., 
\begin{equation*}
    \bint P_{1}P_{2}=\bint P_{2}P_{1} \qquad \text{for all $P_{j}\in \Psi_{D}^{q_{j}}(\cA)$}. 
\end{equation*}
This residual functional is the analogue of the noncommutative residue trace of Guillemin~\cite{Gu:NPWF} and 
Wodzicki~\cite{Wo:LISA}. Note also that it vanishes on all operators $P\in \Psi^{q}_{D}(\cA)$ with $\Re q<-p$, 
and so this is a \emph{local} functional. 

In what follows for $q\geq 1$ and $\alpha\in \N_{0}^{2q}$ we set
\begin{equation}
\label{eq:constrant.in.CM.cocycle}
            c_{q,\alpha}=(-1)^{|\alpha|}
    \frac{(q-1)!(\alpha_{1}+\cdots+\alpha_{2q})!}{\alpha!(\alpha_{1}+1)\cdots (\alpha_{2q}+2q)}. 
\end{equation}

\begin{theorem}[Connes-Moscovici~\cite{CM:LIFNCG}]
\label{thm:CM.cocyle.residue.formula}
Assume that $(\cA,\cH,D)$ is $p^{+}$-summable, regular and has a 
    discrete and simple dimension spectrum. Then
    \begin{enumerate}
        \item  The following formulas define an even periodic cyclic cocycle $\varphi^{\CM}=(\varphi_{2q}^{\CM})_{q\geq 0}$ on $\cA$:
         \begin{gather}
         \label{eq:CM.cocycle.residue.0}
          \varphi_{0}^{\CM}(a^{0})=\underset{z=0}{\Res}\left\{\Gamma(z)\Str\left[a^{0}|D|^{-2z}\right]\right\}+\Str\left[ a^{0}\Pi_{0}\right], \quad 
          a^{0}\in \cA,   \\
            \varphi_{2q}^{\CM}(a^{0},\ldots,a^{2q})=\sum_{\alpha\in \N_{0}^{q}}c_{q, \alpha}\bint \gamma a^{0}[D,a^{1}]^{[\alpha_{1}]}\cdots  
       [D,a^{2q}]^{[\alpha_{2q}]}|D|^{-2(|\alpha|+q)}, \quad a^{j}\in \cA, \ q\geq 1.
          \label{eq:CM.cocycle.residue.>0}
        \end{gather}
    
            \item  The cocycle  $\varphi^{\CM}$ represents the Connes-Chern character $\Ch(D)$ in $\op{HP}^{0}(\cA)$.
    
        \item For any Hermitian finitely projective module $\cE$ over $\cA$ and any Hermitian connection $\nabla^{\cE}$ 
         over $\cE$, we have
         \begin{equation*}
             \ind D_{\nabla^{\cE}}=\acou{\varphi^{\CM}}{\Ch(\cE)}.
         \end{equation*}
    \end{enumerate}
  \end{theorem}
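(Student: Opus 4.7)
My plan is to prove the three assertions in turn, using the pseudodifferential framework just established.

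\textbf{Well-definedness.} First I would verify that each $\varphi^{\CM}_{2q}$ makes sense. The regularity of $(\cA,\cH,D)$ ensures that the iterated commutators $[D,a^j]^{[\alpha_j]}$ lie in $\cD^{2\alpha_j+1}_{D}(\cA)$, and an application of the expansion $|D|^{2z}X^{[j]}\sim \sum_{k}\binom{z}{j}X^{[j+k]}|D|^{2z-2k}$ shows that $\gamma a^{0}[D,a^{1}]^{[\alpha_{1}]}\cdots [D,a^{2q}]^{[\alpha_{2q}]}|D|^{-2(|\alpha|+q)}$ belongs to $\Psi^{-2q}_{D}(\cA)$. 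The existence of the residue $\bint$ is then guaranteed by the discrete and simple dimension spectrum assumption. For $\varphi^{\CM}_{0}$, the $p^{+}$-summability makes $\Str[a^{0}|D|^{-2z}]$ meromorphic with at worst a simple pole at $z=0$, and the finite-dimensionality of $\ker D$ makes the correction term $\Str[a^{0}\Pi_{0}]$ well-defined.

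\textbf{Cocycle property.} To show that $\varphi^{\CM}=(\varphi^{\CM}_{2q})$ is a cocycle in the $(b,B)$-bicomplex, I would expand $b\varphi^{\CM}_{2q}$ and $B\varphi^{\CM}_{2q+2}$ using the Leibniz rules $[D,ab]=[D,a]b+a[D,b]$ and the derivation property of $[D^{2},\cdot]$. The traciality of $\bint$ on $\Psi^{\bt}_{D}(\cA)$ allows one to cyclically permute factors modulo operators whose residue can be read off the asymptotic expansion. The combinatorial identities satisfied by the constants $c_{q,\alpha}$ defined in~(\ref{eq:constrant.in.CM.cocycle}) then force the resulting sums to telescope. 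This is intricate but essentially formal combinatorics once the symbol-level Leibniz rule for $[|D|^{2z},\cdot]$ is in hand.

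\textbf{Identification with $\Ch(D)$.} This is the main obstacle. The cleanest route is via an intermediate heat-kernel representative. Because $(\cA,\cH,D)$ is $p^{+}$-summable, the JLO cochain $\tau^{\JLO}_{t}=(\tau^{\JLO}_{2q,t})_{q\geq 0}$ is a well-defined entire cyclic cocycle representing $\Ch(D)$ in $\op{HP}^{0}(\cA)$ for every $t>0$. I would then take the Mellin transform in $t$ of each component $\tau^{\JLO}_{2q,t}(a^{0},\ldots,a^{2q})$, replace the heat semigroups $e^{-sD^{2}}$ appearing in the JLO integrand by their pseudodifferential expansion in $\Psi^{\bt}_{D}(\cA)$, and extract the residue at $z=0$. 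The constants $c_{q,\alpha}$ arise naturally from the beta-function integrals produced by the Mellin/simplex integration, while the zero-mode contribution $\Str[a^{0}\Pi_{0}]$ appears in degree $0$ as the pole contribution coming from $\ker D$. Carefully tracking the transgression cochains throughout shows that $\varphi^{\CM}$ and $\tau^{\JLO}_{t}$ differ by a coboundary in the total $(b,B)$-complex, yielding (2). The hard part will be justifying the termwise swap of Mellin transform and pseudodifferential asymptotic expansion, and bookkeeping the coboundaries so that the final cohomological identity holds uniformly in $t$.

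\textbf{Index formula.} Assertion (3) is then immediate: once (2) is established, the general index pairing~(\ref{eq:index.coupled.D.pairing.Conne-Chern.character}) gives $\ind D_{\nabla^{\cE}}=\acou{\Ch(D)}{\Ch(\cE)}=\acou{\varphi^{\CM}}{\Ch(\cE)}$.
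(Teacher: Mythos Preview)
The paper does not prove this theorem at all: it is quoted as a result of Connes--Moscovici~\cite{CM:LIFNCG}, with the remark immediately following it noting that the original proof required an extra decay hypothesis on $\Gamma(z)\Str[X|D|^{-z}]$ along vertical lines, later removed by Higson~\cite{Hi:RITCM}. There is therefore no ``paper's own proof'' to compare your proposal against; the theorem functions here purely as background input to Propositions~\ref{prop:CM.cocycle.HP.conditions} and~\ref{prop:CM.heat.trace.estimate}.

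That said, your outline is a fair high-level sketch of the Higson~\cite{Hi:RITCM} approach (JLO as intermediary, Mellin transform, residue extraction), and part~(3) is indeed immediate from part~(2) and~(\ref{eq:index.coupled.D.pairing.Conne-Chern.character}). Two caveats if you intend to flesh this out. First, the cocycle property is not a routine Leibniz/telescoping computation: in both \cite{CM:LIFNCG} and \cite{Hi:RITCM} the closedness of $\varphi^{\CM}$ is obtained \emph{a posteriori} from its cohomology with a known cocycle (the JLO or the $\tau^{D}_{2q}$), not by a direct verification of $(b+B)\varphi^{\CM}=0$. A direct combinatorial proof exists but is substantially harder than you suggest. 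Second, the ``termwise swap of Mellin transform and pseudodifferential asymptotic expansion'' that you flag as the hard part is exactly where the decay hypothesis of \cite{CM:LIFNCG} entered and where Higson's contour-shifting argument in \cite{Hi:RITCM} is needed; your proposal does not indicate how you would handle this without that hypothesis.
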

  
\begin{remark}
The formulas~(\ref{eq:CM.cocycle.residue.0})--(\ref{eq:CM.cocycle.residue.>0}) provide us with the local index formula in noncommutative geometry~\cite{CM:LIFNCG}.  
The cocycle  $\varphi^{\CM}$  is called the CM cocycle of the spectral triple $(\cA,\cH,D)$. 
\end{remark}

\begin{remark}
 Connes-Moscovici~\cite{CM:LIFNCG} proved Theorem~\ref{thm:CM.cocyle.residue.formula} under the additional assumption that the functions 
 $\Gamma(z)\Str[ X|D|^{-z}]$, $X\in \cD_{D}^{\bt}(\cA)$, have rapid decay along vertical lines in the complex plane. 
 This technical assumption is removed in~\cite{Hi:RITCM}. 
\end{remark}

\begin{example}
A Dirac spectral triple $( C^{\infty}(M), L^{2}_{g}(M,\sS), \sD_{g})$ is $n^{+}$-summable with $n=\dim M$. 
It is also regular and has a discrete dimension spectrum contained in $\{k\in \N; \ k \leq n\}$. 
Each space $\Psi^{q}_{D}(\cA)$, $q \in \C$, is contained in the space of classical \psidos\ of order $q$. In addition, 
the residual trace $\bint$ agrees with the noncommutative residue trace of Guillemin and Wodzicki. Finally 
(see~{\cite[Remark~II.1]{CM:LIFNCG}}, \cite{Po:CMP}) the CM cocycle $\varphi^{\CM}=(\varphi_{2q}^{\CM})_{q\geq 0}$ is given by
            \begin{equation*}
                \varphi_{2q}^{\CM}(f^{0},\ldots,f^{2q})= \frac{(2i\pi)^{-\frac{n}{2}}}{(2q)!}\int_{M} f^{0}df^{1}\wedge \cdots \wedge 
                 df^{2q}\wedge \hat{A}\left(R^{TM}\right), \quad f^{j}\in C^{\infty}(M).
            \end{equation*}Combining this with~(\ref{eq:index.coupled.D.pairing.Conne-Chern.character}) enables us to recover the local  index 
            formula  of Atiyah-Singer~\cite{AS:IEO1, AS:IEO3}. 
\end{example}
  
\subsection{The CM cocycle of a smooth spectral triple} 
We shall now explain how to specialize the framework of the local index formula in noncommutative geometry to spectral 
triples over locally convex algebras, more precisely, for the \emph{smooth spectral triples} considered in Part~I. 

In what follows by a  locally convex $*$-algebra  we shall mean a $*$-algebra $\cA$ equipped with a locally convex 
topology with respect to which its product and involution are continuous maps. A spectral triple $(\cA,\cH, D)$ over 
such an algebra is called \emph{smooth} when the representation of $\cA$ and the map $a\rightarrow [D,a]$ are 
continuous linear maps from $\cA$ to $\cL(\cH)$. If we further assume $p$-summability, then the Connes-Chern character 
of $(\cA,\cH,D)$ descends to a cohomology class
\begin{equation*}
    \bCh(D)\in \op{\mathbf{HP}}^{0}(\cA),
\end{equation*}where $ \op{\mathbf{HP}}^{\bt}(\cA)$ is the periodic cyclic cohomology of \emph{continuous} cochains on 
$\cA$ (\emph{cf.}~Part~I). More precisely, the cyclic cocycles $\overline{\tau}_{2q}^{D}$, $q \geq \frac{1}{2}(p-1)$, 
are continuous and define the same class in $\op{\mathbf{HP}}^{0}(\cA)$. It then is natural to ask under which conditions the CM cocycle may represent the Connes-Chern character in 
$\op{\mathbf{HP}}^{0}(\cA)$. 

In what follows we let $(\cA,\cH,D)$ be a smooth spectral triple which is $p^{+}$-summable and hypo-regular.  

\begin{definition}
    We say that $(\cA,\cH,D)$ is uniformly hypo-regular when, for all $m\in \N_{0}$, the linear maps 
    $a\rightarrow D^{m}aD^{-m}$ and $a\rightarrow D^{m}[D,a]D^{-m}$ are continuous from $\cA$ to $\cL(\cH)$. 
\end{definition}

We observe that the hypo-regularity assumption enables us to endow each space $\cD^{m}_{D}(\cA)$, $m\geq 0$, with the norm,
\begin{equation*}
    \|X\|_{(m)}=\|X(1+D^{2})^{-m/2}\|, \qquad X\in \cD^{m}_{D}(\cA).
\end{equation*}
This gives rise to a natural normed topology on each space $\cD^{m}_{D}(\cA)$, $m\geq 0$. 

In what follows, given any open $\Omega\subset \C$, 
we shall denote by $\Hol(\Omega)$ the Fr\'echet space of holomorphic functions on $\Omega$. 

\begin{remark}
\label{rem:Str.P|D|^-z-m.holo}
If $(\cA,\cH,D)$ is uniformly hypo-regular,  then, for any pair $(\alpha,\beta)\in \cP(m)$, $m\in \N_{0}$, the map $(a^{0},\ldots,a^{m})\rightarrow P_{\alpha,\beta}(a^{0},\ldots,a^{m})$ 
is a continuous $(m+1)$-linear map from $\cA^{m+1}$ to $\cD^{m}_{D}(\cA)$, where 
the operator $P_{\alpha,\beta}(a^{0},\ldots,a^{m})$ is given by~(\ref{eq:definition.P_alphabeta}). It then follows that we obtain a continuous $(m+1)$-linear map,
\begin{equation*}
 \cA^{m+1}\ni (a^{0},\ldots,a^{m})\longrightarrow \Str\left[P_{\alpha,\beta}(a^{0},\ldots,a^{m})|D|^{-z-m}\right]\in \op{Hol}(\Re z>p).
\end{equation*}
%$z\rightarrow \Str\left[P_{\alpha,\beta}(a^{0},\ldots,a^{m})|D|^{-z-m}\right]$ is holomorphic on the half-plane $\Re z 
%>p$ uniformly with respect to $a^{0},\ldots, a^{m}$ when they range over bounded subsets of $\cA$. Equivalently, the 
%function $(a^{0},\ldots,a^{m}, z)\rightarrow \Str\left[P_{\alpha,\beta}(a^{0},\ldots,a^{m})|D|^{-z-m}\right]$ is 
%holomorphic on $\cA^{m+1}\times \{\Re z>p\}$.    
\end{remark}

The above remark  leads us to the following notion of uniform dimension spectrum. 

\begin{definition}\label{def:CM.uniform-dimension-spectrum}
    We say that $(\cA,\cH,D)$ has a simple and discrete uniform dimension spectrum when it is uniformly hypo-regular 
    and has a simple and discrete dimension spectrum in such way that, for any $m\in \N_{0}$ and pair $(\alpha,\beta)\in \cP(m)$, 
    the following conditions are satisfied: 
\begin{enumerate}
\item[(i)] The $(m+1)$-linear map $(a^{0},\ldots,a^{m})\rightarrow \Str\left[P_{\alpha,\beta}(a^{0},\ldots,a^{m})|D|^{-z-m}\right]$ is continuous from 
$\cA^{m+1}$ to $\Hol(\C\setminus \Sigma)$.

\item[(ii)] Any $\sigma \in \Sigma$ has an open neighbourhood $\Omega \subset \C$ such that $\Sigma \cap \Omega=\{\sigma\}$ and the 
$(m+1)$-linear map $(a^{0},\ldots,a^{m})\rightarrow (z-\sigma)\Str\left[P_{\alpha,\beta}(a^{0},\ldots,a^{m})|D|^{-z-m}\right]$ is continuous from 
$\cA^{m+1}$ to $\Hol(\Omega)$. 
\end{enumerate}
\end{definition}

\begin{definition}
 We say that $(\cA,\cH,D)$ is uniformly regular when it is regular and, for all $m\in \N_{0}$, the linear maps 
 $a\rightarrow a^{[m]}D^{-m}$ and $a\rightarrow [D,a]^{[m]}D^{-m}$ are continuous from $\cA$ to $\cL(\cH)$.   
\end{definition}

We are now in a position to answer the question about the representation of the Connes-Chern character $\bCh(D)\in \bHP^0(\cA)$ 
by means of the CM cocycle. 

\begin{proposition}[\cite{Po:SmoothCM}]
\label{prop:CM.cocycle.HP.conditions}
    Suppose that $(\cA,\cH,D)$ is smooth, $p^{+}$-summable, uniformly regular and has a discrete and simple uniform dimension 
    spectrum. Then 
    \begin{enumerate}
        \item  The components~(\ref{eq:CM.cocycle.residue.0})--(\ref{eq:CM.cocycle.residue.>0}) of the CM cocycle $\varphi^{\CM}$ are continuous cochains on $\cA$.
    
        \item  The class of $\varphi^{\CM}$ in $\op{\mathbf{HP}}^{0}(\cA)$ agrees with the Connes-Chern character 
        $\bCh(D)$. 
    \end{enumerate}
\end{proposition}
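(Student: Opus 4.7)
The plan is to follow the Connes--Moscovici/Higson construction of the CM cocycle~\cite{CM:LIFNCG, Hi:RITCM}, but carefully track continuity of every intermediate object with respect to the locally convex topology on $\cA$. For part (1), I would first note that uniform hypo-regularity implies that for each pair $(\alpha, \beta) \in \cP(m)$ the operator $P_{\alpha,\beta}(a^0,\ldots,a^m)$ defined in~(\ref{eq:definition.P_alphabeta}) depends continuously on $(a^0,\ldots,a^m) \in \cA^{m+1}$ in the norm $\|\cdot\|_{(m)}$ of $\cD^m_D(\cA)$ (essentially the content of Remark~\ref{rem:Str.P|D|^-z-m.holo}). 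Uniform regularity then makes the iterated-commutator maps $a \mapsto a^{[k]}|D|^{-k}$ and $a \mapsto [D,a]^{[k]}|D|^{-k}$ continuous from $\cA$ to $\cL(\cH)$. Combining these, for any $q \geq 1$ and $\alpha \in \N_0^{2q}$ the operator $\gamma a^0 [D,a^1]^{[\alpha_1]}\cdots[D,a^{2q}]^{[\alpha_{2q}]}|D|^{-2(|\alpha|+q)}$ depends continuously on $(a^0,\ldots,a^{2q})$ in the appropriate $\Psi^*_D(\cA)$-topology. The uniform dimension spectrum hypothesis (Definition~\ref{def:CM.uniform-dimension-spectrum}) then promotes this to continuity of $(a^0,\ldots,a^{2q}) \mapsto \Str[\,\cdots\, |D|^{-2z}]$ valued in $\Hol(\C\setminus\Sigma)$ with at worst simple poles on $\Sigma$ whose residues depend continuously on the $a^j$. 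Taking the residue at $z=0$ preserves continuity and yields continuity of $\varphi_{2q}^{\CM}$. The first term of $\varphi_0^{\CM}$ is treated identically, and continuity of $a^0 \mapsto \Str[a^0\Pi_0]$ is immediate from the finite-dimensionality of $\ker D$ (which follows from $p^+$-summability).

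For part (2), I would route the argument through the JLO cocycle. First, one checks that at every $t>0$ the JLO cocycle $\varphi_t^{\JLO}$ is a continuous even periodic cyclic cocycle on $\cA$ whose class in $\bHP^{0}(\cA)$ equals $\bCh(D)$; continuity follows from the smoothness of the triple together with $p^+$-summability, which gives absolute convergence of each component on the simplex $\Delta_{2q}$. Next, I would extract the short-time finite part of $\varphi^{\JLO}_t$ via the Mellin representation
\begin{equation*}
 \int_0^\infty t^{z-1} e^{-tD^2}\, dt = \Gamma(z)|D|^{-2z},
\end{equation*}
combined with the asymptotic expansion $|D|^{2z}X^{[j]}\sim \sum_{k\geq 0}\binom{z}{k}X^{[j+k]}|D|^{2z-2k}$ recalled in the excerpt. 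Feeding these formulas termwise into the JLO integrand, expanding over $\Delta_{2q}$, and extracting residues at $z=0$ produces exactly the combinatorial coefficients $c_{q,\alpha}$ of~(\ref{eq:constrant.in.CM.cocycle}) and identifies the short-time finite part with $\varphi^{\CM}_{2q}$. The JLO transgression between $t_0 > t_1 > 0$ is given by an explicit $(b+B)$-coboundary whose components are continuous cochains on $\cA$ under our hypotheses; letting $t_1 \to 0^+$ then exhibits $\varphi^{\CM}$ as cohomologous to $\varphi^{\JLO}_{t_0}$ in $\bHP^{0}(\cA)$, hence representing $\bCh(D)$.

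The main obstacle is to ensure that Higson's Cauchy-integral argument~\cite{Hi:RITCM} --- which removes the vertical rapid-decay assumption of the original Connes--Moscovici paper --- lifts to the continuous category. The crux is a remainder estimate of the shape
\begin{equation*}
 |D|^{-2z}X-\sum_{k<N}\binom{-z}{k}X^{[k]}|D|^{-2z-2k}=R_N(X,z),
\end{equation*}
where $R_N(X,z)$ is expressed as an iterated contour integral of resolvents and is trace class for $\Re z$ sufficiently large. Uniform regularity provides continuity of $X\mapsto X^{[k]}|D|^{-k}$ for each $k$, which in turn yields continuity of $X\mapsto R_N(X,z)$ valued in $\cL^1(\cH)$ with holomorphic dependence on $z$. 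With this continuity in hand, all subsequent contour shifts, residue extractions, and interchanges of sum and integral inherit continuous dependence on the arguments $a^j\in\cA$, so that the cohomology equivalence $[\varphi^{\CM}]=\bCh(D)$ takes place in the periodic cyclic cohomology of \emph{continuous} cochains, completing the proof.
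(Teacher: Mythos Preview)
The paper does not actually prove this proposition: it is stated with a citation to~\cite{Po:SmoothCM} (listed as ``in preparation'') and no proof is given in the present text. So there is no proof here to compare your proposal against; all one can say is that the paper's surrounding discussion (Section~5.3 and the introduction) indicates that the intended argument is to revisit the Connes--Moscovici/Higson construction and verify continuity at each step, which is essentially your part~(1) and your final paragraph.

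That said, there is a genuine slip in your part~(2). You write that ``at every $t>0$ the JLO cocycle $\varphi_t^{\JLO}$ is a continuous even periodic cyclic cocycle on $\cA$ whose class in $\bHP^{0}(\cA)$ equals $\bCh(D)$.'' This is not correct as stated: $\varphi_t^{\JLO}$ is an \emph{infinite}-supported even cochain and lives in \emph{entire} cyclic cohomology, not periodic cyclic cohomology (see the beginning of Section~\ref{sec:JLO}). The passage from JLO to a periodic cocycle representing $\Ch(D)$ is the retraction of~\cite{CM:TCCFDKC}, and that is exactly where the work lies---you cannot simply quote $[\varphi_t^{\JLO}]=\bCh(D)$ in $\bHP^0(\cA)$ as a starting point. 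Your subsequent sketch (Mellin transform, the expansion $|D|^{2z}X^{[j]}\sim\sum_k\binom{z}{k}X^{[j+k]}|D|^{2z-2k}$, transgression coboundaries) is really the Connes--Moscovici/Higson derivation of the CM cocycle itself, so invoking JLO first adds a layer rather than removing one. The cleaner route, which your final paragraph already gestures at, is to stay entirely within the Higson argument~\cite{Hi:RITCM}: the transgressions there are between \emph{finitely}-supported $(b,B)$-cocycles (the cocycles $\tau_{2q}^D$, $\overline{\tau}_{2q}^D$, and the residue cocycle), and uniform regularity together with the uniform dimension spectrum hypothesis are precisely what make every intermediate cochain continuous. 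If you rewrite part~(2) along those lines and drop the JLO detour, the outline becomes sound.
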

 
\section{CM Cocycle and Heat-Trace Asymptotics}
\label{sec:CM.cocycle.HT.Asym}
 In this section, we shall now re-interpret the CM cocycle and its representation of the Connes-Chern character in terms of 
heat-trace asymptotics. As we shall see, this is especially convenient for smooth spectral triples over barelled locally 
convex algebras. 
%This will be a consequence of the well known equivalence between theta functions' asymptotics and zeta functions provided by the Mellin transform. 

In what follows we let $(\cA,\cH,D)$ be a hypo-regular $p^{+}$-summable spectral triple. We denote by $\cL^{1}(\cH)$ the Banach ideal of 
trace-class operators on $\cH$ equipped with the norm $\|T\|_{1}:=\Tr |T|$, $T\in \cL^{1}(\cH)$. In addition, we denote by $\Pi_{0}$ the 
orthogonal projection onto $\ker D$. Note that $\Pi_{0}$ is a finite-rank operator whose range is contained in 
$\cH^{\infty}$.  

\begin{lemma}
\label{lem:estimation.involving.heat.operator}
    Assume that $(\cA,\cH,D)$ is hypo-regular, and let $m\in \N_{0}$. 
    \begin{enumerate}
        \item  For all $X\in \cD^{m}_{D}(\cA)$, the operators $Xe^{-tD^{2}}$, $t>0$, form a continuous family of 
        trace-class operators. 
    
        \item  For all $q>p$,  there is a constant $C_{mq}>0$ such that
        \begin{equation}
        \label{eq:X.heat.operator.estimate}
            \|Xe^{-tD^{2}}\|_{1}\leq C_{mq}\|X\|_{(m)}t^{-\frac{m+q}{2}}\qquad \text{for all $t>0$ and $X\in 
            \cD^{m}_{D}(\cA)$}.
        \end{equation}
    
        \item  Let $\lambda_{0}$ be the smallest eigenvalue of $D^{2}$. For all $q>p$, there are a time $t_{q}>0$ and a constant $C_{mq}>0$, such that
        \begin{equation}
        \label{eq:X.heat.operator.estimate.with.projection.kernel}
            \|X(1-\Pi_{0})e^{-tD^{2}}\|_{1}\leq C_{mq}\|X\|_{(m)}e^{-t\lambda_{0}} \quad \text{for all 
            $t\geq t_{q}$ and $X\in 
            \cD^{m}_{D}(\cA)$}. 
        \end{equation}
    \end{enumerate}
\end{lemma}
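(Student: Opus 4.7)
The plan is to reduce all three statements to spectral calculus on $|D|$ via the factorization
\begin{equation*}
    Xe^{-tD^{2}} = \bigl[X(1+D^{2})^{-m/2}\bigr]\cdot\bigl[(1+D^{2})^{m/2}e^{-tD^{2}}\bigr],
\end{equation*}
whose first factor is bounded with operator norm exactly $\|X\|_{(m)}$ by the very definition of the norm on $\cD^{m}_{D}(\cA)$. Thus the problem reduces to trace-class estimates on operators of the form $(1+D^{2})^{m/2}e^{-tD^{2}}$ and $(1+D^{2})^{m/2}(1-\Pi_{0})e^{-tD^{2}}$, both of which are functions of $|D|$ and can be treated by pure functional calculus.

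For part~(2), I split off a ``trace-class regulariser'':
\begin{equation*}
(1+D^{2})^{m/2}e^{-tD^{2}} = \bigl[(1+D^{2})^{(m+q)/2}e^{-tD^{2}/2}\bigr]\cdot(1+D^{2})^{-q/2}\cdot e^{-tD^{2}/2}.
\end{equation*}
The first bracketed factor is a function of $|D|$ whose operator norm is $\sup_{\lambda\geq 0}(1+\lambda^{2})^{(m+q)/2}e^{-t\lambda^{2}/2}$, which elementary calculus shows is $\op{O}(t^{-(m+q)/2})$; the middle factor is trace-class for $q>p$ by the $p^{+}$-summability of $D$; and the last factor is a contraction. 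Multiplying the three norms yields the desired bound $C_{mq}\|X\|_{(m)}t^{-(m+q)/2}$. Part~(3) is the same computation applied to $(1-\Pi_{0})e^{-tD^{2}}$: now the functional calculus runs over eigenvalues $\lambda^{2}\geq\lambda_{0}$, so the exponential factor $e^{-t\lambda_{0}}$ pulls out cleanly, and for $t$ larger than a threshold $t_{q}$ the remaining supremum $\sup_{\lambda^{2}\geq\lambda_{0}}(1+\lambda^{2})^{(m+q)/2}e^{-(t-t_{q})\lambda^{2}/2}$ stays uniformly bounded in $t$.

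For the continuity in part~(1), I fix $t_{0}>0$ and apply Lebesgue dominated convergence to the spectral sum
\begin{equation*}
\|(1+D^{2})^{m/2}e^{-tD^{2}}\|_{1} = \sum_{j}(1+\mu_{j}^{2})^{m/2}e^{-t\mu_{j}^{2}}
\end{equation*}
over eigenvalues $\mu_{j}$ of $|D|$: on any relatively compact neighbourhood of $t_{0}$ in $(0,\infty)$, the majorant obtained by replacing $t$ with any fixed value slightly smaller than the infimum of that neighbourhood is summable by the estimate just established, while pointwise convergence as $t\rightarrow t_{0}$ is evident. This yields trace-norm continuity of $(1+D^{2})^{m/2}e^{-tD^{2}}$, and hence of $Xe^{-tD^{2}}$ after composing with the first bounded factor. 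The only real technical ingredient throughout is the elementary calculus bound $\sup_{\lambda\geq 0}(1+\lambda)^{\alpha}e^{-t\lambda}=\op{O}(t^{-\alpha})$ combined with the $p^{+}$-summability; I do not anticipate any conceptual obstacle beyond the careful bookkeeping of constants to ensure they depend only on $m$ and $q$ (and not on $X$).
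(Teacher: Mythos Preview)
Your approach is correct and in the same spirit as the paper's, with two technical variations worth noting. The paper separates off the kernel via $Xe^{-tD^2}=X(1-\Pi_0)e^{-tD^2}+X\Pi_0$ and factors the first term as $t^{-(m+q)/2}(XD^{-m})\cdot D^{-q}\cdot(tD^2)^{(m+q)/2}(1-\Pi_0)e^{-tD^2}$, working with $D^{-m}$, $D^{-q}$ on $(\ker D)^\perp$; your choice of $(1+D^2)^{-q/2}$ as the trace-class regulariser is arguably cleaner since it avoids the separate kernel bookkeeping and ties directly to the definition of $\|\cdot\|_{(m)}$. For continuity the paper gives an explicit Lipschitz bound by writing $Xe^{-t_1D^2}-Xe^{-t_2D^2}=\pm h\,XD^2e^{-cD^2}e^{-(t-c)D^2}f(hD^2)$ with $f(x)=x^{-1}(1-e^{-x})$ and $h=|t_1-t_2|$, whereas you use dominated convergence; both work.

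Two small points to tighten. In part~(1), showing that $\sum_j(1+\mu_j^2)^{m/2}e^{-t\mu_j^2}$ is continuous in $t$ gives continuity of the trace norm, not continuity \emph{in} trace norm; apply dominated convergence instead to $\|(1+D^2)^{m/2}(e^{-tD^2}-e^{-t_0D^2})\|_1=\sum_j(1+\mu_j^2)^{m/2}\,\lvert e^{-t\mu_j^2}-e^{-t_0\mu_j^2}\rvert$, with the same majorant. In part~(3), your split $e^{-tD^2}=e^{-tD^2/2}\cdot e^{-tD^2/2}$ with the decay extracted only from the first factor yields $e^{-t\lambda_0/2}$, not $e^{-t\lambda_0}$; either also use $\|(1-\Pi_0)e^{-tD^2/2}\|\leq e^{-t\lambda_0/2}$ on the last factor, or avoid the split as the paper does.
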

\begin{proof}
    The proof relies on the fact that $\cL^{1}(\cH)$ is a two-sided ideal and its norm is symmetric, i.e., 
    \begin{equation}
    \label{eq:Sobolev.1.norm.on.3.operators.estimate}
        \|A_{1}TA_{2}\|_{1}\leq \|A_{1}\| \|T\|_{1} \|A_{2}\| \qquad \forall T\in \cL^{1}(\cH) \ \forall A_{j}\in 
        \cL(\cH).
    \end{equation}
Let $q>p$ and $X\in \cD^{m}_{D}(\cA)$. For all $t>0$, we have
\begin{align}
\label{eq:X.heat.operator.sum}
    Xe^{-tD^{2}}&=X(1-\Pi_{0})e^{-tD^{2}}+X\Pi_{0} \nonumber\\ 
    & =t^{-\frac{m+q}{2}}(XD^{-m})D^{-q}(tD^{2})^{\frac{m+q}{2}}(1-\Pi_{0})e^{-tD^{2}}+X\Pi_{0}.
\end{align}
We note that $X\Pi_{0}$ has finite-rank, and hence is trace-class. In addition, the operators $XD^{-m}$, 
$(tD^{2})^{\frac{m+q}{2}}e^{-tD^{2}}$ and $D^{-q}$ are trace-class. Therefore, we see that $Xe^{-tD^{2}}$ is 
trace-class for all $t>0$. 

Using~(\ref{eq:Sobolev.1.norm.on.3.operators.estimate}) and the fact that $\Pi_{0}=(1+D^{2})^{-\frac{m}{2}}\Pi_{0}$ we get
\begin{equation}
\label{eq:X.Pi_0.inequality}
  \|X\Pi_{0}\|_{1}=\|X(1+D^{2})^{-\frac{m}{2}}\Pi_{0}\|_{1}\leq \|X\|_{(m)}\|\Pi_{0}\|_{1}.   
\end{equation}
Using~(\ref{eq:Sobolev.1.norm.on.3.operators.estimate}) and~(\ref{eq:X.heat.operator.sum}) we also obtain
\begin{equation}
        \label{eq:X.heat.operator.projection.kerne.inequality}
    \|X(1-\Pi_{0})e^{-tD^{2}}\|_{1}\leq t^{-\frac{m+q}{2}}\|XD^{-m}\| 
    \|D^{-q}\|_{1}\|(tD^{2})^{\frac{m+q}{2}}(1-\Pi_{0})e^{-tD^{2}}\|. 
\end{equation}
We observe that
\begin{gather}
\label{eq:X.D^-m.inequlity}
 \|XD^{-m}\|=\|X(1+D^{2})^{-\frac{m}{2}}\cdot (1+D^{2})^{\frac{m}{2}}D^{-m}\|\leq 
 \|X\|_{(m)}\|(1+D^{2})^{\frac{m}{2}}D^{-m}\| ,  \\
 \label{eq:inequality.further.estimate}
    \|(tD^{2})^{\frac{m+q}{2}}(1-\Pi_{0})e^{-tD^{2}}\| \leq \sup\{\mu^{\frac{m+q}{2}}e^{-\mu}; \mu\geq t\lambda_{0}\} 
    \leq  \sup\{\mu^{\frac{m+q}{2}}e^{-\mu}; \mu\geq 0\}.  
\end{gather}
Combining this with~(\ref{eq:X.Pi_0.inequality})--(\ref{eq:X.heat.operator.projection.kerne.inequality}) and the fact that $Xe^{-tD^{2}}=X(1-\Pi_{0})e^{-tD^{2}}+X\Pi_{0}$ 
we deduce there is a constant $C_{mq}>0$ such that, for all $t>0$ and $X\in 
\cD^{m}_{D}(\cA)$, we have
\begin{equation*}
%\label{eq:X.heat.operator.overall.estimate}
    \|Xe^{-tD^{2}}\|_{1}\leq \|X(1-\Pi_{0})e^{-tD^{2}}\|_{1}+ \|X\Pi_{0}\|_{1} \leq 
    C_{mq}(t^{-\frac{m+q}{2}}+1)\|X\|_{(m)}.
\end{equation*}
We further observe that the function $\mu\rightarrow \mu^{\frac{m+q}{2}}e^{-\mu}$ has a single critical point $\mu_{q}$ 
on $(0,\infty)$ and is decreasing on $[\mu_{d},\infty)$. Therefore, if we set $t_{q}=\mu_{q}\lambda_{0}^{-1}$, then, 
for all $t\geq t_{q}$, the function $\mu\rightarrow \mu^{\frac{m+q}{2}}e^{-\mu}$ is decreasing on 
$[t\lambda_{0},\infty)$, and hence $\mu^{\frac{m+q}{2}}e^{-\mu}\leq (t\lambda_{0})^{\frac{m+q}{2}}e^{-t\lambda_{0}}$ 
for all $\mu\geq t\lambda_{0}$.  Combining this with~(\ref{eq:X.heat.operator.projection.kerne.inequality})--(\ref{eq:inequality.further.estimate}) we deduce there is a constant 
$C_{mq}'>0$ such that
\begin{equation*}
            \|X(1-\Pi_{0})e^{-tD^{2}}\|_{1}\leq C_{mq}'\|X\|_{(m)}e^{-t\lambda_{0}} \quad \text{for all 
            $t\geq t_{q}$ and $X\in 
            \cD^{m}_{D}(\cA)$}. 
\end{equation*}

To complete the proof it remains to show that, for any $X\in \cD_{D}^{m}(\cA)$, the family  $\left(Xe^{-tD^{2}}\right)_{t>0}$ is a continuous family in 
$\cL^{1}(\cH)$. To this end let $c>0$ and $t_{j}\in (c,\infty)$, $j=1,2$. Set $t=\min\{t_{1},t_{2}\}$ and 
$h=|t_{1}-t_{2}|$. In addition, let $f(x)$ be the function on 
$[0,\infty)$ defined by $f(x)=x^{-1}(1-e^{-x})$ for $x>0$ and $f(0)=1$. Note that $f(x)$ is a continuous 
function on $[0,\infty)$ with values in $(0,1]$. We then have
\begin{equation}
\label{eq:X.heat.operator.difference.in.t}
    Xe^{-t_{1}D^{2}}-Xe^{-t_{2}D^{2}}=\pm Xe^{-tD^{2}}(1-e^{-hD^{2}})=hXD^{2}e^{-cD^{2}}e^{-(t-c)D^{2}}f(hD^{2}).
\end{equation}
As $XD^{2}$ is contained in $\cD^{m+2}_{D}(\cA)$, the first part of the proof shows that 
$XD^{2}e^{-cD^{2}}$ is a trace-class operator. We also note that $\|e^{-(t-c)D^{2}}f(hD^{2})\|\leq \|e^{-(t-c)D^{2}}\| 
\|f(hD^{2})\| \leq \max f\leq 1$. Therefore, combining~(\ref{eq:Sobolev.1.norm.on.3.operators.estimate}) and~(\ref{eq:X.heat.operator.difference.in.t}) we obtain
\begin{equation*}
  \|Xe^{-t_{1}D^{2}}-Xe^{-t_{2}D^{2}}\|_{1}\leq h \|XD^{2}e^{-cD^{2}}\|_{1}\qquad \forall t_{j}\in (c,\infty).   
\end{equation*}
This proves that $\left(Xe^{-tD^{2}}\right)_{t>0}$ is a continuous family in 
$\cL^{1}(\cH)$. The proof is complete. 
\end{proof}

\begin{remark}
It follows from Lemma~\ref{lem:estimation.involving.heat.operator} that, for any $X\in \cD_{D}^{\bt}(\cA)$, the supertrace $\Str \left[ X e^{-tD^{2}}\right]$ is well defined 
for all $t>0$. Moreover, the inequalities~(\ref{eq:X.heat.operator.estimate})--(\ref{eq:X.heat.operator.estimate.with.projection.kernel}) imply that, for all $m\in \N_0$, we actually have 
continuous linear maps, 
\begin{gather*}
 \cD_D^m(\cA)\ni X \longrightarrow t^{\frac{m+q}{2}} \Str \left[ X e^{-tD^{2}}\right] \in C^0_b(0,1], \qquad q>p,\\
 \cD_D^m(\cA)\ni X \longrightarrow e^{t\lambda} \Str \left[ X (1-\Pi_0)e^{-tD^{2}}\right] \in C^0_b[1,\infty),
\end{gather*}
where $C^0_b(0,1]$ (resp., $C^0_b[1,\infty)$) is the Banach space of bounded continuous functions on $(0,1]$ (resp., $[1,\infty)$). 
\end{remark}

\begin{definition} Given a discrete subset $\Sigma$ of $(-\infty, p]$, we shall say that $(\cA,\cH,D)$ has
 the asymptotic expansion property relatively to $\Sigma$ when it is hypo-regular and, for any $X\in 
    \cD^{m}_{D}(\cA)$, $m\geq 0$, there is an asymptotic expansion of the form,
    \begin{equation}
    \label{eq:X.heat.operator.AE}
      \Str\left[ Xe^{-tD^{2}}\right]\sim \sum_{\sigma\in \Sigma} a_{\sigma}^{(m)}(X)t^{-\frac{1}{2}(\sigma+m)}\qquad \text{as 
      $t\rightarrow 0^{+}$}.
    \end{equation}%where $a_{\sigma}(X)$ are constants.   
\end{definition}

\begin{remark}
 Let $\theta(t)$ be a function on $(0,\infty)$ which as $t\rightarrow 0^{+}$ admits an asymptotic expansion of the 
 form~(\ref{eq:X.heat.operator.AE}). We shall call by \emph{partie finie} (i.e., finite part),  and denote by $\FP \theta_X(t)$, the constant 
 coefficient in this asymptotic expansion. For instance, in the notation of~(\ref{eq:X.heat.operator.AE}), for any $X\in \cD^{m}_{D}(\cA)$, 
 $m\geq 0$, and $\sigma \in \Sigma$, we have 
    \begin{align}
       a_{\sigma}^{(m)}(X) &= \FP \left\{ t^{\frac{1}{2}(\sigma+m)} \Str\left[ Xe^{-tD^{2}}\right]\right\} \label{eq:AE.coefficient.FP}\\ 
       &= \lim_{t\rightarrow 
       0^{+}}t^{\frac{1}{2}(\sigma+m)}\biggl\{  
       \Str\left[ Xe^{-tD^{2}}\right]-  \sum_{\substack {\sigma' \in \Sigma\\ \sigma'>\sigma}} 
       a_{\sigma'}^{(m)}(X) t^{-\frac{1}{2}(\sigma'+m)}\biggl\} \label{eq:AE.coefficient.lim.diff}. 
    \end{align}
    Note also that both right-hand sides make sense when $\sigma\not \in \Sigma$. 
\end{remark}

\begin{remark}
 The asymptotic expansion property implies that, for any $m\in \N_{0}$, we define a linear map $R_N:\cD^m_D(\cA,D)\rightarrow C_b^0(0,1]$ by
\begin{equation}
 R_{N}(X)(t)=t^{-N}\biggl\{\Str\left[ Xe^{-tD^{2}}\right]- \sum_{\sigma \in 
        \Sigma_{N}^{(m)}} a_{\sigma}^{(m)}(X) t^{-\frac{1}{2}(\sigma+m)} \biggr\}, \quad X\in \cD_D^m(\cA), \ t\in (0,1],
        \label{eq:Heat-CM.RNXt}
\end{equation}where we have set $\Sigma_{N}^{(m)}:=\left\{\sigma\in \Sigma; \ \sigma+m>2N\right\}$. 
\end{remark}

\begin{lemma}
   \label{lemma:Residue.formula.relation.heat.operator}
   Assume that $(\cA,\cH,D)$ has the asymptotic expansion property relatively to $\Sigma$. Then
   \begin{enumerate}
       \item  $(\cA,\cH,D)$ has a discrete and simple dimension spectrum contained in $\Sigma$.
   
       \item  For any $X\in \cD_{D}^{\bt}(\cA)$ and $q\geq 0$, we have
       \begin{equation}
          \underset{z=0}{\Res}\left\{\Gamma(z)\Str \left[ X|D|^{-2(z+q)}\right]\right\}= \left\{
           \begin{array}{ll}
	 \FP t^{q}\Str\left[ Xe^{-tD^{2}}\right]   &  \text{if $q>0$},\\    
             \FP \Str\left[ Xe^{-tD^{2}}\right]  - \Str\left[X\Pi_{0}\right]&  \text{if $q=0$}. 
           \end{array}\right.
           \label{eq:Residue.formula.relation.heat.operator}
       \end{equation}
   \end{enumerate}
\end{lemma}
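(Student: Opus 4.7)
The strategy is a Mellin-transform argument, built on the identity
\[
\Gamma(s)\,\Str\bigl[X|D|^{-2s}\bigr] = \int_0^\infty t^{s-1}\,\Str\bigl[X(1-\Pi_0)e^{-tD^2}\bigr]\,dt, \qquad X \in \cD_D^m(\cA),
\]
valid in the half-plane $\Re s > p/2$. It combines the spectral representation $|D|^{-2s}(1-\Pi_0) = \Gamma(s)^{-1}\int_0^\infty t^{s-1}(1-\Pi_0)e^{-tD^2}\,dt$ on $(\ker D)^{\perp}$ with the fact that $|D|^{-2s}$ vanishes on $\ker D$; the trace-class and absolute-convergence assertions follow from Lemma~\ref{lem:estimation.involving.heat.operator}.

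I then split the integral at $t=1$. The tail $\int_1^\infty$ is entire in $s$ thanks to the exponential-decay estimate \eqref{eq:X.heat.operator.estimate.with.projection.kernel}. For the head, I write $\Str[X(1-\Pi_0)e^{-tD^2}] = \Str[Xe^{-tD^2}] - \Str[X\Pi_0]$ and apply the asymptotic expansion \eqref{eq:X.heat.operator.AE} together with the uniform remainder bound encoded in $R_N(X)\in C^0_b(0,1]$. Term-by-term integration yields, for $N$ arbitrarily large,
\[
\int_0^1 t^{s-1}\,\Str[X(1-\Pi_0)e^{-tD^2}]\,dt = \sum_{\sigma}\frac{a_\sigma^{(m)}(X)}{s-(\sigma+m)/2} - \frac{\Str[X\Pi_0]}{s} + h_N(s),
\]
with $h_N$ holomorphic for $\Re s>-N$. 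Consequently $\Gamma(s)\Str[X|D|^{-2s}]$ extends meromorphically to $\C$ with at worst simple poles located on $\tfrac12(\Sigma+m)\cup\{0\}$. Specializing to the spanning operators $X=P_{\alpha,\beta}(a^0,\ldots,a^m)$ of $\cD_D^m(\cA)$ and substituting $z=2s-m$ yields the meromorphic extension of $z\mapsto\Str[X|D|^{-z-m}]$ with simple poles in $\Sigma$, which is exactly part~(1).

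For part~(2), I set $s=z+q$ and extract $\Res_{z=0}$. When $q=0$ the identity of step one identifies the residue at $z=0$ of $\Gamma(z)\Str[X|D|^{-2z}]$ with the coefficient of $t^0$ in the expansion of $\Str[X(1-\Pi_0)e^{-tD^2}]$, namely $\FP\Str[Xe^{-tD^2}]-\Str[X\Pi_0]$. When $q>0$, applying the identity at $s=z+q$ pushes the kernel pole $-\Str[X\Pi_0]/s$ to $z=-q$, so only the heat-kernel asymptotic contributes at $z=0$; extracting the coefficient of $t^{-q}$ in the expansion of $\Str[Xe^{-tD^2}]$ via the Mellin dictionary then gives $\FP\{t^q\Str[Xe^{-tD^2}]\}$. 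The delicate step, and the main bookkeeping obstacle, is matching each singular term in the Laurent expansion of the Mellin integral with the corresponding asymptotic coefficient, and keeping the kernel contribution from being double-counted or attributed to the wrong case.
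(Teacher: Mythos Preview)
Your proposal is correct and follows essentially the same Mellin-transform approach as the paper: both establish the identity $\Gamma(s)\Str[X|D|^{-2s}]=\int_0^\infty t^{s-1}\Str[X(1-\Pi_0)e^{-tD^2}]\,dt$, split at $t=1$, feed in the short-time asymptotic~\eqref{eq:X.heat.operator.AE} on $(0,1]$, integrate term by term, and read off the pole structure and residues. Two small points worth tightening: the initial domain of convergence should be $\Re s>\tfrac12(p+m)$ rather than $\Re s>p/2$ (since $X|D|^{-m}$ is merely bounded), and for part~(1) you should say explicitly that dividing by $\Gamma(s)$ removes the apparent pole at $s=0$ coming from the kernel term $-\Str[X\Pi_0]/s$, because $1/\Gamma$ vanishes there---the paper makes this cancellation explicit.
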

\begin{proof}
   Let $X\in \cD^{m}_{D}(\cA)$, $m \geq 0$. In the following, given any $s\in \C$ we set $\hat{s}=\frac{1}{2}(s+m)$. 
   Using the Mellin formula 
   $\Gamma(\hat{z})|D|^{-(z+m)}=\int_{0}^{\infty}t^{\hat{z}-1}(1-\Pi_{0})e^{-tD^{2}}dt$, $\Re \hat{z}>0$, and the 
   boundedness of $X|D|^{-m}$, it can be shown that, for $\Re z>0$,  we have
   \begin{equation*}
       \Gamma(\hat{z})X|D|^{-(z+m)}=\int_{0}^{\infty}t^{\hat{z}-1}X(1-\Pi_{0})e^{-tD^{2}}dt,
   \end{equation*}where the integral converges in $\cL(\cH)$. In fact, Lemma~\ref{lem:estimation.involving.heat.operator} further ensures us that, for $\Re z 
   >p$ the operators $t^{\hat{z}}X(1-\Pi_{0})e^{-tD^{2}}$, $t>0$, form a continuous family in $\cL^{1}(\cH)$ which 
   remains bounded as $t\rightarrow 0^{+}$ and decay exponentially fast as $t\rightarrow \infty$. Therefore, for $\Re z>p$, we have 
   \begin{equation}
   \label{eq:Str.X.coomplex.power.D.relation.heat.operator}
       \Gamma(\hat{z})\Str \left[X|D|^{-(z+m)}\right]=\int_{0}^{\infty}t^{\hat{z}-1}\Str 
       \left[X(1-\Pi_{0})e^{-tD^{2}}\right]dt.  
   \end{equation}
   That is, when regarded as a function of the variable $\hat{z}$, the function $\Gamma(\hat{z})\Str 
   \left[X|D|^{-(z+m)}\right]$ is the Mellin transform of the function, 
   \begin{equation*}
       \theta_{X}(t)=\Str \left[X(1-\Pi_{0})e^{-tD^{2}}\right]= \Str \left[Xe^{-tD^{2}}\right] - \Str \left[X\Pi_{0}\right], \qquad 
       t>0.
   \end{equation*}
   
   It is well known how to relate the short-time behavior of $ \theta_{X}(t)$ to the meromorphic singularities of its 
   Mellin transform (see, e.g., \cite{BGV:HKDO, GS:OCCTSM}). First, the fact that $t^{\hat{z}}X(1-\Pi_{0})e^{-tD^{2}}$, 
   $t\geq 1$, is a continuous family with rapid decay in $\cL^{1}(\cH)$ enables us to define an entire function
   $F(X)(z):=\int_{1}^{\infty}t^{\hat{z}-1}\theta_{X}(t)dt$, $z\in \C$. We then may rewrite~(\ref{eq:Str.X.coomplex.power.D.relation.heat.operator}) as
   \begin{equation}
      \label{eq:Str.X.coomplex.power.D.relation.term.of.AE}
      \Gamma(\hat{z})\Str \left[X|D|^{-(z+m)}\right]= \int_{1}^{\infty}t^{\hat{z}-1}\theta_{X}(t)dt , \qquad \Re z>p.
   \end{equation}
  Second, the asymptotic expansion~(\ref{eq:X.heat.operator.AE}) implies that, for all $N\in \N$, we can write
   \begin{equation}
   \label{eq:theta.sum.three.terms}
     \theta_{X}(t)= \sum_{\sigma \in \Sigma_{N}^{(m)}}t^{-\hat{\sigma}}a_{\sigma}^{(m)}(X)- \Str\left[X\Pi_{0}\right] 
     +t^{N}R_{N}(X)(t), \qquad 0<t\leq 1,  
   \end{equation}where $R_{N}(X)(t)$ is the bounded continuous function on $(0,1]$ defined by~(\ref{eq:Heat-CM.RNXt}). The boundedness of $R_{N}(X)(t)$ enables us
  to define a holomorphic function $H_{N}(X)(z):=\int_{0}^{1}t^{\hat{z}+N-1}R_{N}(X)(t)dt$ on the half-plane $\Re \hat{z}>-N$ (i.e., the 
   half-plane $\Re z >-(2N+m)$). Combining this with~(\ref{eq:Str.X.coomplex.power.D.relation.term.of.AE}) and~(\ref{eq:theta.sum.three.terms}) we see that  on the half-plane $\Re z >p$ the 
   function $\Gamma(\hat{z})\Str  \left[X|D|^{-(z+m)}\right]$ is equal to 
    \begin{multline*}
        \sum_{\sigma \in 
        \Sigma_{N}^{(m)}}\int_{1}^{\infty}t^{\frac{1}{2}(z-\sigma)-1}a_{\sigma}^{(m)}(X)dt- 
        \int_{1}^{\infty}t^{\frac{1}{2}(z+m)}\Str\left[X\Pi_{0}\right]dt +  H_{N}(X)(z) +F(X)(z)  \\
        =   \sum_{\sigma \in  \Sigma_{N}^{(m)}} \frac{2}{z-\sigma}a_{\sigma}(X) - \frac{2}{z-m}\Str\left[X\Pi_{0}\right] + 
         H_{N}(X)(z) +F(X)(z). 
    \end{multline*}
   This shows that, for all $N\in \N$, the function $ \Gamma(\hat{z})\Str \left[X|D|^{-(z+m)}\right] $ has a 
   meromorphic continuation to the half-plane $\Re z >-(2N+m)$ with at worst simple pole singularities on 
   $\Sigma_{N}^{(m)}\cup\{-m\}$. Furthermore, for all $\sigma\in\Sigma_{N}^{(m)}$ with $\sigma\neq -m$, we have
   \begin{equation}
   \label{eq:residue.not.at.-m}
     \underset{z=\sigma}{\Res} \left\{ \Gamma(\hat{z})\Str \left[X|D|^{-(z+m)}\right]\right\} =  2a_{\sigma}^{(m)}(X)= 
         2 \FP \left\{
         t^{\hat{\sigma}}\Str \left[X(1-\Pi_{0})e^{-tD^{2}}\right]\right\},  
   \end{equation}while for $\sigma=-m$ we obtain  
    \begin{equation}
    \label{eq:residue.at.m}
        \underset{z=-m}{\Res} \left\{ \Gamma(\hat{z})\Str \left[X|D|^{-(z+m)}\right]\right\} = 2 \FP 
        \Str \left[X(1-\Pi_{0})e^{-tD^{2}}\right] - 2\Str\left[X\Pi_{0}\right].        
    \end{equation} 
 As $\Gamma(\hat{z})^{-1}=\Gamma\left( \frac{1}{2}(z+m)\right)^{-1}$ is an entire function that vanishes on 
 $-m-2\N_{0}$, we then deduce that the function $\Str \left[X|D|^{-(z+m)}\right]$ has a meromorphic 
 extension to $\C$ with at worst simple pole singularities on $\bigcup_{N\geq 1} \Sigma_{N}^{(m)}=\Sigma$. This proves that 
 $(\cA,\cH,D)$ has a simple and discrete dimension spectrum contained in $\Sigma$. Finally, the 
 formula~(\ref{eq:Residue.formula.relation.heat.operator}) follows from~(\ref{eq:residue.not.at.-m})--(\ref{eq:residue.at.m}) and the fact that, for $q>0$ both sides of~(\ref{eq:Residue.formula.relation.heat.operator})  vanish when 
 $q \not \in \frac{1}{2}\left( \Sigma+m\right)$. The proof is complete.  
\end{proof}

Combining Lemma~\ref{lemma:Residue.formula.relation.heat.operator} with Theorem~\ref{thm:CM.cocyle.residue.formula} we then arrive at the following result. 

\begin{proposition}[\cite{Po:SmoothCM}]
\label{prop:CM.heat.trace.estimate}
    Assume that $(\cA,\cH,D)$ is $p^{+}$-summable, regular and has the asymptotic property. Then
    \begin{enumerate}
        \item  $(\cA,\cH,D)$ has a discrete and simple dimension spectrum, and hence the Connes-Chern character 
        $\Ch(D)$ is represented by the CM cocycle~(\ref{eq:CM.cocycle.residue.0})--(\ref{eq:CM.cocycle.residue.>0}). 
    
        \item  We have the following formulas for the components $\varphi_{2q}^{\CM}$, $q\geq 0$, of the CM cocycle:  
    \begin{gather}
    \label{eq:CM.cocycle.heat.operator.0}
         \varphi_{0}^{\CM}(a^{0})= \FP \Str\left[ a^{0}e^{-tD^{2}}\right], \qquad a^{0}\in \cA,\\
        \varphi_{2q}^{\CM}(a^{0},\ldots,a^{2q})= \sum_{\alpha \in \N_{0}^{2q}}c_{q,\alpha}\Gamma(|\alpha|+q)^{-1} 
        \FP \left\{t^{|\alpha|+q}\Str\left[ T_{q,\alpha}e^{-tD^{2}}\right]\right\}, \quad a^{j}\in \cA, \ q\geq 1, 
         \label{eq:CM.cocycle.heat.operator.>0}
    \end{gather}where $c_{q,\alpha}$ is given by~(\ref{eq:constrant.in.CM.cocycle}) and we have set 
    $T_{q,\alpha}=a^{0}[D,a^{1}]^{[\alpha_{1}]}\cdots [D,a^{2q}]^{[\alpha_{2q}]}$. 
    \end{enumerate}
\end{proposition}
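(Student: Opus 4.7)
The plan is to deduce Proposition~\ref{prop:CM.heat.trace.estimate} as a direct consequence of Connes-Moscovici's residue form of the CM cocycle (Theorem~\ref{thm:CM.cocyle.residue.formula}) combined with the dictionary between zeta-residues and heat-trace \emph{parties finies} already supplied by Lemma~\ref{lemma:Residue.formula.relation.heat.operator}.

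\textbf{Part (1)} is immediate. The asymptotic expansion property is precisely the hypothesis of Lemma~\ref{lemma:Residue.formula.relation.heat.operator}, whose conclusion (1) gives that $(\cA,\cH,D)$ has a discrete and simple dimension spectrum (contained in $\Sigma$). Together with the standing assumptions of $p^+$-summability and regularity, this places us in the scope of Theorem~\ref{thm:CM.cocyle.residue.formula}, yielding that the cocycle $\varphi^{\CM}$ defined by (\ref{eq:CM.cocycle.residue.0})--(\ref{eq:CM.cocycle.residue.>0}) represents the Connes-Chern character $\Ch(D)$ in $\HP^0(\cA)$.

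\textbf{Part (2)} reduces to converting each residue appearing in (\ref{eq:CM.cocycle.residue.0})--(\ref{eq:CM.cocycle.residue.>0}) into a heat-trace partie finie. For $q=0$, I would apply the $q=0$ case of identity (\ref{eq:Residue.formula.relation.heat.operator}) to $X = a^0$; the correction term $-\Str[a^0\Pi_0]$ produced by the lemma cancels exactly against the $+\Str[a^0\Pi_0]$ appearing in (\ref{eq:CM.cocycle.residue.0}), producing (\ref{eq:CM.cocycle.heat.operator.0}). For $q\geq 1$, one rewrites each summand as $\bint \gamma T_{q,\alpha}|D|^{-2(|\alpha|+q)} = \Res_{z=0}\Str[\gamma T_{q,\alpha}|D|^{-2z-2(|\alpha|+q)}]$ by the definition of the residue trace. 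Setting $m = 2(|\alpha|+q)$ (so that $T_{q,\alpha}\in \cD^m_D(\cA)$) and performing the change of variable $u=2z$, one is reduced to $\tfrac12 \Res_{u=0}\Str[\gamma T_{q,\alpha}|D|^{-(u+m)}]$. Since $m/2 = |\alpha|+q \geq 1$, the Gamma factor $\Gamma(\tfrac{u+m}{2})$ is regular and nonzero at $u=0$, so dividing the identity $\Res_{u=0}\{\Gamma(\tfrac{u+m}{2})\Str[X|D|^{-(u+m)}]\} = 2\FP\{t^{m/2}\Str[X(1-\Pi_0)e^{-tD^2}]\}$ (which is (\ref{eq:residue.not.at.-m}) in the proof of Lemma~\ref{lemma:Residue.formula.relation.heat.operator}) by $\Gamma(m/2) = \Gamma(|\alpha|+q)$ yields
$$\bint \gamma T_{q,\alpha}|D|^{-2(|\alpha|+q)} = \Gamma(|\alpha|+q)^{-1}\FP\bigl\{t^{|\alpha|+q}\Str[T_{q,\alpha}(1-\Pi_0)e^{-tD^2}]\bigr\}.$$
Finally, $t^{|\alpha|+q}\Str[T_{q,\alpha}\Pi_0]$ vanishes as $t\to 0^+$ since $\Pi_0$ has finite rank and $|\alpha|+q\geq 1$, so the $(1-\Pi_0)$ factor drops out of the partie finie and we arrive at (\ref{eq:CM.cocycle.heat.operator.>0}).

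The argument is entirely routine once Lemma~\ref{lemma:Residue.formula.relation.heat.operator} is available -- all of the genuine analysis (trace-class estimates, meromorphic extension, reading off residues from heat-kernel asymptotics) has been done there. The only points requiring care are the bookkeeping of the factor $1/2$ (from $u=2z$) against $\Gamma(|\alpha|+q)^{-1}$ (from the Gamma value at $u=0$) so they combine into the single constant appearing in (\ref{eq:CM.cocycle.heat.operator.>0}), and a consistent reading of the grading $\gamma$ in $\bint \gamma T$ versus the supertrace convention used in $\Str[T e^{-tD^2}]$. No further analytic obstacle is expected.
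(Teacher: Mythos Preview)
Your proposal is correct and follows exactly the route the paper indicates: the paper's entire ``proof'' is the one-line remark immediately preceding the proposition, namely that the result follows by combining Lemma~\ref{lemma:Residue.formula.relation.heat.operator} with Theorem~\ref{thm:CM.cocyle.residue.formula}. You have supplied the bookkeeping that the paper omits, and in particular you are right that for $q\geq 1$ one must dip back into~(\ref{eq:residue.not.at.-m}) rather than use only the final statement~(\ref{eq:Residue.formula.relation.heat.operator}), since the latter involves $\Gamma(z)$ at $z=0$ whereas $\ensuremath{\dashint}$ does not; your factor-of-$\tfrac12$ and $\Gamma(|\alpha|+q)^{-1}$ accounting is accurate. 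Your flag about the stray $\gamma$ in~(\ref{eq:CM.cocycle.residue.>0}) versus the $\Str$ already built into $\ensuremath{\dashint}$ is a genuine notational inconsistency in the paper rather than a defect in your argument.
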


In the rest of this section we further assume that $(\cA,\cH,D)$ is smooth (in addition to be $p^{+}$-summable and 
hypo-regular). 
%$\alpha\in \N_{0}^{m+2}$, $|\alpha|=m$, and $\beta\in\{0,1\}^{m}$, we let 
%\begin{equation*}
% R_{N\alpha,\beta}(a^0,...., a^m+1)= t^{-N}\left{\Str\left[ P_{\alpha,\beta}(a^{0},\ldots,a^{m})e^{-tD^{2}}\right]- \sum_{\sigma \in 
%        \Sigma_{N}^{(m)}}a_{\sigma}^{(m)}\!\left(P_{\alpha,\beta}(a^{0},\ldots,a^{m})\right) t^{-\frac{1}{2}(\sigma+m)} \right\}
%\end{equation*}

\begin{definition}
\label{def:UAE.of.ST}
    Given a discrete subset $\Sigma\subset \R$, we say that $(\cA,\cH,D)$ has the uniform asymptotic expansion property 
    relatively to $\Sigma$ when $(\cA,\cH,D)$ is uniformly hypo-regular and it has the asymptotic expansion property relatively to $\Sigma$ in such way that,
    for any $m\in \N_{0}$ and pair $(\alpha,\beta)\in \cP(m)$,  the following properties are satisfied:
    \begin{enumerate}
\item[(i)] For all $\sigma \in \Sigma$, the $(m+1)$-linear form $(a^{0},\ldots,a^{m})\rightarrow 
   a^{(m)}_{\sigma}\left(P_{\alpha,\beta}(a^{0},\ldots,a^{m})\right)$ is continuous on $\cA^{m+1}$.
   
\item[(ii)] For all $N\in \N_0$, the  $(m+1)$-linear map $(a^0,...,a^m)
 \longrightarrow R_N\left(P_{\alpha\beta}(a^0,\cdots, a^m)\right)(t)$ is continuous from $\cA^{m+1}$ to $C^0_b(0,1]$ (where $R_N$ is defined as in~(\ref{eq:Heat-CM.RNXt})). 
   \end{enumerate}
\end{definition}

\begin{remark}\label{rem:Str.P.heat.operator.AE.meaning}
The condition (i) means there is a continuous seminorm $\cN^{(m)}_{\sigma \alpha \beta}$ on $\cA$ such that
\begin{equation}
\label{eq:a^m_sigmaX.estimate}
 \left| a^{(m)}_{\sigma}\left(P_{\alpha,\beta}(a^{0},\ldots,a^{m})\right)\right| \leq \cN^{(m)}_{\sigma \alpha \beta}(a^0)\cdots \cN^{(m)}_{\sigma \alpha \beta}(a^m) 
   \qquad \text{for all $a^{j}\in  \cA$}. 
\end{equation}
   Likewise, the condition (ii) means there is a continuous 
   semi-norm $\cN^{(m)}_{N\alpha\beta}$ on $\cA$ such that
   \begin{equation}
       \label{eq:Str.P.heat.operator.AE.meaning}
        \left| R_N\left( P_{\alpha\beta}(a^0,\cdots, a^m)\right)(t) \right| 
        \leq \cN^{(m)}_{N\alpha\beta}(a^{0})\cdots \cN^{(m)}_{N\alpha\beta}(a^{m})
         \qquad \text{for all $a^{j}\in  \cA$ and $t\in (0,1]$}.  
   \end{equation}
\end{remark}

As the following lemma shows, requiring uniformness in the heat trace asymptotics~(\ref{eq:X.heat.operator.AE}) turns out to be irrelevant when the $\cA$ has a 
barelled locally convex topology. We refer to~\cite{SW:TVS} for the precise definition of a barelled locally convex 
topology. For our purpose it is enough to know that Banach-Steinhaus theorem continues to hold for barelled topological 
vector spaces and main examples of barelled topological spaces include Baire topological vector spaces,  as well as 
inductive limits of such spaces (\emph{cf.}~\cite{SW:TVS}). In particular, Fr\'echet spaces and inductive limits of 
Fr\'echet spaces are barelled locally convex spaces.   

\begin{lemma}
\label{lem:UAE.rel.AE}
 Assume that the topology of $\cA$ is barelled,  and let $\Sigma$ be a discrete subset of $(-\infty, p]$. Then the following are equivalent:
 \begin{enumerate}
     \item  $(\cA,\cH,D)$ has the uniform asymptotic expansion property 
    relatively to $\Sigma$.
 
     \item  $(\cA,\cH,D)$ is uniformly hypo-regular and has the asymptotic expansion property 
    relatively to $\Sigma$.
 \end{enumerate}
\end{lemma}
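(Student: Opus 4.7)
The implication $(1)\Rightarrow(2)$ is immediate from the definitions, so the plan focuses on $(2)\Rightarrow(1)$. The strategy is to exploit the barelledness of $\cA^{m+1}$ via the Banach--Steinhaus theorem in order to upgrade the pointwise existence of asymptotic expansions into jointly continuous multilinear dependence on the arguments $a^{0},\ldots,a^{m}$.

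First I would fix $m\in \N_{0}$ and a pair $(\alpha,\beta)\in \cP(m)$. By Remark~\ref{rem:Str.P|D|^-z-m.holo}, uniform hypo-regularity ensures that the multilinear map $\Phi\colon \cA^{m+1}\to \cD^{m}_{D}(\cA)$ defined by $\Phi(\mathbf{a}):=P_{\alpha,\beta}(a^{0},\ldots,a^{m})$ is continuous. Combined with Lemma~\ref{lem:estimation.involving.heat.operator}, which shows that $X\mapsto \Str[Xe^{-tD^{2}}]$ is a continuous linear form on the normed space $\cD^{m}_{D}(\cA)$ for each $t>0$, this implies the scalar function
\begin{equation*}
f_{t}(\mathbf{a}):=\Str\bigl[P_{\alpha,\beta}(\mathbf{a})e^{-tD^{2}}\bigr]
\end{equation*}
is a continuous multilinear form on $\cA^{m+1}$ for every $t>0$.

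Next I would enumerate $\Sigma=\{\sigma_{0}>\sigma_{1}>\sigma_{2}>\cdots\}$ and show by induction on $k\geq 0$ that $\mathbf{a}\mapsto a^{(m)}_{\sigma_{k}}(\Phi(\mathbf{a}))$ is a continuous multilinear form on $\cA^{m+1}$. The extraction formula~(\ref{eq:AE.coefficient.lim.diff}) rewrites this coefficient as
\begin{equation*}
a^{(m)}_{\sigma_{k}}(\Phi(\mathbf{a}))=\lim_{t\to 0^{+}}\, t^{(\sigma_{k}+m)/2}\Bigl\{f_{t}(\mathbf{a})-\sum_{j<k}a^{(m)}_{\sigma_{j}}(\Phi(\mathbf{a}))\,t^{-(\sigma_{j}+m)/2}\Bigr\},
\end{equation*}
which is a pointwise limit of continuous multilinear forms on $\cA^{m+1}$ (using the inductive hypothesis). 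Since $\cA^{m+1}$ is a product of barelled spaces, hence barelled, the existence of the pointwise limit yields pointwise boundedness of the family, and Banach--Steinhaus for multilinear forms on barelled spaces gives continuity of the limit. In view of Remark~\ref{rem:Str.P.heat.operator.AE.meaning}, this is precisely condition (i) of Definition~\ref{def:UAE.of.ST}.

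For condition (ii), the explicit formula~(\ref{eq:Heat-CM.RNXt}) combined with the continuity just established shows that, for each fixed $t\in(0,1]$, the map $\mathbf{a}\mapsto R_{N}(\Phi(\mathbf{a}))(t)$ is a continuous multilinear form on $\cA^{m+1}$. The asymptotic expansion property guarantees pointwise that $\sup_{t\in(0,1]}|R_{N}(\Phi(\mathbf{a}))(t)|<\infty$ for each $\mathbf{a}$. A second invocation of Banach--Steinhaus then yields equicontinuity of the family $\{R_{N}(\Phi(\cdot))(t)\}_{t\in(0,1]}$ of continuous multilinear forms on the barelled space $\cA^{m+1}$, which translates exactly into the statement that $\mathbf{a}\mapsto R_{N}(\Phi(\mathbf{a}))$ is continuous multilinear from $\cA^{m+1}$ into the Banach space $C^{0}_{b}(0,1]$. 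The main obstacle is the passage from separate to joint continuity in the multilinear Banach--Steinhaus step; for the algebras of interest in this paper --- Fr\'echet or $LF$-algebras such as $C^{\infty}(M)\rtimes G$ --- this is standard and reduces to iterating the usual linear Banach--Steinhaus variable by variable.
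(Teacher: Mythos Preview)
Your proposal is correct and follows essentially the same route as the paper: continuity of $\mathbf{a}\mapsto P_{\alpha,\beta}(\mathbf{a})$ into $\cD^{m}_{D}(\cA)$ via uniform hypo-regularity, continuity of $\Str[\,\cdot\,e^{-tD^{2}}]$ on $\cD^{m}_{D}(\cA)$ for each fixed $t$, then an induction on the enumeration of $\Sigma$ using Banach--Steinhaus to obtain the continuity of the coefficients, and finally a second Banach--Steinhaus application to get the equicontinuity of the remainder family. The only cosmetic difference is that for condition~(ii) the paper invokes Banach--Steinhaus through the existence of the pointwise limit $\lim_{t\to 0^{+}}R_{N}(\Phi(\mathbf{a}))(t)$, whereas you invoke it through pointwise boundedness of the family over $(0,1]$; both are equivalent here, and your closing caveat on the multilinear version of Banach--Steinhaus is a point the paper passes over in silence.
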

\begin{proof}
    It is immediate that (1) implies (2), so we only need to prove the converse. Assume that $(\cA,\cH,D)$ is 
    uniformly hypo-regular and has the asymptotic expansion property relatively to $\Sigma$. Let $m\in \N_{0}$ and 
    $(\alpha,\beta)\in \cP(m)$. By Remark~\ref{rem:Str.P|D|^-z-m.holo}, the $(m+1)$-linear map 
    $(a^{0},\ldots,a^{m})\rightarrow P_{\alpha,\beta}(a^{0},\ldots,a^{m})$ is continuous from $\cA^{m+1}$ to 
    $\cD^{m}_{D}(\cA)$. Let $t>0$. As the estimate~(\ref{eq:X.heat.operator.estimate}) implies that the linear form $X\rightarrow 
    \Str\left[Xe^{-tD^{2}}\right]$ is continuous on $\cD^{m}_{D}(\cA)$, we deduce that the $(m+1)$-linear form $(a^{0},\ldots,a^{m})\rightarrow  
    \Str\left[P_{\alpha, \beta}(a^{0},\ldots,a^{m})e^{-tD^{2}}\right]$ is continuous on $\cA^{m+1}$. 
    
    Bearing this in mind,  let us enumerate $\Sigma$ 
    as a decreasing sequence $\sigma_{0}>\sigma_{1}>\cdots$. Then by~(\ref{eq:AE.coefficient.FP}) we have 
    \begin{equation*}
        a^{(m)}_{\sigma_{0}}\left(P_{\alpha, \beta}(a^{0},\ldots,a^{m})\right) = \lim_{t\rightarrow 
        0^{+}}t^{\frac{1}{2}(\sigma_{0}+m)} \Str\left[P_{\alpha, \beta}(a^{0},\ldots,a^{m})e^{-tD^{2}}\right] \qquad 
        \forall a^{j}\in \cA.
    \end{equation*}
    Therefore, we see that $a^{(m)}_{\sigma_{0}}\left(P_{\alpha, \beta}(a^{0},\ldots,a^{m})\right)$ is the pointwise limit of 
    continuous $(m+1)$-linear forms on $\cA$. As the topology of $\cA$ is barrelled, the Banach-Steinhaus theorem 
    holds, and hence ensures us that $(m+1)$-linear form $(a^{0},\ldots,a^{m})\rightarrow 
   a^{(m)}_{\sigma_{0}}\left(P_{\alpha,\beta}(a^{0},\ldots,a^{m})\right)$ is continuous on $\cA^{m+1}$. More generally, 
   using~(\ref{eq:AE.coefficient.lim.diff}), an induction on $j$ and repeated use of the Banach-Steinhaus theorem show that, for all 
   $j=0,1,2,\ldots$, the $(m+1)$-linear form $(a^{0},\ldots,a^{m})\rightarrow 
   a^{(m)}_{\sigma_{j}}\left(P_{\alpha,\beta}(a^{0},\ldots,a^{m})\right)$ is continuous on $\cA^{m+1}$. That is, 
   condition~(i) of Definition~\ref{def:UAE.of.ST} is satisfied. 
   
   Bearing this in mind, let $N\in \N_{0}$. An examination of~(\ref{eq:Heat-CM.RNXt}) shows that, for all $t\in (0,1]$, the 
   $(m+1)$-linear form $(a^{0},\ldots,a^{m})\rightarrow 
   R_{N}\left(P_{\alpha,\beta}(a^{0},\ldots,a^{m})\right)(t)$ is continuous on $\cA^{m+1}$. Moreover, the asymptotic 
   expansion property implies that $\lim_{t\rightarrow 0^{+}}R_{N}\left(P_{\alpha,\beta}(a^{0},\ldots,a^{m})\right)(t)$ 
   exists for all $a^{j}\in \cA$. The Banach-Steinhaus theorem then implies that we obtain an equicontinuous 
   family of $(m+1)$-linear forms on $\cA$ parametrized by $t\in (0,1]$. That is, the estimate~(\ref{eq:Str.P.heat.operator.AE.meaning}) holds. 
   Therefore, using Remark~\ref{rem:Str.P.heat.operator.AE.meaning} we see that condition (ii) of Definition~\ref{def:UAE.of.ST} holds as well. This shows that $(\cA,\cH,D)$ has 
   the uniform asymptotic expansion property. The proof is complete. 
\end{proof}

The following lemma provides us with a relationship between uniform asymptotic property and uniform dimension spectrum.  
\begin{lemma}
\label{lem:UAE=>simple.discreteUDS}
    If $(\cA,\cH,D)$ has the uniform asymptotic expansion property, then it has a simple and discrete uniform dimension spectrum. 
\end{lemma}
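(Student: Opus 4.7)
The plan is to revisit the proof of Lemma~\ref{lemma:Residue.formula.relation.heat.operator}, extract the explicit meromorphic representation of $\Str\bigl[X|D|^{-(z+m)}\bigr]$ it provides when $X=P_{\alpha,\beta}(a^0,\ldots,a^m)$, and verify that every term in that representation depends continuously on $(a^0,\ldots,a^m)\in\cA^{m+1}$ in the Fréchet topology of holomorphic functions on the appropriate open subset of $\C$.

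Fix $m\in\N_0$ and $(\alpha,\beta)\in\cP(m)$, and for each $N\in\N$ set $\Sigma_N^{(m)}=\{\sigma\in\Sigma \mid \sigma+m>2N\}$ as in~(\ref{eq:Heat-CM.RNXt}). Revisiting the proof of Lemma~\ref{lemma:Residue.formula.relation.heat.operator}, we obtain for $X\in\cD_D^m(\cA)$ the meromorphic identity
\begin{equation*}
    \Gamma\bigl(\tfrac{z+m}{2}\bigr)\Str\bigl[X|D|^{-(z+m)}\bigr]= \sum_{\sigma\in\Sigma_N^{(m)}}\frac{2\,a^{(m)}_{\sigma}(X)}{z-\sigma}-\frac{2\Str[X\Pi_{0}]}{z+m}+H_N(X)(z)+F_N(X)(z)
\end{equation*}
on the half-plane $\{\Re z>-(2N+m)\}$, where $H_N(X)(z)=\int_0^1 t^{(z+m)/2+N-1}R_N(X)(t)\,dt$ and $F_N(X)(z)=\int_1^\infty t^{(z+m)/2-1}\Str\bigl[X(1-\Pi_0)e^{-tD^2}\bigr]\,dt$ are both holomorphic in $z$ on their respective domains.

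Setting $X=P_{\alpha,\beta}(a^0,\ldots,a^m)$, one then argues that each term on the right-hand side depends continuously on $(a^0,\ldots,a^m)\in\cA^{m+1}$ as a holomorphic function of $z$ on any compact subset of $\{\Re z>-(2N+m)\}\setminus\Sigma$. Continuity of the pole sum follows from condition~(i) of Definition~\ref{def:UAE.of.ST} in the form of~(\ref{eq:a^m_sigmaX.estimate}). Continuity of the $\Pi_0$ term is a consequence of the estimate~(\ref{eq:X.Pi_0.inequality}) combined with the continuity of $\cA^{m+1}\to\cD_D^m(\cA)$, $(a^0,\ldots,a^m)\mapsto X$, provided by uniform hypo-regularity (Remark~\ref{rem:Str.P|D|^-z-m.holo}). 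For $H_N(X)(z)$, the uniform bound~(\ref{eq:Str.P.heat.operator.AE.meaning}) from condition~(ii) of Definition~\ref{def:UAE.of.ST}, together with the pointwise estimate $|t^{(z+m)/2+N-1}|\leq t^{(\Re z+m)/2+N-1}$, yields a continuous control on each compact subset of $\{\Re z>-(2N+m)\}$. For $F_N(X)(z)$, the exponential decay estimate~(\ref{eq:X.heat.operator.estimate.with.projection.kernel}) combined with uniform hypo-regularity delivers an entire-function continuous control.

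Dividing both sides of the identity by $\Gamma\bigl(\frac{z+m}{2}\bigr)$, a nowhere vanishing holomorphic function on $\C\setminus(-m-2\N_0)$ (and whose simple zeros at $-m-2\N_0$ cancel the corresponding potential poles of the right-hand side whenever these points lie outside $\Sigma$), preserves the continuous dependence on $(a^0,\ldots,a^m)$ as a map into $\Hol(U\setminus\Sigma)$ for the open set $U=\{\Re z>-(2N+m)\}$. Letting $N\to\infty$ and patching these continuity statements yields condition~(i) of Definition~\ref{def:CM.uniform-dimension-spectrum}. For condition~(ii), any $\sigma\in\Sigma$ can be isolated by an open neighborhood $\Omega$ meeting $\Sigma$ only at $\sigma$; then multiplication by $(z-\sigma)$ cancels the single pole coming from $\frac{2\,a^{(m)}_\sigma(X)}{z-\sigma}$, and the preceding continuity arguments apply verbatim to each of the remaining summands, giving continuity into $\Hol(\Omega)$. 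The principal technical point is precisely this promotion of pointwise-in-$z$ and pointwise-in-$t$ estimates to locally uniform estimates compatible with the Fréchet topology of $\Hol$, which is routine bookkeeping once the uniform bounds furnished by Definition~\ref{def:UAE.of.ST} and Lemma~\ref{lem:estimation.involving.heat.operator} are combined, but care is required to match the order $N$ of the Taylor remainder to the chosen compact set $K\subset\C\setminus\Sigma$.
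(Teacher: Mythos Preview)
Your proposal is correct and follows essentially the same approach as the paper: both extract the explicit meromorphic decomposition of $\Gamma(\hat z)\Str\bigl[X|D|^{-(z+m)}\bigr]$ from the proof of Lemma~\ref{lemma:Residue.formula.relation.heat.operator} and verify termwise continuity in $(a^0,\ldots,a^m)$ using precisely the same ingredients (condition~(i) for the pole sum, condition~(ii) for $H_N$, and the estimates of Lemma~\ref{lem:estimation.involving.heat.operator} together with uniform hypo-regularity for $F$). You are in fact slightly more explicit than the paper in treating the $\Pi_0$ term and the division by $\Gamma$, and your $-\tfrac{2}{z+m}$ for the $\Pi_0$-pole is the correct sign (the paper's displayed $-\tfrac{2}{z-m}$ is a typo, as is clear from its own residue computation at $z=-m$).
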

\begin{proof}
  We know by Lemma~\ref{lemma:Residue.formula.relation.heat.operator} that $(\cA,\cH,D)$ has a discrete and simple dimension spectrum contained in $\Sigma$. Therefore, we 
  only need to show that the dimension spectrum is uniform. Given 
  $m\in \N_{0}$, the proof of Lemma~\ref{lemma:Residue.formula.relation.heat.operator} shows that, given any $X\in \cD^{m}_{D}(\cA)$ and $N\in \N$, 
  for  $\Re z>p$ we have
   \begin{equation}
   \label{eq:Str.X.complex.power.D.sum.four}
  \Gamma(\hat{z})\Str  \left[X|D|^{-(z+m)}\right] =   \sum_{\sigma \in  \Sigma_{N}^{(m)}} \frac{2}{z-\sigma}a_{\sigma}(X) - \frac{2}{z-m}\Str\left[X\Pi_{0}\right] + 
        H_{N}(X)(z) +F(X)(z), 
    \end{equation}
   where $F_{X}(z)$ and $H_{N}{(X)}(z)$ are given by
   \begin{equation*}
      F(X)(z)= \int_{1}^{\infty}t^{\hat{z}-1}\Str\left[X(1-\Pi_{0})e^{-tD^{2}}\right]dt \quad \text{and} \quad 
       H_{N}(X)(z) =\int_{0}^{1}t^{\hat{z}+N} R_N(X)(z)dt.
   \end{equation*}
Recall also that $F(X)(z)$ is en entire function and the function $H_{N}(X)(z)$ is holomorphic on the half-plane $\Re z>-(2N+m)$.

Let $\alpha\in \N_{0}^{m+2}$, $|\alpha|=m$, and $\beta \in\{0,1\}^{m}$. By assumption, for all $\sigma \in \Sigma$, the $(m+1)$-linear map
  $(a^{0},\ldots,a^{m})\rightarrow a_{\sigma}^{(m)}\!\left(P_{\alpha,\beta}(a^{0},\ldots,a^{m})\right)$ is continuous on $\cA^{m+1}$.  
Moreover, it follows from the seminorm estimate~(\ref{eq:Str.P.heat.operator.AE.meaning}) that $H_{N}\left(P_{\alpha,\beta}(a^{0},\ldots,a^{m})\right)(z)$ satisfies a uniform
estimate of the form~(\ref{eq:Str.P.heat.operator.AE.meaning}) on any closed halfspace $\Re z\geq -(2N+m)+\epsilon$, $\epsilon>0$. In addition, as 
pointed out in Remark~\ref{rem:Str.P|D|^-z-m.holo}, the $(m+1)$-linear map $(a^{0},\ldots,a^{m})\rightarrow P_{\alpha,\beta}(a^{0},\ldots,a^{m})$ is continuous from 
$\cA^{m+1}$ to $\cD^{m}_{D}(\cA)$. Combining this with the estimate~(\ref{eq:X.heat.operator.estimate.with.projection.kernel}) we deduce that 
$F\left(P_{\alpha,\beta}(a^{0},\ldots,a^{m})\right)(z)$ satisfies a uniform
estimate of the form~(\ref{eq:Str.P.heat.operator.AE.meaning}) on any closed vertical stripe $c_{1}\leq \Re z \leq c_{2}$, $c_{j}\in \R$. 
Combining these  observations with~(\ref{eq:Str.X.complex.power.D.sum.four}) we then deduce that the conditions (i)--(ii) of 
Definition~\ref{def:CM.uniform-dimension-spectrum} are satisfied on any halfspace $\Re z\geq -(2N+m)$, $N\in \N$. 
This shows that $(\cA,\cH,D)$ has a simple and discrete uniform dimension spectrum. The proof is complete. 
\end{proof}

Combining Lemmas~\ref{lem:UAE.rel.AE} and~\ref{lem:UAE=>simple.discreteUDS} with Proposition~\ref{prop:CM.cocycle.HP.conditions} we then arrive at the final  result of this section. 

\begin{proposition}[\cite{Po:SmoothCM}]
\label{prop:UAE=>CMcocyle.rep.CCC}
    Assume that $(\cA,\cH,D)$ is smooth, $p^{+}$-summable, uniformly regular, and one the following conditions holds:
    \begin{enumerate}
        \item[(i)]  $(\cA,\cH,D)$ has the uniform asymptotic expansion property.
    
        \item[(ii)] The topology of $\cA$ is barelled and $(\cA,\cH,D)$ has the asymptotic expansion property.
    \end{enumerate}
Then the CM cocycle represents the Connes-Chern character $\bCh(D)$ in $\op{\mathbf{HP}}^{0}(\cA)$ and its components are computed by the formulas~(\ref{eq:CM.cocycle.heat.operator.0})--(\ref{eq:CM.cocycle.heat.operator.>0}).    
\end{proposition}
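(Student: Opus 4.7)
The plan is to reduce both conditions (i) and (ii) to the hypotheses of Proposition~\ref{prop:CM.cocycle.HP.conditions}, namely uniform regularity together with a simple and discrete uniform dimension spectrum. Once this reduction is achieved, Proposition~\ref{prop:CM.cocycle.HP.conditions} directly yields that the CM cocycle consists of continuous cochains whose class in $\bHP^{0}(\cA)$ coincides with $\bCh(D)$. The explicit heat-trace formulas~(\ref{eq:CM.cocycle.heat.operator.0})--(\ref{eq:CM.cocycle.heat.operator.>0}) will then follow from Proposition~\ref{prop:CM.heat.trace.estimate}, whose hypotheses (the asymptotic expansion property and regularity) are satisfied in either case.

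Concretely, under condition (ii) the first step is to invoke Lemma~\ref{lem:UAE.rel.AE}: because the topology of $\cA$ is barelled, the asymptotic expansion property automatically upgrades to the uniform asymptotic expansion property. Thus (ii) reduces to (i) and the two cases collapse into one. Under condition (i), I would then apply Lemma~\ref{lem:UAE=>simple.discreteUDS} to conclude that $(\cA,\cH,D)$ has a simple and discrete uniform dimension spectrum in the sense of Definition~\ref{def:CM.uniform-dimension-spectrum}. Combined with the given smoothness, $p^{+}$-summability, and uniform regularity, this places the spectral triple exactly in the setup of Proposition~\ref{prop:CM.cocycle.HP.conditions}, which yields the continuity of the components $\varphi_{2q}^{\CM}$ and the identification of their class with the Connes-Chern character in $\bHP^{0}(\cA)$.

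For the heat-trace formulas, I would observe that in either case $(\cA,\cH,D)$ satisfies the asymptotic expansion property, so Proposition~\ref{prop:CM.heat.trace.estimate} applies. Comparing its formulas~(\ref{eq:CM.cocycle.heat.operator.0})--(\ref{eq:CM.cocycle.heat.operator.>0}) with the residue formulas~(\ref{eq:CM.cocycle.residue.0})--(\ref{eq:CM.cocycle.residue.>0}) reduces to the identities recorded in~(\ref{eq:Residue.formula.relation.heat.operator}), which translate each residue $\bint \gamma T_{q,\alpha}|D|^{-2(|\alpha|+q)}$ into the finite part $\FP\{t^{|\alpha|+q}\Str[T_{q,\alpha}e^{-tD^{2}}]\}$ up to the prefactor $\Gamma(|\alpha|+q)^{-1}$. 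The only subtle point is the $q=0$ component, where the kernel correction $\Str[a^{0}\Pi_{0}]$ in~(\ref{eq:CM.cocycle.residue.0}) is absorbed into $\FP\Str[a^{0}e^{-tD^{2}}]$ thanks to the second case of~(\ref{eq:Residue.formula.relation.heat.operator}).

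The main obstacle is not the present proposition itself, which is essentially a bookkeeping exercise once the preparatory results are in place, but rather the input Lemma~\ref{lem:UAE.rel.AE}. There the passage from a pointwise asymptotic expansion to a uniform one in the sense of Definition~\ref{def:UAE.of.ST} genuinely uses the barelled-ness of $\cA$ through a repeated application of the Banach-Steinhaus theorem: first to the leading coefficient $a^{(m)}_{\sigma_{0}}$ obtained via the limit formula~(\ref{eq:AE.coefficient.FP}), then by induction on $j$ to the subsequent coefficients $a^{(m)}_{\sigma_{j}}$ using~(\ref{eq:AE.coefficient.lim.diff}), and finally to the family of remainder functions $R_{N}(\cdot)(t)$ parametrized by $t\in(0,1]$ to produce the uniform estimate~(\ref{eq:Str.P.heat.operator.AE.meaning}). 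Assuming those ingredients, the present statement is a direct synthesis of Proposition~\ref{prop:CM.cocycle.HP.conditions} and Proposition~\ref{prop:CM.heat.trace.estimate}.
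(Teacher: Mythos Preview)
Your proposal is correct and follows exactly the paper's approach: the paper's justification is the single sentence ``Combining Lemmas~\ref{lem:UAE.rel.AE} and~\ref{lem:UAE=>simple.discreteUDS} with Proposition~\ref{prop:CM.cocycle.HP.conditions} we then arrive at the final result of this section,'' which is precisely the reduction you describe. Your additional remarks on Proposition~\ref{prop:CM.heat.trace.estimate} and the $q=0$ kernel correction are accurate elaborations of details the paper leaves implicit.
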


\section{The Connes-Chern Character of an Equivariant Dirac Spectral Triple}\label{sec:Connes-Chern-conformal}
The aim of this section is to compute the Connes-Chern character of an equivariant Dirac spectral triple by means of its representation by the CM cocycle. By Connes-Chern character we shall 
mean its version as a class in the periodic cyclic cohomology of \emph{continuous} cochains. 

Throughout this section we let $(M^{n}, g)$ be an even dimensional compact spin oriented Riemannian manifold. We denote by 
$\sD_{g}$ its Dirac operator acting on the spinor bundle $\sS=\sS^{+}\oplus \sS^{-}$. In 
addition, we let $G$ be a subgroup of the connected component of the group of orientation-preserving smooth isometries preserving the spin structure. For $\phi 
\in G$ we denote by $U_{\phi}$ the unitary operator of $L^{2}_{g}(M,\sS)$ defined by~(\ref{eq:unitary.operator.on.L^2}) using the  unique lift of $\phi$ 
to a unitary vector bundle isomorphism $\phi^{\sS}:\sS\rightarrow \phi_{*}\sS$.  The map $\phi\rightarrow U_{\phi}$ 
then provides us with a unitary representation of $G$ in the Hilbert space $L^{2}_{g}(M,\sS)$. 

Equipping $G$ with its discrete topology,  the crossed-product algebra $C^{\infty}(M)\rtimes G$ is the space 
$C^{\infty}_{c}(M\times G)$ with the product and involution given by
\begin{equation*}
    F_{1}*F_{2}(x,\phi)=\sum_{\phi_{1}\circ \phi_{2}=\phi}F_{1}(x,\phi_{1})F_{2}(\phi^{-1}_{1}(x),\phi_{2}),  \qquad 
  F^{*}(x,\phi)=\overline{F(x,\phi^{-1})}.
\end{equation*}
Alternatively, if we denote by $u_{\phi}$ the characteristic function of $M\times\{\phi\}$. Then $u_{\phi}\in 
C^{\infty}_{c}(M\times G)$ and any $F\in C^{\infty}_{c}(M\times G)$ is uniquely written as a finitely supported sum,
\begin{equation*}
    F=\sum_{\phi \in G}f_{\phi}u_{\phi},
\end{equation*}
where $f_{\phi}(x):=F(x,\phi)\in C^{\infty}(M)$. Moreover, we have the relations, 
\begin{gather}
    u_{\phi_{1}}u_{\phi_{2}}=u_{\phi_{1}\circ \phi_{2}}, \qquad u_{\phi_{j}}^{*}=u_{\phi_{j}^{-1}}=u_{\phi_{j}}^{-1}, 
    \qquad \phi_{j}\in G, 
    \label{eq:group.function.properties1}\\
    u_{\phi}f=(f\circ \phi^{-1})u_{\phi}, \qquad f\in C^{\infty}(M), \ \phi \in G.
  \label{eq:group.function.properties2}   
\end{gather}
In addition we shall endow $C^{\infty}(M)\rtimes G$ with its standard  locally convex $*$-algebra topology. As 
mentioned in Part~I, this topology is obtained as the inductive limit of the topologies of the Fr\'echet spaces 
$C^{\infty}(M)\rtimes F$, where $F$ ranges over finite subsets of $G$. In particular, the topology of  
$C^{\infty}(M)\rtimes G$ is barelled. Moreover, given any topological vector space 
$X$,  a linear map $\Phi: C^{\infty}(M)\rtimes G\rightarrow X$ is continuous if and only if, for all $\phi \in G$, the map  $f\rightarrow \Phi(fu_{\phi})$ is a 
continuous linear map from $C^{\infty}(M)$ to $X$. 

We also observe that the relations~(\ref{eq:group.function.properties1})--(\ref{eq:group.function.properties2}) are satisfied by the operators 
$U_{\phi}$, $\phi\in G$, and the functions $f\in C^{\infty}(M)$ are
represented as multiplication operators on $L^{2}_{g}(M,\sS)$. Therefore, we have a natural representation 
$fu_{\phi}\rightarrow fU_{\phi}$ of the  crossed-product algebra $C^{\infty}(M)\rtimes G$ as bounded operators on 
$L^{2}_{g}(M,\sS)$.

\begin{proposition}
\label{prop:equivariant.ST.smooth.n+.summable}
    $(C^{\infty}(M)\rtimes G, L^{2}_{g}(M,\sS), \sD_{g})$ is an $n^{+}$-summable smooth spectral triple. 
\end{proposition}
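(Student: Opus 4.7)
The plan is to verify the three defining properties in turn: spectral triple, smoothness, and $n^{+}$-summability. The key observation making all of this easy is that $G$ acts by isometries preserving the spin structure, so each $\phi\in G$ commutes with $\sD_{g}$ in the sense that $U_{\phi}\sD_{g}=\sD_{g}U_{\phi}$ on $\dom(\sD_{g})$. Consequently, all the compatibility checks reduce to the corresponding checks for multiplication operators by smooth functions.

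First I would verify the spectral triple axioms. The operator $\sD_{g}$ is a selfadjoint elliptic first-order differential operator on the compact manifold $M$, odd with respect to the splitting $\sS=\sS^{+}\oplus\sS^{-}$, and $(\sD_{g}+i)^{-1}$ is compact by ellipticity on a compact manifold. Given a generator $a=fu_{\phi}$ with $f\in C^{\infty}(M)$ and $\phi\in G$, the operator $fU_{\phi}$ clearly preserves the Sobolev space $\dom(\sD_{g})$ since both $U_{\phi}$ (coming from a smooth diffeomorphism and a smooth bundle automorphism) and multiplication by $f$ preserve it. Using the commutation $[\sD_{g},U_{\phi}]=0$, a direct computation gives
\begin{equation*}
    [\sD_{g},fU_{\phi}]=[\sD_{g},f]U_{\phi}=c(df)U_{\phi},
\end{equation*}
which is a bounded operator. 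By linearity this extends to any finite sum in $C^{\infty}(M)\rtimes G$, so the compatibility conditions hold. Preservation of the $\Z_{2}$-grading by the representation is immediate, since multiplication by $f$ preserves the grading and $U_{\phi}$ does so because $\phi^{\sS}$ is even (the lift of $\phi$ to the spin bundle respects $\sS^{\pm}$ as $\phi$ is orientation-preserving).

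Second I would check smoothness. The crossed-product algebra $C^{\infty}(M)\rtimes G$ carries the inductive-limit topology built from the Fr\'echet spaces $C^{\infty}(M)\rtimes F$, as $F$ ranges over finite subsets of $G$. By the universal property of inductive limits, it suffices to verify that, for each fixed $\phi\in G$, the two linear maps $f\mapsto fU_{\phi}$ and $f\mapsto [\sD_{g},fU_{\phi}]=c(df)U_{\phi}$ are continuous from $C^{\infty}(M)$ to $\cL(L^{2}_{g}(M,\sS))$. These follow from the elementary operator-norm estimates $\|fU_{\phi}\|\leq\|f\|_{\infty}$ and $\|c(df)U_{\phi}\|\leq\|df\|_{\infty}$, both controlled by standard continuous seminorms on the Fr\'echet space $C^{\infty}(M)$.

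Finally, for $n^{+}$-summability I would invoke the Weyl asymptotics for the Dirac Laplacian $\sD_{g}^{2}$, a classical positive second-order elliptic operator on the rank-$2^{n/2}$ bundle $\sS$ over a compact $n$-manifold. The counting function $N(\lambda)=\#\{j:\mu_{j}(|\sD_{g}|)\leq\lambda\}$ satisfies $N(\lambda)=\op{O}(\lambda^{n})$ as $\lambda\to\infty$, which implies $\mu_{j}(|\sD_{g}|)\geq c\,j^{1/n}$ and hence $\mu_{j}(|\sD_{g}|^{-1})=\op{O}(j^{-1/n})$ (with $|\sD_{g}|^{-1}$ understood as $0$ on the finite-dimensional kernel of $\sD_{g}$). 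This is precisely the $n^{+}$-summability condition. No part of the argument presents a real obstacle; the only point that deserves care is the initial verification that each $U_{\phi}$ commutes with $\sD_{g}$, which relies on the standing assumption that $G$ preserves both the metric and the spin structure.
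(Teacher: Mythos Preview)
Your proof is correct and follows essentially the same approach as the paper: both reduce to the non-equivariant Dirac spectral triple by exploiting $[\sD_{g},U_{\phi}]=0$, use the identity $[\sD_{g},fU_{\phi}]=c(df)U_{\phi}$, and check smoothness via the continuity of $f\mapsto fU_{\phi}$ and $f\mapsto c(df)U_{\phi}$ from $C^{\infty}(M)$ to $\cL(L^{2}_{g}(M,\sS))$. The only difference is that the paper simply cites the $n^{+}$-summability of the ordinary Dirac spectral triple as known, whereas you spell out the Weyl asymptotics; this is just a matter of level of detail.
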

\begin{proof}
 We know that $(C^{\infty}(M), L^{2}_{g}(M,\sS), \sD_{g})$ is an $n^{+}$-summable 
spectral triple. As the Dirac operator $\sD_{g}$ commutes with the unitary operators $U_{\phi}$, $\phi\in G$, we see that 
$(C^{\infty}(M)\rtimes G, L^{2}_{g}(M,\sS), \sD_{g})$ is a spectral triple as well. Obviously, this spectral triple is 
$n^{+}$-summable. 

In order to show that the spectral triple $(C^{\infty}(M)\rtimes G, L^{2}_{g}(M,\sS), \sD_{g})$ is smooth we only need to show 
that, given any $\phi\in G$, the linear maps $f\rightarrow fU_{\phi}$ and $f\rightarrow 
[\sD_{g},fU_{\phi}]$ are continuous from $C^{\infty}(M)$ to $\cL\left(L^{2}_{g}(M,\sS)\right)$. The continuity of the 
former map is immediate and that of the latter is a consequence of the identities $ 
[\sD_{g},fU_{\phi}]=[\sD_{g},f]U_{\phi}=c(df)U_{\phi}$. 
The proof is complete. 
\end{proof}

As $(C^{\infty}(M)\rtimes G, L^{2}_{g}(M,\sS), \sD_{g})$ is an $n^{+}$-summable smooth spectral triple it has a well 
defined Chern-Connes character $\bCh(\sD_{g})$ in $\op{\mathbf{HP}}^{0}(C^{\infty}(M)\rtimes G)$. The first step is showing that this Connes-Chern character is represented in  $\op{\mathbf{HP}}^{0}(C^{\infty}(M)\rtimes G)$ by the CM cocycle.
%, which 
%was not carried out in~\cite{Az:RMJM, CH:ECCIDO}, i  

In what follows, as in Section~\ref{sec:spectral-triples}, for $m\in \N_{0}$,  we let  $\cD^{m}(M,\sS)$ be the Fr\'echet space of $m$-th order differential 
operators on $M$ acting on the sections of $\sS$. %where the topology is defined as in Section~\ref{sec:spectral-triples}. 
We then have the following result.

\begin{lemma}
\label{lem:continuity}
    Let $m\in \N_{0}$ and $\phi\in G$. Then the linear map $P\rightarrow P\sD_{g}^{-m}U_{\phi}$ from 
    $\cD^{m}(M,\sS)$ to $\cL(L_g^{2}(M,\sS))$ is continuous. 
\end{lemma}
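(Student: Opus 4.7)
The plan is to reduce the statement to the closed graph theorem. First, since $U_{\phi}$ is unitary on $L^{2}_{g}(M,\sS)$, we have $\|P\sD_{g}^{-m}U_{\phi}\|=\|P\sD_{g}^{-m}\|$, so it is enough to show that the linear map $\Phi:P\mapsto P\sD_{g}^{-m}$ is continuous from $\cD^{m}(M,\sS)$ to $\cL(L^{2}_{g}(M,\sS))$. (Here and below, because $\sD_{g}$ may have a non-trivial finite-dimensional kernel, we tacitly replace $\sD_{g}^{-m}$ by $\sD_{g}^{-m}(1-\Pi_{0})+\Pi_{0}$, with $\Pi_{0}$ the orthogonal projection onto $\ker\sD_{g}$, exactly as in the passage to the invertible double used in Section~\ref{sec:spectral-triples}; none of the arguments below are affected by this choice.) That $\Phi$ is well defined follows from the standard pseudodifferential calculus on $M$: for $P\in\cD^{m}(M,\sS)$ the product $P\sD_{g}^{-m}$ is a classical $\psi$DO of order~$0$ on $M$, hence extends to a bounded operator on $L^{2}_{g}(M,\sS)$.

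Next I would observe that both $\cD^{m}(M,\sS)$ (in its standard Fr\'echet topology, given by the sup-norms of coefficients and their derivatives in a finite atlas trivializing $\sS$) and $\cL(L^{2}_{g}(M,\sS))$ (a Banach, hence Fr\'echet, space) satisfy the hypotheses of the closed graph theorem. To verify that the graph of $\Phi$ is closed, suppose that $P_{k}\to P$ in $\cD^{m}(M,\sS)$ and $\Phi(P_{k})\to T$ in $\cL(L^{2}_{g}(M,\sS))$. The convergence in $\cD^{m}(M,\sS)$ entails convergence of all coefficients of the $P_{k}$ to those of $P$ in the $C^{\infty}$-topology, and therefore $P_{k}v\to Pv$ in $L^{2}_{g}(M,\sS)$ for every $v\in C^{\infty}(M,\sS)$. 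By elliptic regularity, $\sD_{g}^{-m}u\in C^{\infty}(M,\sS)$ whenever $u\in C^{\infty}(M,\sS)$, and so $P_{k}\sD_{g}^{-m}u\to P\sD_{g}^{-m}u$ in $L^{2}_{g}(M,\sS)$. On the other hand, operator norm convergence gives $P_{k}\sD_{g}^{-m}u\to Tu$ in $L^{2}_{g}(M,\sS)$. Hence $Tu=P\sD_{g}^{-m}u$ on the dense subspace $C^{\infty}(M,\sS)$, and therefore $T=P\sD_{g}^{-m}$ as bounded operators, which closes the graph.

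There is no serious obstacle here; the only mildly delicate points are the interpretation of $\sD_{g}^{-m}$ when $\sD_{g}$ has non-trivial kernel, and matching the Fr\'echet structure on $\cD^{m}(M,\sS)$ with the closed graph hypothesis, both of which are routine. If one prefers a quantitative proof over an abstract one, one may instead fix a finite atlas trivializing $\sS$, write $P$ locally as $\sum_{|\alpha|\le m}a_{\alpha}(x)D^{\alpha}$, and combine $L^{2}$-boundedness of zeroth-order $\psi$DOs with the elliptic estimate $\|D^{\alpha}\sD_{g}^{-m}u\|_{L^{2}}\le C\|u\|_{L^{2}}$ for $|\alpha|\le m$ to obtain a bound of the form $\|P\sD_{g}^{-m}\|\le C\sum_{|\alpha|\le m}\|a_{\alpha}\|_{\infty}$, whose right-hand side is a continuous seminorm on $\cD^{m}(M,\sS)$.
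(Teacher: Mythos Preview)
Your proof is correct, but it takes a different route from the paper's. The paper factors the map through the Fr\'echet space $\Psi^{0}(M,\sS)$ of zeroth-order classical pseudodifferential operators: it cites the standard facts that the product map $P\mapsto P\sD_{g}^{-m}$ is continuous from $\cD^{m}(M,\sS)$ to $\Psi^{0}(M,\sS)$ and that the inclusion $\Psi^{0}(M,\sS)\hookrightarrow\cL(L^{2}_{g}(M,\sS))$ is continuous, then composes with right multiplication by $U_{\phi}$. Your argument instead bypasses the $\Psi^{0}$ topology entirely by invoking the closed graph theorem and checking closure on the dense subspace $C^{\infty}(M,\sS)$. The paper's approach is shorter and reuses well-known continuity statements from the pseudodifferential calculus (which it needs elsewhere anyway), whereas your approach is more self-contained and does not require the reader to know anything about the Fr\'echet structure on $\Psi^{0}(M,\sS)$; your alternative quantitative estimate at the end is in spirit the ``unpacked'' version of the paper's argument. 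Both are perfectly valid; the only point worth tightening in your write-up is the treatment of $\ker\sD_{g}$, which you flag but should state once and for all rather than leave parenthetical.
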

\begin{proof}
    Let $\Psi^{0}(M,\sS)$ be the space of zero-th order \psidos\ on $M$ acting on the sections of $\sS$. We equip it 
    with its standard Fr\'echet space topology (see, e.g., \cite[Appendix~A]{LMP:SZEFEO} for a description of this topology). 
    We note that with respect to this topology the following properties are satisfied:
    \begin{itemize}
        \item  The inclusion of $\Psi^{0}(M,\sS)$ into $\cL(L_g^{2}(M,\sS))$ is continuous.
    
        \item  For all $m\in \N_{0}$, the linear map $P\rightarrow P\sD_g^{-m}$ from $\cD^{m}(M,\sS)$ to 
        $\Psi^{0}(M,\sS)$ is continuous.
    \end{itemize}
   Using these two properties we deduce that, for all $m\in \N_{0}$ and $\phi \in G$, the linear map $P\rightarrow P\sD_g^{-m}U_{\phi}$ is continuous from 
    $\cD^{m}(M,\sS)$ to $\cL(L^{2}(M,\sS))$, proving the lemma. 
\end{proof}

\begin{lemma}
\label{lem:equiv.ST.UHR.UR}
    The spectral triple $(C^{\infty}(M)\rtimes G, L^{2}_{g}(M,\sS),\sD_{g})$ is uniformly hypo-regular and uniformly regular. 
\end{lemma}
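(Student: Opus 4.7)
The plan is to reduce both uniform hypo-regularity and uniform regularity to continuity statements on the generators $fu_{\phi}$ of $C^{\infty}(M)\rtimes G$, and then invoke Lemma~\ref{lem:continuity}. Since the topology on $C^{\infty}(M)\rtimes G$ is the inductive limit of the topologies on the subspaces $C^{\infty}(M)\rtimes F$ with $F\subset G$ finite, a linear map into a locally convex space is continuous if and only if, for each $\phi\in G$, its restriction $f\mapsto \Phi(fu_{\phi})$ is continuous from $C^{\infty}(M)$. This reduces everything to estimating, for each fixed $\phi$, the four families of operators obtained by conjugating or bracketing $fU_{\phi}$ and $[\sD_{g},fU_{\phi}]$.

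The essential structural input is that each $U_{\phi}$ commutes with $\sD_{g}$ (and hence with $\sD_{g}^{2}$), because $\phi$ is an isometry preserving the spin structure and $U_{\phi}$ is induced by the canonical lift $\phi^{\sS}$. Consequently $[\sD_{g},fU_{\phi}]=c(df)U_{\phi}$, the iterated bracket $(fU_{\phi})^{[m]}$ collapses to $f^{[m]}U_{\phi}$, and conjugation by $\sD_{g}^{m}$ commutes with right multiplication by $U_{\phi}$. This yields the four identities
\begin{equation*}
\sD_{g}^{m}(fU_{\phi})\sD_{g}^{-m}=(\sD_{g}^{m}f)\sD_{g}^{-m}U_{\phi},\qquad
\sD_{g}^{m}[\sD_{g},fU_{\phi}]\sD_{g}^{-m}=(\sD_{g}^{m}c(df))\sD_{g}^{-m}U_{\phi},
\end{equation*}
together with $(fU_{\phi})^{[m]}\sD_{g}^{-m}=f^{[m]}\sD_{g}^{-m}U_{\phi}$ and $[\sD_{g},fU_{\phi}]^{[m]}\sD_{g}^{-m}=c(df)^{[m]}\sD_{g}^{-m}U_{\phi}$. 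In every case the resulting operator has the form $P\sD_{g}^{-m}U_{\phi}$ for some differential operator $P$ on $\sS$, so Lemma~\ref{lem:continuity} applies provided I can verify that $f\mapsto P$ is continuous from $C^{\infty}(M)$ into $\cD^{m}(M,\sS)$.

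For the first two the operators $\sD_{g}^{m}f$ and $\sD_{g}^{m}c(df)$ are compositions of a fixed $m$-th order differential operator with a zero-th order multiplication/Clifford operator whose coefficients depend continuously on $f$ in the $C^{\infty}$ topology; Leibniz's rule makes the continuity from $C^{\infty}(M)$ to $\cD^{m}(M,\sS)$ immediate. For the iterated brackets I use that $\sD_{g}^{2}$ has \emph{scalar} principal symbol, so that $[\sD_{g}^{2},P]$ is a differential operator of order at most $\op{ord}P+1$ whose coefficients are polynomial combinations of finitely many derivatives of the coefficients of $P$ and of $\sD_{g}^{2}$. Starting from the order-zero operators $f$ and $c(df)$ and iterating, one concludes by induction that $f^{[m]}$ and $c(df)^{[m]}$ both lie in $\cD^{m}(M,\sS)$ and depend continuously on $f$.

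Applying Lemma~\ref{lem:continuity} in each of the four cases then yields continuity of the composite map from $C^{\infty}(M)$ to $\cL(L^{2}_{g}(M,\sS))$. The first two give uniform hypo-regularity, while the last two give uniform regularity; ordinary regularity follows by linearity, since any $X\in \cD_{\sD_{g}}^{0}(C^{\infty}(M)\rtimes G)$ is a sum of operators of the form $fU_{\phi}$ and $c(df)U_{\phi}$, and boundedness of $X^{[m]}\sD_{g}^{-m}$ follows from the boundedness established on generators. The argument contains no real obstacle; the only mildly delicate point is the inductive verification that the iterated commutators with $\sD_{g}^{2}$ remain of order $\leq m$, which rests on the scalar principal symbol of $\sD_{g}^{2}$.
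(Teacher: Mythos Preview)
Your proof is correct and follows essentially the same route as the paper: reduce to generators $fU_{\phi}$, use that $U_{\phi}$ commutes with $\sD_{g}$ to obtain the four identities, observe that the scalar principal symbol of $\sD_{g}^{2}$ forces the iterated commutators $f^{[m]}$ and $c(df)^{[m]}$ to have order $\leq m$, and then invoke Lemma~\ref{lem:continuity}. The only difference is that you spell out the inductive-limit reduction and the induction on the commutator order slightly more explicitly than the paper does.
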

\begin{proof}
 Let $m\in \N_{0}$ and $\phi \in G$. As the operators $U_{\phi}$ and $\sD_g$ commute with each other we see that, for any
 $f\in C^{\infty}(M)$, we have 
 \begin{equation}
 \label{eq:D_g.U_phi.equalities}
     \sD_g^{m}fU_{\phi}\sD_g^{-m}=(\sD_g^{m}f)\sD_g^{-m}U_{\phi} \quad \text{and} \quad 
     \sD_g^{m}[\sD_g,f]U_{\phi}\sD_g^{-m}=(\sD_g^{m}c(df))\sD_g^{-m}U_{\phi}.
 \end{equation}
 As the linear maps $f\rightarrow \sD_g^{m}f$ and $f\rightarrow \sD_g^{m}[\sD_g,f]$ are continuous from $C^{\infty}(M)$ to 
 $\cD^{m}(M,\sS)$, using Lemma~\ref{lem:continuity} we then deduce that, for all $m\in \N_{0}$ and $\phi\in G$, the linear maps 
 $f\rightarrow \sD_g^{m}f\sD_g^{-m}$ and $f\rightarrow \sD_g^{m}[\sD_g,f]\sD_g^{-m}$ are continuous from $C^{\infty}(M)$ to $\cL(L_g^{2}(M,\sS))$. 
 This proves that  the spectral triple $(C^{\infty}(M)\rtimes G, L^{2}_{g}(M,\sS),\sD_{g})$ is uniformly hypo-regular. 
 
 Similarly, given $m\in \N_{0}$ and $\phi \in G$, for all $f\in C^{\infty}(M)$, we have
 \begin{equation*}
    (fU_{\phi})^{[m]}\sD_g^{-m}=f^{[m]}\sD_g^{-m}U_{\phi} \quad \text{and} \quad   
    \left([\sD_g,f]U_{\phi}\right)^{[m]}\sD_g^{-m}=\left(c(df)\right)^{[m]}\sD_g^{-m}U_{\phi}. 
 \end{equation*}
 As the principal symbol of~\mbox{$\sD_g^{2}$} is scalar, we see that
 $f^{[m]}$ and $\left(c(df)\right)^{[m]}$ are $m$-th order differential operators. Incidentally, the linear maps 
 $f\rightarrow f^{[m]}$ and $f\rightarrow \left(c(df)\right)^{[m]}$  are continuous from $C^{\infty}(M)$ to 
 $\cD^{m}(M,\sS)$. Combining this with the equalities~(\ref{eq:D_g.U_phi.equalities}) and using Lemma~\ref{lem:continuity} we deduce that, for all $m\in \N_{0}$ and $\phi\in G$, the linear maps 
 $f\rightarrow (fU_{\phi})^{[m]}\sD_g^{-m}$ and $f\rightarrow  \left([\sD_g,f]U_{\phi}\right)^{[m]}\sD_g^{-m}$ are continuous from $C^{\infty}(M)$ to $\cL(L_g^{2}(M,\sS))$. 
 It then follows that  $(C^{\infty}(M)\rtimes G, L^{2}_{g}(M,\sS),\sD_{g})$ is uniformly regular. The proof is complete. 
\end{proof}

\begin{lemma}
\label{lem:equivariant.ST.UAE}
 Set  $\Sigma=\left\{\frac{1}{2}(n-\ell);\ \ell\in \N_{0}\right\}$. Then $(C^{\infty}(M)\rtimes G, 
 L^{2}_{g}(M,\sS),\sD_{g})$ has the asymptotic expansion property relatively to $\Sigma$. 
\end{lemma}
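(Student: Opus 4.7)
The plan is to reduce the heat trace $\Str[Xe^{-t\sD_g^2}]$ for $X\in \cD^m_{\sD_g}(C^\infty(M)\rtimes G)$ to an equivariant heat trace $\Str[Qe^{-t\sD_g^2}U_\phi]$ with $Q$ a differential operator on $M$ and $\phi\in G$, so that Proposition~\ref{TraceOfHeatKernelVB} supplies the asymptotic expansion directly. By linearity and the definition of $\cD^m_{\sD_g}(\cA)$, it is enough to handle the spanning operators $X = P_{\alpha,\beta}(a^0,\ldots,a^l)$ of~(\ref{eq:definition.P_alphabeta}) with $(\alpha,\beta)\in\cP(m)$ and $a^j = f^j u_{\phi_j}$, $f^j\in C^\infty(M)$, $\phi_j\in G$.

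The first step is to rewrite $X$ in the form $QU_\phi$. Using the identity $[\sD_g,fu_\phi]=c(df)u_\phi$, the commutation relations $u_\phi\cdot h = (h\circ\phi^{-1})\cdot u_\phi$ (valid for $h$ a smooth function and, since $\phi$ is an isometry preserving the spin structure, also for $h=c(df)$ with $f\in C^\infty(M)$), the composition law $u_{\phi_1}u_{\phi_2}=u_{\phi_1\circ\phi_2}$, and the fact that $\sD_g$ commutes with every $U_\phi$, all the $u_{\phi_j}$ factors appearing in the product defining $X$ can be pushed to the right. This yields
\begin{equation*}
P_{\alpha,\beta}(f^0 u_{\phi_0},\ldots, f^l u_{\phi_l}) = Q(f^0,\ldots,f^l)\cdot U_{\phi_0\circ\cdots\circ\phi_l},
\end{equation*}
where $Q(f^0,\ldots,f^l)$ is a differential operator of order $m$ on $M$ acting on $\sS$ depending continuously and multilinearly on its arguments. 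The key verification here is the order count: each $\sD_g^{\alpha_k}$ contributes $\alpha_k$ while each $(f^j)^{\beta_j}c(df^j)^{1-\beta_j}$ is of order $0$, so the total order of $Q$ equals $|\alpha|=m$.

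Next I would apply the second part of Proposition~\ref{TraceOfHeatKernelVB} with $L=\sD_g^2$, $P=Q(f^0,\ldots,f^l)$, and $\phi=\phi_0\circ\cdots\circ\phi_l$; since $\phi$ preserves the orientation, the fixed-point submanifolds $M^\phi_a$ have even dimensions and the proposition yields, as $t\to 0^+$,
\begin{equation*}
\Str\bigl[Xe^{-t\sD_g^2}\bigr] \sim \sum_{\substack{0\leq a\leq n\\ a\text{ even}}}\sum_{j\geq 0} t^{-(a/2+[m/2])+j}\int_{M^\phi_a}\str_\sS\bigl[\phi^\sS(x)I^{(j)}_{Q(\sD_g^2+\partial_t)^{-1}}(x)\bigr]|dx|.
\end{equation*}
Recasting each exponent in the form $-(\sigma+m)/2$ and regrouping like powers of $t$ produces exactly an asymptotic expansion indexed by a discrete subset of $\Sigma$, and extending to arbitrary $X\in \cD^m_{\sD_g}(\cA)$ by linearity completes the argument. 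The plan presents no serious analytical obstacle since all of the hard work has been absorbed into Proposition~\ref{TraceOfHeatKernelVB}; the only point of care is the algebraic order bookkeeping in the reduction $X=QU_\phi$.
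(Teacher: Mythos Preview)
Your proposal is correct and follows essentially the same approach as the paper: reduce each $X\in\cD^m_{\sD_g}(C^\infty(M)\rtimes G)$ to the form $QU_\phi$ with $Q$ an $m$-th order differential operator, commute $U_\phi$ past $e^{-t\sD_g^2}$, and invoke Proposition~\ref{TraceOfHeatKernelVB}. The only cosmetic difference is that the paper establishes the inclusion $\cD^m_{\sD_g}(\cA_G)\subset \cD^m(M,\sS)\rtimes G$ by a short induction on $m$, whereas you work directly with the spanning operators $P_{\alpha,\beta}$; both routes are equivalent.
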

\begin{proof}
Set $\cA=C^{\infty}(M)$ and $\cA_{G}=C^{\infty}(M)\rtimes G$. In addition, for $m\in \N_{0}$, we shall denote by 
$\cD^{m}(M,\sS)\rtimes G$ unbounded operators on $\cH$ that are linear combinations of operators 
of the form $PU_{\phi}$, where $P$ ranges over $\cD^{m}(M,\sS)$ and $\phi$ ranges over $G$. Note that $\cD^{0}_{\sD_{g}}(\cA_{G})$ is spanned by 
operators of the form $ fU_{\phi}$ and $[\sD_{g},fU_{\phi}]=c(df)U_{\phi}$, with $f\in \cA$ and $\phi \in G$. 
Therefore, the space $\sD^{1}_{\sD_{g}}(\cA_{G})$ is spanned by $\cD^{0}_{\sD_{g}}(\cA_{G})$ and operators of the form
\begin{equation*}
\sD_{g}fU_{\phi}, \qquad fU_{\phi}\sD_{g}=f\sD_{g}U_{\phi}, \qquad   \sD_{g}c(df)U_{\phi}, \qquad 
c(df)U_{\phi}\sD_{g}=c(df)\sD_{g}U_{\phi}, 
 \end{equation*}where $f$ ranges over $\cA$ and $\phi$ ranges over $G$. We thus see that $\cD^{0}_{\sD_{g}}(\cA_{G})$ 
 (resp., $\cD^{1}_{\sD_{g}}(\cA_{G})$) is contained in $\cD^{0}(M,\sS)\rtimes G$ (resp., $\cD^{1}(M,\sS)\rtimes G$). An 
 induction on $m$ then shows that
 \begin{equation*}
  \cD^{m}_{\sD_{g}}(\cA_{G})   \subset \cD^{m}(M,\sS)\rtimes G \qquad \text{for all $m \in \N_{0}$}. 
 \end{equation*}
Bearing this in mind, let $\phi\in G$ and $P\in \cD^{m}(M,\sS)$, $m \in \N_{0}$. As $\sD_{g}$ commutes with the 
action of $G$, we see that the unitary operator $U_{\phi}$ commutes with the heat semigroup $e^{-t\sD_{g}^{2}}$. Thus, 
\begin{equation}
    \Str\left[ PU_{\phi}e^{-t\sD_{g}^{2}}\right]  = \Str\left[Pe^{-t\sD_{g}^{2}}U_{\phi}\right] \qquad \text{for all 
    $t>0$}.  
    \label{eq:CM.commutation-G-heat-semigroup}
\end{equation}
Using Proposition~\ref{TraceOfHeatKernelVB} we then see that, as $t\rightarrow 0^{+}$, we have 
\begin{equation*}
     \Str\left[ PU_{\phi}e^{-t\sD_{g}^{2}}\right]  
       \sim \sum_{\substack{0\leq a\leq n\\ \textup{$a$ even}}} \sum_{j \geq 0} t^{-\left(\frac{a}{2}+\left[\frac{m}{2}\right]\right)+j} 
     \int_{M_a^{\phi}}\Str\left[ \phi^{\sS}(x)I_{P(\sD_g^{2}+\partial_{t})^{-1}}^{(j)}(x)\right] |dx|. 
 \end{equation*}
 Combining this with~(\ref{eq:X.heat.operator.AE}) shows that $(C^{\infty}(M)\rtimes G, 
 L^{2}_{g}(M,\sS),\sD_{g})$ has the asymptotic expansion property relatively to $\Sigma=\left\{\frac{1}{2}(n-\ell);\ 
 \ell\in \N_{0}\right\}$. The lemma is proved.  
\end{proof}

As the topology of $C^{\infty}(M)\rtimes G$ is barelled, combining Lemma~\ref{lem:equiv.ST.UHR.UR} and Lemma~\ref{lem:equivariant.ST.UAE} 
with Proposition~\ref{prop:UAE=>CMcocyle.rep.CCC} and Proposition~\ref{prop:equivariant.ST.smooth.n+.summable} we then obtain the following statement. 

\begin{proposition}\label{prop:CCC.CMC-represents-CCC}
    The Connes-Chern character of $(C^{\infty}(M)\rtimes G, L^{2}_{g}(M,\sS), \sD_{g})$ is represented in 
    $\op{\mathbf{HP}}^{0}(C^{\infty}(M)\rtimes G)$ by the CM cocycle. Moreover, the formulas~(\ref{eq:CM.cocycle.heat.operator.0})--(\ref{eq:CM.cocycle.heat.operator.>0}) compute this 
    cocycle.  
\end{proposition}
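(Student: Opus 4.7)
The plan is to simply assemble the hypotheses of Proposition~\ref{prop:UAE=>CMcocyle.rep.CCC}(ii) for our spectral triple. That proposition requires four ingredients: smoothness and $n^+$-summability, uniform regularity, a barelled locally convex topology on the algebra, and the (ordinary) asymptotic expansion property. Each of these has already been supplied in the preceding pages, so the proof is essentially a matter of citing them in order and then invoking the conclusion.

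First I would note that Proposition~\ref{prop:equivariant.ST.smooth.n+.summable} provides the smoothness and $n^+$-summability of $(C^{\infty}(M)\rtimes G, L^{2}_{g}(M,\sS), \sD_{g})$. Next, Lemma~\ref{lem:equiv.ST.UHR.UR} delivers both uniform hypo-regularity and uniform regularity, exploiting that $\sD_g$ commutes with $U_\phi$ and that the principal symbol of $\sD_g^2$ is scalar (which keeps the iterated commutators $f^{[m]}$ and $c(df)^{[m]}$ inside $\cD^m(M,\sS)$). The barelled condition is automatic: as recalled after the definition of the crossed product, $C^{\infty}(M)\rtimes G$ is the inductive limit of the Fr\'echet spaces $C^{\infty}(M)\rtimes F$ indexed by finite $F\subset G$, and such inductive limits of Fr\'echet spaces are barelled.

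The remaining input is the asymptotic expansion property with respect to the natural spectrum $\Sigma=\{(n-\ell)/2:\ell\in\N_0\}$, which is precisely the content of Lemma~\ref{lem:equivariant.ST.UAE}. Here the crucial point is that $U_\phi$ commutes with $e^{-t\sD_g^2}$, so that for any $P\in \cD^m(M,\sS)$ and $\phi\in G$ one has the identity
\begin{equation*}
 \Str\bigl[PU_\phi e^{-t\sD_g^2}\bigr]=\Str\bigl[Pe^{-t\sD_g^2}U_\phi\bigr],
\end{equation*}
to which the differentiable equivariant heat kernel asymptotic of Proposition~\ref{TraceOfHeatKernelVB} applies, yielding an expansion in powers of $t$ whose exponents lie in $-\tfrac12\Sigma-\tfrac{m}{2}$, and in particular all elements of $\cD^m_{\sD_g}(C^\infty(M)\rtimes G)\subset \cD^m(M,\sS)\rtimes G$ receive such an expansion.

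Having verified all four hypotheses, Proposition~\ref{prop:UAE=>CMcocyle.rep.CCC}(ii) immediately gives both conclusions: the CM cocycle represents $\bCh(\sD_g)$ in $\bHP^0(C^\infty(M)\rtimes G)$, and its components are given by the heat-trace formulas~(\ref{eq:CM.cocycle.heat.operator.0})--(\ref{eq:CM.cocycle.heat.operator.>0}). There is no real obstacle — all the analytic work has been absorbed into the prior lemmas — but the one point requiring care is confirming that one genuinely falls under clause (ii) of Proposition~\ref{prop:UAE=>CMcocyle.rep.CCC} rather than clause (i), since we have established the asymptotic expansion property only in its non-uniform form; the barelledness of the crossed product topology is precisely what licenses this shortcut via Lemma~\ref{lem:UAE.rel.AE}.
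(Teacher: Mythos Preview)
Your proposal is correct and follows exactly the same route as the paper: assemble smoothness and $n^+$-summability (Proposition~\ref{prop:equivariant.ST.smooth.n+.summable}), uniform regularity (Lemma~\ref{lem:equiv.ST.UHR.UR}), the asymptotic expansion property (Lemma~\ref{lem:equivariant.ST.UAE}), and the barelledness of the crossed-product topology, then invoke Proposition~\ref{prop:UAE=>CMcocyle.rep.CCC}(ii). Your closing remark about why clause~(ii) rather than~(i) applies is a nice clarification that the paper leaves implicit.
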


It then remains to compute the CM cocycle by using the formulas~(\ref{eq:CM.cocycle.heat.operator.0})--(\ref{eq:CM.cocycle.heat.operator.>0}). 
As we shall see the computation will follow from the differentiable version of the local 
equivariant index theorem provided by Theorem~\ref{thm:Diff.local.equiv.index.thm} . 

In the following, given a differential form $\omega$ on $M$, for $a=0,2,\ldots,n$,
 we shall denote by $\int_{M_a^\phi} \omega $ the integral of top component of $(\iota_{M_a^\phi})^*\omega$ over 
 $M_a^\phi$, where $\iota_{M_a^\phi}$ is the inclusion of $M_a^\phi$  into $M$. We shall also denote by 
 $|\omega|^{(a,0)}$ the Berezin integral $|(\iota_{M_a^\phi})^*\omega|^{(a,0)}$. 
 We note that
 \begin{equation}
 \int_{M_a^\phi} \omega = \int_{M_a^\phi} (\iota_{M_a^\phi})^*\omega= \int_{M_a^\phi} |\omega|^{(a,0)}|dx|.
 \label{eq:CCCharacter.Berezin-integration}
\end{equation}

\begin{proposition}
\label{prop:small.time.equiv.heat.op.diff.ver}
Let $\phi\in G$ and $f^{0},\ldots,f^{2q}$  in $C^{\infty}(M)$. When $q\geq 1$ and $\alpha\in \N_{0}^{2q}$ set
\begin{equation*}
    P_{q,\alpha}=f^{0}[\sD_{g},f^{1}]^{[\alpha_{1}]}\cdots [\sD_{g},f^{2q}]^{[\alpha_{2q}]}. 
\end{equation*}In addition, set $P_{0,0}=f^{0}$ when $q=0$. %where the notation is the same as in~(\ref{eq:ST.iterated-commutators-D2}).
\begin{enumerate}
    \item  If $q\geq 1$ and $\alpha\neq 0$, then 
    \begin{equation}
    \label{eq:Str.P.heat.op.phi.1}
        \Str\left[ P_{q,\alpha}e^{-t\sD_g^{2}}U_{\phi}\right]=\op{O}\left(t^{-(|\alpha|+q)+1}\right) \qquad \text{as $t\rightarrow 0^{+}$}.
    \end{equation}

    \item  If $\alpha=0$, then, as $t\rightarrow 0^{+}$, we have
    \begin{equation}
     \label{eq:Str.P.heat.op.phi.2}
          \Str\left[ P_{q,0}e^{-t\sD_g^{2}}U_{\phi}\right]=  
          t^{-q}\sum_{\substack{0\leq a\leq n\\ \textup{$a$ even}}} 
         (2\pi)^{-\frac{a}{2}} \int_{M^{\phi}_{a}}\Upsilon_{q} +  \op{O}\left(t^{-q+1}\right),
    \end{equation}
     where we have set $\Upsilon_{q}=(-i)^{\frac{n}{2}}f^{0}df^{1} \wedge \cdots \wedge 
              df^{2q}\wedge\hat{A}(R^{TM^{\phi}})\wedge \nu_{\phi}\left(R^{\cN^{\phi}}\right)$.
\end{enumerate}
\end{proposition}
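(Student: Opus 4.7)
The plan is to deduce both parts from the differentiable local equivariant index theorem (Proposition~\ref{thm:Diff.local.equiv.index.thm}) applied with $E=\C$, so that $\sD_{\nabla^E}=\sD_g$, $F^E=0$, and $\phi^E=1$. The first task is to bound the Getzler order of $P_{q,\alpha}$. Each function $f^j$ is a multiplication operator of Getzler order $0$ with model $f^j(x_0)$, and each bracket $[\sD_g,f^j]=c(df^j)$ has Getzler order $1$ with model equal to wedge-multiplication by the constant form $df^j(x_0)$. By Example~\ref{ex:D^2.formula} the principal Getzler symbol of $\sD_g^2$ is the harmonic oscillator $H_R$, which commutes with wedge-multiplication by any constant form in $\Lambda(n)$. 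Combining this with the composition rule of Lemma~\ref{lem:index.top-total-order-symbol-composition}, an induction on $\alpha_j$ shows that $c(df^j)^{[\alpha_j]}$ has Getzler order $\leq 1+\alpha_j$, and a further application of Lemma~\ref{lem:index.top-total-order-symbol-composition} gives Getzler order $\leq 2q+|\alpha|$ for $P_{q,\alpha}$.

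For part~(1), applying Proposition~\ref{thm:Diff.local.equiv.index.thm} to $P_{q,\alpha}$ yields $\Str[P_{q,\alpha}e^{-t\sD_g^2}U_\phi]=\op{O}(t^{-q-|\alpha|/2})$ when $|\alpha|$ is even and $\op{O}(t^{-q-(|\alpha|-1)/2})$ when $|\alpha|$ is odd. A straightforward check shows that, for $|\alpha|\geq 1$, both exponents are $\geq -(|\alpha|+q)+1$, so the claim follows.

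For part~(2), the Getzler order of $P_{q,0}$ is exactly $2q$ with model operator
\[
P_{q,0,(2q)}=f^0(x_0)\,df^1(x_0)\wedge\cdots\wedge df^{2q}(x_0),
\]
that is, wedge-multiplication by a constant $2q$-form. Proposition~\ref{thm:Diff.local.equiv.index.thm} then gives
\[
\Str\bigl[P_{q,0}e^{-t\sD_g^2}U_\phi\bigr]=t^{-q}\int_{M^\phi}\gamma_\phi(P_{q,0};\sD_g)(x)\,|dx|+\op{O}(t^{-q+1}),
\]
with $\gamma_\phi(P_{q,0};\sD_g)(x_0)=(-i)^{n/2}2^{a/2}{\det}^{\frac12}(1-\phi^\cN(0))\bigl|P_{q,0,(2q)}\wedge I_{(H_R+\partial_t)^{-1}}(0,1)\bigr|^{(a,0)}$ at $x_0\in M_a^\phi$. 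Using the explicit formula for $I_{(H_R+\partial_t)^{-1}}(0,1)$ from Lemma~\ref{lem:GetzlerOrderParametrix}(2), its $\Lambda^{(\bullet,0)}$-component equals $(4\pi)^{-a/2}{\det}^{-\frac12}(1-\phi^\cN)\hat{A}(R^{TM^\phi}(0))\wedge\nu_\phi(R^{\cN^\phi}(0))$, the $\det$-factors cancel, and $2^{a/2}(4\pi)^{-a/2}=(2\pi)^{-a/2}$; integrating and applying the Berezin-integration identity~(\ref{eq:CCCharacter.Berezin-integration}) then reproduces~(\ref{eq:Str.P.heat.op.phi.2}). The main obstacle lies in the Getzler-order step: verifying that each commutation with $\sD_g^2$ raises the Getzler order by only $1$ rather than $2$ hinges on the "scalar" nature of $H_R$, so that the potential leading symbol $[H_R,X_{(m)}]$ of the commutator with a wedge by a constant form vanishes.
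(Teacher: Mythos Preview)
Your overall approach---reducing both parts to Proposition~\ref{thm:Diff.local.equiv.index.thm} with $E=\C$---is exactly the paper's, and your treatment of part~(2) is essentially identical to the paper's computation.

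There is, however, a gap in your part~(1). The inductive claim that $c(df^j)^{[\alpha_j]}$ has Getzler order $\leq 1+\alpha_j$ is not justified beyond the first step. For $\alpha_j=1$ you correctly argue that the model of $[\sD_g^2,c(df^j)]$ at the naive top order $3$ is $[H_R,df^j(0)\wedge]=0$, so the order drops to $\leq 2$. But to iterate you would need $[H_R,X_{(2)}]=0$, where $X_{(2)}$ is the model of $c(df^j)^{[1]}$ at Getzler order $2$. This $X_{(2)}$ is \emph{not} wedge by a constant form---it involves the next-order terms in the Getzler expansion of $\sD_g^2$ and $c(df^j)$ (first-order differential pieces, curvature contributions, etc.)---so there is no reason for it to commute with $H_R$. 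Your last sentence flags the right obstacle but only addresses the base case.

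The paper's fix is simpler and avoids any induction: apply Lemma~\ref{lem:index.top-total-order-symbol-composition} once to the entire product $P_{q,\alpha}$, whose naive Getzler order is $2(|\alpha|+q)$, obtaining
\[
\sigma[P_{q,\alpha}]=f^0(0)\,(df^1(0))^{[\alpha_1]}_{H_R}\cdots (df^{2q}(0))^{[\alpha_{2q}]}_{H_R}+\op{O}_G\bigl(2(|\alpha|+q)-1\bigr),
\]
where $(\cdot)^{[\alpha_j]}_{H_R}$ denotes the iterated commutator with $H_R$. Since $H_R$ commutes with wedge by constant forms, each factor with $\alpha_j\geq 1$ vanishes, so the top symbol is zero and the Getzler order of $P_{q,\alpha}$ is $\leq 2(|\alpha|+q)-1$, which is \emph{odd}. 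The odd case of Proposition~\ref{thm:Diff.local.equiv.index.thm} then gives directly
\[
\Str\bigl[P_{q,\alpha}e^{-t\sD_g^2}U_\phi\bigr]=\op{O}\bigl(t^{-\frac{(2(|\alpha|+q)-1)-1}{2}}\bigr)=\op{O}\bigl(t^{-(|\alpha|+q)+1}\bigr),
\]
with no case split on the parity of $|\alpha|$.
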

\begin{remark}
    The above result is proved in~\cite{CH:ECCIDO} (see Theorem~2 of~\cite{CH:ECCIDO} for~(\ref{eq:Str.P.heat.op.phi.1}) and Corollary~3.16  
    of~\cite{CH:ECCIDO} for~(\ref{eq:Str.P.heat.op.phi.2}); see also~\cite{Az:RMJM}). Our aim here is to show that these 
    asymptotics are simple consequences of the differentiable version of the local equivariant index theorem 
    provided by Theorem~\ref{thm:Diff.local.equiv.index.thm} . 
\end{remark}
\begin{proof}[Proof of Proposition~\ref{prop:small.time.equiv.heat.op.diff.ver}] 
    Let $x_{0}\in M_{a}^{\phi}$, $a=0,2,\ldots,n$, and let us work in admissible normal coordinates centered at $x_{0}$. 
    The results of Section~\ref{sec:proof-key-thm} show that
\begin{itemize}
    \item  The multiplication operator $f^{0}$ has Getzler order $0$ and model operator $f^{0}(0)$.

    \item  Each Clifford multiplication operator $c(df^{j})$ has Getzler order $1$ and model operator 
    $df^{j}(0)$.      
    
    \item The operator $\sD_g^{2}$ has Getzler order $2$ and model operator $H_{R}$.
\end{itemize}
Therefore, using Lemma~\ref{lem:index.top-total-order-symbol-composition} we deduce that, at $x=0$, we have
\begin{equation}
\label{eq:G.symbol.equi.heat.op}
    \sigma\left[P_{q,\alpha}\right]= f^{0}(0)df^{1}(0)^{[\alpha_{1}]}\cdots 
    df^{2q}(0)^{[\alpha_{2q}]} + \op{O}_{G}\left(2(|\alpha|+q)-1\right), 
\end{equation}
where $df^{j}(0)^{[\alpha_{j}]}$ is the $\alpha_{j}$-th iterated commutator with $H_{R}$. 
In fact, as $H_{R}$ takes coefficients in forms of degree 0 and degree 2, it commutes with the forms 
$df^{j}(0)$,  and hence $df^{j}(0)^{[\alpha_{j}]}=0$ whenever $\alpha_{j}\neq 0$. Therefore, if $\alpha\neq 0$, then we 
see that $P_{q,\alpha}$ has Getzler order~$2(|\alpha|+q)-1$. As $2(|\alpha|+q)-1$ is an odd integer, 
Theorem~\ref{thm:Diff.local.equiv.index.thm}  shows that, when $\alpha\neq 0$,  we have
    \begin{equation*}
        \Str\left[ P_{q,\alpha}e^{-t\sD_g^{2}}U_{\phi}\right]=\op{O}\left(t^{-(|\alpha|+q)+1}\right) \qquad \text{as $t\rightarrow 0^{+}$}.
    \end{equation*}

Suppose now that $\alpha=0$ and set $\omega=f^{0}df^{1}\wedge \cdots \wedge df^{2q}$. 
Then~(\ref{eq:G.symbol.equi.heat.op}) shows that $P_{q,0}$ has Getzler order $2q$ and model operator 
$\left(P_{q,0}\right)_{(2q)}=\omega(0)$. It then follows that 
\begin{equation}
    K_{\left(P_{q,0}\right)_{(2q)}(H_{R}+\partial_{t})^{-1}}(x,y,t)=K_{\omega(0)\wedge(H_{R}+\partial_{t})^{-1}}(x,y,t)=\omega(0)\wedge K_{(H_{R}+\partial_{t})^{-1}}(x,y,t).   
    \label{eq:CCCharacter.kernel-Pq0}
\end{equation}
Thus, 
\begin{equation}
    I_{\left(P_{q,0}\right)_{(2q)}(H_{R}+\partial_{t})^{-1}}(x,t)= \omega(0)\wedge I_{(H_{R}+\partial_{t})^{-1}}(x,t).
    \label{eq:CCCharacter.IPq0}
\end{equation}
Combining this with~(\ref{eq:LEIT.horizontal-component}) we deduce that
\begin{equation*}
    I_{\left(P_{q,0}\right)_{(2q)}(H_{R}+\partial_{t})^{-1}}(0,1)^{(\bt,0)}= 
    \frac{(4\pi)^{-\frac{a}{2}}}{
        {\det}^{\frac12}\left(1-\phi^{\cN}\right)} \omega(0)^{(\bt,0)} \wedge \hat{A}(R^{TM^{\phi}}(0))\wedge 
            \nu_{\phi}\left(R^{\cN^{\phi}}(0)\right).
\end{equation*}
Therefore, we obtain
\begin{align}
    \gamma_{\phi}(P_{q,0};\sD_{g})(0) & = (-i)^{\frac{n}{2}}2^{\frac{a}{2}}{\det}^{\frac{1}{2}}(1-\phi^{\cN}(0)) 
    \left|I_{\left(P_{q,0}\right)_{(2q)}(H_{R}+\partial_{t})^{-1}}(0,1)\right|^{(a,0)} \nonumber \\
     & = (-i)^{\frac{n}{2}}(2\pi)^{-\frac{a}{2}} \left|\omega(0)\wedge \hat{A}(R^{TM^{\phi}}(0))\wedge 
            \nu_{\phi}\left(R^{\cN^{\phi}}(0)\right)\right|^{(a,0)}. 
                 \label{eq:CCCharacter.UpsilonPq0}
\end{align}
%It then follows that 
%\begin{equation*}
%    | \Upsilon_{\phi}(P_{q,0};\sD_{g})(x)|^{(a,0)}= (-i)^{\frac{n}{2}}(2\pi)^{-\frac{a}{2}} \left|\omega \wedge \hat{A}(R^{TM^{\phi}})\wedge 
%            \nu_{\phi}\left(R^{\cN^{\phi}}\right)\right|^{(a,0)}.
%\end{equation*}
As $P_{q,0}$  has Getzler order $2q$, combining this with Theorem~\ref{thm:Diff.local.equiv.index.thm}  and using~(\ref{eq:CCCharacter.Berezin-integration}) gives the asymptotic~(\ref{eq:Str.P.heat.op.phi.2}). 
 The proof is complete. 
\end{proof}

We are now in a position to prove the main result of this section. 

\begin{theorem}\label{thm:Connes-Chern-conformal-DiracST}
The Connes-Chern character  of $(C^{\infty}(M)\rtimes G, L^{2}_{g}(M,\sS), \sD_{g})$
is represented in $\op{\mathbf{HP}}^{0}(C^{\infty}(M)\rtimes 
G)$ by the CM cocycle $\varphi^{\CM}=(\varphi_{2q}^{\CM})$. Moreover, for all $f^0, ..., f^{2q}$ in $C^\infty(M)$ and $\phi_0,...,\phi_{2q}$ in $G$, we have 
 \begin{multline}
 \varphi_{2q}^{\CM}(f^{0}U_{\phi_{0}},\ldots, f^{2q}U_{\phi_{2q}})= \\
 \frac{(-i)^{\frac{n}{2}}}{(2q)!}\sum_{\substack{0\leq a \leq n\\ \textup{$a$ even}}}    
 (2\pi)^{-\frac{a}{2}}\int_{M^{\phi_{(2q)}}_{a}}   f^{0}d\hat{f}^{1} \wedge \cdots \wedge 
             d\hat{f}^{2q} \wedge \hat{A}(R^{TM^{\phi_{(q)}}})\wedge 
            \nu_{\phi_{(2q)}}\left(R^{\cN^{\phi_{(2q)}}}\right), 
             \label{eq:Conformal.CM-cocycle}
\end{multline}where we have set $\phi_{(j)}:=\phi_{0}\circ \cdots \circ \phi_{j}$ and $\hat{f}^{j}:=f^{j}\circ 
\phi_{(j-1)}^{-1}$. %(with $f^{j}\in C^{\infty}(M)$ and $\phi_{j}\in G$). 
\end{theorem}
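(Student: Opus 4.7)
The representation of the Connes-Chern character by the CM cocycle is already provided by Proposition~\ref{prop:CCC.CMC-represents-CCC}, so the only remaining task is to compute the components $\varphi_{2q}^{\CM}(f^{0}U_{\phi_{0}},\ldots, f^{2q}U_{\phi_{2q}})$ using the heat-trace formulas~(\ref{eq:CM.cocycle.heat.operator.0})--(\ref{eq:CM.cocycle.heat.operator.>0}). The key observation is that since $\sD_g$ commutes with each $U_{\phi_j}$, we have $[\sD_g, f^j U_{\phi_j}] = c(df^j) U_{\phi_j}$, and more generally each iterated commutator satisfies $[\sD_g, f^j U_{\phi_j}]^{[\alpha_j]} = c(df^j)^{[\alpha_j]} U_{\phi_j}$. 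Then, using the crossed-product relation $U_\phi f = (f\circ \phi^{-1})U_\phi$, one can successively push every $U_{\phi_j}$ to the right of all the Clifford elements, which amounts to replacing each $c(df^j)$ by $c(d\hat{f}^j)$ with $\hat{f}^j = f^j\circ \phi_{(j-1)}^{-1}$ (the commutator with $\sD_g^2$ is preserved under this conjugation since $\sD_g^2$ commutes with $U_\phi$). This yields the key identity
\begin{equation*}
 T_{q,\alpha}\bigl(f^{0}U_{\phi_{0}},\ldots, f^{2q}U_{\phi_{2q}}\bigr) = f^{0} c(d\hat{f}^{1})^{[\alpha_1]}\cdots c(d\hat{f}^{2q})^{[\alpha_{2q}]}\, U_{\phi_{(2q)}} = P_{q,\alpha}\bigl(f^0,\hat{f}^1,\ldots,\hat{f}^{2q}\bigr)\, U_{\phi_{(2q)}},
\end{equation*}
where $P_{q,\alpha}$ is precisely the operator appearing in Proposition~\ref{prop:small.time.equiv.heat.op.diff.ver}.

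Next, since $U_{\phi_{(2q)}}$ commutes with $e^{-t\sD_g^2}$, the heat-trace formula becomes
\begin{equation*}
 \FP\left\{t^{|\alpha|+q}\Str\left[T_{q,\alpha}\bigl(f^{0}U_{\phi_{0}},\ldots, f^{2q}U_{\phi_{2q}}\bigr)e^{-t\sD_g^2}\right]\right\} = \FP\left\{t^{|\alpha|+q}\Str\left[P_{q,\alpha}(f^0,\hat{f}^1,\ldots,\hat{f}^{2q})e^{-t\sD_g^2}U_{\phi_{(2q)}}\right]\right\}.
\end{equation*}
Now one applies Proposition~\ref{prop:small.time.equiv.heat.op.diff.ver} with $\phi=\phi_{(2q)}$: for $\alpha\neq 0$ the heat supertrace is $\op{O}(t^{-(|\alpha|+q)+1})$, so the partie finie vanishes, while for $\alpha=0$ one obtains a nonzero contribution equal to $\sum_a (2\pi)^{-a/2}\int_{M_a^{\phi_{(2q)}}} \Upsilon_q$.

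Finally, only the term $\alpha=0$ survives in the sum over $\N_0^{2q}$ in~(\ref{eq:CM.cocycle.heat.operator.>0}), and the numerical factors combine as
\begin{equation*}
 c_{q,0}\,\Gamma(q)^{-1} = \frac{(q-1)!}{(2q)!}\cdot \frac{1}{(q-1)!} = \frac{1}{(2q)!},
\end{equation*}
which yields exactly~(\ref{eq:Conformal.CM-cocycle}) for $q\ge 1$. The case $q=0$ follows identically from the $q=0$ instance of Proposition~\ref{prop:small.time.equiv.heat.op.diff.ver} applied to $\varphi_{0}^{\CM}(f^0U_{\phi_0})=\FP\Str[f^0 e^{-t\sD_g^2}U_{\phi_0}]$.

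The proof is essentially a bookkeeping exercise; the real substance has been loaded into the differentiable local equivariant index theorem (Theorem~\ref{thm:Diff.local.equiv.index.thm}), which powers Proposition~\ref{prop:small.time.equiv.heat.op.diff.ver}. The only mild subtlety is verifying the commutation step for the iterated commutators $[\sD_g,f^j U_{\phi_j}]^{[\alpha_j]}$, but this reduces to the fact that $\sD_g^2$ commutes with $U_{\phi_j}$; there are no genuine analytic obstacles remaining at this stage.
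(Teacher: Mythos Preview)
Your proposal is correct and follows essentially the same approach as the paper's proof: both invoke Proposition~\ref{prop:CCC.CMC-represents-CCC} for the representation, use the commutation $U_{\psi}[\sD_g,f]^{[j]}=[\sD_g,f\circ\psi^{-1}]^{[j]}U_{\psi}$ to rewrite $T_{q,\alpha}=P_{q,\alpha}U_{\phi_{(2q)}}$, apply Proposition~\ref{prop:small.time.equiv.heat.op.diff.ver} to kill the $\alpha\neq 0$ terms and extract the $\alpha=0$ contribution, and finish with the identity $c_{q,0}\Gamma(q)^{-1}=1/(2q)!$. The only cosmetic difference is that the paper also remarks $\varphi_{2q}^{\CM}=0$ for $q>\tfrac{n}{2}$ by $n^{+}$-summability, but this is implicit in your formula since the integrand then has degree exceeding $\dim M_a^{\phi}$.
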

\begin{proof}
  Thanks to Proposition~\ref{prop:small.time.equiv.heat.op.diff.ver} we know that  the Connes-Chern character  $\bCh(\sD_{g})$ is represented in 
  $\op{\mathbf{HP}}^{0}(C^{\infty}(M)\rtimes G)$ by the CM cocycle $\varphi^{\CM}=(\varphi_{2q}^{\CM})$. Moreover, this CM cocycle is given by the 
  formulas~(\ref{eq:Str.P.heat.op.phi.1})--(\ref{eq:Str.P.heat.op.phi.2}). It then remains to compute the CM cocycle 
  $\varphi^{\CM}=(\varphi_{2q}^{\CM})$ by using these formulas. Note that $\varphi_{2q}^{\CM}=0$ for $q\geq \frac{1}{2}n+1$, since 
  $(C^{\infty}(M)\rtimes G, L^{2}_{g}(M,\sS), \sD_{g})$ is $n^{+}$-summable, so we only have to compute 
  $\varphi_{2q}^{\CM}$ for $q=0,\ldots,\frac{1}{2}n$. 
  
  Let $\phi_{0}\in G$ and $f^{0}\in C^{\infty}(M)$. Using~(\ref{eq:Str.P.heat.op.phi.1}) and (\ref{eq:CM.commutation-G-heat-semigroup}) we get 
 \begin{equation*}
     \varphi_{0}^{\CM}(fU_{\phi_{0}})= \FP \Str \left[f^{0}U_{\phi_{0}}e^{-t\sD_g^{2}}\right]= \FP \Str 
     \left[f^{0}e^{-t\sD_g^{2}}U_{\phi_{0}}\right]. 
 \end{equation*}Combining this with~(\ref{eq:LEIT.local-equiv.-index-thm}) then gives
 \begin{equation*}
     \varphi_{0}^{\CM}(f^{0}U_{\phi_{0}}) =   (-i)^{\frac{n}{2}}\sum_{\substack{0\leq a \leq n\\ \textup{$a$ even}}}    
 (2\pi)^{-\frac{a}{2}}\int_{M^{\phi_{0}}_{a}} f^{0} \hat{A}(R^{TM^{\phi_{0}}})\wedge 
            \nu_{\phi_{0}}\left(R^{\cN^{\phi_{0}}}\right), 
 \end{equation*}which is the formula~(\ref{eq:Conformal.CM-cocycle}) for $q=0$. 
 
Given $q\in\left\{1,\ldots,\frac{1}{2}n\right\}$, let $\phi_{j}\in G$ and $f^{j}\in 
  C^{\infty}(M)$, $j=0,\ldots,q$. In addition, for $\alpha \in \N_{0}^{2q}$, set $T_{q,\alpha}= f^{0}U_{\phi_{0}} 
    \left[\sD_g,f^{1}U_{\phi_{1}}\right]^{[\alpha_{1}]} \cdots 
          \left[\sD_g,f^{2q}U_{\phi_{2q}}\right]^{[\alpha_{2q}]}$. We observe that, as $\sD_{g}$ commutes with the action 
          of $G$, given any $\psi_{1}$ and $\psi_{2}$ in $G$ and $f\in C^{\infty}(M)$,  for all $j\in \N$, we have
          \begin{equation}
              U_{\psi_{1}}[\sD_{g},fU_{\psi_{2}}]^{[j]}= 
              [\sD_{g},U_{\psi_{1}}fU_{\psi_{1}}^{-1}]^{[j]}U_{\psi_{1}}U_{\psi_{2}} = [\sD_{g},f\circ 
              \psi_{1}^{-1}]^{[j]}U_{\psi_{1}\psi_{2}}. 
              \label{eq:CCCharacter.commutationUphi-f}
          \end{equation}
 Repeated use of these equalities enables us to rewrite $T_{q,\alpha}$ as
 \begin{equation*}
     T_{q,\alpha}=P_{q,\alpha}U_{\phi_{(2q)}}, \qquad P_{q,\alpha}:= f^{0} 
     \left[\sD_g,\hat{f}^{1}\right]^{[\alpha_{1}]}\cdots  \left[\sD_{g},\hat{f}^{2q}\right]^{[\alpha_{2q}]}, 
 \end{equation*}where we have set $\phi_{(j)}:=\phi_{0}\circ \cdots \circ \phi_{j}$ and $\hat{f}^{j}:=f^{j}\circ 
\phi_{(j-1)}^{-1}$. Therefore, using~(\ref{eq:Str.P.heat.op.phi.2}) and~(\ref{eq:CM.commutation-G-heat-semigroup}) we 
obtain
\begin{equation*}
% \label{eq:CM.cocycle.heat.op.spec.case}
        \varphi_{2q}^{\CM}(f^{0}U_{\phi_{0}},\ldots,f^{2q}U_{\phi_{2q}})= \sum_{\alpha \in \N_{0}^{2q}}c_{q,\alpha}\Gamma(|\alpha|+q)^{-1} 
        \FP \left\{t^{|\alpha|+q}\Str\left[ P_{q,\alpha}e^{-t\sD_g^{2}}U_{\phi_{(2q)}}\right]\right\} .    
\end{equation*}Combining this with Proposition~\ref{prop:small.time.equiv.heat.op.diff.ver} then gives 
\begin{equation*}
  \varphi_{2q}^{\CM}(f^{0}U_{\phi_{0}},\ldots,f^{2q}U_{\phi_{2q}})=  c_{q,0}\Gamma(q)^{-1}   \sum_{\substack{0\leq a \leq n\\ \textup{$a$ even}}}    
 (2\pi)^{-\frac{a}{2}}\int_{M^{\phi_{(2q)}}_{a}} \Upsilon_{q},
\end{equation*}
where we have set $\Upsilon_{q}=(-i)^{\frac{n}{2}}f^{0}d\hat{f}^{1} \wedge \cdots \wedge d\hat{f}^{2q}\wedge 
\hat{A}(R^{TM^{\phi_{(2q)}}})\wedge \nu_{\phi_{(2q)}}\left(R^{\cN^{\phi_{(2q)}}}\right)$. As 
$c_{q,0}\Gamma(q)^{-1}$ is equal to $((2q)!)^{-1}$ this gives the formula~(\ref{eq:Conformal.CM-cocycle}) for 
$q=1,\ldots,\frac{1}{2}n$. The proof is complete. 
\end{proof}

\begin{remark}
 To understand the formula~(\ref{eq:Conformal.CM-cocycle}) it is worth looking at the top-degree component 
$\varphi_{n}^{\CM}$. For $q=\frac{1}{2}n$ the r.h.s.~of~(\ref{eq:Conformal.CM-cocycle}) reduces to an integral 
over $M_{n}^{\phi_{(n)}}$ and this submanifold is empty unless $\phi_{(n)}=\op{id}$. Thus, 
\begin{equation*}
    \varphi_{n}^{\CM}(f^{0}U_{\phi_{0}},\ldots, f^{n}U_{\phi_{n}})= \left\{ 
    \begin{array}{ll} {\displaystyle \frac{(2i\pi)^{-\frac{n}{2}}}{n!}     \int_{M}  
            f^{0}d\hat{f}^{1} \wedge \cdots \wedge d\hat{f}^{n} } & \text{if $\phi_{0}\circ \cdots \circ 
            \phi_{n}=\op{id}$},\\
0     & \text{otherwise}.  
    \end{array}\right.
\end{equation*}That is, $\varphi_{n}^{\CM}$ agrees with the transverse fundamental class cocycle of
Connes~\cite{Co:CCTFF}. This implies that the Hochschild class of the Connes-Chern character agrees with Connes' 
transverse fundamental class (see also~\cite[Proposition~3.7]{Mo:LIFTST}).
\end{remark}

\begin{remark}\label{rmk:CM.odd}
    We refer to~\cite{Po:Odd} for the computation of the Connes-Chern character of a crossed-product Dirac spectral 
    triple $(C^{\infty}(M)\rtimes G, L^{2}_{g}(M,\sS), \sD_{g})$ in odd dimensions. In this case the spectral triple is 
    an \emph{odd} spectral triple. The Connes-Chern character then is defined as a class $\bCh(D)\in 
    \bHP^{1}\left(C^{\infty}(M)\rtimes G\right)$ and is represented by a CM cocycle. The formula for the CM cocycle is 
    similar to that in the even dimensional case given by~(\ref{eq:Conformal.CM-cocycle}). 
\end{remark}

\section{The JLO Cocycle of a Dirac Spectral Triple}\label{sec:JLO}
In this section, as a further application of Theorem~\ref{thm:LEIT.Volterra-LEIT}, we shall compute the short-time limit of the JLO 
cocycle of an equivariant Dirac spectral triple. As we shall see, with our approach, the computation of JLO-type 
cochains associated to a Dirac spectral triple is not more difficult than the computation of the CM cocycle. 

 \subsection{The JLO cocycle of a spectral triple} Let $(\cA,\cH,D)$ be a $p^{+}$-summable spectral triple. In what follows, given  
 operators $X^{0},\ldots,X^{m}$ in $\cD_{D}^{\bt}(\cA)$, for all $t>0$, we define
  \begin{equation*}
    H_{t}(X^{0},\ldots,X^{m})=  
    \int_{\Delta_{m}}X^{0}e^{-s_{0}tD^{2}}X^{1}e^{-s_{1}tD^{2}}\cdots   
    X^{m}e^{-s_{m}tD^{2}}d\mathbf{s}. 
 \end{equation*}where $\Delta_{m}=\{\mathbf{s}=(s_{0},\ldots,s_{m})\in [0,1]^{m+1}; s_{0}+\cdots + s_{m}=1\}$. The JLO 
 cochain~\cite{JLO:QKTCC} is actually a family of infinite-supported even cochains $\varphi^{\JLO}_{t}=(\varphi^{\JLO}_{2q,t} 
 )_{q\geq 0}$, where $\varphi^{\JLO}_{2q,t}$ is the $2q$-cochain on $\cA$ given by
 \begin{equation*}
   \varphi^{\JLO}_{t}(a^{0},\ldots,a^{2q})= t^{q}\Str\left[ H_{t}\left(a^{0},[D,a^{1}],\ldots,[D,a^{2q}]\right) 
   \right], \qquad a^{j}\in \cA, \ t>0.  
 \end{equation*}
 Let $A$ be the Banach $*$-algebra obtained as the completion of $\cA$ with respect to the norm $a\rightarrow 
 \|a\| +\|[D,a]\|$. Then it can be shown that $\varphi^{\JLO}_{t}$ gives rise to a cocycle in the entire cyclic cohomology of $A$ whose class is independent of $t$
 (see~\cite{JLO:QKTCC, GS:OCCTSM}). Moreover, this  class agrees with the Connes-Chern character in 
 entire cyclic cohomology~\cite{Co:ECCBACTSFM, Co:CCTSM}.  In fact, as pointed out by 
 Quillen~\cite{Qu:ACCC}, the JLO cocycle can be naturally interpreted as the Chern character of a suitable superconnection
 with values in cochains. It should also be mentioned that the definition of the JLO cocycle and its aforementioned properties 
 only require the spectral triple $(\cA,\cH,D)$ to be $\theta$-summable, which is a weaker condition than 
 $p^{+}$-summability or $p$-summability. 
 
 When $(\cA,\cH,D)$ is $p$-summable, Connes-Moscovici~\cite{CM:TCCFDKC} showed that the JLO cocycle retracts to a periodic 
 cyclic cocycle representing the Connes-Chern character in $\HP^{0}(\cA)$. They also proved that, under suitable short-time
 asymptotic properties for the supertraces $\Str\left[ H_{t}\left(X^{0},X^{1},\ldots,X^{m}\right)\right]$, $X^{j}\in 
 \cD_{D}^{1}(\cA)$, we can define \emph{parties finies} $\Pf_{t=0^+} \varphi^{\JLO}_{2q,t}(a^{0},\ldots,a^{2q})$, $a^j\in \cA$, in such way to obtain
 a periodic cyclic cocycle $\Pf_{t=0^+}\varphi^{\JLO}_t$ representing the Connes-Chern character in 
 $\HP^{0}(\cA)$. In fact, this cocycle is naturally identified with the CM cocycle (see~\cite{CM:LIFNCG}). 
 
 We stress out that the results of~\cite{CM:TCCFDKC} do not require the spectral triple $(\cA,\cH,D)$ to be regular. 
 Therefore, computing $\Pf_{t=0^+} \varphi^{\JLO}_{t}$ is an alternative to the computation of the CM cocycle 
 for spectral triples that are not regular. Note that there are natural geometric examples of spectral triples associated to 
 hypoelliptic operators on contact manifolds or even Carnot manifolds, which are not regular and satisfy the 
 asymptotic expansion assumptions of~\cite{CM:TCCFDKC}. In fact, the regularity property is an operator-thereotic 
 reformulation of the scalarness of the principal symbol of the square of the Dirac operator. Therefore, it should 
 not be surprising that this property may fail in some ``highly" noncommutative examples. 
 
Block-Fox~\cite{BF:APDOIT} computed  $\Pf_{t=0^+} \varphi^{\JLO}_{t}$ for a Dirac spectral triple by using the asymptotic pseudodifferential calculus of 
Getzler~\cite{Ge:POSASIT}. 
As we shall explain below, for Dirac spectral triples the computation of $\Pf_{t=0^+} \varphi^{\JLO}_{t}$ is not much more 
 difficult than the computation of the CM cocycle given in Section~\ref{sec:Connes-Chern-conformal} (or in~\cite{Po:CMP} in the non-equivariant setting).
 
\subsection{Equivariant Dirac spectral triple}
As a further example of application of the local equivariant index theorem for Volterra \psidos\ provided by 
 Theorem~\ref{thm:LEIT.Volterra-LEIT}, we shall now show how this result enables us to establish the short-time limit of the JLO cocycle $\varphi^{\JLO}_{t}$ of an equivariant 
 Dirac spectral 
 triple $( C^{\infty}(M)\rtimes G, L^{2}_{g}(M,\sS),\sD_{g})$. As in previous sections, $(M^{n},g)$ is an even dimensional compact 
 spin oriented Riemannian manifold, $\sD_{g}$ is its Dirac operators acting on spinors, and $G$ is a subgroup of the 
 connected component of the identity component of the group of orientation-preserving isometries preserving the spin 
 structure. More precisely, our goal is to prove the following result. 
 
  \begin{theorem}\label{thm:JLO.PfJLO}
 For all $f^0,...,f^{2q}$ in $C^\infty(M)$ and $\phi_0,...,\phi_{2q}$ in $G$, as $t\rightarrow 0^{+}$, we have 
 \begin{multline}
 \lim_{t\rightarrow 0^{+}}\varphi_{2q,t}^{\JLO}(f^{0}U_{\phi_{0}},\ldots, f^{2q}U_{\phi_{2q}})= \\
 \frac{(-i)^{\frac{n}{2}}}{(2q)!}\sum_{\substack{0\leq a \leq n\\ \textup{$a$ even}}}    
 (2\pi)^{-\frac{a}{2}}\int_{M^{\phi}_{a}}   f^{0}d\hat{f}^{1} \wedge \cdots \wedge 
             d\hat{f}^{2q} \wedge \hat{A}(R^{TM^{\phi}})\wedge 
            \nu_{\phi}\left(R^{\cN^{\phi}}\right),% + \op{O}(t), 
             \label{eq:JLO.JLO-coycle-Dirac}
\end{multline}where we have set $\phi=\phi_{0}\circ \cdots \circ \phi_{2q}$ and $\hat{f}^{j}:=f^{j}\circ 
    (\phi_{0}\circ \cdots \circ \phi_{j-1})^{-1}$, $j=1,\ldots,2q$. 
 \end{theorem}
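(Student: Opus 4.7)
The plan is to recognize the JLO cochain as arising from a Volterra pseudodifferential operator and then invoke Theorem~\ref{thm:LEIT.Volterra-LEIT} directly. First, using the commutation identities~\eqref{eq:CCCharacter.commutationUphi-f} together with the fact that $\sD_g$ commutes with each $U_{\phi_j}$ (and hence so does every heat operator $e^{-s t \sD_g^2}$), I would reduce to the untwisted form
\[
\varphi^{\JLO}_{2q,t}(f^0 U_{\phi_0},\ldots, f^{2q}U_{\phi_{2q}}) = t^q \Str\! \left[ H_t\bigl(f^0, c(d\hat{f}^1),\ldots, c(d\hat{f}^{2q})\bigr) U_{\phi}\right],
\]
where $\phi = \phi_0\circ\cdots\circ\phi_{2q}$ and $\hat{f}^{j} = f^{j}\circ (\phi_0\circ\cdots\circ \phi_{j-1})^{-1}$. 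The change of variables $u_j = t s_j$ in the simplex integral defining $H_t$ transforms it into a simplex integral over $\{u_0+\cdots+u_{2q}=t,\, u_j\geq 0\}$, picking up a Jacobian $t^{-2q}$. By iterated Duhamel's formula, the resulting integrand is precisely the operator on $L^2_g(M,\sS)$ whose integral kernel is $K_Q(x,y,t)$, where
\[
Q := f^0 (\sD_g^2 + \partial_t)^{-1} c(d\hat{f}^1) (\sD_g^2 + \partial_t)^{-1} \cdots c(d\hat{f}^{2q}) (\sD_g^2 + \partial_t)^{-1}
\]
is a Volterra \psido\ on $M\times\R$ acting on sections of $\sS$. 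Thus $t^q H_t = t^{-q}\,\mathcal{K}_Q(t)$ as operators on $L^2_g(M,\sS)$, and applying Lemma~\ref{lem:Heat-localization} (in the form valid for general Volterra \psidos) gives
\[
\varphi^{\JLO}_{2q,t}(f^0 U_{\phi_0},\ldots, f^{2q}U_{\phi_{2q}}) = t^{-q} \int_{M^{\phi}} \str_{\sS}\!\left[\phi^{\sS}(x) I_Q(x,t)\right]|dx| + \op{O}(t^{\infty}).
\]

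Next I would determine the Getzler order of $Q$. Since $f^0$ has Getzler order $0$, each Clifford multiplication $c(d\hat{f}^{j})$ has Getzler order~$1$, and each $(\sD_g^2+\partial_t)^{-1}$ has Getzler order~$-2$ by Lemma~\ref{lem:GetzlerOrderParametrix}, the product has Getzler order
\[
m = 0 + 2q\cdot 1 + (2q+1)(-2) = -2q-2,
\]
which is even. Part~(2) of Theorem~\ref{thm:LEIT.Volterra-LEIT} then yields, uniformly on each fixed-point submanifold $M^{\phi}_a$,
\[
\str_{\sS}\!\left[\phi^{\sS}(x) I_Q(x,t)\right] = \gamma_{\phi}(Q)(x)\, t^{q} + \op{O}(t^{q+1}) \quad \text{as } t\to 0^+.
\]
Plugging this back, the factor $t^{-q}$ exactly cancels the leading $t^q$, so the short-time limit exists and equals $\int_{M^{\phi}}\gamma_{\phi}(Q)(x)|dx|$, reducing the problem to the computation of $\gamma_{\phi}(Q)$.

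To identify the model operator, I would use Lemma~\ref{lem:index.top-total-order-symbol-composition}: the model of a product is the product of models, giving the model $f^0(0)$ for $f^0$, the 1-form $d\hat{f}^{j}(0)$ for $c(d\hat{f}^{j})$, and $(H_R+\partial_t)^{-1}$ for each $(\sD_g^2+\partial_t)^{-1}$. The key observation is that $H_R$ takes coefficients in scalars and 2-forms (cf.~Example~\ref{ex:D^2.formula}), so wedge multiplication by any 1-form commutes with $H_R$ and hence with $(H_R+\partial_t)^{-1}$. This allows me to collect all form factors on the left:
\[
Q_{(m)} = f^0(0)\, d\hat{f}^1(0)\wedge\cdots\wedge d\hat{f}^{2q}(0) \wedge \bigl[(H_R+\partial_t)^{-1}\bigr]^{2q+1}.
\]
By the semigroup property of $e^{-tH_R}$, the Volterra kernel of $\bigl[(H_R+\partial_t)^{-1}\bigr]^{2q+1}$ at time $t$ equals $\tfrac{t^{2q}}{(2q)!}$ times that of $(H_R+\partial_t)^{-1}$ (the volume of the $2q$-simplex with vertex sum $t$), so
\[
I_{Q_{(m)}}(0,1) = \tfrac{1}{(2q)!}\,f^0(0)\,d\hat{f}^1(0)\wedge\cdots\wedge d\hat{f}^{2q}(0)\wedge I_{(H_R+\partial_t)^{-1}}(0,1).
\]
Substituting the explicit formula~\eqref{eq:ProofProp.IHR} and using $2^{a/2}(4\pi)^{-a/2} = (2\pi)^{-a/2}$ together with the identification of the $(\bullet,0)$ components of $R'$ and $R''$ with $R^{TM^{\phi}}(0)$ and $R^{\cN^{\phi}}(0)$ yields
\[
\gamma_{\phi}(Q)(x_0) = \frac{(-i)^{n/2} (2\pi)^{-a/2}}{(2q)!}\,\bigl| f^0 d\hat{f}^1\wedge\cdots\wedge d\hat{f}^{2q}\wedge \hat{A}(R^{TM^{\phi}})\wedge \nu_{\phi}(R^{\cN^{\phi}})(x_0)\bigr|^{(a,0)},
\]
and integrating over $M^{\phi}$ gives~\eqref{eq:JLO.JLO-coycle-Dirac}.

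The main technical hurdle is the identification $t^q H_t = t^{-q}\mathcal{K}_Q(t)$ with correct bookkeeping of the rescaling factors and the verification that the Volterra kernel/operator kernel interpretation is legitimate; once this is established, the computation mirrors that of the CM cocycle in Section~\ref{sec:Connes-Chern-conformal}, with the only new ingredient being the semigroup identity for $[(H_R+\partial_t)^{-1}]^{2q+1}$ which manufactures the combinatorial factor $\frac{1}{(2q)!}$.
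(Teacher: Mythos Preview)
Your proposal is correct and follows essentially the same route as the paper: reduce to the untwisted form via the commutation identities, identify the JLO integrand with the Volterra kernel of $Q = f^0(\sD_g^2+\partial_t)^{-1}c(d\hat f^1)\cdots c(d\hat f^{2q})(\sD_g^2+\partial_t)^{-1}$ (this is exactly the paper's Lemma~\ref{lem:JLO.Ht-Q}), localize, compute the Getzler order $-2q-2$, and invoke Theorem~\ref{thm:LEIT.Volterra-LEIT}. The one place where you differ slightly is in extracting the factor $\tfrac{1}{(2q)!}$: you argue via the semigroup property and the volume of the $2q$-simplex that $K_{(H_R+\partial_t)^{-(2q+1)}} = \tfrac{t^{2q}}{(2q)!}K_{(H_R+\partial_t)^{-1}}$, whereas the paper obtains the equivalent identity $I_{(H_R+\partial_t)^{-(m+1)}}(x,t)=\tfrac{t^{m}}{m!}I_{(H_R+\partial_t)^{-1}}(x,t)$ by induction from the commutator relation $[t,(H_R+\partial_t)^{-1}]=(H_R+\partial_t)^{-2}$; both arguments are short and equivalent.
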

 
\begin{remark}
 When $G=\{\op{id}\}$ we recover the result of Block-Fox~\cite{BF:APDOIT} on the short-time limit of the JLO cocycle of a non-equivariant Dirac spectral triple 
 $(C^{\infty}(M), L^2_g(M,\sS),\sD_g)$. 
\end{remark}
 
In what follows, given differential operators $X^{0},\ldots,X^{m}$ on $M$ acting on spinors we set
\begin{equation*}
    Q(X^{0},\ldots,X^{m})=X^{0}(\sD_{g}^{2}+\partial_{t})^{-1}\cdots X^{m}(\sD_{g}^{2}+\partial_{t})^{-1}.
\end{equation*}
Note that $Q(X^{0},\ldots,X^{m})$ is a Volterra \psido\ of order $\ord X^{0}+\cdots + \ord X^{m}-2m-2$.  We will deduce 
Theorem~\ref{thm:JLO.PfJLO} from the following result. 

\begin{proposition}\label{prop:JLO.strIQ}
   Given $\phi \in G$, let $f\in C^{\infty}(M)$ and $\omega^{j}\in C^{\infty}(M,T^{*}_{\C}M)$, $j=1,\ldots,2q$. In 
   addition, set 
   $Q=Q\left(f,c(\omega^{1}),\ldots,c(\omega^{2q})\right)$. Then, as $t\rightarrow 0^{+}$ and uniformly on each 
   fixed-point submanifold $M_{a}^{\phi}$, we have
   \begin{equation}
       \str_{\sS}\left[\phi^{\sS}(x)I_{Q}(x,t)\right]=
      \frac{1}{(2q)!} t^{q}\left| f\wedge \omega^{1}\wedge \cdots \wedge \omega^{2q}\wedge \hat{A}(R^{TM^{\phi}})\wedge 
       \nu_{\phi}(R^{\cN^{\phi}})\right|^{(a,0)}+\op{O}(t^{q+1}). 
       \label{eq:JLO.strIQ}
   \end{equation}
\end{proposition}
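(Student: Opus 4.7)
The plan is to realize $Q$ as a Volterra pseudodifferential operator with a computable Getzler order and model operator, and then to invoke Theorem~\ref{thm:LEIT.Volterra-LEIT}. To this end, recall from the discussion around Example~\ref{ex:D^2.formula} that under the Getzler rescaling~(\ref{eq:AS-Getzler-order}), multiplication by $f$ has Getzler order $0$ with model $f(0)$, and the Clifford action $c(\omega^j)$ has Getzler order $1$ with model $\omega^j(0)\in \Lambda^1(n)$. Applying Lemma~\ref{lem:GetzlerOrderParametrix} in the special case of trivial coefficients ($F^E=0$), the Volterra inverse $(\sD_g^2+\partial_t)^{-1}$ has Getzler order $-2$ and model operator $(H_R+\partial_t)^{-1}$. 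Repeated use of the composition rule in Lemma~\ref{lem:index.top-total-order-symbol-composition} then shows that $Q$ has Getzler order $0+2q-2(2q+1)=-2q-2$, with model operator
\begin{equation*}
 Q_{(-2q-2)} = f(0)(H_R+\partial_t)^{-1}\omega^1(0)(H_R+\partial_t)^{-1}\cdots\omega^{2q}(0)(H_R+\partial_t)^{-1}.
\end{equation*}

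The next step is an algebraic simplification of $Q_{(-2q-2)}$. The curvature entries $R_{ij}=\sum_{k<l}R_{ijkl}\,dx^k\wedge dx^l$ are $2$-forms, so $H_R$ preserves $\Lambda^{\textup{even}}(n)$-valued sections and its Volterra inverse takes values in even forms. As $1$-forms commute in the graded sense with even forms, each constant form $\omega^j(0)$ may be transported past every factor $(H_R+\partial_t)^{-1}$ without picking up a sign, so
\begin{equation*}
 Q_{(-2q-2)} = f(0)\wedge\omega^1(0)\wedge\cdots\wedge\omega^{2q}(0)\wedge \bigl[(H_R+\partial_t)^{-1}\bigr]^{\ast(2q+1)},
\end{equation*}
where $\ast$ denotes Volterra composition. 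An elementary change of variables on the inverse formula~(\ref{eq:volterra.inverse-heat-operator}) shows that $\bigl[(L+\partial_t)^{-1}\bigr]^{\ast k}$ has Volterra kernel $\frac{t^{k-1}}{(k-1)!}$ times that of $(L+\partial_t)^{-1}$; with $L=H_R$ and $k=2q+1$, combining with the Mehler-type formula~(\ref{eq:ProofProp.IHR}) of Lemma~\ref{lem:GetzlerOrderParametrix} gives
\begin{equation*}
 I_{Q_{(-2q-2)}}(0,1) = \frac{(4\pi)^{-a/2}}{(2q)!\,{\det}^{\frac12}(1-\phi^\cN)}\, f(0)\wedge\omega^1(0)\wedge\cdots\wedge\omega^{2q}(0)\wedge \hat{A}(R^{TM^\phi}(0))\wedge\nu_\phi(R^{\cN^\phi}(0)).
\end{equation*}

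To conclude I would invoke the even case of Theorem~\ref{thm:LEIT.Volterra-LEIT}: with $m=-2q-2$, the leading exponent is $-m/2-1=q$ and the remainder is $\op{O}(t^{q+1})$, and at a point $x_0\in M_a^\phi$ in admissible synchronous normal coordinates the leading coefficient is $(-i)^{n/2}\,2^{a/2}\,{\det}^{\frac12}(1-\phi^\cN(0))\,\bigl|I_{Q_{(-2q-2)}}(0,1)\bigr|^{(a,0)}$. Substituting the expression above, the factors ${\det}^{\frac12}(1-\phi^\cN)$ cancel and the identity $2^{a/2}(4\pi)^{-a/2}=(2\pi)^{-a/2}$ yields the asymptotic~(\ref{eq:JLO.strIQ}). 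The main subtlety, and the pivotal step around which the entire computation is organised, is the graded-commutation argument that collapses the interlaced product into the $(2q+1)$-fold Volterra power $[(H_R+\partial_t)^{-1}]^{\ast(2q+1)}$; absent this reduction, the combinatorial factor $\frac{1}{(2q)!}$ does not emerge transparently, and the result cannot be reduced to the evaluation~(\ref{eq:ProofProp.IHR}) of Lemma~\ref{lem:GetzlerOrderParametrix} already used in the proof of the local equivariant index theorem.
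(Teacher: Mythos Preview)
Your proof is correct and follows the same overall architecture as the paper's: compute the Getzler order and model operator of $Q$ via Lemma~\ref{lem:index.top-total-order-symbol-composition} and Lemma~\ref{lem:GetzlerOrderParametrix}, commute the constant forms $\omega^{j}(0)$ past $(H_{R}+\partial_{t})^{-1}$, and then invoke Theorem~\ref{thm:LEIT.Volterra-LEIT}.

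The one point of genuine difference lies in how the iterated inverse $(H_{R}+\partial_{t})^{-(2q+1)}$ is reduced to $(H_{R}+\partial_{t})^{-1}$. The paper establishes the identity $I_{(H_{R}+\partial_{t})^{-(m+1)}}(x,t)=\frac{t^{m}}{m!}\,I_{(H_{R}+\partial_{t})^{-1}}(x,t)$ by an inductive commutator argument: from $[H_{R}+\partial_{t},t]=1$ one gets $[t,(H_{R}+\partial_{t})^{-1}]=(H_{R}+\partial_{t})^{-2}$, and since commutation with $t$ multiplies the Volterra kernel by $t$ (cf.~(\ref{eq:JLO.I[t,Q]})), the claim follows by induction. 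You instead observe directly that the $k$-fold Volterra convolution of $e^{-tH_{R}}$ with itself collapses, via the simplex change of variables already used in Lemma~\ref{lem:JLO.Ht-Q}, to $\frac{t^{k-1}}{(k-1)!}e^{-tH_{R}}$. Your route is arguably more transparent (it is literally the volume of $\Delta_{2q}$), though it tacitly relies on the semigroup property of $e^{-tH_{R}}$, which here is justified by the explicit Mehler kernel~(\ref{eq:ProofProp.MelherHR}) rather than by the abstract functional calculus underlying~(\ref{eq:volterra.inverse-heat-operator}). The paper's commutator argument sidesteps this by working entirely at the level of Volterra kernels and $I_{Q}$, without invoking~(\ref{eq:volterra.inverse-heat-operator}) for the form-valued operator $H_{R}$. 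Either way the combinatorial factor $\frac{1}{(2q)!}$ and the final formula come out identically.
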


The proof of Proposition~\ref{prop:JLO.strIQ} is a direct application of Theorem~\ref{thm:JLO.PfJLO}. Before getting to this let us explain how 
Proposition~\ref{prop:JLO.strIQ} enables us to prove Theorem~\ref{thm:JLO.PfJLO}. The  key observation is the following elementary lemma.

 \begin{lemma}\label{lem:JLO.Ht-Q}
     Let $X^{0},\ldots,X^{m}$ be differential operators on $M$ acting on spinors. For $t>0$ denote by 
     $h_{t}(X^{0},\ldots,X^{m})(x,y)$ the kernel of $H_{t}(X^{0},\ldots,X^{m})$ in the sense of~(\ref{eq:Heat.heat-kernel-smooth-function}). Then
     \begin{equation*}
         h_{t}(X^{0},\ldots,X^{m})(x,y)=t^{-m}K_{Q(X^{0},\ldots,X^{m})}(x,y,t) \qquad \text{for all $t>0$}.   
     \end{equation*}
 \end{lemma}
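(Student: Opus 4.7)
The plan is to compute both sides explicitly by unravelling definitions, and then match them via a simple rescaling of the time simplex. First I would compute the Schwartz kernel $h_{t}(X^{0},\ldots,X^{m})(x,y)$ directly from the definition of $H_{t}(X^{0},\ldots,X^{m})$. Since the Schwartz kernel of $e^{-u\sD_{g}^{2}}$ is $k_{u}(x,y)$ and each intermediate differential operator $X^{j}$ acts by differentiation under the integral sign on its left variable, one obtains
\begin{equation*}
 h_{t}(X^{0},\ldots,X^{m})(x,y) = \int_{\Delta_{m}}\int_{M^{m}} (X^{0}_{x} k_{s_{0} t})(x,z_{1})\cdots(X^{m}_{z_{m}}k_{s_{m} t})(z_{m},y)\,dz_{1}\cdots dz_{m}\,d\mathbf{s}.
\end{equation*}

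Next I would compute $K_{Q}(x,y,t)$ by iterated application of the time-convolution formula for Volterra kernels. By Proposition~\ref{thm:Heat.P-inverse-heat-operator-PsiDO} each factor $X^{j}(\sD_{g}^{2}+\partial_{t})^{-1}$ is a Volterra \psido\ with Volterra kernel $(X^{j}_{x} k_{t})(x,y)$. Combining Remark~\ref{rem:Volterra.kernel} with the defining formula~(\ref{eq:Heat.KQ0-heat-kernel}) for the Volterra kernel of $(\sD_g^2+\partial_{t})^{-1}$, a direct computation shows that the Volterra kernel of a composition $Q_{1}Q_{2}$ of Volterra \psidos\ is given by the time-convolution $K_{Q_{1}Q_{2}}(x,y,t)=\int_{0}^{t}\int_{M} K_{Q_{1}}(x,z,t-s)K_{Q_{2}}(z,y,s)\,dz\,ds$. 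Iterating this formula for $Q=Q(X^{0},\ldots,X^{m})$ and parametrizing the resulting ordered time integration by $(u_{0},\ldots,u_{m})$ with $u_{0}+\cdots+u_{m}=t$ and $u_{j}\ge 0$, one obtains
\begin{equation*}
 K_{Q}(x,y,t) = \int_{t\Delta_{m}}\int_{M^{m}} (X^{0}_{x} k_{u_{0}})(x,z_{1})\cdots(X^{m}_{z_{m}}k_{u_{m}})(z_{m},y)\,dz_{1}\cdots dz_{m}\,d\mathbf{u},
\end{equation*}
where $t\Delta_{m}=\{\mathbf{u}\in[0,\infty)^{m+1}:u_{0}+\cdots+u_{m}=t\}$ is equipped with its natural $m$-dimensional measure.

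Finally, the rescaling $u_{j}=ts_{j}$ maps $\Delta_{m}$ diffeomorphically onto $t\Delta_{m}$ with Jacobian $t^{m}$, so that $d\mathbf{u}=t^{m}\,d\mathbf{s}$. Inserting this into the formula for $K_{Q}$ yields precisely $t^{m}h_{t}(X^{0},\ldots,X^{m})(x,y)$, which is the desired identity. No substantial obstacle is to be expected here: the only point requiring slight care is the iterated time-convolution formula for Volterra kernels, which follows immediately from the Volterra vanishing $K_{(\sD_{g}^{2}+\partial_{t})^{-1}}(x,y,t)=0$ for $t<0$ together with the definition~(\ref{eq:volterra.inverse-heat-operator}); differentiation through the intermediate $X^{j}$ raises no difficulty since the heat kernel is smooth for positive times.
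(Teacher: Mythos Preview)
Your proposal is correct and follows essentially the same route as the paper: iterate the definition of $(\sD_{g}^{2}+\partial_{t})^{-1}$ to obtain an $m$-fold time integral, then rescale the time simplex $t\Delta_{m}\to\Delta_{m}$ via $u_{j}=ts_{j}$ to produce the factor $t^{m}$. The only cosmetic difference is that the paper carries out the iteration at the level of operators applied to a test function $u\in C^{\infty}_{+}(\R,L^{2}(M,\sS))$ and then reads off the Volterra kernel at the end (in the spirit of~(\ref{eq:Heat.KQ0-heat-kernel})), whereas you work directly with Schwartz kernels and spatial integrations over $M^{m}$; both are equally valid and the core change-of-variables step is identical.
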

 \begin{proof}
 Let $u\in C^{\infty}_{+}(\R, L^{2}(M,\sS))$. Then 
 \begin{align*}
  Q(X^{0},\ldots,X^{m})u(t)    &=  \int_{0}^{\infty} X^{0}e^{-t_{0}\Delta}Q(X^{1}, \ldots, X^{m})u(t-t_{0})dt_{0}\\
      & = \int_{0}^{\infty}\int_{0}^{\infty}X^{0}e^{-t_{0}\Delta}X^{1}e^{-t_{1}\Delta}Q(X^{2}, \ldots, X^{m})u(t-t_{0}-t_{1})dt_{0}dt_{1}.
 \end{align*}
 An induction then shows that
 \begin{equation*}
     Q(X^{0},\ldots,X^{m})u(t)=  \int_{0}^{\infty}\cdots \int_{0}^{\infty}X^{0}e^{-t_{0}\Delta}\cdots 
     X^{m}e^{-t_{m}\Delta}u(t-t_{0}-\cdots -t_{m})dt_{0}\cdots dt_{m}.
 \end{equation*}
 The change of variables $\sigma=t_{0}+\cdots +t_{m}$ and $s_{j}=\sigma^{-1}t_{j}$, $j=0,\ldots,m$, gives
 \begin{align}
  Q(X^{0},\ldots,X^{m})u(t)    &=   \int_{0}^{\infty}\int_{\Delta_{m}}X^{0}e^{-s_{0}\sigma\Delta}\cdots 
     X^{m}e^{-s_{m}\sigma\Delta}u(t-\sigma)\sigma^{m}d\mathbf{s}d\sigma  \nonumber\\
      & =  \int_{0}^{\infty} \sigma^{m} H_{\sigma}(X^{0},\ldots,X^{m})u(t-\sigma)d\sigma.
      \label{eq:JLO.Q-Ht}
 \end{align}
 In the same way as we obtained~(\ref{eq:Heat.KQ0-heat-kernel}) this shows that
     \begin{equation*}
       K_{Q(X^{0},\ldots,X^{m})}(x,y,t)=t^{m}h_{t}(X^{0},\ldots,X^{m})(x,y) \qquad \text{for  all $t>0$}.   
     \end{equation*}
 The lemma is proved.  
 \end{proof}
 
\begin{remark}
    The formula~(\ref{eq:JLO.Q-Ht}) is remeniscent of the resolvent formula for the JLO cocycle given by Connes~\cite[Eq.~(17)]{Co:CCTSM}. In 
    fact, at least at a formal level, we go from one formula to the other by a conjugation by the Laplace transform 
    with respect to the variable $t$, since this transforms the inverse heat operator $(\sD_{g}^{2}+\partial_{t})^{-1}$ into the resolvent 
    $(\sD^{2}_{g}-\lambda)^{-1}$, $\lambda>0$. 
\end{remark}

 \begin{proof}[Proof of Theorem~\ref{thm:JLO.PfJLO}] Let $f^{j}\in C^{\infty}(M)$ and $\phi_{j}\in G$,  $j=0,1,\ldots,2q$. By 
    definition, for 
      all $t>0$, we have
    \begin{equation*}
      \varphi^{\JLO}_{2q,t}(f^{0}U_{\phi_{0}},\ldots, 
      f^{2q}U_{\phi_{2q}})=t^{q}\Str\left[H_{t}\left(f^{0}U_{\phi_{0}},[\sD_{g},f^{1}U_{\phi_{1}}],\ldots, 
      [\sD_{g},f^{2q}U_{\phi_{2q}}]\right)\right] .
    \end{equation*}
    As the unitary operators $U_{\phi_{j}}$ commute with the heat semigroup $e^{-t\sD_{g}^{2}}$, $t>0$, by arguing as 
    in~(\ref{eq:CCCharacter.commutationUphi-f}) it can be shown that,  for all $t>0$, we have
    \begin{align*}
        H_{t}\left(f^{0}U_{\phi_{0}},[\sD_{g},f^{1}U_{\phi_{1}}],\ldots, 
      [\sD_{g},f^{2q}U_{\phi_{2q}}]\right)& = H_{t}\left(f^{0},[\sD_{g},\hat{f}^{1}],\ldots, 
      [\sD_{g},\hat{f}^{2q}]\right)U_{\phi}\\
      & = H_{t}\left(f^{0}, c(d\hat{f}^{1}),\ldots, c(d\hat{f}^{2q})\right)U_{\phi},
    \end{align*}where we have set $\phi=\phi_{0}\circ \cdots \circ \phi_{2q}$ and $\hat{f}^{j}:=f^{j}\circ 
    (\phi_{0}\circ \cdots \circ \phi_{j-1})^{-1}$, $j=1,\ldots,2q$. Therefore, using Lemma~\ref{lem:JLO.Ht-Q} and its notation, we see 
    that, for all $t>0$, we have
    \begin{align}
      \varphi^{\JLO}_{2q,t}(f^{0}U_{\phi_{0}},\ldots, 
      f^{2q}U_{\phi_{2q}})    & = t^{q}\Str\left[H_{t}\left(f^{0}, c(d\hat{f}^{1}),\ldots, 
      c(d\hat{f}^{2q})\right)U_{\phi}\right] \nonumber\\
         & = t^{q}\int_{M} \str_{\sS}\left[ \phi^{\sS}(x)h_{t}\left(f^{0}, c(d\hat{f}^{1}),\ldots, 
      c(d\hat{f}^{2q})\right)(x,\phi(x))\right]|dx| 
      \label{eq:JLO.JLO-strIQ}\\
         & = t^{-q}\int_{M} \str_{\sS}\left[ \phi^{\sS}(x)K_{Q}(x,\phi(x),t)\right]|dx|,\nonumber
    \end{align}where we have set $Q=Q\left(f^{0}, c(d\hat{f}^{1}),\ldots, c(d\hat{f}^{2q})\right)$. 
    Moreover, Lemma~\ref{lem:Heat-localization} and Proposition~\ref{prop:JLO.strIQ} shows that, as $t\rightarrow 0^{+}$, we have
    \begin{align*}
      \int_{M} \str_{\sS}\left[ \phi^{\sS}(x)K_{Q}(x,\phi(x),t)\right]|dx|   & = \int_{M} \str_{\sS}\left[ 
      \phi^{\sS}(x)I_{Q}(x,t)\right]|dx|  +\op{O}(t^{\infty}) \\
         & = \frac{1}{(2q)!} t^{q}\sum_{\substack{0\leq a \leq n\\ \textup{$a$ even}}} 
         (2\pi)^{-\frac{a}{2}}\int_{M^{\phi}_{a}}\Upsilon_{q} + \op{O}(t^{q+1}), 
    \end{align*} where we have set $\Upsilon_{q}=(-i)^{\frac{n}{2}} f^{0}d\hat{f}^{1} \wedge \cdots \wedge 
             d\hat{f}^{2q} \wedge \hat{A}(R^{TM^{\phi}})\wedge 
            \nu_{\phi}\left(R^{\cN^{\phi}}\right)$. Combining this with~(\ref{eq:JLO.JLO-strIQ}) proves~(\ref{eq:JLO.JLO-coycle-Dirac}). The 
            proof is complete. 
\end{proof}
 
It remains to prove Proposition~\ref{prop:JLO.strIQ}. The rest of the subsection is devoted to proving this result. 

\begin{proof}[Proof of Proposition~\ref{prop:JLO.strIQ}] Given $\phi \in G$, let $f\in 
C^{\infty}(M)$ and $\omega^{j}\in C^{\infty}(M,T^{*}_{\C}M)$, $j=1,\ldots,2q$. In addition, set
\begin{equation*}
   Q=Q\left(f,c(\omega^{1}),\ldots,c(\omega^{2q})\right)=f (\sD_{g}^{2}+\partial_{t})^{-1}c(\omega^{2q})(\sD_{g}^{2}+\partial_{t})^{-1}
   \cdots c(\omega^{2q})(\sD_{g}^{2}+\partial_{t})^{-1}.
\end{equation*}
Here $f$ has Getzler order $0$, each Clifford multiplication operator $c(\omega^{j})$ has Getzler order~$1$, and the 
inverse heat operator $(\sD_{g}^{2}+\partial_{t})^{-1}$ has Getzler order~$-2$, and so using Lemma~\ref{lem:index.top-total-order-symbol-composition} we see that $Q$ 
has Getzler order $2q-2(2q+1)=-2(q+1)$. 
Therefore, Theorem~\ref{thm:LEIT.Volterra-LEIT} ensures us that, as $t\rightarrow 0^{+}$ and uniformly on each fixed-point submanifold $M_{a}^{\phi}$, we have 
 \begin{equation}
            \str_{\sS}\left[ \phi^{\sS}(x)I_{Q}(x,t)\right]=  t^{q} \gamma_{\phi}(Q)(x) + 
            \op{O}\left( t^{q+1}\right) .
            \label{eq:JLO.IQ-UpsilonQ}
\end{equation}
To complete the proof it then remains to identify $\gamma_{\phi}(Q)(x)$. 

Let $x_{0}$ be a point in some fixed-point submanifold $M_{a}^{\phi}$, 
$a=0,2,\ldots,n$, and let us work in admissible normal coordinates centered at $x_{0}$. At $x=0$ the respective model 
operators of $f$, $c(\omega^{j})$ and $(\sD_{g}^{2}+\partial_{t})^{-1}$ are $f(0)$, $\omega^{j}(0)$ and 
$(H_{R}+\partial_{t})^{-1}$. Therefore, using Lemma~\ref{lem:index.top-total-order-symbol-composition} we see that $Q$ has model operator
\begin{equation*}
    Q_{(-2q-2)}=f(0)(H_{R}+\partial_{t})^{-1}\omega^{1}(0)(H_{R}+\partial_{t})^{-1}\cdots 
    \omega^{2q}(0)(H_{R}+\partial_{t})^{-1}.
\end{equation*}
As pointed out in the proof of Theorem~\ref{thm:LEIT.local-equiv.-index-thm-pointwise} in Section~\ref{sec:proof-key-thm}, the operator $(H_{R}+\partial_{t})^{-1}$ commutes with 
with the forms $\omega^{j}(0)$. Thus, setting $\omega=f\omega^{1}\wedge \cdots \wedge \omega^{2q}$, we can rewrite $Q_{(-2q-2)}$ as
\begin{equation*}
   Q_{(-2q-2)}=f(0)\omega^{1}(0)\wedge  \cdots \wedge \omega^{2q}(0)\wedge(H_{R}+\partial_{t})^{-(2q+1)}= 
   \omega(0)\wedge (H_{R}+\partial_{t})^{-(2q+1)}. 
\end{equation*}Therefore, arguing as in~(\ref{eq:CCCharacter.kernel-Pq0})--(\ref{eq:CCCharacter.IPq0}) shows that
\begin{equation}
    I_{Q_{(-2q-2)}}(x,t)=\omega (0)\wedge I_{(H_{R}+\partial_{t})^{-(2q+1)}}(x,t). 
    \label{eq:JLO.IQ(-2q-2)-IHR}
\end{equation}

\begin{claim}
    Let $m\in \N_{0}$. Then
    \begin{equation}
        I_{(H_{R}+\partial_{t})^{-(m+1)}}(x,t)=\frac{1}{m!}t^{m} I_{(H_{R}+\partial_{t})^{-1}}(x,t).
        \label{eq:JLO.IHRm} 
    \end{equation}
\end{claim}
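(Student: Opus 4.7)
The plan is to reduce the identity to a clean statement about Volterra kernels via the semigroup property for $H_R$. The starting point is the observation that $H_R$ is a positive selfadjoint harmonic oscillator on $\R^{n}$, so the heat semigroup $e^{-tH_R}$, $t\geq 0$, enjoys $e^{-sH_R}e^{-rH_R}=e^{-(s+r)H_R}$. Via the formula~(\ref{eq:volterra.inverse-heat-operator}) this says that $(H_R+\partial_t)^{-1}$ acts as Volterra convolution in $t$ with the kernel $K_{(H_R+\partial_t)^{-1}}(x,y,t)=e^{-tH_R}(x,y)$ for $t>0$ and $0$ for $t\leq 0$.

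From there, I would compose and iterate. Writing $(H_R+\partial_t)^{-(m+1)}u(s)$ as an $(m+1)$-fold iterated integral of $e^{-t_0 H_R}\cdots e^{-t_m H_R}u(s-t_0-\cdots-t_m)$ over $t_j\geq 0$, the semigroup property collapses the heat operators to $e^{-\sigma H_R}$ with $\sigma = t_0+\cdots+t_m$. The substitution $\sigma = t_0+\cdots+t_m$, $s_j = t_j/\sigma$ sends the domain onto $\Delta_m\times(0,\infty)$ with Jacobian $\sigma^m$, and $\int_{\Delta_m}d\mathbf{s}=1/m!$. Comparing with the Volterra convolution representation yields the kernel identity
\begin{equation*}
  K_{(H_R+\partial_t)^{-(m+1)}}(x,y,t)=\frac{t^m}{m!}\,K_{(H_R+\partial_t)^{-1}}(x,y,t) \qquad \text{for $t>0$.}
\end{equation*}

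Finally, the quantity $I_Q(x',t)$ is defined in~(\ref{IQonRn}) as a linear functional of the Volterra kernel: it is simply the integral $\int_{\R^{n-a}}K_Q(x',x'';0,(1-\phi^{\cN})x'';t)\,dx''$. Applying this linear functional to both sides of the kernel identity immediately gives the claimed formula~(\ref{eq:JLO.IHRm}). No obstacle of substance arises: the only thing to check is that the $x''$-integral converges so that interchanging integrations is legitimate, which is guaranteed by the Gaussian decay of $K_{(H_R+\partial_t)^{-1}}$ already exploited in the proof of Lemma~\ref{lem:GetzlerOrderParametrix} (see the explicit Mehler-type formula~(\ref{eq:ProofProp.MelherHR}) and the convergent Gaussian integral in~(\ref{eq:ProofProp.IHR-Theta})). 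Alternatively, one may argue by induction on $m$: the base case $m=0$ is trivial, and the inductive step follows from the convolution identity $(H_R+\partial_t)^{-(m+1)}=(H_R+\partial_t)^{-m}\circ(H_R+\partial_t)^{-1}$ combined with the Beta-function integral $\int_0^t s^{m-1}ds=t^m/m$.
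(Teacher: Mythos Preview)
Your argument is correct and establishes the stronger pointwise kernel identity
\[
  K_{(H_R+\partial_t)^{-(m+1)}}(x,y,t)=\frac{t^{m}}{m!}\,K_{(H_R+\partial_t)^{-1}}(x,y,t),
\]
from which the claim follows by applying the linear functional~(\ref{IQonRn}). One small caveat in your write-up: $H_R$ is not literally a ``positive selfadjoint harmonic oscillator on $\R^n$'' since its coefficients $R_{ij}$ lie in $\Lambda^{2}(n)$ rather than $\R$. Nonetheless these form-valued coefficients are nilpotent and mutually commuting, so the Mehler kernel~(\ref{eq:ProofProp.MelherHR}) makes sense and the semigroup identity $e^{-sH_R}e^{-rH_R}=e^{-(s+r)H_R}$ holds; your computation goes through unchanged.

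The paper takes a different, purely algebraic route. It observes that for any Volterra $Q$ one has $K_{[t,Q]}(x,y,t)=tK_Q(x,y,t)$ (simply because the Schwartz kernel of $[t,Q]$ is $(t-s)K_Q(x,y,t-s)$), and then uses the commutator identity $[t,(H_R+\partial_t)^{-m}]=m\,(H_R+\partial_t)^{-(m+1)}$, which follows from $[H_R+\partial_t,t]=[\partial_t,t]=1$ by the Leibniz rule. Induction then closes the argument. Your approach is in spirit the specialisation of Lemma~\ref{lem:JLO.Ht-Q} (proved just above the claim) to $X^0=\cdots=X^m=1$, replacing $\sD_g^2$ by $H_R$; it is direct and relies on the heat-semigroup structure. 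The paper's commutator trick avoids any appeal to the semigroup or to explicit kernel formulas and would transplant to settings where the model operator is less explicit. Both routes ultimately yield the same kernel identity.
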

 \begin{proof}
     The proof is based on the following observation: for any $Q\in \pvdo^{\bt}(\R^{n}\times \R)\otimes 
     \Lambda(n)$, the commutator $[t,Q]$ has kernel $(t-s)K_{Q}(x,y,t-s)$, and hence
     \begin{equation}
         K_{[t,Q]}(x,y,t)=t K_{Q}(x,y,t)  \qquad \text{and} \qquad I_{[t,Q]}(x,t)=tI_{Q}(x,t).
         \label{eq:JLO.I[t,Q]}
     \end{equation}
   Bearing this in mind, we shall prove~(\ref{eq:JLO.IHRm}) by induction on $m$. It is immediate that~(\ref{eq:JLO.IHRm}) is true for 
   $m=0$. Assume it is true for $m\geq 0$. As $[H_{R}+\partial_{t},t]=[\partial_{t},t]=1$, we have 
     \begin{equation*}
         [t,(H_{R}+\partial_{t})^{-1}]=(H_{R}+\partial_{t})^{-1}[H_{R}+\partial_{t},t](H_{R}+\partial_{t})^{-1}=  (H_{R}+\partial_{t})^{-2}. 
     \end{equation*}
     This implies that $[t,(H_{R}+\partial_{t})^{-m}] $ is equal to
     \begin{equation*}
          \sum_{0\leq j \leq 
         m-1}(H_{R}+\partial_{t})^{-j}[t,(H_{R}+\partial_{t})^{-1}](H_{R}+\partial_{t})^{-m+j+1}=m(H_{R}+\partial_{t})^{-(m+1)}.
     \end{equation*}
     Combining this with~(\ref{eq:JLO.I[t,Q]}) we then get
     \begin{equation*}
         I_{(H_{R}+\partial_{t})^{-(m+1)}}(x,t)=\frac{1}{m}I_{[t,(H_{R}+\partial_{t})^{-m}]}(x,t)=\frac{1}{m}tI_{(H_{R}+\partial_{t})^{-m}}(x,t). 
     \end{equation*}
     As formula~(\ref{eq:JLO.IHRm}) is true for $m$, we deduce that 
     \begin{equation*}
        I_{(H_{R}+\partial_{t})^{-(m+1)}}(x,t)=\frac{1}{(m+1)!}t^{m+1}I_{(H_{R}+\partial_{t})^{-1}}(x,t).
     \end{equation*}
      This shows that formula~(\ref{eq:JLO.IHRm}) is true for $m+1$. The proof of the claim is thus complete. 
     \end{proof}

Let us go back to the proof of Proposition~\ref{prop:JLO.strIQ}. Combining~(\ref{eq:JLO.IQ(-2q-2)-IHR}) with~(\ref{eq:JLO.IHRm}) shows that 
\begin{equation*}
    I_{Q_{(-2q-2)}}(x,t)=\frac{1}{(2q)!}\omega (0)\wedge  I_{(H_{R}+\partial_{t})^{-1}}(x,t). 
\end{equation*}Therefore, by arguing as in~(\ref{eq:CCCharacter.UpsilonPq0}) we  obtain
\begin{align*}
   \gamma_{\phi}(Q)(0)  = \frac{1}{(2q)!}(-i)^{\frac{n}{2}}(2\pi)^{-\frac{a}{2}} \left|\omega^{(\bt,0)} \wedge \hat{A}(R^{TM^{\phi}}(0))\wedge 
            \nu_{\phi}\left(R^{\cN^{\phi}}(0)\right)\right|^{(a,0)}. 
\end{align*}
Combining this with~(\ref{eq:JLO.IQ-UpsilonQ}) gives the asymptotic~(\ref{eq:JLO.strIQ}). The proof of Proposition~\ref{prop:JLO.strIQ} is complete. 
\end{proof}

\begin{remark}\label{rmk:JLO.family}
 There is no major difficulty to extend this approach to the computation of the JLO cocycle  of a Dirac spectral triple to various equivariant and 
 non-equivariant  family settings, as those discussed in Remark~\ref{rmk:LEIT-family-settings}. In particular, this approach can be 
 used to compute the bivariant JLO cocycle of an equivariant Dirac spectral triple with coefficients in suitable algebras  
 (compare~\cite{Az:REBCTECCC}). As pointed out by Wu~\cite{Wu:BCCCHGIT} (who introduced the bivariant JLO cocycle), this enables us to 
 recover the higher-index theorem of Connes-Moscovici~\cite{CM:CCNCHG} (in the formulation of Lott~\cite{Lo:SCHIT}). This approach 
 can also be used to simplify the computations of the infinitesimal equivariant JLO cocycle and proof of the 
 integrability of the transgressed infinitesimal equivariant JLO cocycle in~\cite{YWang:JKT14}. 
\end{remark}

\begin{remark}\label{rmk:JLO.eta-cochain}
    The eta cochain of Wu~\cite{Wu:CCCDOMB} implements the explicit homotopy between the large-time limit of the JLO cocycle and its 
    short-time finite part. It also naturally appears in the description of the Connes-Chern character of a spin manifold with 
    boundary equipped with a $b$-metric~(see \cite{Ge:CHAPSIT, Wu:CCCDOMB, LMP:CCMBEC}). In particular, in the odd dimensional case, its first degree component agrees with the eta 
    invariant of Atiyah-Patodi-Singer~\cite{APS:SARG1}. The eta cochain is formally defined as the integral over $[0,\infty)$ of a 
    transgressed version of the JLO cocycle. The main issue at stake in this defintion is the 
    integrability near $t=0$ of the transgressed JLO cocycle~\cite{Wu:CCCDOMB}. We refer to~\cite{Po:Odd} for further applications of 
    Lemma~\ref{lem:JLO.Ht-Q} and Theorem~\ref{thm:LEIT.Volterra-LEIT} to a new proof of the 
    integrability at $t=0$ of the transgressed JLO cocycle of an equivariant Dirac spectral triple. In particular, this 
    bypasses the crossing with $S^{1}$ and the use of a Grassmannian variable from the previous approaches of Wu~\cite{Wu:CCCDOMB} in 
    the non-equivariant case and Yong Wang~\cite{YWang:KT06} in the equivariant case. 
\end{remark}

\begin{remark}\label{rmk:JLO.boundary}
    By combining the heat $b$-calculus of Melrose~\cite{Me:APSIT} with a version of Theorem~\ref{thm:LEIT.Volterra-LEIT} for the 
    $b$-differential operators we also can compute the short-time limit of the relative JLO cocycle of 
    Lesch-Moscovici-Pflaum~\cite{LMP:CCMBEC} associated to a Dirac operator on a spin manifold with boundary equipped with a $b$-metric. It would be interesting 
    to extend the results~\cite{LMP:CCMBEC} to the equivariant setting.  
\end{remark}

\end{document}